\lstdefinelanguage{M2}
{morekeywords={loadPackage,transpose,matrix,entries},
morekeywords=[2]{QQ,Ideal,ZZ},
morekeywords=[3]{Degrees},
otherkeywords={<---},
string=[b]{"},
sensitive=true,
morecomment=[l]{--},
}
\tikzset{parallel/.style={decoration={sl,raise=1mm},decorate}}
\tikzset{position/.style args={#1:#2 from #3}{
        at=(#3.#1), anchor=#1+180, shift=(#1:#2)}}
\tikzstyle{none}=[inner sep=0pt]
\tikzstyle{every node}=[font=\tiny]
\tikzstyle{vertex}=[draw,circle,fill=black,inner sep=1pt]
\tikzstyle{new}=[circle,fill=black,draw=black,line width=0 pt]
\newtheorem*{rep@theorem}{\rep@title}
\newcommand{\newreptheorem}[2]{%
\newenvironment{rep#1}[1]{%
 \def\rep@title{#2 \ref{##1}}%
 \begin{rep@theorem}}%
 {\end{rep@theorem}}}
\newtheorem{thm}{Theorem}[section]
\newtheorem{prop}[thm]{Proposition}
\newtheorem{lem}[thm]{Lemma}
\newtheorem{conj}[thm]{Conjecture}
\theoremstyle{definition}
\newtheorem{defn}[thm]{Definition}
\newtheorem{quest}[thm]{Question}
\newtheorem{cor}[thm]{Corollary}
\newcommand{\Bn}{\Phi_{B_n}}
\newcommand{\An}{\Phi_{A_{n-1}}}
\newcommand{\Cn}{\Phi_{C_n}}
\newcommand{\Kprt}{K_{P}^{\mathrm{rt}}}
\newcommand{\Kpwt}{K_{P}^{\mathrm{wt}}}
\newcommand{\maj}{\mathrm{maj}}
\newcommand{\Hilb}{\mathrm{Hilb}}
\newcommand{\Des}{\mathrm{Des}}
\newcommand{\Lrt}{L^{\mathrm{rt}}}
\newcommand{\Rprt}{R_P^\mathrm{rt}}
\newcommand{\Rpwt}{R_P^\mathrm{wt}}
\newcommand{\syz}{\mathrm{syz}}
\newcommand{\init}{\mathrm{in}}
\newcommand{\initI}{\mathop{\mathrm{in}_\preceq} \Iwt{P}}
\newcommand{\sgn}{\mathrm{sgn}}
\newcommand{\supp}{\mathop{\mathrm{supp}}}
\newcommand{\ssupp}{\mathop{\mathrm{supp}_{\pm}}}
\newcommand{\Ghat}{\widehat{G}}
\newcommand{\Gbhat}{\widehat{G}_B}
\newcommand{\Gchat}{\widehat{G}_C}
\newcommand{\smvee}{{\scriptscriptstyle\vee}}
\newcommand{\Pc}{{P^{\scriptscriptstyle\vee}}}
\newcommand{\dual}{{\scriptscriptstyle\vee}}
\newcommand{\Rrt}[1]{R_{#1}^\mathrm{rt}}
\newcommand{\m}[1]{\bar{#1}}
\newcommand{\Jconn}{J_{\mathrm{conn}}}
\newcommand{\PLC}[1]{\overline{#1}^{PLC}}
\newcommand{\Lbrt}{L^{\mathrm{rt}}_B}
\newcommand{\Lcrt}{L^{\mathrm{rt}}_C}
\newcommand{\Krt}[1]{K^{\mathrm{rt}}_{#1}}
\newcommand{\Srt}[1]{S^{\mathrm{rt}}_{#1}}
\newcommand{\Irt}[1]{I^{\mathrm{rt}}_{#1}}
\newcommand{\Swt}[1]{S^{\mathrm{wt}}_{#1}}
\newcommand{\Iwt}[1]{I^{\mathrm{wt}}_{#1}}
\newcommand{\Lcwt}{L^\mathrm{cowt}}
\newcommand{\Kwt}[1]{K_{#1}^{\mathrm{wt}}}
\newcommand{\Rwt}[1]{R_{#1}^{\mathrm{wt}}}
\newcommand{\resp}{resp.\ }
\newcommand{\feray}{F\'eray\xspace}
\newcommand{\cZ}{\mathcal{Z}}
\newcommand{\conv}{\mathop{\mathrm{conv}}}
\begin{document}

\phd

\title{\bf Root and weight semigroup rings for signed posets}
\author{Sebastian Alexander Csar}
\campus{University of Minnesota} 
\program{Mathematics} 
\director{Victor Reiner, advisor} 
\submissionyear{2014}
\submissionmonth{August}

\abstract{\begin{spacing}{1}    We consider a pair of semigroups associated to a signed poset, called the root
semigroup and the weight semigroup, and their
semigroup rings, $\Rprt$ and $\Rpwt$, respectively.

Theorem~\ref{thm:toricideal} gives generators for the toric ideal of
        affine semigroup rings associated to signed posets and, more generally,
        oriented signed graphs. These are the subrings of Laurent polynomials
        generated by monomials of the form $t_i^{\pm 1},t_i^{\pm 2},t_i^{\pm
            1}t_j^{\pm 1}$. This result appears to be new and generalizes work
        of~\citet*{BoussicaultFerayLascouxReiner2012},~\citet*{GitlerReyesVillarreal2010}
        and~\citet{Villarreal1995}.
 Theorem~\ref{thm:csspci} shows that strongly planar signed posets
        $P$ have rings $\Rprt$, $\Rrt{\Pc}$ which are complete intersections,
        with Corollary~\ref{cor:computingpsi} showing how to compute $\Psi_P$
        in this case.
    Theorem~\ref{thm:typebpresentation} gives a Gr\"obner basis for the
        toric ideal of $\Rpwt$ in type B, generalizing~\citet*[Proposition
        6.4]{FerayReiner2012}. Theorems~\ref{thm:excludedposets} and~\ref{thm:construction} giving
        two characterizations (via forbidden subposets versus via inductive
        constructions) of the situation where this Gr\"obner basis gives a
        complete intersection presentation for its initial ideal,
        generalizing~\citet*[Theorems 10.5, 10.6]{FerayReiner2012}.
    \end{spacing}
}
\words{}    
\copyrightpage 
\acknowledgements{It is perhaps obligatory to start the acknowledgements by thanking your
advisor. To that end, I thank Vic Reiner for his guidance, patience and
willingness to learn to parse the varied meanings of ``all right''. I am
similarly indebted to Dennis Stanton and Gregg Musiker.

I thank Nathan Williams, Kevin Dilks and Thomas McConville for fielding my
random questions,  Maggie Ewing
and David Morawski for the standing work appointments, Alanna Hoyer-Leitzl for
her thoughtful support, Heidi Goodson, Jeff Cruse, Patrick Campbell and Baiying
Liu for their conversations in the office and Julie Leifeld for deciding we
should be friends.

I am grateful to the FLBs first for letting me join the group and second for
being such wonderful friends. I thank Jenna Ingalls for being my best friend
with all that entails, and my parents and brother for their endless love and support.
}
\dedication{For my grandad.}


\beforepreface 

\figurespage
\tablespage

\afterpreface            

\chapter{Introduction}\label{ch:introduction}

A partially ordered set, or \emph{poset}, is a set $P$ together with a binary
relation $<$ such that
\begin{itemize}
    \item $<$ is antisymmetric: if $x < y$ then $y \not < x$ for all $x,y \in
        P$, and
    \item $<$ is transitive: if $x < y$ and $y < z$ then $x < z$ for all $x,y,z
        \in P$.
\end{itemize}
Consider the poset in Figure~\ref{fig:typeaex}. The underlying set is $P =
\{1,2,3,4,5\}$ and $x < y$ if there is a path from $x$ to $y$ travelling
upwards along each edge in the path.
\begin{figure}[htbp]
    \begin{center}
        \begin{tikzpicture}
            \node[vertex] (1) at (0,0) [label=left:$1$] {};
            \node[vertex] (2) at (2,0) [label=right:$2$] {};
            \node[vertex] (3) at (1,1) [label=left:$3$] {};
            \node[vertex] (4) at (3,1) [label=right:$4$] {};
            \node[vertex] (5) at (2,2) [label=right:$5$] {};
            \draw (1)--(3)--(5)--(4)--(2)--(3);
        \end{tikzpicture}
    \end{center}
    \caption{A poset}
    \label{fig:typeaex}
\end{figure}
For example, $1 < 3$, but $1 \not < 4$.

An element of a poset $y \in P$ is said to \emph{cover} $x \in P$ if $x < y$
and there is no $z \in P$ such that $x < z < y$. One writes $x \lessdot y$ to
emphasize that the relation between $x$ and $y$ is a \emph{covering relation}.

A \emph{linear extension} of a poset is an extension $\prec$ of $<$ to a total
order, i.e.\ a linear order of the elements of $P$ by $\prec$ so that if $x <
y$ then $x \prec y$. The linear extensions of our example are
\[
    \begin{split}
        1 \prec 2 \prec 3 \prec 4 \prec 5 \\
        1 \prec 2 \prec 4 \prec 3 \prec 5 \\
        2 \prec 1 \prec 3 \prec 4 \prec 5 \\
        2 \prec 1 \prec 4 \prec 3 \prec 5 \\
        2 \prec 4 \prec 1 \prec 3 \prec 5
    \end{split}
\]
The set of linear extensions of $P$ is denoted $\cL(P)$. When $P$ is a poset on
$[n] \coloneqq \{1,2,\ldots,n\}$, it is natural to regard the linear extensions
as permutations of $[n]$. When $P$ is the poset in Figure~\ref{fig:typeaex} one
then has $\cL(P) = \{12345,12435,21345,21435,24135\}$.

While finding a linear extension of a finite poset is straightforward---it is what is
known as \emph{topological sorting} in computer science and can be done in
linear time (see, for instance~\cite[\S
22.4]{CormenLeisersonRivestStein2009})---counting the number of linear extensions
proves rather more difficult. Brightwell and Winkler showed
in~\cite{Brightwell1991} that the problem is $\# P$-complete. As a consequence,
computing rational functions which are sums over linear extensions proves
difficult. This leads to two basic questions:
\begin{itemize}
    \item When and how can the linear extensions of a poset be counted without
        listing them?
    \item When and how can such a rational function be computed without listing all the
linear extensions?
\end{itemize}

\section{A root system perspective on posets}\label{sec:rootsystemperspective}

In~\cite{BoussicaultFerayLascouxReiner2012}, Boussicault, \feray, Lascoux and
Reiner used a view of posets as sets of type A roots to explain how a pair of
rational functions which are sums over linear extensions can be evaluated, at
least in certain cases.
They transform a poset $P$ into the collection of type A roots $\{e_i-e_j
\colon i <_P j\}$. Under this scheme, $P$ from Figure~\ref{fig:typeaex}
corresponds to $\{e_1-e_3,e_1-e_5,e_2-e_3,e_2-e_4,e_2-e_5,e_3-e_5,e_4-e_5\}$.
The two rational functions they considered are, for a
poset $P$ on $[n]\coloneqq \{1,2,\ldots,n\}$,
\[
    \Psi_P(\bm{x}) = \sum_{w \in \cL(P)}
    w\left(\dfrac{1}{(x_1-x_2)(x_2-x_3)\cdots (x_{n-1}-x_n)}\right)
\]
and
\[
    \Phi_P(\bm{x}) = \sum_{w \in \cL(P)}
    w\left(\dfrac{1}{x_1(x_1+x_2)\cdots(x_1+\cdots+x_n)}\right),
\]
where the linear extensions are viewed as permutations acting on the indices of
$x_i$.

The function $\Psi_P$ had previously been considered by \citet{Greene1992}, where
he gave an evaluation for \emph{strongly planar} posets, i.e.\ those posets
whose Hasse diagrams remain planar after the addition of minimum and maximum
elements $\hat{0}$ and $\hat{1}$.
\begin{thm}[{Greene,~\cite{Greene1992}}]\label{thm:greene}
Suppose $P$ is a strongly planar poset. Then 
\[
    \Psi_P(\bm{x}) = \dfrac{\prod_\rho (x_{\min(\rho)} -
	x_{\max(\rho)})}{\prod_{i \lessdot j} (x_i-x_j)},
\]
where $\rho$ runs over all the bounded regions enclosed by the Hasse diagram of
$P$ and $i \lessdot j$ runs over all covering relations of the poset.
\end{thm}

In our example, computing $\Psi_P$ as a sum over the linear extensions
gives
\begin{align*}
    \Psi_P(\bm{x}) =& \sum_{w \in \cL(P)}
    w\left(\dfrac{1}{(x_1-x_2)(x_2-x_3)(x_3-x_4)(x_4-x_5)}\right) \\
    =& \dfrac{1}{(x_1-x_2)(x_2-x_3)(x_3-x_4)(x_4-x_5)} +
    \dfrac{1}{(x_1-x_2)(x_2-x_4)(x_4-x_3)(x_3-x_5)} \\
   & + \dfrac{1}{(x_2-x_1)(x_1-x_3)(x_3-x_4)(x_4-x_5)} 
    + \dfrac{1}{(x_2-x_1)(x_1-x_4)(x_4-x_3)(x_3-x_5)} \\
    &+ \dfrac{1}{(x_2-x_4)(x_4-x_1)(x_1-x_3)(x_3-x_5)} \\
    =& \dfrac{x_2-x_5}{(x_1-x_3)(x_2-x_3)(x_2-x_4)(x_3-x_5)(x_4-x_5)}.
\end{align*}
On the other hand, 
\[
\Psi_P(\bm{x}) = \dfrac{\prod_\rho (x_{\min(\rho)} -
	x_{\max(\rho)})}{\prod_{i \lessdot j} (x_i-x_j)}=\dfrac{x_2-x_5}{(x_1-x_3)(x_2-x_3)(x_2-x-4)(x_3-x_5)(x_4-x_5)}.
\]

The function $\Phi_P$ was considered in the case of forests by~\citet*{ChapotonHivertNovelliThibon2008}, who proved the following.
\begin{thm}[{Chapoton, Hivert, Novelli,
        Thibon,~\cite{ChapotonHivertNovelliThibon2008}}]\label{thm:chapotonhivert}
Suppose $P$ is a forest (i.e.\ every element is covered by at most one other
element). Then
\[
    \Phi_P(\bm{x}) = \prod_{i=1}^n \dfrac{1}{\sum_{j \leq_P i} x_j}.
\]
\end{thm}

Boussicault, \feray, Lascoux and Reiner then defined a pair of dual cones, the root cone $\Kprt =
\bR_+P$ and the weight cone $\Kpwt = \bR_+\{\chi_J \colon J \in J(P)\}$, where
$J(P)$ is the set of \emph{order ideals} $J$, subsets $J \subset P$ with the
condition that if $y \in J$ and $x <_P y$, then $x \in J$, and $\chi_J$ is the
characteristic vector of $J$. They
made two important realizations:
\begin{itemize}
    \item $\Psi_P$ and $\Phi_P$ are the Laplace transform valuations of $\Kprt$
        and $\Kpwt$, and
    \item $\Psi_P$ and $\Phi_P$ can be recovered from the Hilbert series of the
        semigroup rings 
        \[
            \Rprt = k[\Kprt\cap \bZ^n]\quad \text{and}\quad \Rpwt= k[\Kpwt \cap \bZ^n],
        \]
        respectively.
\end{itemize}
These two observations enabled Boussicault, \feray, Lascoux and Reiner
in~\cite{BoussicaultFerayLascouxReiner2012} and \feray and Reiner
in~\cite{FerayReiner2012} to use Proposition~\ref{prop:sandsigmaci} to obtain
the following two results.

\begin{thm}[{Boussicault, \feray, Lascoux, Reiner}]\label{thm:bflr}
Suppose $P$ is a strongly planar poset. Then $\Rprt$ is a complete
intersection,
\[
    \Hilb(\Rprt,\bm{x}) = \dfrac{\prod_{\rho}
        (1-x_{\min(\rho)}x_{\max(\rho)}^{-1})}{\prod_{i \lessdot_P j}
        (1-x_ix_j^{-1})}
\]
and
\[
    \Psi_P(\bm{x}) = \dfrac{\prod_{\rho}
        (x_{\min(\rho)}-x_{\max(\rho)})}{\prod_{i \lessdot_P j} (x_i-x_j)}.
\]
\end{thm}

In~\cite{FerayReiner2012}, \feray and Reiner described a class of posets
generalizing forests called
\emph{forests with duplication}, which are precisely those posets such that
$\Rpwt$ is a complete intersection, and compute $\Phi_P$ in this case. 

\begin{thm}[\feray and Reiner]\label{thm:ferayreiner}
A poset $P$ is a forest with duplication if and only if $\Rpwt$ is a complete
intersection, in which case
\[
    \Hilb(\Rpwt,\bm{x}) = \dfrac{\prod_{\{J,K\} \in \Pi(P)}
        (1-\bm{x}^J\bm{x}^k)}{\prod_{J \in \Jconn(P)} (1-\bm{x}^J)}
\]
and
\[
    \Phi_P(\bm{x}) = \dfrac{\prod_{\{J,K\} \in \Pi(P)} \langle
        \bm{x},\chi_{J+K}\rangle}{\prod_{J \in \Jconn(P)}\langle
        \bm{x},\chi_J\rangle},
\]
where $\Jconn(P)$ is the set of connected order ideals of $P$, $\Pi(P)$ is
the set of pairs of connected order ideals which intersect nontrivially ($J\cap
K \ne \emptyset$ and neither $J \subset K$ nor $K \subset J$) and $\langle
\bm{x}, \chi_J \rangle = \sum_{i \in J} x_i$.
\end{thm}

Return again to the poset in Figure~\ref{fig:typeaex}.  
One has that the root cone semigroup ring is presented as
\[
k[U_{13},U_{23},U_{24},U_{35},U_{45}]/(U_{23}U_{35}-U_{24}U_{45})
\]
with the map $U_{ij} \mapsto x_ix_j^{-1} \in k[x_1^{\pm 1},\ldots,x_n^{\pm
    1}]$,
and the weight cone semigroup ring is presented as
\[
k[U_1,U_2,U_{24},U_{123},U_{1234},U_{12345}]/(U_{123}U_{24}-U_2U_{1234})
\]
with the map $U_J \mapsto \prod_{i \in J} x_i \in k[x_1,\ldots,x_n]$.
Since both of the presentation ideals, $(U_{23}U_{35}-U_{24}U_{45})$ and
$(U_{123}U_{24}-U_2U_{1234})$, are principal, both $\Rprt$ and $\Rpwt$ are complete
intersections. Both rings are naturally $\bZ^5$-graded, and this $\bZ^5$-grading
coincides with an $\bN^5$-grading of $\Rpwt$. One then has
\[
    \Hilb(\Rprt,\bm{x}) =
    \dfrac{(1-x_2x_5^{-1})}{(1-x_1x_3^{-1})(1-x_2x_3^{-1})(1-x_2x_4^{-1})(1-x_3x_5^{-1})(1-x_4x_5^{-1})},
\]
so
\[
    \Psi_P(\bm{x}) =
    \dfrac{(x_2-x_5)}{(x_1-x_3)(x_2-x_3)(x_2-x_4)(x_3-x_5)(x_4-x_5)},.
\]
as seen above.
On the other hand,
\[
    \Hilb(\Rpwt,\bm{x}) =
    \dfrac{(1-x_1x_2^2x_3x_4)}{(1-x_1)(1-x_2)(1-x_2x_4)(1-x_1x_2x_3)(1-x_1x_2x_3x_4)(1-x_1x_2x_3x_4x_5)},
\]
so
\[
    \Phi_P(\bm{x}) =
    \dfrac{x_1+2x_2+x_3+x_4}{x_1x_2(x_2+x_4)(x_1+x_2+x_3)(x_1+x_2+x_3+x_4)(x_1+x_2+x_3+x_4+x_5)}.
\]
Computing $\Phi_P(\bm{x})$ as a sum over linear extensions, one has
\begin{multline*}
    \Phi_P(\bm{x})=\\
     \sum_{w \in \cL(P)}
    w\left(\dfrac{1}{x_1(x_1+x_2)(x_1+x_2+x_3)(x_1+x_2+x_3+x_4)(x_1+x_2+x_3+x_4+x_5)}\right)
    \\
    =
    \dfrac{1}{x_1(x_1+x_2)(x_1+x_2+x_3)(x_1+x_2+x_3+x_4)(x_1+x_2+x_3+x_4+x_5)}
    \\
    +
    \dfrac{1}{x_1(x_1+x_2)(x_1+x_2+x_4)(x_1+x_2+x_4+x_3)(x_1+x_2+x_4+x_3+x_5)}
    \\
    +
    \dfrac{1}{x_2(x_2+x_1)(x_2+x_1+x_3)(x_2+x_1+x_3+x_4)(x_2+x_1+x_4+x_3+x_5)}
    \\
    +
    \dfrac{1}{x_2(x_2+x_1)(x_2+x_4+x_1)(x_2+x_1+x_4+x_3)(x_2+x_1+x_4+x_3+x_5)}
    \\
    +
    \dfrac{1}{x_2(x_2+x_4)(x_2+x_4+x_1)(x_2+x_1+x_4+x_3)(x_2+x_4+x_1+x_3+x_5)}\\
    =
    \dfrac{x_1+2x_2+x_3+x_4}{x_1x_2(x_1+x_2+x_3)(x_2+x_4)(x_1+x_2+x_3+x_4)(x_1+x_2+x_3+x_4+x_5)},
\end{multline*}
matching the above computation.

\feray and Reiner also observed that when the $\bN^n$-grading on $\Rpwt$ is
collapsed to an $\bN$-grading, taking $\deg x_i =1$ for all $i$, one has
\[
    \Hilb(\Rpwt,q) = \dfrac{\sum_{w \in \cL(P)}
        q^{\maj(w)}}{(1-q)(1-q^2)\cdots(1-q^n)}.
\]
In particular, when $P$ is a forest, this recovers the $q$-hook formula of
Bj\"orner and Wachs~\cite{BjornerWachs1989}. For the poset in Figure~\ref{fig:typeaex}, one has
\begin{multline*}
    \Hilb(\Rpwt,q) = \dfrac{1-q^5}{(1-q)^2(1-q^2)(1-q^3)(1-q^4)(1-q^5)} \\
    = \dfrac{1+q^3+q+q^4+q^2}{(1-q)(1-q^2)(1-q^3)(1-q^4)(1-q^5)} =
    \dfrac{\sum_{w \in \cL(P)} q^{\maj(w)}}{(1-q)(1-q^2)(1-q^3)(1-q^4)(1-q^5)}.
    \end{multline*}

\section{The signed poset story}\label{subsec:signedposetstory}

The story for signed posets is really quite similar to that for posets and we
will generalize all of the above results to this context. Signed
posets will be defined formally in Definition~\ref{def:signedposet}. They come in pairs, $P \subset \Bn$ and $\Pc \subset \Cn$, since $\Bn$ and
$\Cn$ are dual root systems. Inspecting the definitions of $\Psi_P$ and
$\Phi_P$ in type A, one sees that the denominators of the fraction on which $w$ acts
correspond to a choice of simple roots and the corresponding fundamental
dominant coweights,
respectively. This observation leads one to define
\begin{align*}
    \Psi_P(\bm{x}) &= \sum_{w \in \cL(P)}
    w\left(\dfrac{1}{(x_1-x_2)(x_2-x_3)\cdots(x_{n-1}-x_n)x_n}\right) 
    \text{and} \\
    \Psi^\smvee_{\Pc}(\bm{x}) &= \sum_{w \in \cL(\Pc)}
    w\left(\dfrac{1}{(x_1-x_2)(x_2-x_3)\cdots(x_{n-1}-x_n)2x_n}\right)
\end{align*}
to parallel $\Psi$ in type A and
\begin{align*}
    \Phi_P(\bm{x}) &= \sum_{w \in \cL(P)}
    w\left(\dfrac{1}{x_1(x_1+x_2)\cdots(x_1+\cdots+x_n)}\right) 
    \text{and} \\
    \Phi^\smvee_{\Pc}(\bm{x}) &= \sum_{w \in \cL(\Pc)}
    w\left(\dfrac{1}{x_1(x_1+x_2)+\cdots+(x_1+\cdots+x_{n-1})(\frac{1}{2}x_1+\cdots+\frac{1}{2}x_n)}\right)
\end{align*}
to parallel $\Phi$ in type $A$.

One can likewise define root and weight cones for signed posets (see
Section~\ref{sec:dualcones}) and use the corresponding semigroup rings to
evaluate $\Psi$ and $\Phi$ in some cases where the semigroup ring is a complete intersection.

\begin{figure}[htbp]
    \begin{center}
        \begin{tikzpicture}
            \node[vertex] (4) at (0,0) [label=left:$4$] {};
            \node[vertex] (3) at (-1,1)[label=left:$3$] {};
            \node[vertex] (1) at (1,1) [label=right:$1$] {};
            \node[vertex] (2) at (0,2) [label=left:$2$] {};
            \node[vertex] (-2) at (2,2)[label=right:$-2$] {};
            \node[vertex] (-1) at (1,3)[label=left:$-1$] {};
            \node[vertex] (-3) at (3,3) [label=right:$-3$] {};
            \node[vertex] (-4) at (2,4) [label=left:$-4$] {};
            \node at (0,1) {$\rho$};
            \node at (1,2) {$\sigma,\iota(\sigma)$};
            \node at (2,3) {$\iota(\rho)$};
            \draw (4)--(3)--(2)--(-1)--(-4)--(-3)--(-2)--(1)--(4);
            \draw (1)--(2);
            \draw(-2)--(-1);
        \end{tikzpicture}
    \end{center}
    \caption{A signed poset}
    \label{fig:introsignedex1}
\end{figure}
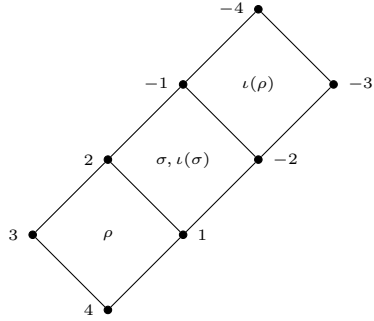

By way of example, consider Figure~\ref{fig:introsignedex1}. As will be
explained in Section~\ref{sec:representingposets}, it is a representation of a
signed poset $\Pc \subset \Cn$, with $\Pc =
\{+e_1-e_2,+e_1+e_2,+e_3-e_2,+e_4-e_3,+e_4-e_1,+e_4-e_2,+e_2+e_4,+e_3+e_4,+e_1+e_3,+e_1+e_4,+2e_1,+2e_4\}$. Note that there is an involutive poset
anti-automorphism, $\iota$, exchanging $i$
and $-i$. The poset is strongly planar (this will mean $\Pc$ is strongly planar)
and the three regions it encloses fall into two orbits under the involution:
$\{\rho,\iota(\rho)\}$ and
$\{\sigma=\iota(\sigma)\}$. Theorem~\ref{thm:csspci} will show that this implies the following complete intersection
presentation for $\Rrt{\Pc}$:
\[
    \Rrt{\Pc} \cong
    k[U_{12},U_{1\m2},U_{\m14},U_{\m23},U_{\m34}]/(U_{\m14}U_{1\m2}-U_{\m34}U_{\m23}),
\]
with the map $U_{ij}\mapsto x_{|i|}^{\sgn(i)}x_{|j|}^{\sgn(j)} \in k[x_1^{\pm
    1},\ldots,x_n^{\pm 1}]$, where $\sgn(i) = 1$ and $\sgn(\m i)=-1$, i.e.
\begin{align*}
U_{12} \mapsto x_1x_2 \\
U_{1\m2} \mapsto x_1x_2^{-1} \\
U_{\m1 4} \mapsto x_1^{-1}x_4 \\
U_{\m23} \mapsto x_2^{-1}x_3 \\
U_{\m34} \mapsto x_3^{-1}x_4.
\end{align*}
Suppose the polynomial ring is $\bZ^4$-graded with
\begin{align*}
    \deg U_{12} & = (1,1,0,0) \\
    \deg U_{1\m2} &= (1,-1,0,0)\\
    \deg U_{\m14} &= (-1,0,0,1) \\
    \deg U_{\m23} &= (0,-1,1,0) \\
    \deg U_{\m34} &= (0,0,-1,1).
\end{align*}
Then
\[
    \Hilb(\Rrt{\Pc},\bm{x}) =
        \dfrac{1-x_2^{-1}x_4}{(1-x_1x_2)(1-x_1x_2^{-1})(1-x_1^{-1}x_4)(1-x_2^{-1}x_3)(1-x_3^{-1}x_4)}.
\]
Looking at the numerator, one sees that it corresponds to
$1-x_{|\max(\rho)|}^{-\sgn(\max(\rho))}x_{|\min(\rho)|}^{\sgn(\min(\rho))}$, and
the terms of the denominator correspond to (pairs of) covering relations in the
poset of Figure~\ref{fig:introsignedex1},
paralleling Theorem~\ref{thm:bflr}. This is Theorem~\ref{thm:csspci}, and
Section~\ref{subsec:computingpsi} will explain the computation of $\Psi$ for certain
signed posets in types B and C.

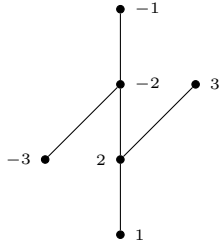
\begin{figure}[htbp]
    \begin{center}
        \begin{tikzpicture}
            \node[vertex] (1) at (0,0) [label=right:$1$] {};
            \node[vertex](2) at (0,1) [label=left:$2$] {};
            \node[vertex] (3) at (1,2) [label=right:$3$] {};
            \node[vertex] (-3) at (-1,1) [label=left:$-3$] {};
            \node[vertex] (-2) at (0,2) [label=right:$-2$] {};
            \node[vertex] (-1) at (0,3) [label=right:$-1$] {};
            \draw (1)--(2)--(-2)--(-1);
            \draw (-3)--(-2);
            \draw (2)--(3);
        \end{tikzpicture}
    \end{center}
    \caption{A signed poset}
    \label{fig:signedintroex2}
\end{figure}
As in type A, the weight cone semigroup will allow one to calculate $\Phi$, but
there is a wrinkle. Consider the poset shown in
Figure~\ref{fig:signedintroex2}. The weight cone semigroup ring $\Rpwt$ is
presented as
\[
    \Rpwt \cong k[U_1,U_{12}U_{123},U_{\m3}]/(U_{123}U_{\m3}-U_{12}),
\]
via the map $U_J \mapsto \bm{x}^J = \prod_{j \in J} x_{|j|}^{\sgn(j)} \in
k[x_1^{\pm 1},\ldots,x_n^{\pm n}]$, i.e.
\begin{align*}
U_1 & \mapsto x_1 \\
U_{12} &\mapsto x_1x_2 \\
U_{123} &\mapsto x_1x_2x_3 \\
U_{\m3} & \mapsto x_3^{-1}.
\end{align*}
When the polynomial ring is graded by $\bZ^3$ with
\begin{align*}
    \deg U_1 &= (1,0,0) \\
    \deg U_{12} &= (1,1,0) \\
    \deg U_{123} &= (1,1,1) \\
    \deg U_{\m3} &= (0,0,-1),
\end{align*}
the Hilbert series is
\[
    \Hilb(\Rpwt,\bm{x}) =
    \dfrac{1-x_1x_2}{(1-x_1)(1-x_1x_2)(1-x_1x_2x_3)(1-x_3^{-1})}.
\]
The ideals of the signed poset are $\{1\},\{1,2\},\{1,2,3\},\{-3\}$ (see
Section~\ref{sec:ideals}), corresponding to each term of the denominator of the
Hilbert series. The numerator corresponds to the one pair of ideals which
intersect nontrivially (to be defined in Section~\ref{sec:wttoricideal}),
$\{1,2,3\}$ and $\{-3\}$, which combine to form $\{1,2\}$. While this parallels
Theorem~\ref{thm:ferayreiner}, it turns out that the ``correct'' thing to do is
to consider an initial ideal, as doing so allows one to migrate through various
specializations of the
grading---the toric ideal from Theorem~\ref{thm:typebpresentation} is not
necessarily homogeneous in gradings other than the one by $\bZ^n$.

Chapter~\ref{ch:background} summarizes necessary background on cones,
semigroups and root systems. Further background information will be introduced
as needed. Chapter~\ref{ch:signedposets} reviews the
definition of signed posets, as well as ideals, $P$-partitions and linear
extensions of signed posets. Section~\ref{sec:representingposets} explains how
signed posets can be represented as posets and oriented signed graphs.
Fischer's representation of a signed poset $P \subset \Bn$ from~\cite{Fischer1993} is
modified for purposes of Chapter~\ref{ch:rootcone}. Section~\ref{sec:dualcones}
defines the root and weight cones of a signed poset and gives dual
characterizations for when they are each pointed, full-dimensional and
simplicial.

Chapters~\ref{ch:rootcone} and~\ref{ch:weightcone} proceed independently of one
another. Chapter~\ref{ch:rootcone} discusses the root cone semigroup.
Section~\ref{sec:signedgraphtoricideals} discusses the toric ideal of the
semigroup associated to a signed graph (Theorem~\ref{thm:toricideal}) and uses
that result to describe generating sets for the toric ideals of the root cone
semigroup, posets/digraphs and graphs. Section~\ref{sec:rprtci} describes a
situation when $\Rprt$ is a complete intersection and
Section~\ref{subsec:computingpsi} computes $\Psi_P$.

Chapter~\ref{ch:weightcone} discusses the weight cone semigroup in type B.
Section~\ref{thm:typebpresentation} gives a presentation for the weight cone
semigroup ring. Section~\ref{sec:computingsums} turns its attention to
computing $\Phi_P$ and
\[
    \sum_{w \in \cL(P)}q^{\maj(w)}.
\]
This is again a question of complete intersections, but not of the weight cone
semigroup ring. Instead, modding out by an initial ideal of the toric ideal
preserves the Hilbert series, but allows a presentation involving an
$\bN$-graded homogeneous
ideal. Signed posets with the property that modding out by the initial ideal of
the toric ideal gives a complete intersection will be
called \emph{initial complete intersections}.
Section~\ref{subsec:characterisinginitci} characterizes these signed posets as
those avoiding certain induced subposets, while
Section~\ref{subsec:constructinginitci} explains how these posets can be
constructed via any sequence of three moves.

Chapter~\ref{ch:unfinished} discusses a few loose ends: understanding two
triangulations of the weight cone, the type C weight cone and 
characterizing when $\Rprt$ is a complete intersection.

\section{Summary of the Main Results}\label{sec:resultsummary}
\begin{itemize}
    \item Theorem~\ref{thm:toricideal} giving generators for the toric ideal of
        affine semigroup rings associated to signed posets and, more generally,
        oriented signed graphs. These are the subrings of Laurent polynomials
        generated by monomials of the form $t_i^{\pm 1},t_i^{\pm 2},t_i^{\pm
            1}t_j^{\pm 1}$. This result appears to be new and generalizes work
        of~\citet*{BoussicaultFerayLascouxReiner2012},~\citet*{GitlerReyesVillarreal2010}
        and~\citet{Villarreal1995}.
    \item Theorem~\ref{thm:csspci} showing that strongly planar signed posets
        $P$ have rings $\Rprt$, $\Rrt{\Pc}$ which are complete intersections,
        with Corollary~\ref{cor:computingpsi} showing how to compute $\Psi_P$
        in this case.
    \item Theorem~\ref{thm:typebpresentation} giving a Gr\"obner basis for the
        toric ideal of $\Rpwt$ in type B, generalizing~\citet*[Proposition
        6.4]{FerayReiner2012}.
    \item Theorems~\ref{thm:excludedposets} and~\ref{thm:construction} giving
        two characterizations (via forbidden subposets versus via inductive
        constructions) of the situation where this Gr\"obner basis gives a
        complete intersection presentation for its initial ideal,
        generalizing~\citet*[Theorems 10.5, 10.6]{FerayReiner2012}.
\end{itemize}

\section{A remark on notation}\label{sec:notation}
\begin{itemize}
    \item To save space and improve readability, negative numbers will
        sometimes be written as $\m1$ rather than $-1$.
    \item The parentheses and brackets will sometimes be dropped from vectors
        and sets in figures, e.g.\ $100$ instead of $(1,0,0)$.
    \item There will be many vertices of graphs that come in pairs $i,-i$ for
        $i \in [n]$ and
        polynomial rings where the variables are indexed by $[n]$. If $v$ is a
        variable, understand $x_v$ to mean the variable $x_{|v|}$.
    \item The characteristic vector of a set $J \subset \{\pm 1, \ldots, \pm
        n\}$ such that $J$ does not contain both $i$ and $-i$ for any $i$ is the vector $\chi_J$ whose coordinates are defined by
        \[
            (\chi_J)_i = \begin{cases}
1 & \text{if }i \in J \\
-1 & \text{if } -i \in J \\
0 & \text{else}
            \end{cases}
        \]
    \item If $J \subset \{\pm1 ,\ldots, \pm n\}$ is such that there is no $i$
        such that both $i, -i \in J$, then 
        \[
            \langle \bm{x},J \rangle = \langle \bm{x},\chi_J\rangle =\sum_i
            (\chi_J)_i x_i.
        \]
\end{itemize}

\chapter{Some Background}\label{ch:background}

This chapter reviews requisite material on polyhedral cones, semigroups and
root systems. Chapter~\ref{ch:signedposets} will fit these ideas together in
the discussion of signed posets. Further background material will be introduced
as needed.

\section{Polyhedral Cones}\label{sec:cones}

Associated to a signed poset will be two polyhedral cones, the root cone and
the weight cone. Consequently, this section reviews some basic facts about
polyhedral cones. One can also refer to~\citet{Fulton1993} for further
information on polyhedral cones.

\begin{defn}\label{def:cone}
A \emph{polyhedral cone} $K \subset \bR^n$ is the intersection of finitely many half-spaces
determined by hyperplanes $H_\alpha = \{x\colon \langle x,\alpha \rangle =
0\}$. The $H_\alpha$ are the \emph{supporting hyperplanes} of $K$.
Alternatively, a cone may be characterized as the positive span of some finite
collection of vectors, $W$, with the positive span being denoted $\bR_+W$.
A set of \emph{extreme rays} of a cone $K$ are vectors comprising a set $W$,
minimal with respect to inclusion, such that $K = \bR_+W$. 
\end{defn}

There are a number of properties that can be used to describe a cone.

\begin{defn}\label{def:coneprops}
The \emph{dimension} of a
cone $K$, denoted $\dim K$ is the dimension of the vector space spanned by its
extreme rays, and $K$ is said to be
\emph{full-dimensional} if $K \subset \bR^n$ and $\dim K = n$. A cone is said to be
\emph{pointed} when it does not contain a line. It is said to be
\emph{rational} with respect to a full-rank lattice $L \subset \bR^n$ when the $\alpha$ determining the
supporting hyperplanes lie in $L$. It is \emph{simplicial} if
the extreme rays are linearly independent. A (simplicial) cone is \emph{unimodular} with
respect to a lattice if the primitive vectors along the extreme rays form a
basis of the lattice.
\end{defn}

For example, consider the cone in Figure~\ref{fig:coneex}.
\begin{figure}[htbp]
\begin{center}
    \tdplotsetmaincoords{70}{100}
    \begin{tikzpicture}[tdplot_main_coords,scale=2]
        \coordinate (O) at (0,0,0);
\draw[thick,->] (0,0,0) -- (2,0,0) node[anchor=north east]{$x$};
\draw[thick,->] (0,0,0) -- (0,2,0) node[anchor=north west]{$y$};
        \coordinate (1) at (1,-1,1);
        \coordinate (2) at (1,1,1);
        \coordinate (3) at (-1,1,1);
        \coordinate (4) at (-1,-1,1);
        \fill[gray!20] (O)--(1)--(4)--cycle;
        \fill[gray!20] (O)--(4)--(3)--cycle;
        \fill[gray!20] (O)--(2)--(3)--cycle;
\draw[thick,->] (0,0,0) -- (0,0,2) node[anchor=south]{$z$};
        \draw[blue, ->] (0,0,0)--(1,-1,1) node[anchor=north
		east,black]{$(1,-1,1)$};
        \draw[blue, ->] (0,0,0)--(1,1,1) node[anchor=
		west,black]{$(1,1,1)$};
        \draw[blue,->] (0,0,0)--(-1,1,1) node[anchor=west,black]{$(-1,1,1)$};
        \draw[blue,->] (0,0,0)--(-1,-1,1)node[anchor=south east,black]{$(-1,-1,1)$};
        \fill[gray!20] (O)--(1)--(2)--cycle;
    \end{tikzpicture}
\end{center}
\caption{Cone spanned by $(1,1,1)$, $(-1,1,1)$, $(-1,-1,1)$ and $(1,-1,1)$}
\label{fig:coneex}
\end{figure}
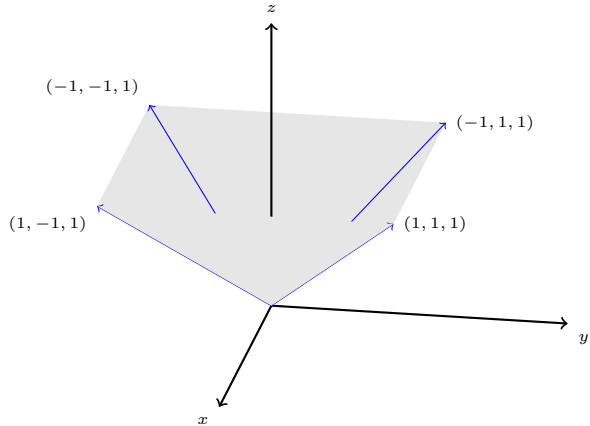
The extreme rays are $(1,1,1)$, $(-1,1,1)$, $(-1,-1,1)$ and $(1,-1,1)$ and the cone is
pointed and rational. Since it is a cone
in $\bR^3$ with four extreme rays, it cannot be simplicial. Likewise, it cannot be
unimodular with respect to the lattice $\bZ^3$.

\begin{defn}\label{def:dualcone}
If $K$ is a cone, its \emph{dual} or \emph{polar cone} is 
\[
K^* = \{x \in \bR^n
\colon \langle x, a\rangle \geq 0\ \forall a \in K\}.
\]
\end{defn}
The dual is sometimes defined
with a $\leq$ rather than a $\geq$, but the appeal of this choice of $\geq$
will become clear in Section~\ref{sec:dualcones} and one can insert a minus
sign where appropriate when the $\leq$ definition is more apt. The following
 facts about cones and their duals are well known.
\begin{itemize}
    \item A cone is the dual of its dual, i.e.\ $K^{**} = K$.
    \item A cone $K$ is full-dimensional if and only if its dual $K^*$ is
        pointed.
    \item A cone $K$ is simplicial if and only if its dual $K^*$ is also
        simplicial.
\end{itemize}

\subsection{Rational Functions and Cones}\label{subsec:rationalfunctionscones}

Recall that the goal is to prove some rational function identities for signed
posets. A key step will be to understand the functions $\Psi$ and $\Phi$ as
valuations of cones.

In~\cite{Barvinok1993}, Barvinok considers an exponential integral and
exponential sum over a polyhedral cone, $K$:
\[
    \int_K e^{-\langle x, u\rangle}\,du \quad \text{and} \quad \sum_{K \cap
        \bZ^n} e^{-\langle x,u\rangle},
\]
where $x\in \bR^n$ and $du$ is Lebesgue measure on $\bR^n$. Each gives a
rational function, in $x_i$ and $X_i=e^{x_i}$, respectively (see
Propositions~\ref{prop:expintrational} and~\ref{prop:expsumrational}).

\begin{prop}[{\cite[Proposition 2.4]{Barvinok1993}}]\label{prop:expintrational}
    Let $K$ be a pointed, full-dimensional polyhedral cone in $\bR^n$. Then,
    for all $x \in \Int K^*$, the integral
    \[
        \int_K e^{-\langle x, u \rangle}\,du
    \]
    exists and determines a function $s(K;x)$, which is rational in $x \in
    \bC^n$. 

    Furthermore, if $K$ is not pointed or if $K$ is not full-dimensional, $s(K;x)
    = 0 $.
\end{prop}

For example, consider the cone in Figure~\ref{fig:coneex} once again. To compute the integral, it
is easiest to split $K$ into two simplicial cones $K_1$ and $K_2$:
\begin{align*}
    K_1 &= \text{span}_{\bR_+}\{(1,-1,1),(-1,-1,1),(-1,1,1)\} \\
    K_2 &= \text{span}_{\bR_+}\{(-1,1,1),(1,-1,1),(1,1,1)\}.
\end{align*}
Since $K_1$ and $K_2$ are both simplicial, every
element of each cone can be written uniquely as a linear combination of the
extreme rays. Then, if one denotes the extreme rays of $K_1$ by $u_1,u_2,u_3$
and the extreme rays of $K_2$ by $v_1,v_2,v_3$, for a fixed $x \in \Int K^*$, one can compute the integral as
follows.
\[
    \int_K e^{-\langle x, u\rangle}\, du = \int_{K_1} e^{-\langle x, u \rangle}\,
	du + \int_{K_2} e^{-\langle x,u \rangle}\, du - \int_{K_1 \cap K_2}e^{-\langle x,u\rangle}\,du.
\]
The last integral is $0$ since $K_1 \cap K_2$ is not full-dimensional.
Then
\begin{align*}
    \int_{K_1} e^{-\langle x, u \rangle}\, du &= \int_0^\infty \int_0^\infty
    \int_0^\infty e^{-\langle x, a_1u_1 + a_2u_2 +
        a_3u_3\rangle}\,da_3da_2da_1 \\
    &= \lim_{b_1,b_2,b_3 \to \infty} \int_0^{b_1}\int_0^{b_2}\int_0^{b_3}
    e^{-\langle x,a_1u_1\rangle}e^{-\langle x,a_2u_2\rangle}e^{-\langle
        x,a_3u_3\rangle}\,da_3da_2da_1 \\
    &= \lim_{b_1,b_2,b_3 \to \infty} \int_0^{b_1}e^{-\langle x,a_1u_1\rangle}
    \int_0^{b_2} e^{-\langle x,a_2u_2\rangle} \int_0^{b_3}e^{-a_3\langle x,
        u_3\rangle}\,da_3da_2da_1 \\
    &=\lim_{b_1,b_2,b_3 \to \infty} \int_0^{b_1}e^{-\langle x,a_1u_1\rangle}
    \int_0^{b_2} e^{-\langle x,a_2u_2\rangle} \left[ \dfrac{-1}{\langle
            x,u_3\rangle} e^{-a_3\langle x, u_3\rangle}\right]_0^{b_3}\,da_2da_1
    \\
    &= \lim_{b_1,b_2,b_3 \to \infty} \left[\dfrac{-1}{\langle x,u_1\rangle}
        e^{-a_1\langle x,u_1\rangle}\right]_0^{b_1}\left[\dfrac{-1}{\langle
            x,u_2\rangle} e^{-a_2\langle
            x,u_2\rangle}\right]_0^{b_2}\left[\dfrac{-1}{\langle
            x,u_3\rangle}e^{-a_3\langle x,u_3\rangle}\right]_0^{b_3}
\end{align*}
Since $x \in \Int K^*$, one knows that $\langle x, u_1\rangle, \langle x,
u_2\rangle, \langle x,u_3\rangle \geq 0$. Therefore, 
\[
    \lim_{b_i \to \infty} e^{-b_i\langle x,u_i\rangle} = 0
\]
for $i = 1,2,3$. Thus, one has
\[
\int_{K_1} e^{-\langle x, u \rangle}\, du = \dfrac{1}{\langle
    x,u_1\rangle}\dfrac{1}{\langle x,u_2\rangle}\dfrac{1}{\langle x,u_3\rangle}
	=
	\dfrac{1}{(-x_1+x_2-x_3)}\dfrac{1}{(x_1+x_2-x_3)}\dfrac{1}{(x_1-x_2-x_3)}.
\]
A similar calculation with $K_2$ gives
\[
    \int_{K_2} e^{-\langle x, u \rangle}\, du =
	\dfrac{1}{(x_1-x_2-x_3)}\dfrac{1}{(-x_1+x_2-x_3)}\dfrac{1}{(-x_1-x_2-x_3)}.
\]
One then has
\begin{multline*}
     \int_K e^{-\langle x, u\rangle}\, du =  
	\dfrac{1}{(-x_1+x_2-x_3)}\dfrac{1}{(x_1+x_2-x_3)}\dfrac{1}{(x_1-x_2-x_3)}\\ +
	\dfrac{1}{(x_1-x_2-x_3)}\dfrac{1}{(-x_1+x_2-x_3)}\dfrac{1}{(-x_1-x_2-x_3)}
	 \\
	= \dfrac{1}{(x_1-x_2-x_3)(-x_1+x_2-x_3)}\left(\dfrac{1}{-x_1-x_2-x_3}+\dfrac{1}{x_1+x_2-x_3}\right)
\end{multline*}

\begin{prop}[{\citet[(2.1)]{Barvinok1993}}]\label{prop:barvinok}
Suppose $K$ is a simplicial cone whose extreme rays are $\{u_1,\ldots,u_n\}$.
Then
\[
s(K;\bm{x}) = |u_1\wedge \cdots \wedge u_n|\prod_{i=1}^n \langle
\bm{x},u_i\rangle^{-1},
\]
where $|u_1\wedge \cdots \wedge u_n|$ is the volume of the parallelopiped
formed by the $u_i$.
\end{prop}

Discussion of the exponential sum will be postponed until the
Section~\ref{sec:semigroups}.

\section{Semigroups}\label{sec:semigroups}

The next important concept is that of the semigroup.

\begin{defn}\label{def:semigroup}
A \emph{semigroup} is a set together with an associative binary operation. 
An \emph{affine semigroup} is a semigroup which is isomorphic to a
finitely-generated subsemigroup
of $\bZ^n$ under addition.
\end{defn}

In general, unlike monoids, semigroups need not have an identity element. However,
the semigroups of interest here will have an identity and the binary operation,
$+$, will be commutative. The semigroups considered in Chapters~\ref{ch:rootcone}
and~\ref{ch:weightcone} are affine semigroups as a consequence of Gordan's
Lemma (see~\cite[Proposition 5.14]{EneHerzog2012}).

\begin{thm}[Gordan's Lemma]\label{thm:gordan}
Suppose $K$ is a rational polyhedral cone in $\bR^n$ and $A$ is a subgroup of
$\bZ^n$. Then $K \cap A$ is an affine semigroup.
\end{thm}

As an example, suppose $K$ is the cone in Figure~\ref{fig:gordanex} whose extreme rays are $(1,0)$ and
$(0,1)$ and suppose $A$ is $\bZ(1,1) \subset \bZ^2$.
\begin{figure}[htbp]
\begin{center}
    \begin{tikzpicture}
        \draw[thin,gray] (-3,0)--(3,0);
        \draw[thin,gray] (0,-3)--(0,3);
        \fill[gray!20] (0,0)--(3,0) arc (0:90:3) -- cycle;        
        \foreach \x in {0,1,2}
        {
            \node[vertex] at (\x,\x) {};
            \node[vertex] at (-\x,-\x) {};
        }
    \end{tikzpicture}
\end{center}
\caption{The cone spanned by $(1,0)$ and $(0,1)$ intersected with $\bZ^2$}
\label{fig:gordanex}
\end{figure}
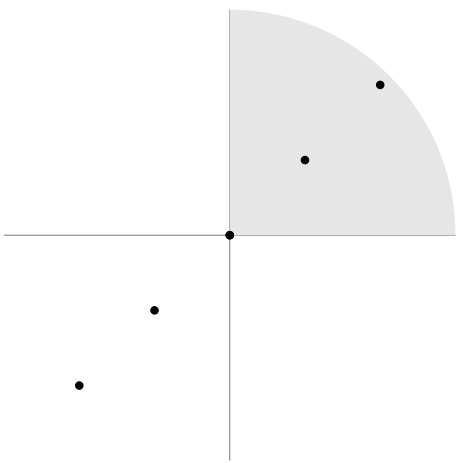
Then $K \cap A=\{(i,i) \colon i \in \bZ_{\geq 0}\}$.

\subsection{Semigroup Rings and Toric Ideals}\label{subsec:semigrouprings}

One can move from the semigroup world to the somewhat more familiar world of
rings by considering semigroup rings.

\begin{defn}\label{def:semigroupring}
Suppose $A \subset \bZ^n$ is an affine semigroup generated by
$\{a_1,\ldots,a_m\}$. Let $L =
k[t_1^{\pm1},\ldots,t_n^{\pm1}]$ be the Laurent polynomial ring. The
\emph{semigroup ring} of $A$ is the subring of $L$ spanned by
$\bm{t}^{a_i}$, $i=1,\ldots,m$, where $\bm{t}^{a_i} =
t_1^{a_{i1}}t_2^{a_{i2}}\cdots
    t_n^{a_{in}}$ when $a_i = (a_{i1},a_{i2},\ldots,a_{in})$. Denote the
    semigroup ring of $A$ by $k[A]$.
\end{defn}

    It is often more convenient to work with a presentation of the semigroup
    ring as a quotient by the toric ideal than to view the semigroup ring as a
    subring of the Laurent polynomial ring. 
	
\begin{defn}\label{def:toricideal}
	Let $S = k[x_1,\ldots,x_m]$ be a
    polynomial ring in $m$ variables and $A$ a semigroup generated by
    $\{a_1,\ldots,a_m\}$. Define a map $\phi \colon S \to L$ by
    \[
        \phi(x_i) = \bm{t}^{a_i}.
    \]
    One then has that $k[A] \cong S/\ker \phi$. The kernel of $\phi$ is the ideal
    known as the \emph{toric ideal} of $A$, denoted $I_A$.
\end{defn}

\begin{defn}\label{def:binomialideal}
    A \emph{binomial} is a polynomial which is a difference of two monomials
    and a \emph{binomial ideal} is an ideal that is generated by binomials.
\end{defn}
    Comprehensive discussion of binomial ideals can be found
    in~\citet{EisenbudSturmfels1994}. 
	
The following definition and notation is useful for understanding the
generators of toric ideals.
\begin{defn}
	If $u \in \bZ^n$, define two vectors, the positive and
	negative supports of $u$ as follows. The \emph{positive support} $u^+$ is given by
    \[
        u^+_i = \begin{cases}
            u_i & \text{if $u_i> 0$} \\
            0 & \text{else}
        \end{cases}
    \]
	Likewise, the \emph{negative support} $u^-$ is
    \[
        u^-_i = \begin{cases}
            u_i & \text{if $u_i < 0$} \\
            0 & \text{else}
        \end{cases}
    \]
    Then to each $u \in \bZ^n$, one can associate a binomial $\bm{x}^{u^+}
    - \bm{x}^{u^-}$.
	\end{defn}

	The following proposition is well known and the
    proof may be found in~\citet[Lemma 4.1]{Sturmfels1996a} or~\citet[Lemma
    5.2]{EneHerzog2012}.

\begin{prop}\label{prop:toricidealsbinomial}
Suppose $A$ is an affine semigroup and $I_A$ its toric ideal. Then $I_A$ is a
binomial ideal generated by $\bm{x}^{u^+}-\bm{x}^{u^-}$ for $u \in \ker
M$ where $M$ is the matrix whose columns are the generators of $A$. 
\end{prop}

As an example, consider the semigroup whose generators are
$(1,-1,1)$, $(-1,-1,1)$, $(-1,1,1)$, $(1,1,1)$. (This is the semigroup obtained by
intersecting the cone of Figure~\ref{fig:coneex} with $\bZ^3$.) The matrix $M$ is then
\[
    \begin{pmatrix*}[r]
    1 & -1 & -1 & 1 \\
	-1 & -1 & 1 & 1 \\
	1 & 1 & 1 & 1 \\
	\end{pmatrix*}
\]
The kernel of $M$ is one-dimensional and spanned by $(1,-1,1,-1)^\top$. Let $S =
k[x_1,x_2,x_3,x_4]$ and $\phi \colon S \to k[t_1^{\pm 1},t_2^{\pm 1},t_3^{\pm 1}]$ be 
defined by
\begin{align*}
		\phi(x_1) &= t_1t_2^{-1}t_3\\
		\phi(x_2) &= t_1^{-1}t_2^{-1}t_3 \\
		\phi(x_3) &= t_1^{-1}t_2t_3 \\
    \phi(x_4) &= t_1t_2t_3 
\end{align*}
The kernel of $\phi$ is then the principal ideal $(x_1x_3-x_2x_4)$. Certainly this
ideal is contained in the kernel. One knows from
Proposition~\ref{prop:toricidealsbinomial} that $\ker \phi =
(\bm{x}^{u^+}-\bm{x}^{u^-})$ for $u = m(1,-1,1,-1)^\top \in \ker M \cap \bN^n$, with $m
\in \bZ_{>0}$, so one
shows that $\bm{x}^{u^+}-\bm{x}^{u^-} \in (x_1x_3-x_2x_4)$. This is
straightforward:
\begin{multline*}
x_1^mx_3^m-x_2^mx_4^m = \\
(x_1x_3-x_2x_4)((x_1x_3)^{m-1}+(x_1x_3)^{m-2}x_2x_4+\cdots +
x_1x_3(x_2x_4)^{m-2}+(x_2x_4)^{m-1}) \\ \in (x_1x_3-x_2x_4),
\end{multline*}
so $(x_1x_3-x_2x_4) = \ker \phi$.

\subsection{A Rational Function}\label{subsec:semigrouprationalfunction}

The exponential sum
\[
    \sum_{K \cap \bZ^n} e^{-\langle x, u\rangle}    
\]
discussed in~\cite{Barvinok1993} and mentioned in
Section~\ref{subsec:rationalfunctionscones} is then a sum over the elements of
an affine semigroup. The following proposition is the analogue of
Proposition~\ref{prop:expintrational}.

\begin{prop}[{\citet[Proposition 4.4]{Barvinok1993}}]\label{prop:expsumrational}
Suppose $K$ is a pointed rational polyhedral cone in $\bR^n$. Then for $x \in
\Int K^*$, the series
\[
    \sum_{K \cap \bZ^n} e^{-\langle x, u \rangle}
\]
converges and determines a function $\sigma(K;x)$ which is rational in $X_i =
e^{x_i}$, $i = 1,\ldots,n$. Furthermore, there exists a representation
\[
    \sigma(K;x) = \dfrac{P(x)}{\prod_{i=1}^m (1-e^{-\langle x, u_i\rangle})},
\]
where $P(x)$ is a Laurent polynomial in $X_i$ and the $u_i$ are the extreme
rays of $K$. If $K$ is not pointed, $\sigma(K;x)
= 0$.
\end{prop}

Moreover, we can understand this sum as the Hilbert series of the semigroup
ring. 

\begin{defn}\label{def:hilbertseries}
Suppose $k$ is a field and $R$ is a finitely-generated $k$-algebra. Suppose
further that $R$ is graded by some index set $I$ equipped with an addition, i.e.\ $R  = \bigoplus_{\alpha
    \in I} R_\alpha$ with $R_\alpha R_\beta \subset R_{\alpha + \beta}$. Its
\emph{Hilbert series} is
\[
    \Hilb(R,x) = \sum_{\alpha \in I} \dim(R_\alpha)x^\alpha = \sum_{r \in R}
    x^{\deg r}.
\]
\end{defn}

A natural grading of the semigroup ring of an affine semigroup $A$ is the $\bZ^n$-grading where
$\deg(\bm{x}^a) = (a_1,\ldots,a_n)$ for $a \in A$. Then one has that
\[
    \Hilb(k[A],\bm{x}) = \sum_{a \in A} x_1^{a_1}x_2^{a_2}\cdots x_n^{a_n}.
\]
Note that we are abusing notation here and using $x_i$ in both the presentation
of the semigroup ring and the Hilbert series, though these are actually
different $x_i$.

Moreover, if one supposes that $A$ arose from Gordan's Lemma as the
intersection of a pointed rational cone $K$ with $\bZ^n$, one has that 
\[
    \Hilb(k[A],\bm{x}) = \sum_{a \in A} x_1^{a_1}x_2^{a_2}\cdots x_n^{a_n}
    = \sum_{a \in K\cap \bZ^n} x_1^{a_1}x_2^{a_2}\cdots x_n^{a_n},
\]
and this last sum is precisely the sum $\sum_{K \cap \bZ^n} e^{-\langle X, u
    \rangle}$ after the change of coordinates $x_i = e^{-X_i}$.

One can compute the Hilbert series of a
graded ring from a minimal finite free resolution courtesy of the following
well-known fact (see, for example,~\citet[Theorem I.11.3]{Stanley1996}).
\begin{prop}\label{prop:hilbfromffr}
Suppose $M$ is a graded $A$-module and 
\[
0 \to F_t \to F_{t-1} \to \cdots \to F_{1} \to F_0 \to M \to 0
\]
 is a finite free resolution of $M$ by graded $A$-modules. Then
 \[
     \Hilb(M,x) = \sum_{i=0}^t (-1)^i \Hilb(F_i,x).
 \]
\end{prop}

Consider the cone $K$ having extreme rays $(1,0,0),(1,1,0),(1,0,1),(1,1,1)$.
In fact, $K$ is the weight cone of a poset on three elements,
so~\cite[Proposition 7.1]{BoussicaultFerayLascouxReiner2012} gives that these
vectors also (minimally) generate the semigroup $K \cap \bZ^n$, which will be
denoted by $A$. Let $S= k[a,b,c,d]$ with $\deg(a) = (1,0,0)$,
$\deg(b)=(1,1,0)$, $\deg(c) = (1,0,1)$, $\deg(d) = (1,1,1)$. The semigroup ring
$k[A]$ is isomorphic to $S/(bc-ad)$ (see~\cite[Proposition 6.4]{FerayReiner2012}).
One then has the following (minimal) finite free resolution for $S/I$:
\[
 0 \to S(-(2,1,1)) \stackrel{\phi}{\to}S \to S/I \to 0
\]
where $\phi$ is the map sending $1$ to $bc-ad$, and $S(-(2,1,1))$ is $S$ with
the grading shifted so that $1 \in S$ has degree $(2,1,1)$. One can then compute the
Hilbert series of $S/I$ (i.e.\ of $k[A]$) as
\begin{multline*}
    \Hilb(S/I,\bm{x}) = \Hilb(S,\bm{x}) -
    \Hilb(S(-(2,1,1)),\bm{x}) \\
    = \dfrac{1}{(1-x_1)(1-x_1x_3)(1-x_1x_2)(1-x_1x_2x_3)} \\
    \shoveright{-\dfrac{x_1^2x_2x_3}{(1-x_1)(1-x_1x_3)(1-x_1x_2)(1-x_1x_2x_3)}} \\
     = \dfrac{1-x_1^2x_2x_3}{(1-x_1)(1-x_1x_3)(1-x_1x_2)(1-x_1x_2x_3)}
\end{multline*}

On the other hand, one can compute the exponential sum directly. Let $K_1$ be
the cone whose extreme rays are $(1,0,0),(1,1,0),(1,0,1)$, $K_2$ the cone whose
extreme rays are $(1,1,0),(1,0,1),(1,1,1)$. Then $K_1 \cap K_2$ is the cone
whose extreme rays are $(1,1,0)$ and $(1,0,1)$. (See
Figure~\ref{fig:semigroupsumex}.)

\begin{figure}[htbp]
\begin{center}
    \tdplotsetmaincoords{70}{100}
    \begin{tikzpicture}[tdplot_main_coords,scale=2]
        \coordinate (O) at (0,0,0);
\draw[thick,->] (0,0,0) -- (2,0,0) node[anchor=north east]{$x$};
\draw[thick,->] (0,0,0) -- (0,2,0) node[anchor=north west]{$y$};
\draw[thick,->] (0,0,0) -- (0,0,2) node[anchor=south]{$z$};
        \coordinate (1) at (1,0,0);
        \coordinate (2) at (1,1,0);
        \coordinate (3) at (1,0,1);
        \coordinate (4) at (1,1,1);
        \fill[gray!20] (O)--(1)--(3)--cycle;
        \fill[gray!20] (O)--(4)--(3)--cycle;
        \fill[gray!20] (O)--(2)--(4)--cycle;
        \fill[gray!20] (O)--(1)--(2)--cycle;
        \draw[blue, ->] (0,0,0)--(1,0,0) node[anchor=east,black]{$(1,0,0)$};
        \draw[blue, ->] (0,0,0)--(1,1,0) node[anchor=
		west,black]{$(1,1,0)$};
        \draw[blue,->] (0,0,0)--(1,0,1) node[anchor=east,black]{$(1,0,1)$};
        \draw[blue,->] (0,0,0)--(1,1,1)node[anchor=west,black]{$(1,1,1)$};
        \draw[pattern=dots,dashed] (0,0,0)--(1,0,1)--(1,1,0)--(0,0,0);
		\node[inner sep=0] (K2) at (0,.5,1) {$K_2$};
		\node (b) at (0,.25,.5) {};
		\node[inner sep=0] (K1) at (1,-.25,.5) {$K_1$};
		\node (a) at (.5,-.075,.25) {};
		\draw[->] (K1) edge[bend left] (a);
		\draw[->] (K2) edge[bend right](b);
\end{tikzpicture}
\end{center}
\caption{Cone spanned by $(1,0,0)$, $(1,1,0)$, $(1,0,1)$ and $(1,1,1)$}
\label{fig:semigroupsumex}
\end{figure}
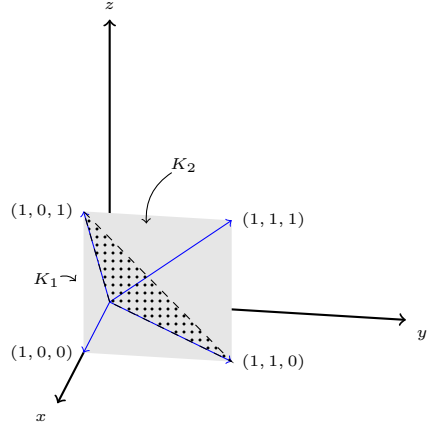
One then has
\[
    \sum_{K \cap \bZ^n} e^{-\langle x, u\rangle} = \sum_{K_1 \cap \bZ^n}
    e^{-\langle x, u\rangle} +\sum_{K_2\cap \bZ^n} e^{-\langle x, u \rangle}
    -\sum_{K_1 \cap K_2 \cap \bZ^n} e^{-\langle x,u \rangle}.
\]
Making the change of variables so that $x_i = e^{-x_i}$, one has that
\begin{align*}
    \sum_{K_1 \cap \bZ^3} e^{-\langle x, u\rangle} &= \sum_{a_1=0}^\infty
    \sum_{a_2=0}^\infty \sum_{a_3=0}^\infty e^{-\langle x,
        a_1(1,0,0)+a_2(1,1,0)+a_3(1,0,1)\rangle} \\
   &=\sum_{a_1=0}^\infty
    \sum_{a_2=0}^\infty \sum_{a_3=0}^\infty
    x_1^{a_1}(x_1x_2)^{a_2}(x_1x_3)^{a_3} \\
    &=\dfrac{1}{(1-x_1x_3)(1-x_1x_2)(1-x_1)}.
\end{align*}
Similarly,
\[
     \sum_{K_2 \cap \bZ^3} e^{-\langle x,u\rangle} =
     \dfrac{1}{(1-x_1x_2)(1-x_1x_3)(1-x_1x_2x_3)}
\]
and
\[
 \sum_{K_1\cap K_2 \cap \bZ^3} e^{-\langle c, x\rangle} =
 \dfrac{1}{(1-x_1x_2)(1-x_1x_3)}.
\]
One then has that
\begin{multline*}
\sum_{K \cap \bZ^n} e^{-\langle x, u\rangle} =
     \dfrac{1}{(1-x_1x_3)(1-x_1x_2)(1-x_1)} +
     \dfrac{1}{(1-x_1x_2)(1-x_1x_3)(1-x_1x_2x_3)} \\ -
     \dfrac{1}{(1-x_1x_2)(1-x_1x_3)} \\ = 
     \dfrac{1-x_1^2x_2x_3}{(1-x_1)(1-x_1x_2)(1-x_1x_3)(1-x_1x_2x_3)},
 \end{multline*}
matching the Hilbert series computation.

\subsection{Complete Intersections}\label{subsec:completeintersections}

Part of the goal of Chapters~\ref{ch:rootcone} and~\ref{ch:weightcone} will be
to identify signed posets for which the computation of the Hilbert series of
(one or the other of) the relevant rings is particularly
straightforward, namely when the rings are complete intersections.

\begin{defn}\label{def:ci}
Suppose $R$ is a ring. A sequence of elements $\theta_1,\ldots,\theta_k \in R$
is said to be a \emph{regular sequence} if $\theta_{i+1}$ is a non-zero divisor
in $R/(\theta_1,\ldots,\theta_i)$ for $i = 0,\ldots,k-1$. A quotient ring $R/I$ is said to
be a \emph{complete intersection} if $I$ is generated by a regular sequence.
\end{defn}

The following proposition simplifies the computation of the Hilbert series of a
complete intersection.

\begin{prop}\label{prop:hilbseriesprincipalidea}
    Suppose $R$ is a graded ring and $\theta$ is a non-zero divisor in $R$ and
    $\deg \theta \ne 0$. Then
    \[
        \Hilb(R/(\theta), \bm{x}) = (1-\bm{x}^{\deg
            \theta})\Hilb(R,\bm{x}).
    \]
\end{prop}
Iterating this relation gives the following.

\begin{cor}\label{cor:cihilbertseries}
    Suppose $R$ is a ring and $(\theta_1,\ldots,\theta_k)$ is a regular
    sequence. Then
    \[
        \Hilb(R/(\theta_1,\ldots,\theta_k),\bm{x}) =
        \Hilb(R,\bm{x})\prod_{i=1}^k (1-\bm{x}^{\deg\theta_i}).
    \]
\end{cor}

Boussicault, \feray, Lascoux and Reiner explain in Section 2.4
of~\cite{BoussicaultFerayLascouxReiner2012} how the valuations $s(K;c)$ from
Section~\ref{subsec:rationalfunctionscones} and $\sigma(K;c)$ from
Section~\ref{subsec:semigrouprationalfunction} are connected via a residue
operation. This will be particularly relevant in the case where the semigroup
ring is a complete intersection as a result of the following proposition.

\begin{prop}\label{prop:sandsigmaci}
Suppose $L$ is a lattice and let $K$ be a pointed $L$-rational cone for which
$R=k[K \cap L]$ is a complete intersection with
\[
R\cong S/I = k[U_1,\ldots,U_k] / (\theta_1,\ldots,\theta_d),
\]
where $\theta_1,\ldots,\theta_d$ are $L$-homogeneous elements of degrees
$\delta_1,\ldots,\delta_d$ forming a regular sequence. Then
\[
    \Hilb(R;\bm{X}) = \sigma(K;\bm{X}) = \dfrac{\prod_{i = 1}^d
        (1-\bm{X}^{\delta_i})}{\prod_{j=1}^d (1-\bm{X}^{u_j})},
\]
and if $K$ is full-dimensional,
\[
s(K;x) = \dfrac{\prod\langle x,\delta_j \rangle}{\prod \langle x,u_j\rangle},
\]
where $\deg U_j = u_j$.
\end{prop}

Section~\ref{sec:rationalfunctions} will show that the two rational functions
$\Psi$ and $\Phi$ on a
signed poset can be understood as the valuation $s(-;x)$ on certain cones.
Proposition~\ref{prop:sandsigmaci} enables one to compute these rational
functions in some cases by establishing that a certain ring is a
complete intersection and finding a regular sequence generating the toric
ideal.

As an example, consider the semigroup generated by 
\[
(1,0,0),(0,0,-1),(1,1,0), (1,-1,-1), (1,1,1).
\]
This will be one of the semigroups
considered in Chapter~\ref{ch:weightcone}. Let $K$ be the cone spanned by the
semigroup generators and $S= k[x_1,x_2,x_3,x_4,x_5]$ with $\deg x_1 = (1,0,0)$,
$\deg x_2 = (0,0,-1)$, $\deg x_3 = (1,1,0)$, $\deg x_4=(1,-1,-1)$ and $\deg
x_5=(1,1,1)$. The toric ideal of the semigroup is, courtesy of Macaulay2, $I = (x_2x_5-x_3,x_4x_5-x_1^2)$,
and one can check that the generators of the toric ideal form a regular
sequence, so the semigroup ring $R \cong S/I$ is a complete intersection.

Then, from Corollary~\ref{cor:cihilbertseries} (or the first half of
Proposition~\ref{prop:sandsigmaci}, which is obtained by repeatedly applying
the corollary), one has
\[
\Hilb(R,\bm{x}) =
\dfrac{(1-x_1x_2)(1-x_1^2)}{(1-x_1)(1-x_3^{-1})(1-x_1x_2)(1-x_1x_2^{-1}x_3^{-1})(1-x_1x_2x_3)}
\]
and
\[
s(K,x) = \dfrac{(x_1+x_2)(2x_1)}{x_1(-x_3)(x_1+x_2)(x_1-x_2-x_3)(x_1+x_2+x_3)}.
\]

\section{Root Systems}\label{sec:rootsystemdefs}

To extend the notion of posets, one must first recall some definitions
regarding root systems.

\begin{defn}\label{def:rootsystem}
	A \emph{(crystallographic) root system} is a set $\Phi \subset \bR^n$ such that
	\begin{enumerate}[label={(\alph*)}]
		\item $\Phi$ spans $\bR^n$;
		\item if $\alpha \in \Phi$, then $-\alpha \in \Phi$ and $\pm \alpha$ are the only multiples of $\alpha$ in $\Phi$;
		\item $\Phi$ is closed under the reflection $\sigma_\alpha$ across the hyperplane perpendicular to $\alpha$ for each $\alpha \in \Phi$;
		\item for all $\alpha,\beta \in \Phi$, one has
			\[
				2\dfrac{\langle \alpha,\beta\rangle}{\langle \alpha,\alpha\rangle} \in \bZ,
			\]
			where $\langle -,-\rangle$ is the standard inner product on $\bR^n$.
	\end{enumerate}
\end{defn}

Condition (d) is known as the \emph{crystallographic condition}. This condition gives the existence of root and weight lattices, which will be of use later.

\begin{defn}\label{def:simpleposnegroots}
		A subset $\Delta$ of a root system $\Phi$ is a choice of \emph{simple
		roots} if $\Delta$ spans $\bR^n$ and partitions the root system into
		those roots lying in their positive integer span---the \emph{positive
		roots}, denoted $\Phi^+$---and those lying in their negative integer
		span---the \emph{negative roots}, $\Phi^-$.
\end{defn}


The discussion of signed posets will focus on two root systems, $\Bn$ and
$\Cn$. 
The roots are
\begin{itemize}
    \item $\{\pm e_i \pm e_j \colon i,j \in [n]\} \cup \{\pm e_i \colon i \in
[n]\}$ for $\Bn$, and
	\item $\{\pm e_i \pm e_j \colon i,j \in [n]\}\cup\{\pm 2e_i \colon i \in
[n]\}$ for $\Cn$.
\end{itemize}
The simple roots will be taken to be
\begin{itemize}
    \item $\{+e_i-e_{i+1} \colon i = 1,\ldots,n-1\}\cup\{+e_n\}$ for $\Bn$, and
    \item $\{+e_i-e_{i+1} \colon i = 1,\ldots,n-1\}\cup\{+2e_n\}$ for $\Cn$.
\end{itemize}
This choice of simple roots gives the following positive roots.
\begin{itemize}
    \item $\{+e_i + e_j \colon i,j \in [n]\} \cup \{+e_i -e _j \colon i < j \}
\cup \{+e_i \colon i \in [n]\}$ for $\Bn$, and
\item $\{+e_i + e_j \colon i,j \in [n]\} \cup \{+e_i-e_j \colon i < j \} \cup
\{+2e_i \colon i \in [n]\}$ for $\Cn$.
\end{itemize}

Subsequent chapters will parallel work of \citet*{BoussicaultFerayLascouxReiner2012} and \citet{FerayReiner2012} addressing the type A case. The $\An$ roots
are 
\[
    \{e_i-e_j \colon i,j \in [n], i\ne j\},
\] and the choice of simple roots
used is $\{e_i-e_{i+1}\colon i=1,\ldots,n-1\}$, giving $\{e_i-e_j \colon i, j
\in [n], i < j\}$ as the positive roots.

\begin{defn}\label{def:weights}
Given a root system $\Phi$, the \emph{(integral) weights} are the $\mu \in
\bR^n$ such that, for each $\alpha \in \Phi$,
\[
   2 \dfrac{\langle \mu,\alpha\rangle}{\langle \alpha,\alpha \rangle} \in \bZ.
\]
If one has made a choice of simple roots, say $\{\alpha_1,\ldots,\alpha_n\}$,
there is then a distinguished set of weights, the \emph{fundamental dominant
    weights} $\mu_1,\ldots,\mu_n$ uniquely defined by the conditions
\[
    2\dfrac{\langle \mu_i, \alpha_j\rangle}{\langle \alpha_j,\alpha_j\rangle} =
    \delta_{ij},
\]
where $\delta_{ij}$ is the Kronecker delta.
\end{defn}

For $\Bn$, the weights are the elements of $\bZ^n$ plus $\left(\pm
    \frac{m}{2},\pm\frac{m}{2},\ldots,\pm\frac{m}{2}\right)$ for $m \in \bZ$.
For the choice of simple roots made above, the fundamental dominant weights are
\[
(1,0,\ldots,0),(1,1,0,\ldots,0),\ldots,(1,\ldots,1,0),\left(\frac{1}{2},\ldots,\frac{1}{2}\right).
\]
For $\Cn$, the weights are the elements of $\bZ^n$ and, with the choice of
simple roots made above, the fundamental dominant weights are
$(1,0,\ldots,0)$,$(1,1,0,\ldots,0),\ldots,(1,\ldots,1,0)$,$(1,\ldots,1)$.

\begin{defn}\label{def:dualrootsystem}
Suppose $\Phi$ is a root system. Its \emph{dual root system}, $\Phi^\dual$, is
the root system
whose roots are
\[
   \alpha^\dual = \dfrac{2 \alpha}{\langle \alpha,\alpha \rangle}
\]
for $\alpha \in \Phi$. The roots
of $\Phi^\dual$ are the \emph{coroots} of $\Phi$ and the weights of
$\Phi^\smvee$ are the
\emph{coweights} of $\Phi$. 
\end{defn}

One always has that $\Phi^{\dual\dual} = \Phi$. 
Note that $\Bn$ and $\Cn$ are dual root systems. 
The roots, weights and coweights form lattices, called the \emph{root},
\emph{weight} and \emph{coweight} lattices. As an example, consider the root
and weight/coweight lattices for $\Phi_{B_2}$ and $\Phi_{C_2}$ in
Figure~\ref{fig:rootweightlattices}.

\begin{figure}[htbp]
    \begin{center}
		\begin{tabular}{cc}
    \begin{subfigure}[b]{.4\linewidth}
        \begin{center}
        \begin{tikzpicture}[scale=.5]
             \draw[step=1.0,black] (-3,-3) grid (3,3);
             \foreach \x in {1,0,-1,2,3,-2,-3}
             {
             \foreach \y in {1,0,-1,2,3,-2,-3}
             {
                 \node[vertex] at (\x,\y) {};
             }
         }
         \end{tikzpicture}
     \end{center}
         \caption{The $\Phi_{B_2}$ root lattice}
    \end{subfigure} &
    \begin{subfigure}[b]{.4\linewidth}
        \begin{center}
            \begin{tikzpicture}[scale=.5]
                 \draw[step=1.0,black] (-3,-3) grid (3,3);
                 \foreach \x in {1,-1}
                 {
                     \foreach \y in {1,-1}
                     {
                         \node[vertex] at (\x,\y) {};
                         \node[vertex] at (\x+2,\y) {};
                         \node[vertex] at (\x,\y+2) {};
                         \node[vertex] at (\x+2,\y+2) {};
                         \node[vertex] at (\x-2,\y) {};
                         \node[vertex] at (\x,\y-2) {};
                         \node[vertex] at (\x-2,\y-2) {};
                     }
                 }
                 \foreach \x in {0,2,-2}
                 {
                     \foreach \y in {0,2,-2}
                     {
                         \node[vertex] at (\x,\y) {};
                     }
                 }
                 \node[vertex] at (-3,3) {};
                 \node[vertex] at (3,-3) {};
            \end{tikzpicture}
        \end{center}
            \caption{The $\Phi_{C_2}$ root lattice}        
    \end{subfigure}\\
    \begin{subfigure}[b]{.4\textwidth}
        \begin{center}
            \begin{tikzpicture}[scale=.5]
                \draw[step=1.0,black] (-3,-3) grid (3,3);
                \foreach \x in {0,1,2,3}
                {
                    \foreach \y in {0,1,2,3}
                    {
                        \node[vertex] at (\x,\y) {};
                        \node[vertex] at (-\x,\y) {};
                        \node[vertex] at (\x,-\y) {};
                        \node[vertex] at (-\x,-\y) {};
                    }
                }
            \end{tikzpicture}
        \end{center}
        \caption{The $\Phi_{B_2}$ coweight/$\Phi_{C_2}$ weight lattice}
    \end{subfigure} &
    \begin{subfigure}[b]{.4\textwidth}
        \begin{center}
            \begin{tikzpicture}[scale=.5]
                \draw[step=1.0,black] (-3,-3) grid (3,3);
                \foreach \x in {0,1,2,3}
                {
                    \foreach \y in {0,1,2,3}
                    {
                        \node[vertex] at (\x,\y) {};
                        \node[vertex] at (-\x,\y) {};
                        \node[vertex] at (\x,-\y) {};
                        \node[vertex] at (-\x,-\y) {};
                    }
                }
                \foreach \z in {1,3,5}
                {
                    \foreach \w in {1,3,5}
                    {
                    \node[vertex] at (.5*\z,.5*\w) {};
                    \node[vertex] at (-.5*\z,.5*\w) {};
                    \node[vertex] at (.5*\z,-.5*\w) {};
                    \node[vertex] at (-.5*\z,-.5*\w){};
                }
                }
            \end{tikzpicture}
        \end{center}
        \caption{The $\Phi_{B_2}$ weight/$\Phi_{C_2}$ coweight lattice}
    \end{subfigure}
	\end{tabular}
\end{center}
    \caption{Root and Weight Lattices}
    \label{fig:rootweightlattices}
\end{figure}
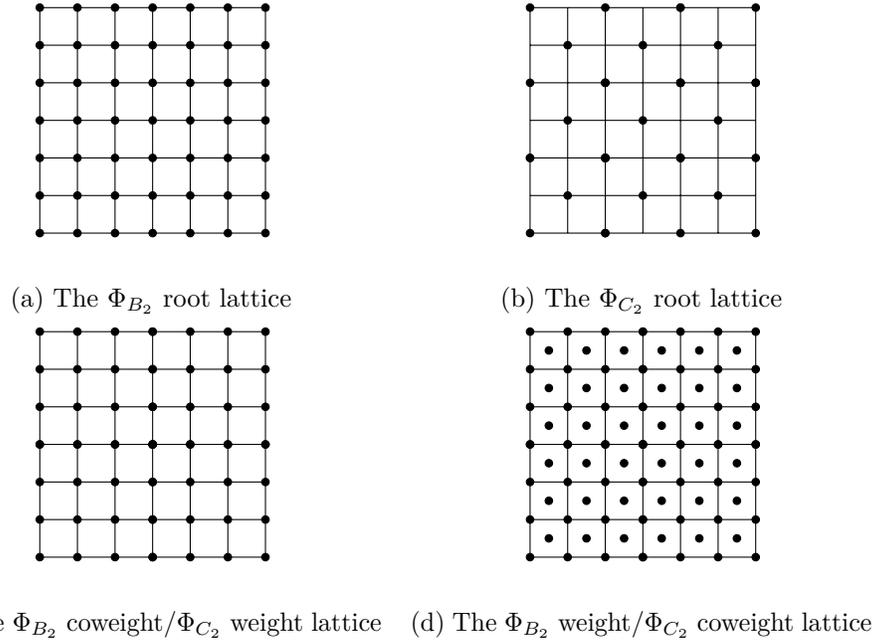

\subsection{The Weyl Group and Signed Permutations}\label{subsec:weylgroup}

\begin{defn}\label{def:weylgroup}
The reflections $\sigma_\alpha$ across the hyperplanes $H_\alpha$ perpendicular to $\alpha
\in \Phi$ form a group called the \emph{Weyl group} and denoted by $W$.
\end{defn}

If one
removes the hyperplanes $H_\alpha$, one divides $\bR^n$ into connected
components which are open simplicial cones known as
\emph{Weyl chambers}. The Weyl group $W$ acts simply transitively on the Weyl
chambers.

The Weyl group of $\An$ is the familiar symmetric group $S_n$. Since $\Bn$ and $\Cn$ are dual root systems they share the same reflections and
thus the same Weyl group. This is the group of \emph{signed permutations},
known as the \emph{hyperoctahedral group}. 

\begin{defn}\label{def:signedperm}
A signed permutation of $[n]$ is a permutation $w$ of $\{\pm 1,\ldots,\pm
n\}$ such that $w(-i) = -w(i)$. 
\end{defn}
Consequently, signed permutations may be
written in a two-line notation specifying the images of $\{1,\ldots,n\}$. For
example,
\begin{equation}\label{eq:signedpermex}
    w = \begin{pmatrix} 1 & 2 & 3 & 4 \\
        2 & -3 & 4 & -1 
    \end{pmatrix}
\end{equation}
is the signed permutation where $w(1)=2$, $w(2) = -3$, $w(3)=4$, $w(4)=-1$ and
$w(-i)=-w(i)$ for $i = 1,\ldots,4$.

The signed permutation analogues of a well-known permutation statistic will prove useful in
Chapter~\ref{ch:weightcone}. Name our choice of simple roots as $\alpha_i = e_i-e_{i+1}$ for $i =
1,\ldots,n-1$ and $\alpha_n = e_n$.
\begin{defn}\label{def:signedmaj}
A signed permutation $w$ is said to have a
\emph{descent} at $i$ if $w(\alpha_i) \in \Phi^-$. The set of descents of $w$
is denoted $\Des(w)$. The \emph{major index} of $w$ is
\[
    \maj(w) = \sum_{i \in \Des(w)} i
\]
\end{defn}
For example, the signed permutation $w$ shown in \eqref{eq:signedpermex} has descents at $2$
and $4$, so $\maj(w) = 6$.

\chapter{Signed Posets}\label{ch:signedposets}

This chapter will discuss background on signed posets and introduce the root
cone and weight cone.

Signed posets were introduced by Reiner in~\cite{Reiner1990}
and~\cite{Reiner1993}. Before defining signed posets, it is useful to define
one piece of notation which will reappear later. 

\begin{defn}
		Suppose $\Phi$ is a root
system and $S \subset \Phi$. Then the \emph{positive linear closure},
$\overline{S}^{PLC}$, is the set of $\alpha
\in \Phi$ which are non-negative linear combinations of elements of $S$, i.e.\
$\PLC{S}=\bR_+S\cap \Phi$.
\end{defn}

\begin{defn}\label{def:signedposet}
If $\Phi$ is a root system, a \emph{$\Phi$-poset} is a subset $P \subset \Phi$ such that
\begin{enumerate}[label=(\alph{*})]
    \item if $\alpha \in P$, then $-\alpha \notin P$
    \item $P = \PLC{P}$.
\end{enumerate}
\end{defn}

A poset $P$ can be understood as the $\An$-poset $\{e_i-e_j \colon i <_P j\}$.
This view of posets reveals condition (a) in the definition as the analogue of
antisymmetry and condition (b) as the analogue of transitivity.

The $\Phi$-posets are the \emph{parsets} of~\cite{Reiner1990}. Since $\Bn$ and
$\Cn$ are dual root systems, $P \subset \Bn$ is a $\Bn$-poset if and only if $\Pc =
\{\alpha^\vee \colon \alpha \in P\} \subset \Cn$ is a $\Cn$-poset. Signed
posets were defined to be the $\Bn$-posets, but in light of this duality, both
$\Bn$- and $\Cn$-posets will be called \emph{signed posets}, with $\Bn$ or
$\Cn$ being specified when necessary.

One can check that $P = \{ +e_1-e_2,+e_1-e_3,+e_2+e_3,+e_1+e_3,+e_1\} \subset
\Phi_{B_3}$ is a signed poset. In this case, $\Pc = \{
+e_1-e_2,+e_1-e_3,+e_2+e_3,+e_1+e_3,+2e_1\} \subset \Phi_{C_3}$. On the other hand, $P \smallsetminus \{+e_1\}$
is \emph{not} a signed poset as it is not closed under positive linear
combinations remaining in $\Phi_{B_3}$ since $\frac{1}{2}((e_1-e_3)+(e_1+e_3)) =
e_1$.

\begin{defn}\label{def:isomorphic}
Two signed posets $P$ and $P'$ are \emph{isomorphic} if there is a signed permutation $w$ such that $wP=P'$.
\end{defn}

Note that, strictly speaking, the definition of $\Phi$-poset allows a poset $P
\subset \Phi \subset \bR^m$ where $P$ is supported on $\{e_i\colon i \in A\}$
for some $A \subsetneq [m]$. In this case, $P$ is isomorphic to some $P'
\subset \Phi' \subset \bR^n$ for $|A|=n$.

\section{Representing Signed Posets}\label{sec:representingposets}

There are two representations of signed posets, one as an oriented signed graph
and the other as a poset on $\pm[n]$, that prove useful in different contexts. 
First, though, one needs to define oriented signed graphs.

\begin{defn}\label{def:signedgraph}
A
\emph{signed graph} $\Sigma$ is a pair $(\Gamma,\sigma)$ where $\Gamma$ is a
graph with vertex set $V$ and edge set $E$ and $\sigma$ is a map $\sigma \colon
E \to \{\pm\}$, assigning a sign to each edge. 
\end{defn}

\begin{defn}\label{def:orientedsignedgraph}
An \emph{oriented signed 
    graph} is a signed graph $\Sigma$ together with a \emph{bidirection},
$\tau$, assigning signs to the incidences of $\Gamma$ in such a way as to be
compatible with $\sigma$, i.e.\ $\tau \colon I(\Gamma) \to \{\pm\}$ such that
\[
    \sigma(e) = -\tau(v,e)\tau(w,e)
\]
when $e$ is an edge between vertices $v$ and $w$ ($v$ and $w$ need not be
distinct).
\end{defn}

As an example, consider the signed graph $\Sigma$ and an orientation  of $\Sigma$
by the bidirection $\tau$ in Figure~\ref{fig:signedgraph}. Notice that the bidirected edges of
$\Sigma$ correspond to elements of $\Cn$: the edge $(u,v)$ corresponds to
$\tau(u)e_u+\tau(v)e_v$.

\begin{figure}[htbp]
    \begin{center}
    \begin{subfigure}[b]{.4\textwidth}
        \begin{center}
            \begin{tikzpicture}[every loop/.style={in=135,out=90,looseness=3}]
                \matrix (m) [matrix of math nodes,row sep=3em, column sep=4em,
                nodes={circle,draw}]{
                    |(1)| 1 & |(2)| 2 \\
                    & |(3)| 3 \\
                }; 
                \draw (1)--(2) node[midway,above] {$+$};
                \draw (1.325)--(3.125) node[midway,right] {$-$};
                \draw (1.305)--(3.145) node[midway,left] {$+$};
                \draw (2)--(3) node[midway,right] {$-$};
                \path (1) edge [loop above] node{$+$} ();
            \end{tikzpicture}
            \subcaption{The signed graph $\Sigma$}
        \end{center}
    \end{subfigure}
    \begin{subfigure}[b]{.4\textwidth}
        \begin{center}
            \begin{tikzpicture}[every loop/.style={in=135,out=90,looseness=3}]
                \matrix (m) [matrix of math nodes,row sep=3em, column sep=4em,
                nodes={circle,draw}]{
                    |(1)| 1 & |(2)| 2 \\
                    & |(3)| 3 \\
                }; 
                 \draw (1)--(2) node[very near start,above] {$+$} node[very
                 near end,above] {$-$};
                \draw (1.325)--(3.125) node[very near start,right] {$+$}  
                node[very near end,above] {$+$};
                \draw (1.305)--(3.145) node[very near start,left] {$+$}
                node[very near end,left] {$-$};
                \draw (2)--(3) node[very near start, right] {$+$}
                node[very near end,right] {$+$};
                \path (1) edge [loop above] node[very near start,right]{$+$}
				node[very near end,left]{$+$} ();
            \end{tikzpicture}
            \subcaption{$\Sigma$ oriented by $\tau$}\label{fig:bidirected}
        \end{center}
    \end{subfigure}
\end{center}
    \caption{A signed graph and a bidirection}
    \label{fig:signedgraph}
\end{figure}
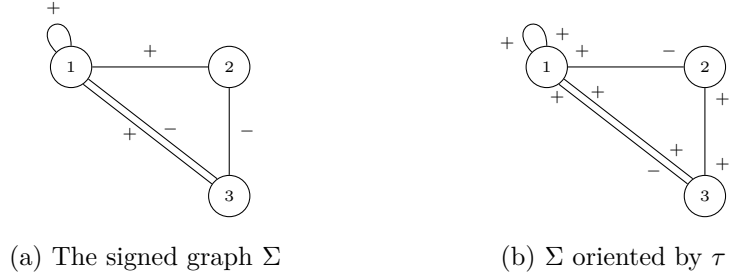

To construct the Hasse diagram of a signed poset, one first constructs an
oriented signed graph as follows. If $P \subset \Bn$ (resp.\ $P \subset \Cn$),
the vertices of the graph are $[n]$. The bidirected edges are as in
Table~\ref{table:bidirectededges}.
\begin{table}[htbp]
\begin{center}
\begin{tabular}{lr@{$\;\in\;$}l}
    \multicolumn{1}{c}{edge} & \multicolumn{2}{c}{poset element}\\
    \begin{tikzpicture}
        \node[circle,draw] (1) at (0,0) {$i$};
        \node[circle,draw] (2) at (2,0) {$j$};
        \draw (1)--(2) node[very near start,above] {$+$} node[very near
        end,above] {$-$};
    \end{tikzpicture} &
    $e_i-e_j$ & $P$ \\
    \begin{tikzpicture}
 \node[circle,draw] (1) at (0,0) {$i$};
        \node[circle,draw] (2) at (2,0) {$j$};
        \draw (1)--(2) node[very near start,above] {$+$} node[very near
        end,above] {$+$};
    \end{tikzpicture} & $e_i + e_j$&$P$ \\
    \begin{tikzpicture}
 \node[circle,draw] (1) at (0,0) {$i$};
        \node[circle,draw] (2) at (2,0) {$j$};
        \draw (1)--(2) node[very near start,above] {$-$} node[very near
        end,above] {$-$};
    \end{tikzpicture} & $-e_i-e_j$&$P$ \\
    \begin{tikzpicture}[every loop/.style={in=135,out=90,looseness=3}]
        \node[circle,draw] (1) at (0,0) {$i$};
        \path (1) edge [loop above] node[very near start, right]{$+$}
		node[very near end, left]{$+$} ();
    \end{tikzpicture} & $e_i\in P$, $2e_i$ & $\Pc$ \\
    \begin{tikzpicture}[every loop/.style={in=135,out=90,looseness=3}]
        \node[circle,draw] (1) at (0,0) {$i$};
        \path (1) edge [loop above] node[very near start,right]{$-$} node[very
		near end,left] {$-$} ();
    \end{tikzpicture} & $-e_i\in P$, $-2e_i$& $\Pc$\\
\end{tabular}
\end{center}
\caption{Association between bidirected edges and elements of a signed poset}
\label{table:bidirectededges}
\end{table}
However, because signed posets are closed under positive linear combinations,
the presence of some elements is implied by others. 

\begin{defn}\label{def:hassediagram}
		Suppose $P \subset \Bn$ (\resp $\Pc \subset \Cn$) is a signed poset.
        Let $\Gamma$ be the oriented signed graph whose edges are obtained from
        Table~\ref{table:bidirectededges}. Let $\Sigma_P$ be the oriented
        signed graph obtained by removing the implied edges from $\Gamma$. Then
        $\Sigma_P$ is the \emph{Hasse diagram} of $P$.
\end{defn}
It is not immediately obvious that the Hasse diagram should be well-defined.
That it is well-defined is explained by~\citet[p.\ 329]{Reiner1993}.


Consider the oriented signed graph in Figure~\ref{fig:bidirected}. It
corresponds to the signed poset $\{+e_1-e_2,+e_1-e_3,+e_2+e_3,+e_1+e_3,+e_1\}$.
The Hasse diagram is shown in Figure~\ref{fig:hasseexample}.
\begin{figure}
    \begin{center}
        \begin{tikzpicture}
            \matrix (m) [matrix of math nodes,row sep=3em, column sep=4em,
            nodes={circle,draw}]{
                |(1)| 1 & |(2)| 2 \\
                & |(3)| 3 \\
            }; 
             \draw (1)--(2) node[very near start,above] {$+$} node[very
             near end,above] {$-$};
            \draw (1)--(3) node[very near start,left] {$+$}
            node[very near end,left] {$-$};
            \draw (2)--(3) node[very near start, right] {$+$}
            node[very near end,right] {$+$};
        \end{tikzpicture}
    \end{center}
    \caption{The Hasse diagram of $\{+e_1-e_2,+e_1-e_3,+e_2+e_3,+e_1+e_3,+e_1\}$}
    \label{fig:hasseexample}
\end{figure}
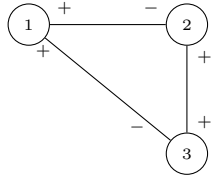

The other representation of a signed poset was introduced by \citet{Fischer1993} and then again independently by \citet*{AndoFujishigeNemoto1996}. 

\begin{defn}\label{def:fischerposets}
Suppose $P \subset \Bn$ and $\Pc \subset \Cn$ are a pair of dual signed posets.
	Define a poset $\Gbhat(P)$ on
$[-n,n] = \{-n,\ldots,1,0,1,\ldots,n\}$ by taking the transitive closure of the
relations determined by Table~\ref{table:gbhat}.
Similarly, define a poset $\Gchat(\Pc)$ on $\pm [n] = \{\pm 1 ,\ldots,\pm n\}$ by
taking the transitive closure of the relations determined by
Table~\ref{table:gchat}.
\begin{table}[htbp]
\[
    \begin{array}{r@{\;<\;}lcr@{\;<\;}lcl}
			i &j & \text{and} & -j & -i & \text{for} & +e_i -e_j \in P \\
        i & -j  & \text{and} &j & -i & \text{for} & +e_i+e_j \in P \\
        -i & j & \text{and} &-j & i & \text{for} & -e_i-e_j \in P \\
        i & 0 & \text{and} & 0 & -i & \text{for} & +e_i \in P \\
        -i & 0 & \text{and} &0 & i & \text{for} & -e_i \in P \\
    \end{array}
\]
\caption{Relations defining $\Gbhat(P)$}
\label{table:gbhat}
\end{table}
\begin{table}[htbp]
\[
    \begin{array}{r@{\;<\;}lcr@{\;<\;}lcl}
			i &j & \text{and} & -j & -i & \text{for} & +e_i -e_j \in P \\
        i & -j  & \text{and} &j & -i & \text{for} & +e_i+e_j \in P \\
        -i & j & \text{and} &-j & i & \text{for} & -e_i-e_j \in P \\
        \pm i &  \mp i &\multicolumn{4}{r}{\text{for}} & \pm 2e_i \in P \\
    \end{array}
\]
    \caption{Relations defining $\Gchat(\Pc)$}
\label{table:gchat}
\end{table}
\end{defn}

Understand $\Ghat(P)$ to mean either $\Gbhat(P)$ or $\Gchat(\Pc)$ as
appropriate. In Chapter~\ref{ch:weightcone} it will be convenient to revert to Fischer's original definition and use
$\Gchat(\Pc)$ to represent a $P \subset \Bn$.
Proposition~\ref{prop:isotropicideals} shows that this
abuse is acceptable. We will call $\Ghat(P)$ the \emph{Fischer poset} of the
signed poset. The Fischer posets for $P^\star
=\{+e_1-e_2,+e_1-e_3,+e_2+e_3,+e_1+e_3,+e_1\}$ are shown in
Figure~\ref{fig:fischerex}. (This poset is being named $P^\star$ since it will
feature as an example in this section and the next.) Both Fischer and Ando, Fujishige and Nemoto defined
$\Gchat(P)$ as being associated to $P \subset \Bn$. However, adding the $0$ vertex in type B and
defining $\Gbhat(P)$ and $\Gchat(P)$ separately
simplifies Chapter~\ref{ch:rootcone}.

\begin{figure}
    \begin{center}
    \begin{subfigure}[b]{.4\textwidth}
        \begin{center}
            \begin{tikzpicture}
                \node[vertex] (1) at (0,0) [label=right:$1$] {};
                \node[vertex] (2) at (-1,1) [label=left:$2$] {};
                \node[vertex] (3) at (1,1) [label=right:$3$] {};
                \node[vertex] (-3) at (-1,2) [label=left:$-3$] {};
                \node[vertex] (-2) at (1,2) [label=right:$-2$] {};
                \node[vertex] (0) at (0,1.5) [label=right:$0$] {};
                \node[vertex] (-1) at (0,3) [label=right:$-1$] {};
                \draw (1)--(2)--(-3)--(-1)--(0)--(1)--(3)--(-2)--(-1);
            \end{tikzpicture}
        \end{center}
        \caption{$\Gbhat(P^\star)$}
    \end{subfigure}
    \begin{subfigure}[b]{.4\textwidth}
        \begin{center}
            \begin{tikzpicture}
                \node[vertex] (1) at (0,0) [label=right:$1$] {};
                \node[vertex] (2) at (-1,1) [label=left:$2$] {};
                \node[vertex] (3) at (1,1) [label=right:$3$] {};
                \node[vertex] (-3) at (-1,2) [label=left:$-3$] {};
                \node[vertex] (-2) at (1,2) [label=right:$-2$] {};
                \node[vertex] (-1) at (0,3) [label=right:$-1$] {};
                \draw (1)--(2)--(-3)--(-1)--(-2)--(3)--(1);
            \end{tikzpicture}
        \end{center}
        \caption{$\Gchat(P^{\star\smvee})$}
    \end{subfigure}
    \caption{$\Gbhat(P)$ and $\Gchat(P)$ for
        $P^\star=\{+e_1-e_2,+e_1-e_3,+e_2+e_3,+e_1+e_3,+e_1\}$}
    \label{fig:fischerex}
\end{center}
\end{figure}

	Both $\Gbhat(P)$ and $\Gchat(\Pc)$ are equipped with involutions, denoted
	$\iota$, sending $i$ to $-i$ and the edge $i \to j$ to $-j \to -i$, making
    $\iota$
    a poset anti-automorphism. In
	$\Gbhat(P)$, this involution fixes $0$, and in $\Gchat(\Pc)$ it fixes edges
	of the form $i \to -i$. That $\Gbhat(P)$ and $\Gchat(\Pc)$
	are equipped with this involution means that they connect to the theory of
	coverings of signed graphs. This will be discussed in
	Section~\ref{sec:signedgraphtoricideals}. Somewhat confusingly, the
	oriented signed graphs which they cover are \emph{not} necessarily the Hasse diagram
	just described in Definition~\ref{def:hassediagram}. The Hasse diagram will be useful in
	Section~\ref{sec:dualcones} and then dispensed with until
    Chapter~\ref{ch:unfinished}, as the Fischer poset
	will prove to be the more convenient notion, lending itself to analogies to 
	the type A case of~\cite{BoussicaultFerayLascouxReiner2012}
	and~\cite{FerayReiner2012}. 

	Note that when $P$ does not contain an element of the form $\pm e_i$, the
    element $0$ is not comparable to any other elements of $\Ghat(P)$.

The following two propositions characterizing $\Gbhat(P)$ and $\Gchat(\Pc)$ are immediate consequences of the definitions.

\begin{prop}\label{prop:gbhatcharacterisation}
A poset on $[-n,n]$ is $\Gbhat(P)$ of a $\Bn$-poset $P$ if and only if the
following two conditions hold:
\begin{itemize}
    \item for all $i,j \in \{-n,\ldots,n\}$, if $i < j$ then $-j < -i$.
	\item for all $i \in \{-n,\ldots,n\}$, if $i < -i$ then $i < 0 < -i$.
\end{itemize}
\end{prop}

\begin{prop}\label{prop:gchatcharacterisation}
A poset on $\pm[n]$ is $\Gchat(\Pc)$ for a $\Cn$-poset $\Pc$ if and only if the
following two conditions hold:
\begin{itemize}
    \item if $i < j$ then $-j < -i$ for all $i,j \in \pm [n]$ and
    \item if $i < -i$ and $j < -j$ then $i < -j$ and $j < -i$ for all $i,j \in
        \pm[n]$.
\end{itemize}
\end{prop}

\begin{defn}
A signed poset $P'$ is an \emph{induced subposet} of another signed poset $P$
if $P' \subset P$ and there is an $A \subsetneq [n]$ such that $P' \subset
P \cap \text{span}_\bR\{e_i \colon i \in A\}$.
\end{defn}
Equally, $P'$ is an induced subposet of $P$ if $\Ghat(P')$ is an induced subposet
of $\Ghat(P)$.

\section{Ideals and $P$-partitions in Signed Posets}\label{sec:ideals}

Recall that an order ideal of a poset $P$ is a subset $I \subset P$ such that if $x \in I$ and $y < x$, then $y \in I$.
On the other hand, when one views a poset $P$ on $\{1,\ldots,n\}$ as a collection of roots in $\An$, the
characteristic vectors of the ideals are precisely those $J \in \{1,0\}^n$ such
that $\langle J,\alpha\rangle \geq 0$ for all $\alpha \in P$. This view
motivates Boussicault, \feray, Lascoux and Reiner's definition of the weight
cone in~\cite{BoussicaultFerayLascouxReiner2012}.

In~\cite{Reiner1993}, Reiner defined an \emph{ideal} of a signed poset $P
\subset \Bn$ to be a $J \in \{1,-1,0\}^n$ such that $\langle J,\alpha \rangle
\geq 0$ for all $\alpha \in P$. 
The two key observations to generalize these 
definitions to $\Phi$-posets are 
\begin{itemize}
		\item the projections of $\{1,0\}^n$ to the hyperplane $\sum_{i=1}^n x_i =0$ (i.e.\ the hyperplane of $\bR^n$ in which $\An$ lives) form the orbit of the $\An$-fundamental coweights under the action of the Weyl group (recall that $\An$ is dual to
itself, so its weights and coweights are the same), and
\item $\{1,-1,0\}^n$ is the orbit of the fundamental coweights of $\Bn$ under the action of the Weyl group.
\end{itemize}

\begin{defn}
		Suppose $P \subset \Phi$ is a $\Phi$-poset. An \emph{ideal} of
$P$ is an $f$ in the orbit of the fundamental weights under the action of the Weyl
 group such that $\langle f, \alpha \rangle \geq 0$ for all $\alpha \in P$. The
 set of ideals of $P$ is denoted $J(P)$, as for posets.
\end{defn}

As an example, consider once again the poset $P^\star$ shown in Figure~\ref{fig:fischerex}. The ideals of $P^\star$ are the elements of the poset shown in Figure~\ref{fig:jpexample}.
One will note that these are the characteristic
vectors of some of the order ideals of $\Ghat(P)$.

\begin{defn}
		Suppose $P \subset \Bn$ (\resp $P \subset \Cn$) is a signed poset. An
        order ideal, $I$, of $\Ghat(P)$ is said to be \emph{isotropic} when the
        following two conditions hold: 
		\begin{itemize}
		\item $0 \notin I$ (when $P \subset \Bn)$
		\item if $i \in I$, then $-i \notin I$ for $i \in \pm[n]$
\end{itemize}
\end{defn}

Fischer and later Ando,
Fujishige and Nemoto showed that the isotropic order ideals correspond to the
ideals of a signed poset. It turns out that it is more convenient to look
only at $\Gchat(\Pc)$ when talking about the ideals. The correspondence between
ideals of a signed poset and isotropic order ideals and the fact that examining $\Gchat(\Pc)$
suffices is encapsulated in the following proposition.  

\begin{prop}\label{prop:isotropicideals}
Suppose $P\subset \Bn$ is a signed poset and $\Pc \subset \Cn$ is the poset
consisting of the corresponding dual roots. Then
\begin{enumerate}[label=(\alph{*})]
    \item the ideals of $P$ are the characteristic vectors of the isotropic
        order ideals of $\Gbhat(P)$,
    \item the ideals of $\Pc$ are the characteristic vectors of the isotropic
        order ideals of $\Gchat(\Pc)$ with the exception that the order ideals
        of size $n$ correspond to one half times their characteristic vectors
        and
    \item the set of isotropic order ideals of $\Gbhat(P)$ is the same as the
        set of isotropic order ideals of $\Gchat(\Pc)$, and the set of
        connected isotropic order ideals of $\Gbhat(P)$ is the same as the set
        of connected isotropic order ideals of $\Gchat(\Pc)$.
\end{enumerate}
\end{prop}

In light of this view of the ideals of a signed poset, the vector of an ideal
and the corresponding (isotropic) set of elements of $\pm[n]$ will be used interchangeably. 

\begin{proof}
    \begin{enumerate}[label=(\alph{*})]
			\item Suppose $J$ is an isotropic order ideal of $\Gbhat(P)$.
                Suppose $\chi_J$ is not an ideal of $f$. Then there is an
                $\alpha \in P$ such that $\langle\chi_J , \alpha \rangle <0$.
                For ease of notation, let $\epsilon = \chi_J$. Note that for
                all $i$, $\epsilon_i \in \{0,1,-1\}$. There are five cases.
				\begin{itemize}
						\item \textbf{Suppose $\alpha = e_i-e_j$.} Then $\epsilon_i - \epsilon_j < 0$. If $\epsilon_i=0$, then $\epsilon_j =1$ so $j \in J$. However, one knows (from $\alpha$) that $i < j$ in $\Gbhat(P)$. If $\epsilon_i = -1$, then $\epsilon_j \geq 0$. Therefore, $-i \in J$. However, $-j < -i$ in $\Gbhat(P)$, so $-j \in J$, contradicting that $J$ was isotropic.
						\item \textbf{Suppose $\alpha = e_i+e_j$.} Then $\epsilon_i +\epsilon_j <0$. Without loss of generality suppose $\epsilon_i = -1$, so $-i \in J$. Therefore, since $j < -i$ in $\Gbhat(P)$, so $j \in J$, meaning $\epsilon_j=1$, a contradiction since then $\epsilon_i+\epsilon_j = 0$.
						\item \textbf{Suppose $\alpha = +e_i$.} Then $\epsilon_i <0$, so $-i \in J$. Since $i < -i$ in $\Gbhat(P)$, one has $i \in J$, contradicting that $J$ was isotropic.
						\item \textbf{Suppose $\alpha = -e_i-e_j$.} The symmetric argument to the case of $\alpha = e_i+e_j$ shows this is impossible.
						\item \textbf{Suppose $\alpha = -e_i$.} The symmetric argument
								to the case of $\alpha = e_i$ shows this is impossible.
				\end{itemize}

				Now suppose $f \in \Lcwt_B$ is an ideal. By construction, the
				set in $\Gbhat(P)$ corresponding to $f$, call it $J$, is
				isotropic. Suppose $J$ is not an order ideal. Then there is $i,j \in [n]$ and $\delta,\epsilon \in \{\pm\}$ such that
				$\delta i \lessdot \epsilon j$ and $\epsilon j \in J$ and
				$\delta i \notin J$. In particular, $j \ne 0$. Then $\delta
				e_i-\epsilon j \in P$ and $\langle \delta e_i-\epsilon e_j,f
				\rangle < 0$, contradicting that $f$ was an ideal.
        \item The argument for type B also works for type C.
        \item Suppose $J$ is an isotropic order ideal in $\Gbhat(P)$. Since $J$
            is isotropic, $0 \notin J$. Consider $J$ as a subset of
            $\Gchat(\Pc)$. Suppose it is not an order ideal. Then there is an $i \in \pm[n]$ and $j \in J$
            such that $i < j$ and $i \notin J$. Without loss of generality, one
            may assume $i, j > 0$. Then, since $i < j$, one has that $e_i - e_j
            \in \Pc$, meaning $e_i - e_j \in P$. However, since $i \notin J$,
            one must have $\langle e_i-e_j, \chi_J\rangle = -1$, contradicting
            that $J$ is an ideal of $P$.

            The argument when $J$ is an isotropic order ideal in $\Gchat(\Pc)$
            is exactly the same, except one must also account for the
            possibility that $i = 0$. However, if $i = 0$, one must have $-j <
            j$ by Proposition~\ref{prop:gbhatcharacterisation}.
    \end{enumerate}
\end{proof}

\begin{defn}\label{def:connecteideal}
An ideal of a signed poset $P \subset \Bn$ (\resp $\Pc \subset \Cn$) will be called
\emph{connected} if it corresponds to a connected isotropic order ideal in
$\Ghat(P)$. Denote the set of connected order ideals by $\Jconn(P)$.
\end{defn}
See Figure~\ref{fig:extensibilityex} for an example of a signed poset where not every ideal is connected.

In~\cite{Reiner1993}, an order was defined on the ideals of a signed poset by
extending componentwise the order $0 < 1,-1$. This order on the ideals
corresponds to ordering the corresponding isotropic order ideals by inclusion.
As in the case of posets, this order gives a meet-semilattice of ideals, denoted, like the
set of ideals, $J(P)$. Figure~\ref{fig:jpexample} shows $J(P)$ for $P 
=\{+e_1-e_2,+e_1-e_3,+e_2+e_3,+e_1+e_3,+e_1\}$.

\begin{figure}
    \begin{center}
        \begin{tikzpicture}[every node/.style={font=\normalsize},scale=2]
            \node (0) at (0,0) {$(0,0,0)$};
            \node (1) at (0,1) {$(1,0,0)$};
            \node (12) at (-1,2) {$(1,1,0)$};
            \node (13) at (1,2) {$(1,0,1)$};
            \node (12m3) at (-2,3) {$(1,1,-1)$};
            \node (123) at (0,3) {$(1,1,1)$};
            \node (1m23) at (2,3) {$(1,-1,1)$};
            \tikzstyle{every node}=[font=\small]
            \draw (0)-- node[midway,left]{$+1$} (1);
            \draw (1)-- node[midway,below left]{$+2$}(12);
            \draw (1)-- node[midway,below right]{$+3$}(13);
            \draw (12)--node[midway,below left] {$-3$}(12m3);
            \draw (12)--node[midway,above left] {$+3$} (123);
            \draw (13)--node[midway,above right]{$+2$} (123);
            \draw (13)--node[midway,below right]{$-2$}(1m23);
        \end{tikzpicture}
    \end{center}
    \caption[{$J(P^\star)$ for $P^\star
        =\{+e_1-e_2,+e_1-e_3,+e_2+e_3,+e_1+e_3,+e_1\}$}]{$J(P^\star)$ for $P^\star 
=\{+e_1-e_2,+e_1-e_3,+e_2+e_3,+e_1+e_3,+e_1\}$ with the edges annotated with
the difference between the ideals}
    \label{fig:jpexample}
\end{figure}
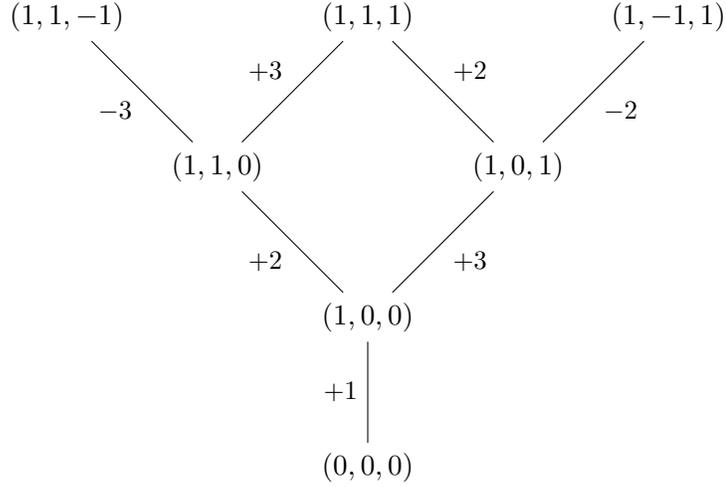

The definition of $P$-partition for a signed poset is the analogue of that for
a poset when a poset is viewed as an $\An$-poset. Recall that, from this
perspective, a $P$-partition is an $f \in \bN^n$ such that $\langle f,\alpha
\rangle \geq 0$ for all $\alpha \in P$. In~\cite{Reiner1993}, Reiner defined a
$P$-partition for a signed poset $P \subset \Bn$ to be an $f \in \bZ^n$ such
that $\langle f, \alpha \rangle \geq 0$ for all $\alpha \in P$.

One makes a similar pair of observations regarding the $P$-partitions as one did regarding ideals:
\begin{itemize}
		\item the projections of $\bN^n$ to the hyperplane $\sum_{i=0}^n x_i = 0$ in $\bR^n$ are the $\An$ coweights, and
		\item the $\bZ^n$ are the $\Bn$ coweights.
\end{itemize}

\begin{defn}\label{def:ppartition}
Suppose $P$ is a $\Phi$-poset. A $P$-partition is an $f$ in the coweight lattice of $\Phi$ such that $\langle f,\alpha \rangle \geq 0$ for all $\alpha \in P$.
The set of $P$-partitions is denoted $\cA(P)$.
\end{defn}

\section{Linear Extensions}
\begin{defn}
A \emph{linear extension} of a $\Phi$-poset $P$ is an element $w$ of the Weyl
group of $\Phi$ such that $P \subset w\Phi^+$. The set of linear extensions is
denoted $\cL(P)$.
\end{defn}

In the case of signed posets, the Weyl group consists of the signed permutations, as explained in
Section~\ref{sec:rootsystemdefs}. 
\begin{defn}
    A signed poset is said to be \emph{naturally
labelled} if the identity signed permutation is a linear extension. 
\end{defn}
Note that
since every signed poset has a linear extension (see the proof of~\cite[Theorem 3.3]{Reiner1993}), every signed poset is
isomorphic to a naturally labeled signed poset. Thus, it is not deceptive that
examples will usually be naturally labelled.

The linear extensions can be read off $J(P)$ by recording the difference between
successive ideals in the maximal chains of $J(P)$.  In the running example of
$P^\star$, these differences were noted in
Figure~\ref{fig:jpexample}. One sees that $P^\star$ is naturally labelled and the linear extensions are
\[
    \begin{pmatrix} 1 & 2& 3 \\
        1 & 2 & 3 \\
    \end{pmatrix} \quad
    \begin{pmatrix} 1 & 2 & 3 \\
        1 & 3 & 2 \\
    \end{pmatrix} \quad
    \begin{pmatrix}
        1 & 2& 3 \\
        1 & 2 & -3 \\
    \end{pmatrix} \quad
    \begin{pmatrix}
        1 & 2 & 3 \\
        1 & -2 & 3 \\
    \end{pmatrix}
\]

In~\cite{Reiner1993}, Reiner proved an analogue of Stanley's Fundamental
Theorem of $P$-partitions for $\Phi$-posets.

\begin{prop}[{\cite[Theorem 3.3]{Reiner1993}}]\label{prop:fundthmppartitions}
    Suppose $P \subset \Bn$ is a signed poset. Then
    \[
        \cA(P) = \bigsqcup_{w \in \cL(P)} \cA(w\Phi^+).
    \]
\end{prop}

\section{Embedding Posets as Signed Posets}\label{sec:embeddingtypea}

In~\cite{Reiner1993}, Reiner defined the embedding of a poset as a signed
poset. Recall that a poset $P$ on $[n]$ can be viewed as a $\An$-poset by
taking $P = \{e_i-e_j\colon i <_P j\}$. The poset $P$ is then embedded in $\Bn$
as the signed poset $P_B=P \cup \{+e_i\colon i \in [n]\} \cup\{+e_i+e_j \colon i,j
\in [n]\}$. Then $P_C=(P \cup \{+e_i\colon i \in [n]\} \cup\{+e_i+e_j \colon i,j
\in [n]\})^\smvee = P \cup \{+2e_i\colon i \in [n]\} \cup\{+e_i+e_j \colon i,j
\in [n]\}$. Call signed posets arising in this manner \emph{type A signed posets}.
The ideals of $P_B$ are precisely the ideals of $P$. Likewise, $P_B$ and $P$ share the same linear extensions (in the sense that any
linear extension of $P_B$ and of $P$ have the same action on $[n]$).

\begin{prop}\label{prop:ilessmitypea}
Suppose $P \subset \Bn$ (resp.\ $\Pc \subset \Cn$) is such that $i < -i$ in $\Ghat(P)$ for all $i \in [n]$. Then $P$ is a type A signed poset.
\end{prop}

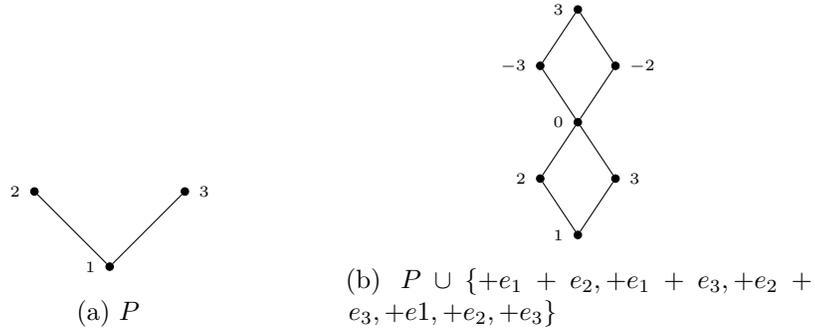
\begin{figure}[htbp]
\begin{center}
		\begin{subfigure}[b]{.4\textwidth}
				\begin{center}
				\begin{tikzpicture}
						\node[vertex] (1) at (0,0) [label=left:$1$] {};
						\node[vertex] (2) at (-1,1) [label=left:$2$] {};
						\node[vertex] (3) at (1,1) [label=right:$3$] {};
						\draw (1)--(2);
						\draw (1)--(3);
				\end{tikzpicture}
				\caption{$P$}
		\end{center}
		\end{subfigure}
		\begin{subfigure}[b]{.4\textwidth}
				\begin{center}
						\begin{tikzpicture}
						\node[vertex] (1) at (0,0) [label=left:$1$] {};
						\node[vertex] (2) at (-.5,.75) [label=left:$2$] {};
						\node[vertex] (3) at (.5,.75) [label=right:$3$] {};
						\node[vertex] (-3) at (-.5,2.25) [label=left:$-3$]{};
						\node[vertex] (-2) at (.5,2.25) [label=right:$-2$] {};
						\node[vertex] (-1) at (0,3) [label=left:$3$] {};
						\node[vertex] (0) at (0,1.5) [label=left:$0$] {};
						\draw (1)--(2);
						\draw (1)--(3);
						\draw (3)--(0)--(-3);
						\draw (2)--(0)--(-2);
						\draw (-2)--(-1);
						\draw (-3)--(-1);
						\end{tikzpicture}
						\caption{$P \cup
                            \{+e_1+e_2,+e_1+e_3,+e_2+e_3,+e1,+e_2,+e_3\}$}
				\end{center}
		\end{subfigure}
		\caption{The poset $P = \{e_1-e_2,e_1-e_3\}$ and its embedding as a
		signed poset}
		\label{fig:embeddingex}
\end{center}
\end{figure}
As an example, consider the posets shown in Figure~\ref{fig:embeddingex}.
$P$ is the intersection of $P \cup \{+e_1+e_2,+e_1+e_3,+e_2+e_3,+e1,+e_2,+e_3\}$ with the hyperplane $\{x \in \bR^n
\colon x_1+\cdots+x_n = 0 \}$.
The ideals of $P$ as a signed poset are precisely the ideals of $P$ as a poset.

\section{The Root and Weight Cones and Their Extreme Rays}\label{sec:dualcones}

Associated to a signed poset $P \subset \Bn$ (resp.\ $\Pc \subset \Cn$) are two
polyhedral cones: the \emph{root cone}, denoted $\Kprt$, and the \emph{weight
    cone}, denoted
$\Kpwt$.

\begin{defn}\label{def:rootandweightcones}
		Suppose $P \subset \Bn$ (\resp $\Pc \subset \Cn$) is a signed poset.
		Its \emph{root cone} is the positive linear span of its elements:
		$\Kprt = \bR_+ P$.

		Its \emph{weight cone} is the dual to the root cone: $\Kpwt = \{f \in \bR^n \colon \langle f,\alpha \rangle \geq 0\ \forall
\alpha \in P\}$.
\end{defn}
Observe that if $P \subset \Bn$ and $\Pc \subset \Cn$ are dual
posets, $P$ and $\Pc$ share the same root and weight cones. The definition of
Hasse diagram gives a characterization of the extreme rays of the root cone.

\begin{prop}\label{prop:kprtextremerays}
Suppose $P$ is a signed poset. Its root cone is an affine polyhedral cone whose extreme rays are given by the
edges of the Hasse diagram.
\end{prop}

Proposition~\ref{prop:kprtextremerays} generalizes the type A result
of~\citet*[Proposition 5.1(i)]{BoussicaultFerayLascouxReiner2012}.

\begin{prop}\label{prop:kprtgeneralisestypea}
Suppose $P \subset \Bn$ (\resp $\Pc \subset \Cn$) is a type A signed poset. Let
$P' = P \cap \An$ be the poset on $[n]$ which $P$ embeds. Then
\[
\Krt{P'} = \Kprt \cap \{ x \in \bR^n \colon x_1+\cdots+x_n = 0\},
\]
and furthermore, extreme rays of $\Krt{P'}$ are extreme rays of $\Kprt$.
\end{prop}

\begin{proof}
		Inspecting the definitions, one sees immediately that $\Krt{P'} \subset
		\Kprt \cap \{ x \in \bR^n \colon x_1+\cdots+x_n = 0\}$. 
		
		The extreme
		rays of $\Krt{P'}$ correspond to the covering relations of the Hasse
		diagram of $P'$. To show that each covering relation $\alpha \in P'$
		is an extreme ray of $\Kprt$, it suffices to check that there cannot
		exist $\beta,\gamma \in P$ such that $\gamma \in \Bn^+$ (\resp $\gamma
		\in \Cn^+$) and $\alpha = \beta + \gamma$. However, since $\alpha = e_i
		- e_j$, this is impossible.
\end{proof}

Characterizing the extreme rays of $\Kpwt$ requires an additional definition
and a lemma regarding ideals. 

\begin{defn}\label{def:extensible}
An ideal $J$ of $P$ is said to be
\emph{extensible} if there is a set $I \subset [n]$ such that $J \cup I$ and $J
\cup -I$ are both ideals of $P$. An ideal which is not extensible is called
\emph{non-extensible}.
\end{defn}

For example, in $P^\star$ (Figure~\ref{fig:fischerex}), the ideal $\{1,2\}$ is extensible  because both
$\{1,2,3\}$ and $\{1,2,-3\}$ are ideals of $P^*$. However, it is not obligatory
that $J \cup I$ and $J \cup -I$ be connected, nor that $J$ be connected. If one
considers instead $P = \{+e_1-e_2,+e_1+e_2,+e_1-e_3,+e_2-e_3,+e_1\}$ (see
Figure~\ref{fig:extensibilityex} for $\Ghat(P)$ and recall that when thinking
about ideals, one can always refer to $\Gchat(\Pc)$), the ideal $\{1,2\}$ is
extensible, as $\{1,2,3\}$ and $\{1,2,-3\}$ are ideals, though $\{1,2,-3\}$ is
not connected. Similarly, $\{1,3\}$ is extensible and disconnected.

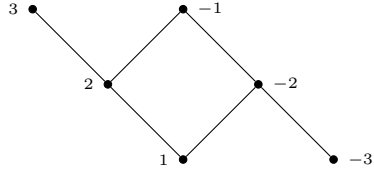
\begin{figure}
    \begin{center}
        \begin{tikzpicture}
            \node[vertex] (1) at (0,0) [label=left:$1$] {};
            \node[vertex] (-3) at (2,0) [label=right:$-3$] {};
            \node[vertex] (2) at (-1,1) [label=left:$2$] {};
            \node[vertex] (-2) at (1,1) [label=right:$-2$] {};
            \node[vertex] (3) at (-2,2) [label=left:$3$] {};
            \node[vertex] (-1) at (0,2) [label=right:$-1$] {};
            \draw (1)--(2)--(3);
            \draw (2)--(-1)--(-2)--(1);
            \draw (-3)--(-2);
        \end{tikzpicture}
    \end{center}
    \caption{$\Ghat(P)$ for $P = \{+e_1-e_2,+e_1+e_2,+e_1-e_3,+e_2-e_3,+e_1\}$}
    \label{fig:extensibilityex}
    \label{fig:extvssimpleext}
\end{figure}

\begin{prop}\label{prop:kpwtextremerays}
Suppose $P$ is a signed poset. Its weight cone $\Kpwt$ is an affine polyhedral
cone whose extreme rays correspond to the connected, non-extensible ideals of
$P$.
\end{prop}

Proposition~\ref{prop:kpwtextremerays} generalizes~\cite[Proposition
5.1(ii)]{BoussicaultFerayLascouxReiner2012}.

\begin{prop}\label{prop:kpwtgeneralisestypea}
Suppose $P \subset \Bn$ (\resp $\Pc \subset \Cn$) is a type A signed poset. Let
$P' = P \cap \An$. Then $\Kpwt = \Kwt{P'}$.
\end{prop}

\begin{proof}
It suffices to show that the ideals of $P'$ are precisely the ideals of $P$.
(If one is considering $\Pc \subset \Cn$, let $P$ be the dual poset.) Certainly
ideals of $P$ are ideals of $P'$ since $P' \subset P$. Suppose $f$ is an ideal
of $P'$.
meaning $\langle f,\alpha \rangle \geq 0$ for all $\alpha \in \Bn^+$. Since $P
= P' \cup \Bn^+$, one then has that $f$ is an ideal of $P$. Thus $\Kpwt =
\Kwt{P'}$.
\end{proof}

Figure~\ref{fig:rootweightconeex} shows the root and weight cones of
$\{+e_1-e_2,+e_1+e_2,+e_1-e_3,+e_2-e_3,+e_1\}$. 

\begin{figure}
    \begin{subfigure}[b]{.5\linewidth}
\begin{center}
    \tdplotsetmaincoords{60}{100}
    \begin{tikzpicture}[tdplot_main_coords,scale=2]
        \coordinate (O) at (0,0,0);
\draw[thick,->] (0,0,0) -- (0,1,0) node[anchor=north west]{$y$};
\draw[thick,<->] (0,0,1)  node[anchor=south]{$z$} -- (0,0,-1);
        \coordinate (1) at (1,-1,0);
        \coordinate (2) at (1,1,0);
        \coordinate (3) at (0,1,-1);
        \fill[gray!20] (O)--(1)--(3)--cycle;
        \fill[gray!20] (O)--(2)--(3)--cycle;
        \draw[blue,->] (O)--(3) node[black,anchor=north west] {$(1,-1,0)$};
        \fill[gray!20] (O)--(1)--(2)--cycle;
        \draw[blue,->] (O)--(1) node[black,anchor=east] {$(0,1,-1)$};
        \draw[blue,->] (O)--(2) node[black,anchor=south west] {$(1,1,0)$};
        \draw[dashed] (1)--(3)--(2)--(1);
        \draw[thick,->] (0,0,0) -- (2,0,0) node[anchor=north east]{$x$};
    \end{tikzpicture}
\end{center}
\caption{$\Kprt$}
\end{subfigure}
\begin{subfigure}[b]{.45\linewidth}
    \begin{center}
    \tdplotsetmaincoords{60}{100}
        \begin{tikzpicture}[tdplot_main_coords,scale=2]
            \coordinate (O) at (0,0,0);
            \draw[thick,<->] (0,-1,0) -- (O) -- (0,1,0) node[anchor=north west]{$y$};
            \draw[thick,<->] (0,0,1)  node[anchor=south]{$z$} -- (O)-- (0,0,-1);
            \draw[thick,->] (0,0,0) -- (2,0,0) node[anchor=north east]{$x$};
            \coordinate (1) at (1,1,1);
            \coordinate (2) at (1,1,-1);
            \coordinate (3) at (1,-1,-1);
            \fill[gray!20] (O)--(2)--(3)--cycle;
            \fill[gray!20] (O)--(1)--(2)--cycle;
            \draw[blue,->] (O)--(2) node[black,anchor=north west] {$(1,1,-1)$};            
            \fill[gray!20] (O)--(1)--(3)--cycle;
            \draw[blue,->] (O)--(1) node[black,anchor=south west] {$(1,1,1)$};
            \draw[blue,->] (O)--(3) node[black,anchor=north east] {$(1,-1,-1)$};
            \draw[dashed] (1)--(3)--(2)--(1);
        \end{tikzpicture}
    \end{center}
    \caption{$\Kpwt$}
\end{subfigure}
\caption{The root cone and weight cone of
    $\{+e_1-e_2,+e_1+e_2,+e_1-e_3,+e_2-e_3,+e_1\}$}
\label{fig:rootweightconeex}
\end{figure}
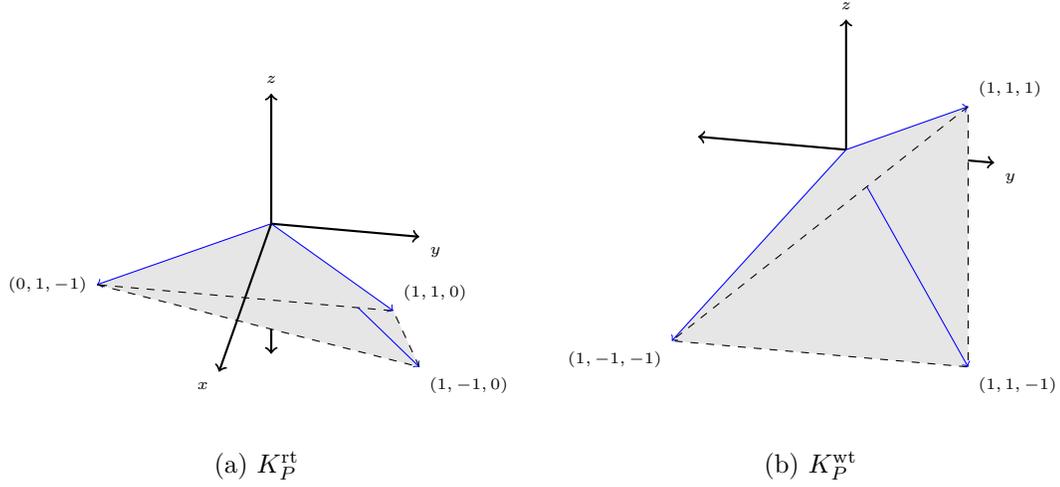

The proof of Proposition~\ref{prop:kpwtextremerays} requires the following
lemma.

\begin{lem}\label{lem:lincombideals}
  Suppose $P \subset \Bn$ is a signed poset and  $f=(f_1,f_2,\ldots,f_n) \in \Kpwt$. Let $c_1\leq c_2 \leq \cdots \leq c_k$ be the \emph{nonzero} $|f_i|$ arranged in increasing order. Then 
  \[
      J_k=\{\sgn(f_i)i \colon i \in [n], |f_i| \geq c_k\}
  \]
  is an ideal.
\end{lem}

As an example, consider the signed poset in
Figure~\ref{fig:extvssimpleext}. One has $c_1=1$, $c_2=2$ and $c_3=3$. Writing
$J_1,J_2,J_3$ in a tower as shown in~\cite{Reiner1993}, one has
\[
\begin{array}{lr}
		J_3 & (1,0,\phantom{-}0) \\
		J_2 & (1,1,\phantom{0}0) \\
		J_1 & +(1,1,-1) \\
\hline
	f	&(3,2,-1)
\end{array}
\]
\begin{defn}\label{def:signedsupport}
The ideal $J_1$ from Lemma~\ref{lem:lincombideals} is the \emph{signed support} of $f$, denoted $\ssupp(f)$.
\end{defn}

\begin{proof}
  Suppose $J_k$ is not an ideal, so it is does not correspond to an order ideal
  in $\widehat{G}(P)$ (it is isotropic by construction). Then there is a $\pm k$ such that $\pm i \in J_k$ with $\pm k < \pm i$. Since $\pm k \notin J_k$, $|f_i| \geq c_k > |f_k|$. One must consider several cases, one for each of the possible combinations of signs of $\pm i$ and $\pm k$.
  \begin{itemize}
	\item \textbf{Suppose $+k < +i$.} Then $e_k - e_i \in P$. Then $\langle f,e_k-e_i\rangle=f_k-f_i \leq |f_k|-f_i<0$, a contradiction.
	\item \textbf{Suppose $-k < +i$.} Then $-e_k-e_i \in P$. Then $\langle f, -e_k-e_i\rangle = -f_k - f_i < 0$, since $0<|f_k|<f_i$, a contradiction.
	\item \textbf{Suppose $+k < -i$.} Then $e_k+e_i \in P$. In this case, $f_i <0$ and $|f_i|>|f_k|$. Then $\langle f, e_k+e_i \rangle = f_k + f_i= f_k - |f_i|<0$, a contradiction.
	\item \textbf{Suppose $ -k < -i$.} Then $e_i-e_k \in P$. Then $\langle f, e_i-e_k \rangle = f_i - f_k \leq -|f_i|+|f_k| < 0$, a contradiction.
  \end{itemize}
  Thus, $J_k$ must be closed under going down in $\widehat{G}(P)$. Recall that,
  it is isotropic by construction, so $J_k$ is an ideal of $P$.
\end{proof}

\begin{proof}[Proof of Proposition~\ref{prop:kpwtextremerays}]
First, one shows that $\Kpwt$ is spanned by the ideals of $P$, then that the
connected ideals suffice to span $\Kpwt$ and, finally, that the connected nonextensible ideals suffice.

Suppose $f=(f_1,\ldots,f_n) \in \Kpwt$ and $c_i$ and $J_i$ are as in Lemma~\ref{lem:lincombideals}.
Then
\[
	f = \sum_{k=1}^n (c_k-c_{k-1})\chi_{J_k},
\]
taking $c_0=0$. The $J_k$ can be decomposed into their connected components,
showing $\Kpwt$ is spanned by the connected ideals of $P$. Since there are
only finitely many ideals, the definition of extensibility means every extensible ideal can be written as a positive linear combination of connected nonextensible ideals. Thus, the nonextensible connected ideals span $\Kpwt$.

To show that the nonextensible connected ideals of $P$ are the extreme rays of
$\Kpwt$, one can show that every such ideal lies in the intersection of $n-1$
(linearly independent) hyperplanes, each supporting $\Kpwt$. Let $J$ be a
connected nonextensible ideal and consider a spanning tree $T$ of its Hasse diagram (as a subposet of $\widehat{G}(P)$). Then $J$ lies in the intersection of the following $n-1$ hyperplanes:
\[
	\begin{array}{ll}
			x_i=0 & \text{for } \pm i \notin J \\
			x_i=x_j & \text{for }+i \lessdot +j \text{ or } -j \lessdot -i \in T\ (+e_i-e_j \in P) \\
			x_i=-x_j & \text{for } +i \lessdot -j \text{ or } -i \lessdot +j \in T\ (\pm(e_i+e_j) \in P)
	\end{array}
\]
The above could, \emph{a priori}, fail to specify $n-1$ hyperplanes in one of
two ways. First, it may specify both $x_i=x_j$ and $x_i=-x_j$ as hyperplanes,
which really would specify only the hyperplane $x_i=0$. However, this situation
cannot arise since $J$ is isotropic. The second possible problem is that a
single hyperplane is specified by two different edges in $T$. Again, this is
not possible since $J$ is isotropic, so for each pair $(i,j)$ there can only be
one edge in $T$ involving $\pm i$ and $\pm j$. Thus the above truly specifies
$n-1$ linearly independent hyperplanes supporting $\Kpwt$ such that $J$ lies in their intersection, meaning $J$ is an extreme ray of $\Kpwt$, as claimed.
\end{proof}

\subsection{Dual Characterizations}\label{subsec:dualcones}

Since the root cone and weight cone are dual to one another, one should have
that a characterization of when $\Kprt$ is pointed gives a characterization of
when $\Kpwt$ is full-dimensional and vice versa. Equally, characterizing when
$\Kprt$ is simplicial gives a characterization of when $\Kpwt$ is simplicial
and vice versa.

As a consequence of the antisymmetry condition in the definition of signed
poset, $\Kprt$ is always pointed, meaning $\Kpwt$ is always full-dimensional.
One can also exhibit $n$ linearly independent ideals of a signed poset
$P$.

\begin{lem}
    Suppose $P \subset \Bn$ (resp.\ $\Pc \subset \Cn$) is a signed poset. For
    each $i \in [n]$, at least one of $I_\leq(i)=\{j \in \widehat{G}(P)
    \colon j \leq i\}$ and $I_\leq(-i)=\{j \in \widehat{G}(P) \colon j \leq -i\}$ is isotropic.
\end{lem}

This lemma introduces some notation that will be used again. In addition to $I_\leq(i)$, let $I_<(i) = \{i \in \Ghat(P)\colon j < i\}$.

\begin{proof}[Proof of Lemma 3.4.4]
    Suppose not and neither $I_\leq(i)$ nor $I_\leq(-i)$ is isotropic. Then
	there are $j$ and $k$, not necessarily distinct, such that $j,-j < i$ and
    $k,-k < -i$. Then, since $j < i$, one has that $-i < -j$ and since $-j <
    -i$ one has that $-i < j$. Transitivity means that $-i < i$. On the other
    hand, since $k < -i$, one has that $i < -k$ and since $-k < -i$ one has
    that $i < k$. Then $i < -i$. It is impossible that both $i < -i$ and $-i <
    i$, so at least one of $I_\leq(i)$ and $I_\leq(-i)$ is isotropic.
\end{proof}

\begin{prop}\label{prop:kpwtfulldim}
    Suppose $P\subset \Bn$ (resp.\ $\Pc \subset \Cn$) is a signed poset. Then $\Kpwt$ is full-dimensional.
\end{prop}

\begin{proof}
    Fix a linear extension, $\prec$, of $\widehat{G}(P)$. For each $i$, pick an ideal $J_i$ as follows:
        \[
            J_i =\begin{cases}
                I_\leq(i) & \text{if } I_\leq(-i) \text{ is not isotropic} \\
                I_\leq(-i) & \text{if } I_\leq(i) \text{ is not isotropic} \\
                I_\leq(i) & \text{if both are isotropic and } i\prec -i \\
                I_\leq(-i) & \text{if both are isotropic and } -i \prec i \\
        \end{cases}
        \]
		The key observation is that $J_i$ contains only elements of $\pm [n]$ that precede $i$ in the $\prec$ order. Writing the $J_i$ as the rows of a matrix (ordered by $\prec$), one has a lower triangular matrix with $\pm 1$ on the diagonal, meaning the $J_i$ are linearly independent. Since $\Kpwt$ contains $n$ linearly independent vectors, it must be full-dimensional.
\end{proof}

\subsection{Matroids}\label{subsec:matroids}

In contrast to Proposition~\ref{prop:kpwtfulldim}, the root cone is not always full-dimensional and the weight
cone is not always pointed. Understanding when the root cone is
full-dimensional, the weight cone is pointed and when both the root cone and
weight cone are simplicial requires some facts from the theory of signed
graphic matroids developed by~\citet{Zaslavsky1982}. This section will review the two equivalent definitions of a matroid that will be useful.

\begin{defn}\label{def:matroidindset}
A \emph{matroid} is a pair $(E,\mathcal{I})$ where $E$ is a finite set called
the \emph{ground set} and $\mathcal{I}$ is a collection of subsets called the
\emph{independent sets} of $E$ such that
\begin{enumerate}[label=(\alph{*})]
		\item $\mathcal{I}$ is nonempty.
		\item If $I \in \mathcal{I}$ and $J \subset I$, then $J \in \mathcal{I}$.
		\item If $I,J \in \mathcal{I}$ and $|I|=|J|+1$, there is an $x \in I \smallsetminus J$ such that $J \cup \{x\} \in \mathcal{I}$.
\end{enumerate}
 Subsets of $E$ which are not independent are said to be \emph{dependent}.
\end{defn}
That the elements of $\mathcal{I}$ are called the independent sets is not a
coincidence---these are the properties defining the collection of sets of linearly independent columns of a matrix. 

Whitney introduced matroids in~\cite{Whitney1935} and gave a number of equivalent definitions. Rather than defining a matroid as a pair $(E,\mathcal{I})$, one can define a matroid as a pair $(E,\mathcal{C})$, where $\mathcal{C}$ is the collection of \emph{circuits}, a collection of subsets of $E$ such that
\begin{enumerate}[label=(\alph{*})]
		\item If $I \in \mathcal{C}$ and $J$ is a proper subset of $I$. Then $J \notin \mathcal{C}$.
		\item If $C_1,C_2 \in \mathcal{C}$, $x \in C_1 \cap C_2$ and $y \in C_1 \smallsetminus C_2$, then there is a $C_3 \in \mathcal{C}$ with $y \in C_3$ and $x \notin C_3$.
\end{enumerate}

One can show that these two definitions are equivalent by taking the circuits to be
the minimal dependent sets. (See~\citet[(6.13)]{Aigner1979}.)

\subsection{When the root cone is full-dimensional and simplicial}\label{subsec:rootconefulldim}
Characterization of when the root cone is
full-dimensional as well as when it is simplicial relies on the notion of
balance in a signed graph. (Signed graphs were discussed in
Section~\ref{sec:representingposets}.) Recall that the Hasse diagram of a
signed poset is an oriented signed graph. Denote the underlying signed graph by
$\Sigma_P$.

\begin{defn}\label{def:balanced}
A cycle in a signed graph is said to be \emph{balanced} if it has an even
number of edges whose sign is $-$. A cycle containing an odd number of edges
with sign $-$ is said to be \emph{unbalanced}. If all cycles in a signed graph
are balanced, the graph itself is said to be balanced.
\end{defn}

Like unsigned graphs, signed graphs have an associated matroid, introduced
by~\citet{Zaslavsky1982}.
\begin{defn}\label{def:signedgraphicmatroid}
    Suppose $\Sigma$ is a signed graph. The \emph{signed graphic matroid}
$\Gamma(\Sigma)$ is the matroid whose circuits are the balanced cycles of
$\Sigma$ and pairs of unbalanced cycles joined by a (possibly empty) path.
\end{defn}

Figure~\ref{fig:matroidex} shows the Hasse diagram of $P =
\{+e_1-e_2,+e_3-e_2,+e_1+e_3,e_3-e_5,e_3-e_4,-e_4-e_5,+e_6-e_4,+e_6-e_5\}$ and
the signed graph it orients, $\Sigma_P$. There is one balanced cycle $3-4-6-5$ and two
pairs of unbalanced cycles joined by a path: $1-2-3$ and $3-4-5$ are joined by
the empty path and $1-2-3$ and $4-5-6$ are joined either by $3-4$ or $3-5$.

\begin{figure}
    \begin{center}
        \begin{subfigure}[b]{.4\textwidth}    
    \begin{center}
        \begin{tikzpicture}[every node/.style={font=\normalsize},scale=1.5,
            incidence/.style={font=\tiny}]
            \node[circle,draw] (1) at (0,0) {$1$};
            \node[circle,draw] (2) at (0,2) {$2$};
            \node[circle,draw] (3) at (1,1) {$3$};
            \node[circle,draw] (4) at (2,0) {$4$};
            \node[circle,draw] (5) at (2,2) {$5$};
            \node[circle,draw] (6) at (3,1) {$6$};
            \draw (1)--node[very near start,incidence,left]{$+$}
           node[very near end, incidence, left]{$-$} (2);
            \draw (1)-- node[very near start,incidence,right]{$+$}
            node[very near end, incidence, below]{$+$}(3);
            \draw (2)-- node[very near start,incidence,above] {$-$} node[very
            near end, incidence, above] {$+$}(3);
            \draw (3)-- node[very near start,incidence,below]{$+$} node[very
            near end,incidence,left]{$-$}(4);
            \draw (3)-- node[very near start, incidence, anchor=south east]
            {$+$} node[very near end,incidence,left] {$-$}(5);
            \draw (4)-- node[very near start,incidence,left] {$-$} node[very
            near end, incidence,left] {$-$}(5);
            \draw (4)-- node[very near start,incidence,above]{$-$} node[very
            near end,incidence,left] {$+$}(6);
            \draw (5)-- node[very near start,incidence,right] {$-$} node[very
            near end, incidence,above] {$+$} (6);
        \end{tikzpicture}
    \end{center}
    \caption{Hasse diagram}
\end{subfigure}
\begin{subfigure}[b]{.4\textwidth}
    \begin{center}
        \begin{tikzpicture}[every node/.style={font=\normalsize},scale=1.5,
            incidence/.style={font=\tiny}]
            \node[circle,draw] (1) at (0,0) {$1$};
            \node[circle,draw] (2) at (0,2) {$2$};
            \node[circle,draw] (3) at (1,1) {$3$};
            \node[circle,draw] (4) at (2,0) {$4$};
            \node[circle,draw] (5) at (2,2) {$5$};
            \node[circle,draw] (6) at (3,1) {$6$};
            \draw (1)-- node[midway,left,incidence] {$+$} (2);
            \draw (1)-- node[midway,right,incidence] {$-$} (3);
            \draw (2)-- node[midway,right,incidence] {$+$} (3);
            \draw (3)-- node[midway,below,incidence] {$+$} (4);
            \draw (3)-- node[midway,left,incidence] {$+$} (5);
            \draw (4)-- node[midway,right,incidence] {$-$} (5);
            \draw (4)-- node[midway,right,incidence] {$+$}(6);
            \draw (5)-- node[midway,right,incidence] {$+$}(6);
        \end{tikzpicture}
    \end{center}
    \caption{Associated signed graph}
\end{subfigure}
\caption{Hasse diagram of $P =
\{+e_1-e_2,+e_3-e_2,+e_1+e_3,e_3-e_5,e_3-e_4,-e_4-e_5,+e_6-e_4,+e_6-e_5\}$ and
associated signed graph}
    \label{fig:matroidex}
\end{center}
\end{figure}
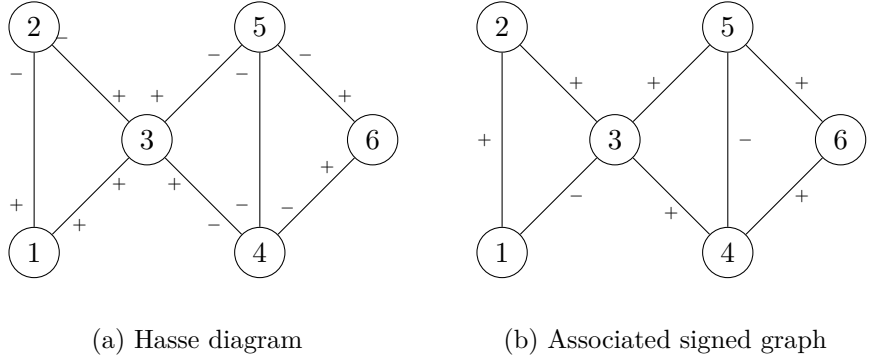

On the other hand, any set of vectors can be used to define a matroid by
taking the independent sets to be the linearly independent subsets, so the
extreme rays of $\Kprt$, i.e.\ the edges of the Hasse diagram of $P$, define a
matroid, call it $\cM_P$. Thus, one has two matroids arising from the edges
of the Hasse diagram of $P$. However, it is a consequence of a result of
Zaslavsky~\cite{Zaslavsky1982} that these two matroids are the same.

Zaslavsky also defines the \emph{incidence matrix} $M(\Sigma)$ of a signed
graph.

\begin{defn}\label{def:incidencematrix}
    Suppose $\Sigma$ is a signed graph. The \emph{incidence matrix} of $\Sigma$
    is the matrix $M(\Sigma)$ whose columns $M_e$ are indexed by the edges of $\Sigma$
and 
\begin{center}
\begin{tabular}{ll}
if $e = (v,w)$ is an edge, & $m_{ve}= \pm 1$ and $m_{we} = -\sigma(e)m_{ve}$,\\
if $e = (v,v)$ is a loop, & $m_{ve} = 0$ if $\sigma(e) = +$ and $m_{ve} = \pm 2$ if
$\sigma(e) = -$, \\
if $e$ and $v$ not incident, & $m_{ve} = 0$.
\end{tabular}
\end{center}
\end{defn}
Note that these $M_e$ are the elements of a signed poset $\Pc
\subset \Cn$ corresponding to the edges of the Hasse diagram. (Dividing
$m_{ve}$ by two for a loop in type B also does not alter the matroid.) For the
poset $P$ in Figure~\ref{fig:matroidex}, one
has
\[
M(\Sigma_P) =
\begin{tikzpicture}[baseline = (M.center),every
    node/.style={font=\normalsize},every left
    delimiter/.style={xshift=1ex},every right delimiter/.style={xshift=-1ex}]
    \matrix (M) [matrix of math nodes,left delimiter={(},right %
    delimiter={)},nodes={anchor=east}]{
 1 & 0 & \phantom{-}1 & 0 & 0 & 0 & 0 & 0 \\
 -1 & -1 & 0 & 0 & 0 & 0 & 0 & 0 \\
 0 & 1 & 1 & 1 & 1 & 0 & 0 & 0 \\
 0 & 0 & 0 & 0 & -1 & -1 & -1 & 0 \\
 0 & 0 & 0 & -1 & 0 & -1 & 0 & -1 \\
 0 & 0 & 0 & 0 & 0 & 0 & 1 & 1 \\
    };
\node[anchor=south east] (cornernode) at (M-1-1.north west) {}; 
\node[rotate=90,anchor=west] at (cornernode -| M-1-1.center) {$+e_1-e_2$};
\node[rotate=90,anchor=west] at (cornernode -| M-1-2.center) {$-e_2+e_3$};
\node[rotate=90,anchor=west,yshift=-1ex] at (cornernode -| M-1-3) {$+e_1+e_3$};
\node[rotate=90,anchor=west] at (cornernode -| M-1-4) {$+e_3-e_5$};
\node[rotate=90,anchor=west] at (cornernode -| M-1-5) {$+e_3-e_4$};
\node[rotate=90,anchor=west] at (cornernode -| M-1-6) {$-e_4-e_5$};
\node[rotate=90,anchor=west] at (cornernode -| M-1-7) {$-e_4+e_6$};
\node[rotate=90,anchor=west] at (cornernode -| M-1-8) {$-e_5+e_6$};
\end{tikzpicture}
\]

That the matroid formed by the columns of $M(\Sigma_P)$ and the signed graphic matroid
of $\Sigma_P$ are the
same is a consequence of the observation that the $M_e$ correspond to elements
of the signed poset and the following result of Zaslavsky.
\begin{thm}[{\cite[Theorem 8B.1]{Zaslavsky1982}}]\label{thm:matroidsequiv}
    Let $\Sigma$ be a signed graph and $f \colon E \to \bR^n$ be the mapping
    $f(e) = M_e$. The matroid structure induced on $E$ by the dependencies
    among the vectors $f(e)$ is precisely that of the signed graphic matroid
    $\Gamma(\Sigma)$.
\end{thm}

In other words, the circuits of $\Gamma(\Sigma)$, i.e.\ balanced cycles and pairs of unbalanced cycles joined by
a path in $\Sigma_P$, correspond linearly dependent subsets of $P$ which are
minimal with respect to inclusion. This allows one to
characterize when the root cone is full-dimensional and simplicial.

\begin{prop}\label{prop:kprtfulldim}
Suppose $P \subset \Bn$ (resp.\ $\Pc \subset \Cn$) is a signed poset. Its root
cone is full-dimensional if and only if each connected component of $\Sigma_P$ contains an unbalanced cycle.
\end{prop}

The proof requires an alternate characterization of balance due to
Harary~\cite{Harary1953}.

\begin{thm}[{\cite[Theorem 3]{Harary1953}}]\label{thm:hararybalance}
A signed graph is balanced if and only if its vertices can be partitioned into
two sets $V_+$ and $V_-$ such that every edge of the graph labeled $+$ joins two
vertices both in either $V_+$ or $V_-$ and every edge of the graph labeled $-$
joints a vertex in $V_+$ to one in $V_-$.
\end{thm}

\begin{proof}[Proof of Proposition~\ref{prop:kprtfulldim}]
   Suppose $\Sigma_P$ contained a component $K$ which is balanced. Then from the theorem of Harary, there is a partition of vertices of $K$ into $V_+(K)$ and $V_-(K)$. Let $f=(f_1,\ldots,f_n) \in \bR^n$ be defined by
    \[
        f_i= \begin{cases}
            1 & v_i \in V_+(K) \\
            -1 & v_i \in V_-(K) \\
            0 & v_i \notin K
        \end{cases}
    \]
    Recall that an edge of $\Sigma$ labeled $+$ corresponded to $e_i-e_j$,
    meaning $f_i=f_j=\pm1$, so $\langle f, e_i-e_j \rangle =0$. Similarly, an
    edge of $\Sigma$ labeled $-$ corresponds to $\pm (e_i+e_j)$, with
    $f_i=-f_j$, so $\langle f, \pm(e_i+e_j)\rangle =0$. Then one has that
    $\langle f, \alpha \rangle=\langle -f ,\alpha \rangle =0$ for all $\alpha
    \in P$. In other words, $\Kprt$ lies in $f^\perp$, so cannot be full-dimensional.
    
     In the other direction, suppose every connected component of $\Sigma_P$
	 contains an unbalanced cycle. To show that $\Kprt$ is full-dimensional it suffices to show there are $k$ edges corresponding to $k$ linearly independent elements of $P$ in each connected component of $\Sigma_P$ with $k$ vertices. Suppose $K$ is a connected component of $\Sigma_P$ with $k$ vertices. 
	 A spanning tree of $K$ has $k-1$ edges corresponding to $k-1$ linearly independent elements of $P$. Adding any other edge of $K$ would create a cycle. 
	 Imagine that an edge $e$ is added to $K$ to create a cycle. The only way this edge could fail to correspond to a $k$th linearly independent element of $P$ is if the edge ``completes'' both the unbalanced cycle and a balanced cycle. However, this would mean the first $k-1$ edges formed a cycle, which is impossible. Thus, $K$ contains edges corresponding to $k$ linearly independent elements of $P$, so $\Sigma_P$ has $n$ such edges, so $\Kprt$ is full-dimensional.
\end{proof}

Note that the $f$ found in the proof of Proposition~\ref{prop:kprtfulldim} is
an ideal such that $-f$ is also an ideal. In other words, the proof also shows that
when $\Kprt$ is not full-dimensional, $\Kpwt$ is not pointed.

\begin{prop}\label{prop:kpwtpointed}
    Suppose $P \subset \Bn$ (resp.\ $\Pc \subset \Cn$) is a signed poset. $\Kpwt$ is pointed if and only if no connected component of $\widehat{G}(P)$ is isotropic.
\end{prop}
\begin{proof}
Suppose $J \subset \widehat{G}(P)$ is a connected component of $\widehat{G}(P)$ which is isotropic. Then $-J$ must also be a connected component of $\widehat{G}(P)$, which is isotropic and thus an ideal. Consequently, $\Kpwt$ is not pointed.

In the other direction, suppose $J$ and $-J$ are both isotropic ideals in
$\widehat{G}(P)$, i.e.\ $\Kpwt$ is not pointed. Suppose $i \in J$ (so $-i \in
-J$) and $i < j$. Then, from the definition of $\widehat{G}(P)$, one must have $-j < -i$, meaning $-j \in -J$, so $j \in J$. Thus $J$ is an order filter. An ideal which is also an order filter must be an entire connected component of the Hasse diagram, meaning $\widehat{G}(P)$ has a connected component which is isotropic, namely $J$.
\end{proof}
  
\begin{prop}\label{prop:isotropiccompimpliesbalanced}
    Suppose $P \subset \Bn$ (\resp $\Pc \subset \Cn$) is a signed poset.
If $\widehat{G}(P)$ contains an isotropic connected component, then $\Sigma_P$ contains a balanced connected component.
\end{prop}

\begin{proof}
 Suppose $J \subset \widehat{G}(P)$ is an isotropic connected component. Then $\Sigma_P$ has a connected component whose vertices are $|i|$ for $i \in J$, call it $K$. Then the vertices of $K$ can be partitioned into $V_+$ and $V_-$ by the following rule:
    \[
        V_+=\{ i \colon i \in J\} \quad\text{and}\quad V_-=\{i \colon -i \in J\}.
    \]
    Checking the definition of $\widehat{G}(P)$, one sees that edges labeled $+$ in $\Sigma_P$ correspond to covering relations between two positive or two negative elements of $\widehat{G}(P)$ and edges labeled $-$ correspond to covering relations between one positive and one negative element in $\widehat{G}(P)$. In other words, the $V_+$ and $V_-$ coming from $J$ satisfy the requirements of Theorem~\ref{thm:hararybalance}, so $K$ is a balanced component of $\Sigma_P$.
\end{proof}

    As an example, consider $P = \{+e_1+e_2,+e_2-e_3,-e_3-e_4,+e_1-e_4\}$, with
    $\Sigma_P$ and $\Ghat(P)$ are shown in Figure~\ref{fig:balanceex}. One sees
    that $\Kprt$ is not full-dimensional as $\Sigma_P$ contains a balanced
    cycle. Additionally, one sees that $\Ghat(P)$ contains connected components
    which are isotropic. The corresponding partition of the vertices of
    $\Sigma_P$ is $\{1,4\}\sqcup\{2,3\}$ and one sees that all elements of $P$
    lie in the hyperplane perpendicular to $(1,-1,-1,1)$, meaning both $(1,-1,-1,1)$
    and $(-1,1,1,-1)$ are ideals, so $\Kpwt$ is not pointed.

    \begin{figure}
        \begin{center}
            \begin{subfigure}[b]{.4\textwidth}
        \begin{center}
            \begin{tikzpicture}[scale=2,every
                node/.style={font=\normalsize},pm/.style={font=\tiny}]
                \node[circle,draw] (1) at (0,0) {$1$};
                \node[circle,draw] (2) at (1,0) {$2$};
                \node[circle,draw] (3) at (1,-1) {$3$};
                \node[circle,draw] (4) at (0,-1) {$4$};
                \draw (1)-- node[midway,above,pm] {$-$} (2) -- node[midway,
                right,pm] {$+$} (3) -- node[midway,below,pm] {$-$} (4) --
                node[midway,left,pm] {$+$} (1);
            \end{tikzpicture}
        \end{center}
        \caption{$\Sigma_P$}
    \end{subfigure}
    \begin{subfigure}[b]{.5\textwidth}
        \begin{center}
            \begin{tikzpicture}[scale=2]
                \node[vertex] (-3) at (0,0) [label=left:$-3$] {};
                \node[vertex] (1) at (1,0) [label=right:$1$] {};
                \node[vertex] (-2) at (0,1) [label=left:$-2$] {};
                \node[vertex] (4) at (1,1) [label=right:$4$] {};
                \node[vertex] (2) at (2,0) [label=left:$2$] {};
                \node[vertex] (-4) at (3,0) [label=right:$-3$] {};
                \node[vertex] (3) at (2,1) [label=left:$3$] {};
                \node[vertex] (-1) at (3,1) [label=right:$-1$] {};
                \draw (-3)--(-2)--(1)--(4)--(-3);
                \draw (2)--(3)--(-4)--(-1)--(2);
            \end{tikzpicture}
        \end{center}
        \caption{$\Ghat(P)$}
    \end{subfigure}
\end{center}
    \caption{$\Sigma_P$ and $\Ghat(P)$ for $P =
        \{+e_1+e_2,+e_2-e_3,-e_3-e_4,+e_1-e_4\}$}
    \label{fig:balanceex}
    \end{figure}
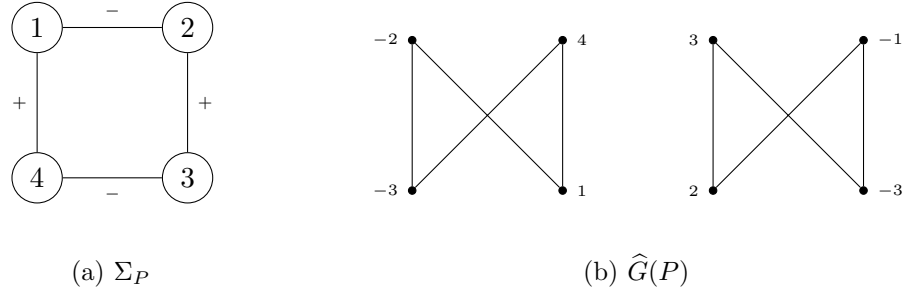

\begin{prop}\label{prop:simplicialcones}
    Suppose $P \subset \Bn$ is a signed poset. The cones $\Kprt$ and $\Kpwt$ are
    simplicial if and only if $\Sigma_P$ does not contain a balanced cycle or
    two unbalanced cycles joined by a path.
\end{prop}

\begin{proof}
From Theorem~\ref{thm:matroidsequiv}, one has that $\Kprt$ is simplicial if and
only if $\Gamma(\Sigma_P)$ contains no dependent sets, as dependent sets in
$\Gamma(\Sigma_P)$ correspond precisely to dependent sets of extreme rays of
$\Kprt$. Therefore, $\Kprt$ is simplicial if and only if $\Gamma(\Sigma_P)$
contains no circuits, i.e.\ no balanced cycles and no pairs of unbalanced
cycles joined by a path. One then has that this characterizes when $\Kpwt$ is
simplicial as well, since $\Kprt$ and $\Kpwt$ are either both simplicial or
both not simplicial as they are dual cones.
\end{proof}

\begin{cor}\label{cor:fulldimpointedsimplicial}
Suppose $P \subset \Bn$ (\resp $\Pc \subset \Cn$) is a signed poset. Its root
cone, $\Kprt$, and weight cone $\Kpwt$ are pointed, full-dimensional and
simplicial if and only if
\begin{itemize}
    \item every connected component of $\Sigma_P$ contains and unbalanced
        cycle, and
    \item $\Sigma_P$ contains a balanced cycle nor two unbalanced cycles joined
        by a path.
\end{itemize}
\end{cor}

\section{Two Rational Functions}\label{sec:rationalfunctions}
Recall from the introduction that associated to each signed poset is a pair of rational functions which are sums
over the linear extensions. For $P \subset \Bn$, define
\begin{align*}
    \Psi_P(\mathbf{x}) &= \sum_{w \in\cL(P)}
    w\left(\dfrac{1}{(x_1-x_2)(x_2-x_3)\cdots(x_{n-1}-x_n)x_n}\right) \quad
    \text{and}  \\
    \Phi_P(\mathbf{x}) &= \sum_{w \in \cL(P)}
    w\left(\dfrac{1}{x_1(x_1+x_2)\cdots(x_1+x_2+\cdots+x_n)}\right),
\end{align*}
and for $\Pc \subset \Cn$, define
\begin{align}
    \Psi^\smvee_\Pc(\bm{x}) &= \sum_{w \in\cL(P)}
    w\left(\dfrac{1}{(x_1-x_2)(x_2-x_3)\cdots(x_{n-1}-x_n)2x_n}\right)=\dfrac{1}{2}\Psi_P(\bm{x})
    \quad \label{eq:psicheck}
    \text{and} \\
    \Phi^\smvee_\Pc(\bm{x}) &= \sum_{w \in \cL(P)}
    w\left(\dfrac{1}{x_1(x_1+x_2)\cdots(x_1+\cdots+x_{n-1})(\frac{1}{2}x_1+\frac{1}{2}x_2+\cdots+\frac{1}{2}x_n)}\right)\label{eq:phicheck}
    \\  &=2\Phi_P(\bm{x}). \nonumber
\end{align}
    These functions are the analogues of the $\Psi$ and $\Phi$ considered
in~\cite{Greene1992},~\cite{BoussicaultFerayLascouxReiner2012}
and~\cite{FerayReiner2012} and discussed in
Section~\ref{subsec:signedposetstory} above. Note that when $P \subset \Bn$ is a type A signed
poset and $P' = P \cap \An$ the corresponding poset, one has $\Phi_P(\bm{x}) =
\Phi_{P'}(\bm{x})$, where $\Phi_P$ is calculated with the type B definition and
$\Phi_{P'}$ is calculated with the type A definition, since $P$ and $P'$ have the same ideals by
Proposition~\ref{prop:kpwtgeneralisestypea} and thus the same linear extensions.

As in~\cite{BoussicaultFerayLascouxReiner2012}, one can understand these functions as the valuation $s(-;\bm{x})$ from
Section~\ref{subsec:rationalfunctionscones}.

\begin{thm}\label{thm:phipsifroms}
Suppose $P \subset \Bn$ (resp.\ $\Pc \subset \Cn$) is a signed poset. Then
\begin{alignat*}{2}
		\Psi_P(\bm{x}) &= s(\Kprt;\bm{x}) &&= \int_{\Kprt} e^{-\langle
		\bm{x},u\rangle}\,du \\
		\Psi^\smvee_{\Pc}(\bm{x}) &= s^\smvee(\Kprt;\bm{x}) &&= \int_{\Kprt}e^{-\langle
		\bm{x},u\rangle}\, \dfrac{1}{2}du \\
		\Phi_P(\bm{x}) &= s(\Kpwt;\bm{x}) &&= \int_{\Kpwt} e^{-\langle
		\bm{x},u\rangle}\,du \\
		\Phi^\smvee_{\Pc}(\bm{x}) &= s^\smvee(\Kpwt;\bm{x}) &&= \int_{\Kpwt}e^{-\langle
		\bm{x},u\rangle}\,2du
\end{alignat*}
where $du$ is Lebesgue measure.
\end{thm}

The measure used for calculating $\Psi^\smvee$ and $\Phi^\smvee$ varies to give
the parallelopiped spanned by the simple roots and fundamental dominant weights,
respectively, volume one.

\begin{proof}
		In all cases, the proof proceeds by induction on the number of pairs $\{i,j\} \subset
		\pm[n]$ such that $i$ and $j$ are incomparable in $\Ghat(P)$. The proof
        for $\Psi_P$ and $\Phi_P$ is given here; the proofs for
        $\Psi^\smvee_{\Pc}$ and $\Phi^\smvee_{P}$ follow 		
        \eqref{eq:psicheck} and \eqref{eq:phicheck}.		
		In the
		base case, suppose all pairs $\{i,j\}$ are comparable, i.e.\ $\Ghat(P)$
		is a chain. Then $\cL(P)$ must consist of a single signed permutation,
		call it $w$. 

		Applying Proposition~\ref{prop:barvinok} in each case, one has
		\begin{multline*}
				s(\Kprt; \bm{x}) = |w(\alpha_1)\wedge \cdots \wedge
				w(\alpha_n)| \prod_{i=1}^n
				\langle \bm{x}, w(\alpha_i) \rangle^{-1}  \\
                \shoveright{=
				w\left(\dfrac{1}{(x_1-x_2)\cdots(x_{n-1}-x_n)x_n}\right)
				= \Psi_P(\bm{x})}\\
            \shoveleft{s(\Kpwt; \bm{x})= |w(\mu_1)\wedge\cdots \wedge
				w(\mu_n)|\prod_{i=1}^n \langle \bm{x},w(\mu_i)\rangle^{-1}} \\=
				w\left(\dfrac{1}{x_1(x_1+x_2)\cdots(x_1+\cdots+x_n)}\right) =
				\Phi_P(\bm{x}) 
		\end{multline*}
		where $|v_1\wedge \cdots \wedge v_n|$ is the volume of the
		parallelopiped they form.

		For the induction step, suppose $\{i,j\}\subset \pm[n]$ are
		incomparable in $\Ghat(P)$. Let 
        \[
            P_{i < j} =
		\PLC{P\cup\{\sgn(i)e_i-\sgn(j)e_j\}} \quad\text{and}\quad P_{j < i} =
		\PLC{P\cup\{\sgn(j)e_j-\sgn(i)e_i\}}.
    \] 
        (If $i=j$, one needs to divide
		$\sgn(i)e_i-\sgn(j)e_j$ and $\sgn(j)e_j-\sgn(i)e_i$ by $2$ in type B.)
		The only way $P_{i<j}$ could fail to be a signed poset is if
		$\sgn(j)e_j-\sgn(i)e_i \in P$, a contradiction since it would mean $i$
		and $j$ were comparable in $\Ghat(P)$. A symmetric argument means
		$P_{j<i}$ is also a signed poset. Next, observe that, by construction, 
		\[
		\cL(P) = \cL(P_{i < j}) \sqcup \cL(P_{j <i}),
		\]
		meaning
		\begin{align*}
				\Psi_P(\bm{x}) &= \Psi_{P_{i < j}}(\bm{x}) + \Psi_{P_{j >
				i}}(\bm{x}) \\
				\Phi_P(\bm{x}) &= \Phi_{P_{i < j}}(\bm{x}) + \Phi_{P_{j >
				i}}(\bm{x}) 
    \end{align*}
		Recall that $s(-;\bm{x})$ is a valuation, so one wants to write $\Kprt$ and $\Kpwt$ as a sum of
		$\Krt{P_{i <j}}$, $\Krt{P_{j < i}}$, $\Kwt{P_{i<j}}$ and $\Kwt{P_{j <
		i}}$ and apply the induction assumption to compute $s(-;\bm{x})$ for
		each of these cones and use that to compute $s(\Kprt;\bm{x})$ and
		$s(\Kpwt;\bm{x})$. To that end, define a set $P_{i =j} =\PLC{P \cup
		\{\sgn(i)e_i-\sgn(j)e_j,\sgn(e_j)e_j-\sgn(i)e_i\}}$. The set $P_{i=j}$ is of
		course, not a signed poset, but the definitions of root cone and weight
		cone still make sense.

		Next, observe that
		\begin{align*}
				\Kprt &= \Krt{P_{i < j}} \cap \Krt{P_{j < i}} \\
				\Krt{P_{i=j}} &= \Krt{P_{i < j}} \cup \Krt{P_{j < i}} \\
				\Kwt{P_{i=j}} &= \Kwt{P_{i<j}} \cap \Kwt{P_{j < i}} \\
				\Kpwt &= \Kwt{P_{i<j}} \cup \Kwt{P_{j < i}}.
		\end{align*}
		One then has by the valuative property that
		\begin{align*}
				s(\Krt{P_{i=j}};\bm{x}) &= s(\Krt{P_{i<j}};\bm{x}) + s(\Krt{P_{j
				< i}};\bm{x}) - s(\Kprt;\bm{x}) \\
				s(\Kpwt;\bm{x}) &= s(\Kwt{P_{i<j}};\bm{x}) + s(\Kwt{P_{j
				< i}};\bm{x})- s(\Kwt{P_{i=j}};\bm{x})
		\end{align*}
		Since $\Krt{P_{i=j}}$ is not pointed and $\Kwt{P_{i=j}}$ is not
		full-dimensional,
		\[
		s(\Krt{P_{i=j}};\bm{x})  =
		s(\Kwt{P_{i=j}};\bm{x})  = 0.
		\]
		Then, applying the induction assumption one has
		\[
		\begin{split}
		s(\Kprt;\bm{x}) = \Psi_{P_{i < j}}(\bm{x}) + \Psi_{P_{j < i}}(\bm{x}) =
		\Psi_P(\bm{x}) \\
		s(\Kpwt;\bm{x}) = \Phi_{P_{i<j}}(\bm{x}) + \Phi_{P_{j < i}}(\bm{x}) =
		\Phi_{P}(\bm{x}) 
    \end{split}
		\]
		completing the proof.
\end{proof}

Together with Proposition~\ref{prop:sandsigmaci}, one now has reason to believe
that
the cone and semigroup perspective of~\cite{BoussicaultFerayLascouxReiner2012}
and~\cite{FerayReiner2012} will bear fruit in the signed poset case.

\begin{cor}
Suppose $P \subset \Bn$ (\resp $\Pc \subset \Cn$) is a signed poset. Both
$\Psi_P$ and $\Phi_P$ (\resp $\Psi^\smvee_\Pc$ and $\Phi^\smvee_\Pc$) vanish
when $\Ghat(P)$ has an isotropic component or, equivalently, $\Sigma_P$ has a
balanced component.
\end{cor}

\begin{cor}\label{cor:circlecount}
Suppose $P \subset \Bn$ (\resp $\Pc \subset \Cn$) is a signed poset such that
$\Kprt$ and $\Kpwt$ are pointed, full-dimensional and simplicial. Then
\begin{align*}
    \Psi_P(\bm{x}) &= \dfrac{1}{2}\Psi^\smvee_\Pc(\bm{x}) \stackrel{(1)}{=} |u_1\wedge \cdots
    \wedge u_n| \prod_{i=1}^n
\langle \bm{x},u_u\rangle^{-1}\stackrel{(3)}{=} 2^k \prod_{i=1}^n
\langle \bm{x},u_u\rangle^{-1} \text{ and } \\
\Phi_P(\bm{x}) &=2\Psi^\smvee_\Pc(\bm{x}) \stackrel{(2)}{=} |J_1\wedge \cdots \wedge J_n|\prod_{i=1}^n
\langle \bm{x},J_i\rangle^{-1} \stackrel{(4)}{=} 2^{n-k} \prod_{i=1}^n
\langle \bm{x},J_i\rangle^{-1},
\end{align*}
where $u_1,\ldots,u_n$ are the extreme rays of $\Kprt$, $J_1,\ldots,J_n$ are
the extreme rays of $\Kpwt$ and $k$ is the number of
(non-loop) cycles in the Hasse diagram of $P$.
\end{cor}

\begin{proof}
Equalities ($1$) and ($2$) are consequences of
Corollary~\ref{cor:fulldimpointedsimplicial}. Equality ($3$) is the result of combining
Theorem~\ref{thm:phipsifroms} with~\cite[Lemma 8A.2]{Zaslavsky1982}.
Recalling that $\Kprt$ and $\Kpwt$ are dual cones and the definition of coroot
(see Definitions~\ref{def:weights} and~\ref{def:dualrootsystem}), one sees that
\[
(u_i)(J_i)^{\top} = 2I_n,
\]
since $\langle u_i,J_k\rangle = 2\delta_{ik}$ (for some indexing of the $u_i$
and $J_k$). Therefore,
\[
    |J_1\wedge \cdots \wedge J_n| = \dfrac{2^n}{|u_1\wedge \cdots \wedge u_n|},
\]
giving equality ($4$).
\end{proof}

Corollary~\ref{cor:circlecount} is an analogue of \cite[Proposition
3.2(1)]{BoussicaultFerayLascouxReiner2012}, which shows
that for a poset, the numerator of $\Psi_P$ was $1$ for a tree and $0$ for a
forest, covering the two cases where $\Kprt$ is simplicial, full-dimensional
and pointed. However, instead of being an indicator of connectedness, it counts
the number of cycles in the Hasse diagram.

%
For example, consider the poset $P$ whose Hasse diagram and Fischer poset are
shown in Figure~\ref{fig:detex}.
\begin{figure}[htbp]
    \begin{center}
        \begin{subfigure}[b]{.4\textwidth}
            \centering
            \begin{tikzpicture}[scale=2]
                \node[circle,draw] (1) at (0,0) {$1$};
                \node[circle,draw] (2) at (60:1) {$2$};
                \node[circle,draw] (3) at (1,0) {$3$};
                \draw (1)-- node[very near start,above left] {$+$} node[very near
                end,left] {$-$}  (2)-- node[very near start, right] {$-$}
                node[very near end,right] {$+$} (3)-- node[very near
                start,below] {$+$} node[very near end,below] {$+$} (1);               
            \end{tikzpicture}
            \caption{Hasse diagram}
        \end{subfigure}
        \begin{subfigure}[b]{.4\textwidth}
            \centering
            \begin{tikzpicture}
                \node[vertex] (1) at (0,0) [label=left:$1$] {};
                \node[vertex] (3) at (2,0) [label=left:$3$] {};
                \node[vertex] (-2) at (4,0) [label=left:$-2$] {};
                \node[vertex] (2) at (1,1) [label=left:$2$] {};
                \node[vertex] (-1) at (3,1) [label=left:$-1$] {};
                \node[vertex] (-3) at (5,1) [label=left:$-3$] {};
                \draw (1)--(2);
                \draw (3)--(2);
                \draw (3)--(-1);
                \draw (1)--(-3);
                \draw (-2)--(-1);
                \draw (-2)--(-3);
            \end{tikzpicture}
            \caption{$\Ghat(P)$}
        \end{subfigure}
        \caption{Hasse diagram and $\Ghat(P)$ for
            $P=\{e_1+e_2,e_1-e_2,e_1+e_3,e_1,e_3)\}$}
        \label{fig:detex}
    \end{center}
\end{figure}
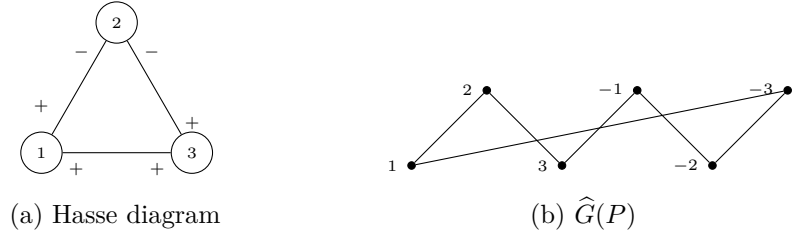
The extreme rays of $\Kprt$ are $(1,-1,0),(0,-1,1),(1,0,1)$ and the extreme rays
of $\Kpwt$ are $(1,1,1),(1,-1,-1),(-1,-1,1)$. Applying
Proposition~\ref{prop:barvinok}, one has that
\[
    \Psi_P(\bm{x}) = \left|\det\begin{pmatrix} 1 & 0 & 1 \\ -1 & -1 & 0 \\ 0 &
            1 & 1 
    \end{pmatrix}\right| \dfrac{1}{(x_1-x_2)(x_3-x_2)(x_1+x_3)} =
    \dfrac{2}{(x_1-x_2)(x_3-x_2)(x_1+x_3)},
\]
and
\begin{multline*}
    \Phi_P(\bm{x}) = \left|\det\begin{pmatrix} 1 & -1 & 1 \\ -1 & -1 & 1 \\ -1
            & 1 & 1
        \end{pmatrix}\right|\dfrac{1}{(x_1+x_2+x_3)(x_1-x_2-x_3)(-x_1-x_2+x_3)}
    \\ =
\dfrac{4}{(x_1+x_2+x_3)(x_1-x_2-x_3)(-x_1-x_2+x_3)}.
\end{multline*}

On the other hand, the second half of Corollary~\ref{cor:circlecount} is a
generalization of Theorem~\ref{thm:chapotonhivert}, which computed $\Phi_P$ for
forests. For example, consider the forest $F$ in Figure~\ref{fig:forest} and
its embedding as a signed poset in Figure~\ref{fig:embeddedforest}.
%
%
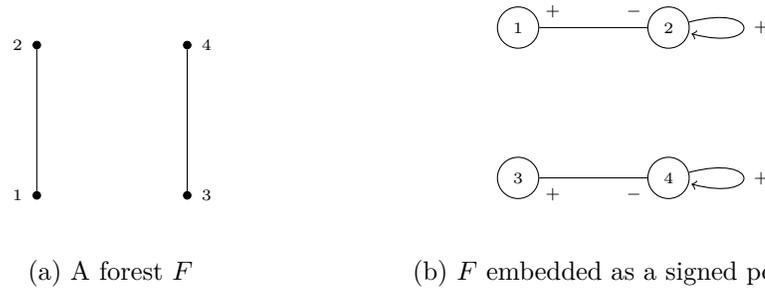
\begin{figure}[htbp]
    \begin{center}
        \begin{subfigure}[b]{.45\textwidth}
            \begin{center}
        \begin{tikzpicture}[scale=2]
            \node[vertex] (1) at (0,0) [label=left:$1$] {};
            \node[vertex] (-1) at (0,1) [label=left:$2$] {};
            \node[vertex] (2) at (1,0) [label=right:$3$] {};
            \node[vertex] (-2) at (1,1) [label=right:$4$] {};
            \draw (1)--(-1);
            \draw (2)--(-2);
        \end{tikzpicture}
    \end{center}
    \caption{A forest $F$}
    \label{fig:forest}
\end{subfigure}
\begin{subfigure}[b]{.45\textwidth}
    \begin{center}
        \begin{tikzpicture}[scale=2]
            \node[circle,draw] (1) at (0,0) {$1$};
            \node[circle,draw] (2) at (1,0) {$2$};
            \node[circle,draw]  (4) at (1,-1) {$4$};
            \node[circle,draw] (3) at (0,-1) {$3$};
            \draw (1)--node [very near start,above] {$+$} node[very near
            end,above]{$-$}(2);
            \draw (4) -- node[very near start,below] {$-$}
            node[very near end,below] {$+$} (3);
            \path (2) edge[loop right] node[midway,right] {$+$} (2);
            \path (4) edge[loop right] node[midway,right] {$+$} (4);
        \end{tikzpicture}
    \end{center}
    \caption{$F$ embedded as a signed poset, $F_B$}
    \label{fig:embeddedforest}
\end{subfigure}
    \end{center}
    \caption{A forest and its embedding as a signed poset}
    \label{fig:forestex}
\end{figure}
Embedding a poset as a signed poset preserves its ideals and linear extensions
and therefore preserves $\Phi$. In particular, this means that $\det(J_i)$,
where $J_i$ runs over the connected non-extensible ideals (i.e.\ all connected
ideals in this case) will be $\pm 1$, so
\[
    \Phi_F(\bm{x}) = \Phi_{F_B}(\bm{x}) =\left|\det\begin{pmatrix} 1 & 0 & 0 &
            0 \\ 1 & 1 & 0 & 0 \\ 0 & 0 & 1 & 0 \\ 0 & 0 & 1 &
            1\end{pmatrix}\right| \dfrac{1}{x_1x_3(x_1+x_2)(x_3+x_4)}=\prod_{j
        \in F}\dfrac{1}{\sum_{i <_F j} x_i}.
\]

\chapter{The Root Cone Semigroup}\label{ch:rootcone}

This chapter considers $\Kprt \cap \Lrt$, the semigroup associated to the root
cone in types $B$ and $C$. After discussion of the generators,
Section~\ref{sec:signedgraphtoricideals} will consider the toric ideal of an
oriented signed graph, but in a more general setting from which one can recover the
signed poset, digraph and poset and graph cases. Section~\ref{sec:rprtci} will
give a sufficient condition for the semigroup ring to be a complete
intersection, enabling the computation of $\Psi_P(\bm{x})$ and
$\Psi^\smvee_{\Pc}(\bm{x})$ via Proposition~\ref{prop:sandsigmaci}.

Recall that $\Ghat(P)$ is equipped with an involution, $\iota$, sending $i$ to
$-i$. This involution will make repeated appearances throughout this chapter.

\begin{prop}\label{prop:rootsemigroupgens}
	The semigroups $\Kprt \cap \Lbrt$ and $\Kprt \cap \Lcrt$ are generated by
    the elements of $P$ and $\Pc$ corresponding to orbits of edges of
    $\Gbhat(P)$ and $\Gchat(\Pc)$, respectively, under the involution.
\end{prop}

\begin{proof}
	In both cases, it suffices to show that every element of $P$ and $\Pc$ lies
in the semigroup generated by the orbits of edges of $\Gbhat(P)$ and
$\Gchat(\Pc)$. First, suppose $\pm e_i \pm e_j \in P$ with $i \ne j$. It is also
an element of $\Pc$. Then, by definition, $\pm i < \mp j$ in both $\Gbhat(P)$
and $\Gchat(\Pc)$. Summing the elements corresponding to edges (see
Table~\ref{table:bidirectededges}) along a chain
from $i$ to $-j$ gives $\pm e_i \pm e_j$ in each case. Next, suppose $\pm e_i \in
P$. Then $\pm 2e_i \in \Pc$. By definition, $\pm i < 0 < \mp i$ in $\Gbhat(P)$. Summing the
elements corresponding to edges along a chain from $\pm i$ to $0$ gives $\pm e_i$. On
the other hand, since $\pm 2e_i \in \Pc$, by definition one has $\pm i < \mp i$ in
$\Gchat(\Pc)$ and adding the elements corresponding to edges along a chain from
$\pm i$ to $\mp i$ gives $\pm 2e_i$.
\end{proof}

This generating set is in fact minimal. All edges of $\Gbhat(P)$ correspond to
edges of the Hasse diagram of $P$, which in turn correspond to the extreme rays
of the root cone. It may be that $\Gchat(\Pc)$ has ``extra'' edges relative to
the Hasse diagram, namely
$\delta i \to -\epsilon j$ and $\epsilon j \to -\delta i$, should the Hasse
diagram contain loops at $i$ and $j$. However, $\delta e_i + \epsilon e_j$
is \emph{not} in the semigroup generated by $\delta2e_i$ and $\epsilon2e_j$, so
the extra edge in the Fischer poset really does correspond to a semigroup
generator.
Proposition~\ref{prop:rootsemigroupgens} generalizes (via the embedding from Section~\ref{sec:embeddingtypea}) the result in type $A$ (Propositions 5.1 and 7.1
of~\cite{BoussicaultFerayLascouxReiner2012}), where $\Kprt \cap \Lrt_A$ was
generated by the roots corresponding to the covering relations of a poset.

\section{The Toric Ideal of an Oriented Signed
    Graph}\label{sec:signedgraphtoricideals}

Recall from Section~\ref{sec:representingposets} that one can view a signed
poset as an oriented signed graph. This section will consider the generators of
the toric ideal associated to an oriented signed graph and then use that
viewpoint to understand the generators of the toric ideals associated to
posets, directed graphs, graphs and signed posets. In other words, we will
describe the toric ideals for \emph{all} affine semigroup rings in $k[t_1^{\pm
1},\ldots,t_n^{\pm 1}]$ generated by monomials of the form $1,t_i^{\pm
1},t_i^{\pm 2},t_i^{\pm 1}t_j^{\pm 1}$.

Suppose $\Sigma$ is a signed graph and $\tau$ a bidirection of $\Sigma$.
(Here $\Sigma$ is allowed to have multiple edges as well as self-loops and
half edges.) The vectors $\tau(i,e)e_i + \tau(j,e) e_j$ as $e=(i,j)$ runs over the
edges of $\Sigma$ (with a half-edge $e=(i,-)$ giving $\tau(i,e)e_i$) generate a semigroup
contained in $\bZ^{|V(\Sigma)|}$. 

\begin{defn}\label{def:signedgraphtoricideal}
		Suppose $\Sigma$ is a signed graph and $\tau$ a bidirection of
		$\Sigma$.
Define a polynomial ring $S_\Sigma = k[U_e]$ where $e$
runs over the (bidirected) edges of $\Sigma$. The \emph{toric ideal} of $\Sigma$ is the kernel of
the map $\phi \colon k[U_e] \to k[t_1^{\pm 1},\ldots, t_n^{\pm 1}]$ defined by
\[
    \phi(U_e) =
    \begin{cases}
        t_i^{\tau(i,e)}t_j^{\tau(j,e)} & e=(i,j), \quad i \ne j \\
        t_i^{\tau(i,e)} & e=(i,-) \text{ a half edge} \\
        t_i^{2\tau(i,e)} & e=(i,i) \text{ a negative loop} \\
        1 & e=(i,i) \text{ a positive loop}
    \end{cases}
\]
The convention will be to put a bar over negative coordinates to save
space. 
\end{defn}

As an example, consider the oriented signed graph in
Figure~\ref{fig:orsignedgraph}.
\begin{figure}[tbp]
    \begin{center}
        \begin{tikzpicture}[every loop/.style={}]
            \node[circle,draw] (1) at (0,0) {$1$};
            \node[circle,draw] (2) at (2,0) {$2$};
            \node[circle,draw] (3) at (-60:2) {$3$};
            \draw (1)--node[very near start,above] {$+$} node[very near
            end,above] {$-$} (2);
            \draw (2)--node[very near start,right] {$+$} node[very near
            end,right] {$-$} (3);
            \draw (1)--node[very near start, left] {$+$} node[very near
            end,left] {$-$} (3);
            \path (1) edge[loop left] node[very near start,below] {$+$}
            node[very near end, above] {$-$} (1);
            \path (3) edge[loop below] node[very near start,right] {$-$}
            node[very near end,left] {$-$} (3);
            \draw (2)--node[very near start,above] {$+$}(3,0);
        \end{tikzpicture}
    \end{center}
    \caption{An oriented signed graph}
    \label{fig:orsignedgraph}
\end{figure}
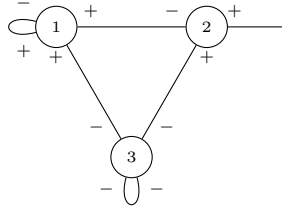
In this case, 
\[
    S_\Sigma = k[U_1,U_2,U_{\m3},U_{1\m2},U_{2\m3},U_{1\m3}]
\]
and $\phi$ is defined by
\begin{alignat*}{1}
U_1 &\mapsto 1 \\
U_2 &\mapsto t_2 \\
U_{\m3} & \mapsto t_3^{-2} \\
U_{1\m2} & \mapsto t_1t_2^{-1} \\
U_{2\m3} & \mapsto t_2t_3^{-1} \\
U_{1\m3} &\mapsto t_1t_3^{-1} \\
\end{alignat*}
The toric ideal $I_\Sigma$ is then
$(U_{1\m2}U_{2\m3}-U_{1\m3},U_2^2U_{\m3}-U_{2\m3}^2,U_1-1)$. (See Figure~\ref{fig:macaulayex} for an example of the computation with Macaulay2~\cite{M2} and 4ti2~\cite{4ti2}.)

\lstset{language=M2}
\begin{figure}
\begin{lstlisting}[breaklines=true,basicstyle=\ttfamily]
i1 : loadPackage "FourTiTwo";
A = transpose matrix {{1,0,0},{0,1,0},{0,0,-1},{1,-1,0},{0,1,-1},{1,0,-1}};
S = QQ[U1,U2,Um3,U1m2,U2m3,U1m3,Degrees=>entries transpose A];
I = toricGroebner(A,S)

              3        6
o2 : Matrix ZZ  <--- ZZ

i4 : using temporary file name /tmp/M2-22805-3
-------------------------------------------------
4ti2 version 1.3.2, Copyright (C) 2006 4ti2 team.
4ti2 comes with ABSOLUTELY NO WARRANTY.
This is free software, and you are welcome
to redistribute it under certain conditions.
For details, see the file COPYING.
-------------------------------------------------
Using 64 bit integers.
4ti2 Total Time:  0.00 secs.

o4 = ideal (- U1*Um3 + U1m3, - U1 + U2*U1m2, - U2*Um3 + U2m3)

o4 : Ideal of S
\end{lstlisting}
\caption{Computing a toric ideal with Macaulay2}
\label{fig:macaulayex}
\end{figure}

Understanding the generators of the toric ideal of an oriented signed graphs
requires the notion of the signed covering of an oriented signed graph.

\begin{defn}\label{def:signedcovering}
Given a signed graph $\Sigma = (\Gamma,\sigma)$ oriented by $\tau$, define a
directed graph $\widetilde{\Sigma}$ whose vertices are $V(\Sigma) \times
\{\pm\}$ and whose edges are determined by the edges of $\Sigma$ according to
Table~\ref{tab:signedcover}. The graph $\widetilde{\Sigma}$ is the \emph{signed covering} of $\Sigma$.
\end{defn}
\begin{table}
    \begin{center}
    \begin{tabular}{ll}
        edge in $\Sigma$ & edge(s) in $\widetilde{\Sigma}$ \\
            \begin{tikzpicture}
        \node[circle,draw] (1) at (0,0) {$i$};
        \node[circle,draw] (2) at (2,0) {$j$};
        \draw (1)--(2) node[very near start,above] {$+$} node[very near
        end,above] {$-$};
    \end{tikzpicture} & $i \to j$, $-j \to -i$ \\
        \begin{tikzpicture}
 \node[circle,draw] (1) at (0,0) {$i$};
        \node[circle,draw] (2) at (2,0) {$j$};
        \draw (1)--(2) node[very near start,above] {$+$} node[very near
        end,above] {$+$};
    \end{tikzpicture} & $i \to -j$, $j \to -i$ \\
     \begin{tikzpicture}
 \node[circle,draw] (1) at (0,0) {$i$};
        \node[circle,draw] (2) at (2,0) {$j$};
        \draw (1)--(2) node[very near start,above] {$-$} node[very near
        end,above] {$-$};
    \end{tikzpicture} & $-i \to j$, $-j \to i$ \\
    \begin{tikzpicture}[every loop/.style={in=135,out=90,looseness=3}]
        \node[circle,draw] (1) at (0,0) {$i$};
        \path (1) edge [loop above] node[very near start,right]{$+$} node[very
        near end,left]{$+$} ();
    \end{tikzpicture} & $i \to -i$ \\
    \begin{tikzpicture}[every loop/.style={in=135,out=90,looseness=3}]
        \node[circle,draw] (1) at (0,0) {$i$};
        \path (1) edge [loop above] node[very near start,right]{$-$} node[very
        near end,left]{$-$} ();
    \end{tikzpicture} & $-i \to i$\\
    \begin{tikzpicture}[every loop/.style={in=135,out=90,looseness=3}]
        \node[circle,draw] (1) at (0,0) {$i$};
        \path (1) edge [loop above] node[very near start,right]{$+$} node[very
        near end,left]{$-$} ();
    \end{tikzpicture} & self-loops at $i$, $-i$ \\
    \begin{tikzpicture}
        \node[circle,draw] (1) at (0,0) {$i$};
        \draw (1) -- node[very near start, above] {$+$} (1.5,0); 
    \end{tikzpicture} & half edges $i \to$, $\to -i$ \\
\begin{tikzpicture}
        \node[circle,draw] (1) at (0,0) {$i$};
        \draw (1) -- node[very near start, above] {$-$} (1.5,0); 
    \end{tikzpicture} & half edges $-i \to$, $\to i$ \\
    \end{tabular}
\end{center}
\caption{Correspondence between edges in $\Sigma$ and $\widetilde{\Sigma}$}
\label{tab:signedcover}
\end{table}
When $\widetilde{\Sigma}$ contains half edges, it will be convenient to imagine
an extra vertex $0$ as the other end of the half edges. With this extra vertex,
when $P \subset \Bn$ is a signed poset, $\Gbhat(P)$ is the signed cover of a
signed graph. $\Gchat(\Pc)$ is also the signed cover of a signed graph, with
or without the dummy $0$, as there are no half edges. (See
Definition~\ref{def:fischerposets} for the definitions of $\Gbhat(P)$ and
$\Gchat(\Pc)$.) Like $\Ghat(P)$, a signed covering $\widetilde{\Sigma}$ is
equipped with an involutive digraph anti-automorphism sending $i$ to $-i$ (and fixing $0$). Figure~\ref{fig:signedcoverex} shows the signed covering of the
oriented signed graph in Figure~\ref{fig:orsignedgraph}.

\begin{figure}[htbp]
    \begin{center}
        \begin{tikzpicture}[every node/.style={font=\normalsize}]
            \node (1) at (0,0) {$1$};
            \node (2) at (60:2) {$2$};
            \node (3) at (2,0) {$3$};
            \begin{scope}[xshift=4cm]
                \node (-3) at (0,0) {$-3$};
                \node (-2) at (60:2) {$-2$};
                \node (-1) at (2,0) {$-1$};
            \end{scope}
            \node (0) at (3,3) {$0$};
            \draw[->] (1)--(2);
            \draw[->] (2)--(3);
            \draw[->] (2)--(0);
            \draw[->] (-2)--(-1);
            \draw[->] (-3)--(-1);
            \draw[->] (-3)--(-2);
            \draw[->] (0)--(-2);
            \draw[->] (1)--(3);
            \path (1) edge[loop left] ();
            \path (-1) edge[loop right] ();
            \draw[->] (-3)--(3);
        \end{tikzpicture}
    \end{center}
    \caption{Signed covering of the oriented signed graph in Figure~\ref{fig:orsignedgraph}}
    \label{fig:signedcoverex}
\end{figure}
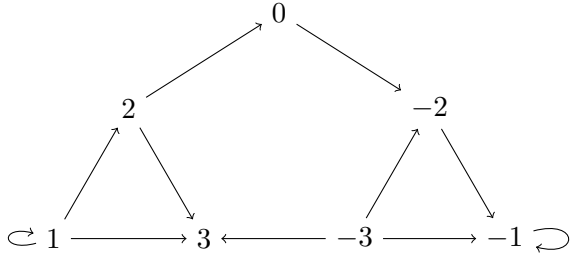

\begin{defn}\label{def:cyclepartition}
		Suppose $\Sigma$ is a signed graph and $\widetilde{\Sigma}$ is its signed covering.
Consider a cycle $C$ in $\widetilde{\Sigma}$ and orient it in some way (i.e.\
choose a direction in which to traverse $C$). This
partitions the edges of $C$ into $W \sqcup A$, where $W$ is the set of edges
such that the orientation is consistent with the direction of the edge in
$\widetilde{\Sigma}$ and $A$ consists of the edges oriented opposite their
direction in $\widetilde{\Sigma}$. Say $C$ is \emph{fixed orientation-wise} by
the involution if $\iota(C) = C$ as edges and $W_C=W_{\iota(C)}$ and $A_C = A_{\iota(C)}$.
\end{defn}

For example, consider the cycle $3 \to 1 \to 2 \to 0 \to -2
\to -1 \to -3 \to 3$ in the graph in
Figure~\ref{fig:signedcoverex}. Then 
\[
W_C = \{(1, 2), (2 , 0), (0, -2), (-2, -1), (-3, 3)\} \quad\text{and}\quad A_C
= \{(3 , 1), (-1,
-3)\}.
\]While this cycle is fixed \emph{edgewise} by the involution, $\iota(C)$ is
oriented in the opposite direction: $-3 \to -1 \to -2 \to 0 \to
2 \to 1 \to 3 \to -3$. Thus $W_{\iota(C)}=A_C$ and $A_{\iota(C)} = W_C$, so
$C$ is \emph{not} fixed orientation-wise. On the other hand, consider the signed
covering in Figure~\ref{fig:anothersignedcovering} (which is actually
$\Gchat(\Pc)$ for $\Pc = \{+e_1-e_2,+e_1+e_2,+2e_1\}$). It has a single cycle
$1 \to 2 \to -1 \to -2 \to 1$, which \emph{is} fixed
orientation-wise by the involution, with $W_C = W_{\iota(C)} =\{(1, 2),( 2, -1)\}$
and $A_C=A_{\iota(C)}=\{(1, -2), (-2 , -1)\}$.

\begin{figure}[htbp]
    \begin{center}
        \begin{tikzpicture}[shorten >=1pt]
            \node[vertex] (1) at (0,-1) [label=left:$1$] {};
            \node[vertex] (2) at (-1,0) [label=left:$2$] {};
            \node[vertex] (-2) at (1,0) [label=right:$-2$] {};
            \node[vertex] (-1) at (0,1) [label=left:$-1$] {};
            \draw[->] (1)--(2);
            \draw[->] (2)--(-1);
            \draw[->] (1)--(-2);
            \draw[->] (-2)--(-1);
        \end{tikzpicture}
    \end{center}
    \caption{Another signed covering}
    \label{fig:anothersignedcovering}
\end{figure}
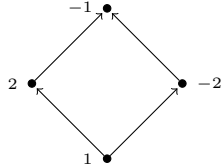

\begin{lem}\label{lem:fixedcycleoppositeedges}
Suppose $\widetilde{\Sigma}$ is the signed covering of an oriented signed
graph $\Sigma$. Suppose $C$ is a cycle in $\widetilde{\Sigma}$ fixed
orientation-wise by the involution. Then if $e \in W_C$ is an edge, $\iota(e) \in
A_C$.
\end{lem}

\begin{proof}
	Suppose $e \in W_C$ and $\iota(e) \in W_C$. There are two cases, when $e$
    involves $0$ and when it doesn't. First, suppose $e$ does not have $0$ as
    an endpoint. Then orienting $C$, one has
	\[
		\cdots \rightarrow i \stackrel{e}{\rightarrow} j \rightarrow \cdots \rightarrow -j \stackrel{\iota(e)}{\rightarrow} -i \rightarrow \cdots
	\]
	Applying the involution gives
	\[
		\cdots \rightarrow -i \rightarrow -j \rightarrow \cdots \rightarrow j\rightarrow i \rightarrow \cdots
	\]
	so $e, \iota(e) \in A_{\iota(C)}$, meaning $C$ was not fixed orientation-wise by the involution.

    On the other hand, suppose $e$ does have $0$ as an endpoint. Without loss of
    generality, one may assume one has
    \[
		\cdots \rightarrow i \stackrel{e}{\rightarrow} 0 \stackrel{\iota(e)}{\rightarrow} -i \rightarrow \cdots
	\]
	Applying the involution gives
	\[
		\cdots \rightarrow -i \rightarrow 0 \rightarrow i \rightarrow \cdots,
	\]
    so $e,\iota(e) \in A_{\iota(C)}$, meaning $C$ was not fixed orientation-wise by the involution.
\end{proof}

If $C$ is a cycle in $\widetilde{\Sigma}$, it gives rise to a relation in the semigroup:
\begin{equation}\label{eq:rtsemigrouprelation}
 \sum_{\alpha \in W(C)} \alpha = \sum_{\alpha \in A(C)} \alpha.
\end{equation}

Recall that $\phi$ was the map from from $S_\Sigma = k[U_e]$ to the Laurent
polynomial ring sending $U_{ij}$ to $t_i^{\tau(i,e)}t_j^{\tau(j,e)}$ for the
edge $e = (i,j)$ (see Definition~\ref{def:signedgraphtoricideal}).

\begin{thm}\label{thm:toricideal}
    Suppose $\Sigma$ is an oriented signed graph.
	The toric ideal $I_\Sigma = \ker \phi$ is generated by \emph{cycle binomials}
	\[
		U(C) = \prod_{e \in W(C)} U_e - \prod_{e \in A(C)} U_e,
	\]
	where $C$ runs over the cycles of $\widetilde{\Sigma}$ not fixed by the
    involution. 
\end{thm}

Consider the oriented signed graph in Figure~\ref{fig:orsignedgraph}, whose
signed covering is in Figure~\ref{fig:signedcoverex}. Recall from the start of
this section that the toric ideal is 
\[
    I_\Sigma =
(U_{1\m2}U_{2\m3}-U_{1\m3},U_2^2U_{\m3}-U_{2\m3}^2,U_1-1).
\]
The signed covering $\widetilde{\Sigma}$ has five
orbits of cycles not fixed orientation-wise by the involution. Their cycle
binomials are:
\[
U_1 - 1 ,\quad U_{1\m2}U_{2\m3}-U_{1\m3},\quad U_{1\m2}^2U_2^2U_{\m3}-U_{1\m3}^2,\quad
U_2^2U_{\m3}-U_{2\m3}^2,\quad
U_{1\m2}U_2^2U_{\m3}-U_{2\m3}U_{1\m3}.
\]
One needs to check that these five binomials generate $I_\Sigma$ and not some
larger ideal. One can use Macaulay2 to find the following relations which show
that the five cycle binomials do generate $I_\Sigma$ by writing the two
``extra'' generators in terms of the other three generators:
\[
    \begin{split}
U_{2\m3}(U_{1\m2}U_{2\m3}-U_{1\m3}) + U_{1\m2}(U_{\m3}U_2^2-U_{2\m3}^2) &=
U_{1\m2}U_2^2U_{\m3}-U_{2\m3}U_{1\m3} \\
(-U_{1\m2}U_{2\m3}-U_{1\m3})(U_{1\m3}-U_{1\m2}U_{2\m3}) +
U_{1\m2}^2(U_{\m3}U_2^2-U_{2\m3}^2) &=
U_{1\m2}^2U_{\m3}U_2^2-U_{1\m3}^2.
    \end{split}
\]

\begin{prop}
	Suppose $C$ is a cycle in $\widetilde{\Sigma}$ fixed orientation-wise by the involution. Then its cycle binomial is zero.
\end{prop}

\begin{proof}
		\emph{A priori}, one has that $\iota(W_C) = A_{\iota(C)}$ and $\iota(A_C) = W_{\iota(C)}$. If a cycle
$C$ is fixed by the involution, one has that $W_C=W_{\iota(C)}$ by definition, but
this means $W_{\iota(C)}=\iota(A_{\iota(C)})$ so the two sums in
(\ref{eq:rtsemigrouprelation}) are over the same set, meaning the cycle
binomial is $0$.
\end{proof}

It will occasionally be convenient to think of \emph{all} cycle binomials as
generating the toric ideal. This is acceptable since the proposition shows the
extra cycle binomials are $0$.

\begin{proof}[Proof of Theorem~\ref{thm:toricideal}]
	Toric ideals are generated by binomials $U^\alpha - U^\beta$ such that
    $\alpha$ and $\beta$ have disjoint support. (See~\citet[Corollary
    4.3]{Sturmfels1996a}.) One must then show that if such a $U^\alpha -
    U^\beta$ is in $I_\Sigma$, it is in the ideal generated by the cycle
    binomials of $\widetilde{\Sigma}$.

Suppose $U^\alpha - U^\beta$ is in $I_\Sigma$ and $U^\alpha$ and $U^\beta$ have
disjoint support. The binomial corresponds to a relation
\[
    \sum a_{\delta i,\epsilon j}(\delta e_i-\epsilon e_j) =\sum b_{\delta i
       \epsilon j}(\delta e_i-\epsilon e_j),
\]
where $\delta,\epsilon \in \{\pm\}$ and, for ease of notation, one takes
$e_0=0$. One builds a digraph on vertex set $\pm[n]$ as follows. For each term $\delta
e_i-\epsilon e_j$ in the left hand sum, take $a_{\delta i,\epsilon j}$ copies
of the directed edge
$\delta i \to \epsilon j$ and $a_{\delta i,\epsilon j}$ copies of $-\epsilon j
\to -\delta i$. For each  term $\delta
e_i-\epsilon e_j$ in the right hand sum, take $b_{\delta i,\epsilon j}$ copies of
$\delta i \leftarrow \epsilon j$ and $b_{\delta i,\epsilon j}$ copies of $-\epsilon j
\leftarrow -\delta i$. In other words, one gets $a_{\delta i,\epsilon j}$ edges
    with orientation coinciding with that of $\widetilde{\Sigma}$ and
    $b_{\delta i,\epsilon j}$ edges with orientation opposite that of
    $\widetilde{\Sigma}$.

    One has constructed a digraph where the in-degree equals the out-degree at
    each vertex, so one has a collection of cycles, possibly with multiplicity.
    By construction, each edge and its image under the involution are oriented
    the same way relative to their direction in $\widetilde{\Sigma}$.
    Consequently, by Lemma~\ref{lem:fixedcycleoppositeedges}, none of these
    cycles can be fixed orientation-wise by the involution. Furthermore, if $C$ is a cycle fixed
    \emph{edgewise} by the involution, one will have an even number of copies
    of $C$.

    Call these pairs of cycles (possibly with multiplicity) $C_1,\ldots,C_k$.
    Induct on $k$. If $k=1$, then $U^\alpha - U^\beta$ was itself a cycle binomial. Suppose that binomials with no common factors corresponding to $k-1$ pairs of cycles lie in the ideal generated by the cycle binomials. Then by induction one has
	\begin{multline*}
			U^\alpha - U^\beta = \prod_{i=1}^k U_{W(C_i)} -\prod_{i=1}^k U_{A(C_i)}  \\
			= (U_{W(C_1)}-U_{A(C_1)})\prod_{i=2}^kU_{W(C_i)} + U_{A(C_1)}\left(\prod_{i=2}^k U_{W(C_i)} -\prod_{i=2}^k U_{A(C_k)}\right),
	\end{multline*}
	where $U_{W(C_i)} = \prod_{e \in W(C_i)} U_e$ and $U_{A(C_i)} =\prod_{e \in A(C_i)}$. Consequently, $U^\alpha -U^\beta$ lies in the ideal generated by the cycle binomials. Thus one has that the toric ideal is generated by the cycle binomials.
\end{proof}

\subsection{The toric ideal of the root cone
    semigroup}\label{subsec:rootconetoricideal}

As noted earlier, examining the definitions of $\Gbhat(P)$ and $\Gchat(\Pc)$
(Definition~\ref{def:fischerposets}), one sees that $\Gbhat(P)$ and $\Gchat(P)$
are both the signed coverings of some oriented signed graph (at least when one
anchors the half edges to a $0$ in the case of $\Gbhat(P)$). Recall from
Proposition~\ref{prop:rootsemigroupgens} that the semigroups $\Kprt \cap \Lbrt$
and $\Kprt \cap \Lcrt$ are generated by the edges of these oriented signed
graphs. Denote the two semigroup rings $k[\Kprt \cap \Lbrt]$ and $k[\Kprt \cap
\Lcrt]$ by $\Rprt$ and $\Rrt{\Pc}$, respectively. In this case,
Theorem~\ref{thm:toricideal} gives the following.

\begin{cor}\label{cor:rttoricideals}
    Suppose $P \subset \Bn$ (resp.\ $\Pc \subset \Cn$) is a signed poset.
Let $\Srt{P} = k[U_{\delta i,\epsilon j}]$ and $\Srt{\Pc}=k[U_{\delta i,
    \epsilon j}]$, where the pairs $(\delta
i,\epsilon j)$ run over orbits under the involution of edges $\delta i \to
\epsilon j$ in $\Gbhat(P)$ and $\Gchat(\Pc)$, respectively. Then the toric
ideals $\Irt{P}$ and $\Irt{\Pc}$ are generated by the cycle binomials
corresponding to cycles in $\Gbhat(P)$ and $\Gchat(\Pc)$, respectively,  not
fixed orientation-wise by the involution and
\[
    \Rprt = \Srt{P} /\Irt{P} \quad \text{and} \quad \Rrt{\Pc} =
    \Srt{\Pc}/\Irt{\Pc}.
\]
\end{cor}

\begin{figure}
    \begin{center}
    \begin{subfigure}[b]{.4\textwidth}
        \begin{center}
        \begin{tikzpicture}
            \node[vertex] (1) at (0,0) [label=left:$1$] {};
            \node[vertex] (4) at (0,1) [label=left:$4$] {};
            \node[vertex] (0) at (0,2) [label=left:$0$] {};
            \node[vertex] (-4) at (0,3) [label=left:$-4$] {};
            \node[vertex] (-1) at (0,4) [label=left:$-1$] {};
            \node[vertex] (2) at (-1,1.3) [label=left:$2$] {};
            \node[vertex] (-3) at (1,1.3) [label=right:$-3$] {};
            \node[vertex] (3) at (-1,2.7) [label=left:$3$] {};
            \node[vertex] (-2) at (1,2.7) [label=right:$-2$] {};
            \draw (1)--(2)--(3)--(-1)--(-4)--(0)--(4)--(1)--(-3)--(-2)--(-1);
        \end{tikzpicture}
    \end{center}
    \caption{$\Gbhat(P)$}
    \end{subfigure}
        \begin{subfigure}[b]{.4\textwidth}
        \begin{center}
        \begin{tikzpicture}
            \node[vertex] (1) at (0,0) [label=left:$1$] {};
            \node[vertex] (4) at (0,1) [label=left:$4$] {};
            \node[vertex] (-4) at (0,3) [label=left:$-4$] {};
            \node[vertex] (-1) at (0,4) [label=left:$-1$] {};
            \node[vertex] (2) at (-1,1.3) [label=left:$2$] {};
            \node[vertex] (-3) at (1,1.3) [label=right:$-3$] {};
            \node[vertex] (3) at (-1,2.7) [label=left:$3$] {};
            \node[vertex] (-2) at (1,2.7) [label=right:$-2$] {};
            \draw (1)--(2)--(3)--(-1)--(-4)--(4)--(1)--(-3)--(-2)--(-1);
        \end{tikzpicture}
    \end{center}
    \caption{$\Gchat(\Pc)$}
    \end{subfigure}
\caption{$\Gbhat(P)$ and $\Gchat(\Pc)$ for $P =
\{+e_1-e_2,+e_1+e_3,+e_1-e_3,+e_1+e_2,+e_1-e_4,+e_1+e_4,+e_1,+e_4\}$}
\label{fig:toricidealex}
\end{center}
\end{figure}

Consider $P =
\{+e_1-e_2,+e_1+e_3,+e_1-e_3,+e_1+e_2,+e_1-e_4,+e_1+e_4,+e_1,+e_4\}$.
Figure~\ref{fig:toricidealex} shows $\Gbhat(P)$ and $\Gchat(\Pc)$. Then 
\[
\Srt{P} = k[U_{12},U_{23},U_{1\bar{3}},U_{14},U_{4\m4}]\quad\text{and} \quad
\Srt{\Pc} =k[U_{12},U_{23},U_{1\m3},U_{14},U_{40}].
\]
Both $\Gbhat(P)$ and $\Gchat(\Pc)$
have a cycle $1 \to 2 \to 3 \to -1 \to -2 \to -3 \to 1$
fixed orientation-wise by the involution, so its cycle binomial is $0$.
Figure~\ref{fig:toricidealcycleex} shows the cycles constructed in the proof of
Theorem~\ref{thm:toricideal}.

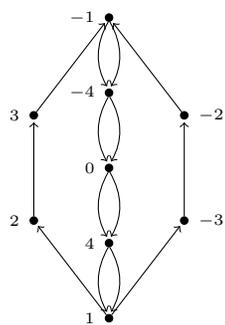
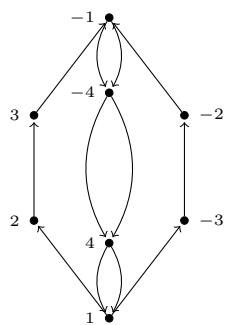
\begin{figure}
    \begin{center}
    \begin{subfigure}[b]{.4\textwidth}
        \begin{center}
        \begin{tikzpicture}[->,shorten >=1pt]
            \node[vertex] (1) at (0,0) [label=left:$1$] {};
            \node[vertex] (4) at (0,1) [label=left:$4$] {};
            \node[vertex] (0) at (0,2) [label=left:$0$] {};
            \node[vertex] (-4) at (0,3) [label=left:$-4$] {};
            \node[vertex] (-1) at (0,4) [label=left:$-1$] {};
            \node[vertex] (2) at (-1,1.3) [label=left:$2$] {};
            \node[vertex] (-3) at (1,1.3) [label=right:$-3$] {};
            \node[vertex] (3) at (-1,2.7) [label=left:$3$] {};
            \node[vertex] (-2) at (1,2.7) [label=right:$-2$] {};
            \draw (1)--(2);
            \draw (2)--(3);
            \draw (3)--(-1);
            \path (-1.260) edge[bend right] (-4.100);
            \path (-4.260) edge[bend right](0.100);
            \path (0.260) edge[bend right] (4.100);
            \path (4.260) edge[bend right] (1.100);
            \path (-1.280) edge[bend left] (-4.80);
            \path (-4.280) edge[bend left](0.80);
            \path (0.280) edge[bend left](4.80);
            \path (4.280) edge[bend left] (1.80);
            \draw (1)--(-3);
            \draw (-3)--(-2);
            \draw (-2)--(-1);
        \end{tikzpicture}
    \end{center}
    \caption{$\Gbhat(P)$}
    \end{subfigure}
        \begin{subfigure}[b]{.4\textwidth}
        \begin{center}
        \begin{tikzpicture}[->,shorten >=1pt]
            \node[vertex] (1) at (0,0) [label=left:$1$] {};
            \node[vertex] (4) at (0,1) [label=left:$4$] {};
            \node[vertex] (-4) at (0,3) [label=left:$-4$] {};
            \node[vertex] (-1) at (0,4) [label=left:$-1$] {};
            \node[vertex] (2) at (-1,1.3) [label=left:$2$] {};
            \node[vertex] (-3) at (1,1.3) [label=right:$-3$] {};
            \node[vertex] (3) at (-1,2.7) [label=left:$3$] {};
            \node[vertex] (-2) at (1,2.7) [label=right:$-2$] {};
            \draw (1)--(2);
            \draw (2)--(3);
            \draw (3)--(-1);
            \path (-1)edge[bend right](-4);
            \path (-4)edge[bend right](4);
            \path (4)edge[bend right](1);
            \path (-1)edge[bend left](-4);
            \path (-4)edge[bend left](4);
            \path (4)edge[bend left](1);
            \draw (1)--(-3);
            \draw (-3)--(-2);
            \draw (-2)--(-1);
        \end{tikzpicture}
    \end{center}
    \caption{$\Gchat(\Pc)$}
    \end{subfigure}
\caption{The directed (multi-)graph from the proof of
    Theorem~\ref{thm:toricideal}}
\label{fig:toricidealcycleex}
\end{center}
\end{figure}

In both $\Gbhat(P)$ and $\Gchat(\Pc)$, there is a single pair $(C,\iota(C))$, so the
toric ideals are principal. One then
has 
\[
    \Irt{P} = (U_{12}U_{23}U_{1\m3} - U_{14}^2U_{40}^2) \quad \text{and} \quad
    \Irt{\Pc} = (U_{12}U_{23}U_{1\m3} - U_{14}^2U_4).
\]

\subsection{Toric ideals of posets and directed
    graphs}\label{subsec:posetgraphtoricideal}

Another application of Theorem~\ref{thm:toricideal} is to posets and directed
graphs. The semigroup associated to a directed graph is generated by $e_i -
e_j$ for each edge $i \to j$. When the directed graph is the Hasse diagram of a
poset, this is the root cone semigroup in type A
of~\cite{BoussicaultFerayLascouxReiner2012}. Using a similar argument to the
proof of Theorem~\ref{thm:toricideal} (but simpler due to having the Hasse
diagram rather than the signed covering), Boussicault, \feray, Lascoux and Reiner showed
in~\cite[Proposition 8.1]{BoussicaultFerayLascouxReiner2012} that, given a
poset, its toric ideal is generated by binomials corresponding to cycles in the
Hasse diagram. Their argument easily generalizes to all directed graphs. Earlier, Gitler, Reyes and Villarreal obtained the same result
for directed graphs~\cite[Proposition 4.3]{GitlerReyesVillarreal2010} using a
result of Sturmfels~\cite[\S5]{Sturmfels1992}.

To see how Theorem~\ref{thm:toricideal} works for directed graphs, first
observe that in a signed graph $\Sigma$ with an orientation coming from a directed
graph $G$, all edges are signed $+$, meaning all cycles are balanced.
Furthermore, the signed covering of $\Sigma$ is $G \sqcup \iota(G)$ and a cycle in
$G$ corresponds to a pair of cycles $(C, \iota(C))$ in $G \sqcup \iota(G)$, and no cycle
in the signed covering is fixed orientation-wise by the involution.
Consequently, the generating set from Theorem~\ref{thm:toricideal} is the same
as the generating set from~\cite{BoussicaultFerayLascouxReiner2012}
and~\cite{GitlerReyesVillarreal2010}.

As an example, consider the digraph (and poset, depending on perspective) shown
in Figure~\ref{subfig:toricposetex}. The associated oriented signed graph is
shown in Figure~\ref{subfig:toricposetosg} and its signed covering in
Figure~\ref{subfig:toricposetsc}.
\begin{figure}[htbp]
    \begin{center}
        \begin{tabular}{cc}
                \begin{subfigure}[b]{.4\textwidth}
                    \begin{center}
                    \begin{tikzpicture}[->,shorten >=1pt]
                        \node[vertex] (1) at (0,0) [label=left:$1$] {};
                        \node[vertex] (2) at (-1,1) [label=left:$2$] {};
                        \node[vertex] (3) at (1,1) [label=right:$3$] {};
                        \node[vertex] (4) at (0,2) [label=left:$4$] {};
                        \node[vertex] (5) at (2,2) [label=right:$5$] {};
                        \node[vertex] (6) at (1,3) [label=right:$6$] {};
                        \draw (1)--(2);
                        \draw (1)--(3);
                        \draw (2)--(4);
                        \draw (3)--(4);
                        \draw (3)--(5);
                        \draw (4)--(6);
                        \draw (5)--(6);
                    \end{tikzpicture}
                \end{center}
                \caption{digraph/poset}
                \label{subfig:toricposetex}
                \end{subfigure}
            &
            \begin{subfigure}[b]{.4\textwidth}
                \begin{center}
                    \begin{tikzpicture}[scale=2]
                        \node[circle,draw] (2) at (0,0) {$2$};
                        \node[circle,draw] (4) at (1,0) {$4$};
                        \node[circle,draw] (6) at (2,0) {$6$};
                        \node[circle,draw] (1) at (0,-1) {$1$};
                        \node[circle,draw] (3) at (1,-1) {$3$};
                        \node[circle,draw] (5) at (2,-1) {$5$};
                        \draw (1)--node[very near start,left] {$+$} node[very
                        near end,left] {$-$} (2);
                        \draw (2)-- node[very near start, above] {$+$}
                        node[very near end, above] {$-$} (4);
                        \draw (4)-- node[very near start, above] {$+$}
                        node[very near end, above] {$-$} (6);
                        \draw (5)--node[very near start, right] {$+$} node[very
                        near end, right] {$-$} (6);
                        \draw (1)--node[very near start, below] {$+$} node[very
                        near end,below] {$-$} (3);
                        \draw (3)--node[very near start,below] {$+$} node[very
                        near end,below] {$-$} (5);
                        \draw (3)--node[very near start,left] {$+$} node[very
                        near end,left] {$-$} (4);
                    \end{tikzpicture}
                \end{center}
                \caption{associated oriented signed graph}
                \label{subfig:toricposetosg}
            \end{subfigure}  \\
            \multicolumn{2}{c}{
            \begin{subfigure}{.9\textwidth}
                \begin{center}
                    \begin{tikzpicture}[->,shorten >=1pt]
                        \node[vertex] (1) at (0,0) [label=left:$1$] {};
                        \node[vertex] (2) at (-1,1) [label=left:$2$] {};
                        \node[vertex] (3) at (1,1) [label=right:$3$] {};
                        \node[vertex] (4) at (0,2) [label=left:$4$] {};
                        \node[vertex] (5) at (2,2) [label=right:$5$] {};
                        \node[vertex] (6) at (1,3) [label=right:$6$] {};
                        \draw (1)--(2);
                        \draw (1)--(3);
                        \draw (2)--(4);
                        \draw (3)--(4);
                        \draw (3)--(5);
                        \draw (4)--(6);
                        \draw (5)--(6);
                        \node[vertex] (-2) at (3,2) [label=left:$-2$] {};
                        \node[vertex] (-1) at (4,3) [label=left:$-1$] {};
                        \node[vertex] (-3) at (5,2) [label=right:$-3$] {};
                        \node[vertex] (-4) at (4,1) [label=left:$-4$] {};
                        \node[vertex] (-5) at (6,1) [label=right:$-5$] {};
                        \node[vertex] (-6) at (5,0) [label=left:$-6$] {};
                        \draw (-6)--(-4);
                        \draw (-6)--(-5);
                        \draw (-4)--(-2);
                        \draw (-4)--(-3);
                        \draw (-5)--(-3);
                        \draw (-2)--(-1);
                        \draw (-3)--(-1);
                    \end{tikzpicture}
                \end{center}
                \caption{signed covering}
                \label{subfig:toricposetsc}
            \end{subfigure}}
        \end{tabular}
    \end{center}
    \caption{A digraph, its associated oriented signed graph and signed
        covering}
    \label{fig:something}
\end{figure}
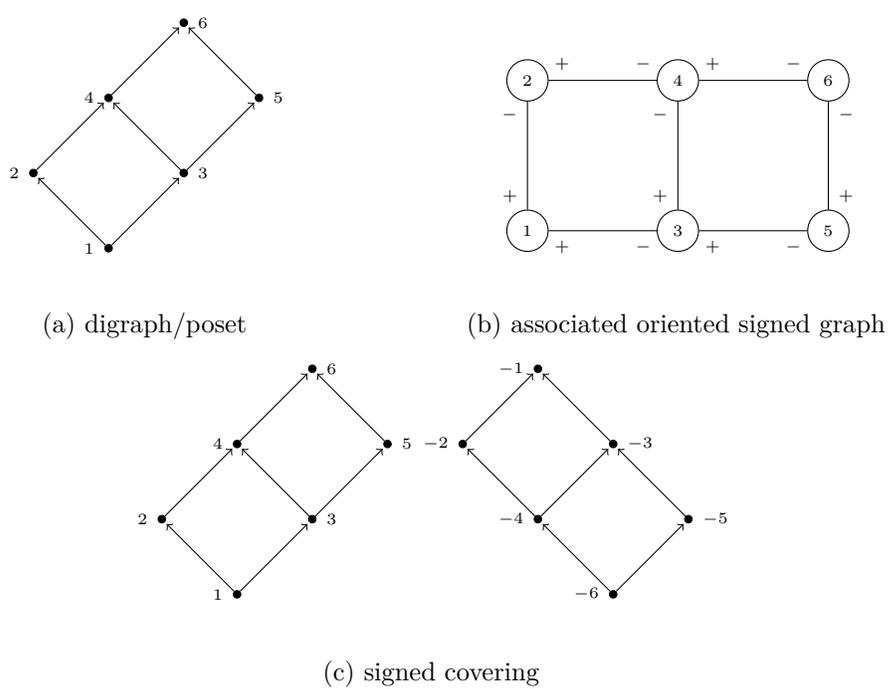
Then $S_G=k[U_{1\m2},U_{1\m3},U_{2\m4},U_{3\m4},U_{3\m5},U_{4\m6},U_{5\m6}]$
and, according to Theorem~\ref{thm:toricideal}, the toric ideal is
\[
I_G =
(U_{1\m2}U_{2\m4}-U_{1\m3}U_{3\m4},U_{3\m4}U_{4\m6}-U_{3\m5}U_{5\m6},U_{1\m2}U_{2\m4}U_{4\m6}-U_{1\m3}U_{3\m5}U_{5\m6}).
\]

\subsection{Toric ideals of graphs}

Suppose $G$ is a (unsigned) graph with no loops or multiple edges. Associated
to $G$ is the semigroup generated by the vectors $e_i+e_j$ where $(i,j)$ runs
over all edges of $G$. This semigroup then gives rise to a toric ideal $I_G$.
Of course, this is the same semigroup as one obtains from an oriented signed
graph $\Sigma$ where $G$ is $\Sigma$'s underlying graph, all edges are signed
$-$ and $\Sigma$ is oriented with all incidences $+$. Consequently,
Theorem~\ref{thm:toricideal} gives a set of generators for $I_G$. This
generating set lies between two other known generating sets.

\begin{defn}\label{def:closedwalk}
A sequence $v_0,\ldots,v_n$ of (not necessarily distinct) vertices in a graph
$G$ is a \emph{closed walk} if $v_0=v_n$ and there is an edge between $v_i$ and
$v_{i+1}$ for $i=0,\ldots,n-1$. It is an \emph{even closed walk} if
$n$ is even. Let $S_G = k[U_e]$ where $e$ runs over the edges of $G$. Associated to an even closed walk $w=\{v_0,\ldots,v_{2k}\}$ is a
binomial 
\[
T_w = U_{v_0v_1}U_{v_2v_3}\cdots U_{v_{2k-2}v_{2k-1}} -
U_{v_1v_2}U_{v_3v_4}\cdots U_{v_{2k-1}v_{2k}}
\]
\end{defn}

\begin{thm}[{Villarreal,~\cite[Proposition
        3.1]{Villarreal1995}}]\label{thm:villarreal}
Suppose $G$ is a graph. Then its toric ideal has this description: 
\[
I_G = (T_w \colon w\text{ an even closed walk in }G).
\]
\end{thm}
This generating set and the generating set from Theorem~\ref{thm:toricideal} do
not necessarily coincide. Consider the graph in Figure~\ref{fig:villarrealex}.
According to Theorem~\ref{thm:villarreal}, the toric ideal is
\[
    (U_{12}U_{13}U_{45}-U_{23}U_{14}U_{15},U_{18}U_{67}-U_{16}U_{78},U_{12}U_{13}U_{45}U_{18}U_{67}-U_{23}U_{14}U_{15}U_{78}U_{16}).
\]
On the other hand, according to Theorem~\ref{thm:toricideal}, it is
\[
    (U_{12}U_{13}U_{45}-U_{23}U_{14}U_{15},U_{18}U_{67}-U_{16}U_{78}).
\]
One has that
\begin{multline*}
U_{12}U_{13}U_{45}(U_{67}U_{18}-U_{16}U_{78}) -
U_{16}U_{78}(U_{23}U_{14}U_{15}-U_{12}U_{13}U_{45}) \\ =
U_{12}U_{13}U_{45}U_{18}U_{67}-U_{23}U_{14}U_{15}U_{78}U_{16},
\end{multline*}
so the two ideals really are equal.

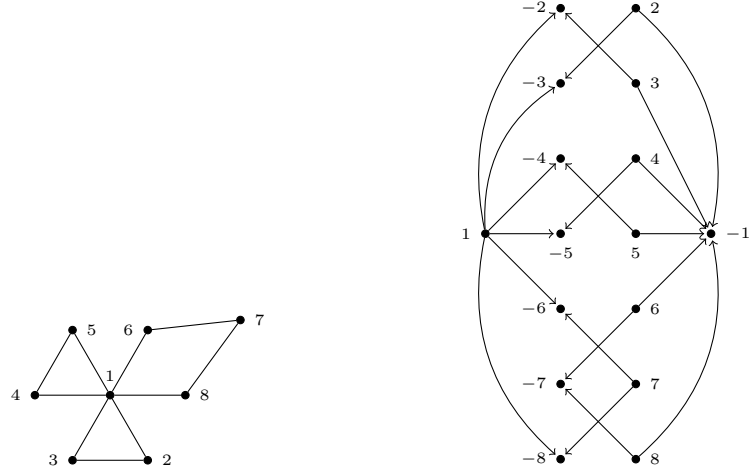
\begin{figure}
    \begin{center}
    \begin{subfigure}[b]{.4\textwidth}
    \begin{center}
        \begin{tikzpicture}
            \node[vertex] (1) at (0,0) [label=above:$1$] {};
            \node[vertex] (6) at (60:1) [label=left:$6$] {};
            \node[vertex] (5) at (120:1) [label=right:$5$] {};
            \node[vertex] (4) at (180:1) [label=left:$4$] {};
            \node[vertex] (3) at (240:1) [label=left:$3$] {};
            \node[vertex] (2) at (300:1) [label=right:$2$] {};
            \node[vertex] (8) at (360:1) [label=right:$8$] {};
            \node[vertex] (7) at (30:2) [label=right:$7$] {};
            \draw (1)--(2)--(3)--(1)--(4)--(5)--(1)--(6)--(7)--(8)--(1);
        \end{tikzpicture}
    \end{center}
    \caption{A graph $G$}
    \end{subfigure}
    \begin{subfigure}[b]{.4\textwidth}
        \begin{center}
            \begin{tikzpicture}[->,shorten >=1pt]
                \node[vertex] (-2) at (0,0) [label=left:$-2$] {};
                \node[vertex] (2) at (1,0) [label=right:$2$] {};
                \node[vertex] (-3) at (0,-1) [label=left:$-3$] {};
                \node[vertex] (3) at (1,-1) [label=right:$3$] {};
                \node[vertex] (-4) at (0,-2) [label=left:$-4$] {};
                \node[vertex] (4) at (1,-2) [label=right:$4$] {};
                \node[vertex] (-5) at (0,-3) [label=below:$-5$] {};
                \node[vertex] (5) at (1,-3) [label=below:$5$] {};
                \node[vertex] (-6) at (0,-4) [label=left:$-6$] {};
                \node[vertex] (6) at (1,-4) [label=right:$6$] {};
                \node[vertex] (-7) at (0,-5) [label=left:$-7$] {};
                \node[vertex] (7) at (1,-5) [label=right:$7$] {};
                \node[vertex] (-8) at(0,-6) [label=left:$-8$] {};
                \node[vertex] (8) at (1,-6) [label=right:$8$] {};
                \node[vertex] (1) at (-1,-3) [label=left:$1$] {};
                \node[vertex] (-1) at (2,-3) [label=right:$-1$] {};
                \path (1) edge[bend left] (-2)
                          edge[bend left] (-3)
                          edge (-4)
                          edge (-5)
                          edge (-6)
                          edge[bend right] (-8);
                \draw (2)--(-3);
                \draw (3)--(-2);
                \draw (5)--(-4);
                \draw (4)--(-5);
                \draw (6)--(-7);
                \draw (7)--(-6);
                \draw (7)--(-8);
                \draw (8)--(-7);
                \path (2) edge[bend left] (-1);
                \draw (3)--(-1);
                \draw (4)--(-1);
                \draw (5)--(-1);
                \draw (6)--(-1);
                \path (8) edge[bend right](-1);
            \end{tikzpicture}
        \end{center}
        \caption{The signed covering of $G$}
    \end{subfigure}
    \caption{A graph and its signed covering} 
    \label{fig:villarrealex}
\end{center}
\end{figure}

Since the signed covering is bipartite, any cycle will correspond to an even
closed walk in $G$. Therefore, the generating set of
Theorem~\ref{thm:toricideal} is contained in the generating set of
Theorem~\ref{thm:villarreal}.

\begin{defn}\label{def:primitivebinomial}
If $I \subset k[x_1,\ldots,x_k]$ is a toric ideal, a \emph{primitive binomial}
is a binomial $\bm{x}^{u^+}-\bm{x}^{u^-} \in I$ such that there does
not exist an $\bm{x}^{v^+}-\bm{x}^{v^-} \in I$ with $ u \ne v$ and
$\bm{x}^{v^+}|\bm{x}^{u^+}$ and $\bm{x}^{v^-}|\bm{x}^{u^-}$.
The set of primitive binomials is called the \emph{Graver basis} and generates
the toric ideal. 
\end{defn}
See~\citet{Sturmfels1996a} for more on the Graver basis.
Ohsugi and Hibi described the Graver basis of the toric ideal of a graph
in~\cite{OhsugiHibi1999}.

\begin{thm}[{Ohsugi and Hibi,~\cite[Lemma
        3.2]{OhsugiHibi1999}}]\label{thm:ohsugihibi}
Let $G$ be a finite connected graph. If $f \in I_G$ is primitive, it is
$f=T_\Gamma$ where $\Gamma$ is one of the following even closed walks:
\begin{enumerate}[label=(\roman{*})]
    \item $\Gamma$ is an even cycle
    \item $\Gamma$ is a pair of odd cycles having exactly one common vertex
    \item $\Gamma = (C_1,\Gamma_1,C_2,\Gamma_2)$ where $C_1$ and $C_2$ are odd
        cycles having no common vertex and where $\Gamma_1$ and $\Gamma_2$ are
        walks of $G$, both of which connect a vertex $v_1$ in $C_1$ to a vertex
        $v_2$ of $C_2$.
\end{enumerate}
\end{thm}

A moment's thought will show that an even closed walk $\Gamma$ corresponding to
a primitive binomial corresponds to a cycle not fixed orientation-wise by the
involution. However, the converse is not true. Consider the graph
in Figure~\ref{fig:ohsugiex}. The cycle $1 \to -2 \to 3 \to -1
\to 2 \to -4 \to 5 \to -6 \to 4 \to -3 \to 1$ in the signed covering is
not fixed orientation-wise by the involution, but does not project to an even
closed walk of any of the forms in Theorem~\ref{thm:ohsugihibi}. The binomial
corresponding to this cycle is
$U_{12}U_{13}U_{24}U_{56}U_{34}-U_{23}U_{12}U_{45}U_{46}U_{13}$, which is
clearly not primitive as the two terms are not relatively prime. Cancelling
$U_{12}U_{13}$ from both terms gives the primitive binomial
$U_{24}U_{56}U_{34}-U_{23}U_{45}U_{46}$, corresponding to the even closed walk
$2 \to 4 \to 6 \to 5 \to 4 \to 3 \to 2$.

\begin{figure}[htbp]
    \begin{center}
    \begin{subfigure}[b]{.4\textwidth}
        \begin{center}
            \begin{tikzpicture}
                \node[circle,draw] (1) at (0,0) {$1$};
                \node[circle,draw] (2) at (60:2) {$2$};
                \node[circle,draw] (3) at (0:2) {$3$};
                \begin{scope}[xshift=2cm]
                    \node[circle,draw] (4) at (60:2) {$4$};
                    \node[circle,draw] (6) at (0:2) {$6$};
                \end{scope}
                \begin{scope}[xshift=4cm]
                    \node[circle,draw] (5) at (60:2) {$5$};
                \end{scope}
                \draw (1)--(2)--(3)--(1);
                \draw (2)--(4)--(3);
                \draw (4)--(5)--(6)--(4);
            \end{tikzpicture}
        \end{center}
        \caption{$G$}
    \end{subfigure}
    \begin{subfigure}[b]{.5\textwidth}
        \begin{center}
            \begin{tikzpicture}[->,shorten >=1pt]
                \node[vertex] (1) at (0,0) [label=left:$1$] {};
                \node[vertex] (-2) at (-1,1) [label=left:$-2$] {};
                \node[vertex] (3) at (-1,2) [label=left:$3$] {};
                \node[vertex] (-1) at (0,3) [label=left:$-1$] {};
                \node[vertex] (2) at (1,2) [label=right:$2$] {};
                \node[vertex] (-3) at (1,1) [label=right:$-3$] {};
                \begin{scope}[xshift=4cm]
                    \node[vertex] (4) at (0,0) [label=right:$4$] {};
                    \node[vertex] (-5) at (-1,1) [label=left:$-5$] {};
                    \node[vertex] (6) at (-1,2) [label=left:$6$] {};
                    \node[vertex] (-4) at (0,3) [label=right:$-4$] {};
                    \node[vertex] (5) at (1,2) [label=right:$5$] {};
                    \node[vertex] (-6) at (1,1) [label=right:$-6$] {};
                \end{scope}
                \draw (1)--(-2);
                \draw (1)--(-3);
                \draw (2)--(-1);
                \draw (2)--(-3);
                \draw (2)--(-4);
                \draw (3)--(-1);
                \draw (3)--(-2);
                \draw (3)--(-4);
                \draw (4)--(-2);
                \draw (4)--(-3);
                \draw (4)--(-5);
                \draw (4)--(-6);
                \draw (5)--(-4);
                \draw (5)--(-6);
                \draw (6)--(-4);
                \draw (6)--(-5);
            \end{tikzpicture}
        \end{center}
        \caption{Signed covering of $G$}
    \end{subfigure}
    \caption{A graph for which the generating sets from
        Theorem~\ref{thm:toricideal} and Theorem~\ref{thm:ohsugihibi} differ}
    \label{fig:ohsugiex}
\end{center}
\end{figure}
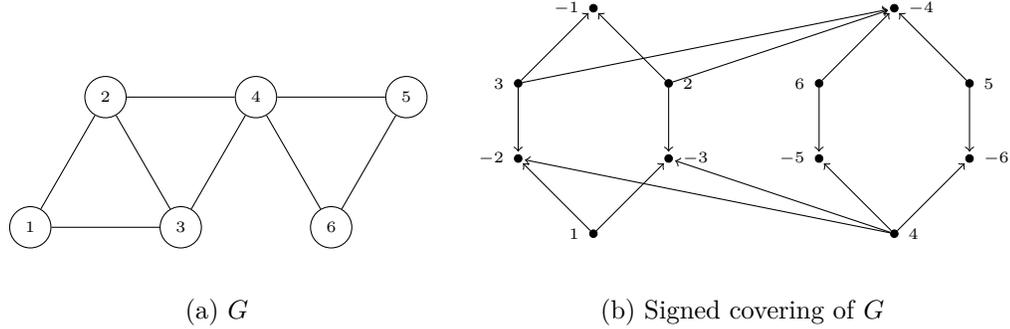

The following proposition summarizes the results of this section, linking the
generating sets of Theorems~\ref{thm:toricideal},~\ref{thm:villarreal},
and~\ref{thm:ohsugihibi}.
\begin{prop}\label{prop:summarytoricidealsgraphs}
    Suppose $G$ is a finite connected graph. There are then three generating
    sets, $S_1 \subset S_2 \subset S_3$
    for the toric ideal $I_G$:
    \begin{itemize}
        \item $S_1 = \{T_w \colon w \text{ an even closed walk in the forms
                of Theorem~\ref{thm:ohsugihibi}}\}$
        \item $S_2 = \{ U(C) \colon C\text{ a cycle in the signed covering}\}$
        \item $S_3 = \{T_w \colon w\text{ an even closed walk}\}$.
    \end{itemize}
    \end{prop}

\section{Some Complete Intersection Rings $\Rprt$}\label{sec:rprtci}
Now that one understands the toric ideal, one can turn one's attention to the
question of computing $\Psi_P$ and $\Psi_P^\smvee$ via complete intersection
presentations.
This section gives factorizations of
\begin{align*}
    \Psi_P(\bm{x}) &= \sum_{w \in \cL(P)}
    w\left(\dfrac{1}{(x_1-x_2)(x_2-x_3)\cdots(x_{n-1}-x_n)x_n}\right) \quad
    \text{and}\quad \\
    \Psi^\smvee_{\Pc}(\bm{x}) &= \sum_{w \in
        \cL(P)}w\left(\dfrac{1}{(x_1-x_2)(x_2-x_3)\cdots(x_{n-1}-x_n)2x_n}\right)
\end{align*}
for a certain class of signed posets using Proposition~\ref{prop:sandsigmaci}.

Recall Theorem~\ref{thm:greene}, where Greene showed that, for a strongly planar poset,
\[
    \Psi_P(\bm{x}) = \dfrac{\prod_{\rho} (x_{\min(\rho)} -
        x_{\max(\rho)})}{\prod_{i \lessdot j}(x_i-x_j)},
\]
where $\rho$ runs over all the regions enclosed by the Hasse diagram of $P$ and $i
\lessdot j$ over the covering relations of the poset. \citet{Boussicault2009} showed that a similar factorization occurs
for posets that are ``gluings of diamonds along chains'' and, in particular,
for
strongly planar posets, recovering Greene's result.
In~\cite{BoussicaultFerayLascouxReiner2012}, Boussicault, \feray, Lascoux and
Reiner gave an algebraic explanation for the disconnecting chains
of~\cite{Boussicault2009} by showing certain posets (including strongly planar
posets) are complete intersections by constructing a regular sequence using the
operation of opening/closing a notch.

The situation for signed posets is quite similar. We will construct a regular
sequence by opening/closing signed notches, culminating in
Theorem~\ref{thm:csspci}.

{
\renewcommand{\thethm}{\ref{thm:csspci}}
\begin{thm}
	Suppose $P\subset \Bn$ (\resp $\Pc \subset \Cn$) is a signed poset such
    that $\Gbhat(P)$ (\resp $\Gchat(\Pc)$) has an embedding in $\bR^2$ which is
    strongly planar. Then
    \begin{itemize}
        \item $\Gbhat(P)$ (\resp $\Gchat(\Pc)$) has an embedding
    that is both centrally symmetric and strongly planar, 
    \item $\Rprt$ (\resp $\Rrt{\Pc}$) is a complete intersection, with $I_P$
        (\resp $I_{\Pc}$) generated by the cycle binomials of the cycles in
        $\Gbhat(P)$ (\resp $\Gchat(P)$) defining the faces of the graph, and
    \item one has
    \begin{align*}
    \Hilb(\Rprt,\bm{x}) &=\dfrac{\prod_{\rho} (1-\bm{x}^\rho)}{\prod_{e} (1-x_a^\delta x_b^{-\epsilon})}
	\quad \text{and} \\
	\Hilb(\Rrt{\Pc},\bm{x}) &=\dfrac{\prod_{\rho} (1-\bm{x}^\rho)}{\prod_{e} (1-x_a^\delta x_b^{-\epsilon})},
\end{align*}
where $\rho$ runs over all regions enclosed by $\Gbhat(P)$ (\resp $\Gchat(P)$)
not fixed by the involution, $e$ runs over the orbits of edges $\delta a \to
\epsilon b$ ($a<b$) in $\Gchat(P)$, $x_0$ is taken to be $1$ and
\[
    \bm{x}^\rho =
    x_{\min(\rho)}^{\sgn(\min(\rho))}x_{\max(\rho)}^{-\sgn(\max(\rho))}.  
\]
\end{itemize}
\end{thm}%
\addtocounter{thm}{-1}%
}

The next several sections will build up to the proof of Theorem~\ref{thm:csspci} in Section~\ref{subsec:stronglyplanar}, where what it means for a signed poset to be strongly planar will be defined.
\begin{itemize}
		\item Section~\ref{subsec:reductions} will reduce to the case of signed
            posets consisting of a single biconnected component (see
            Definition~\ref{def:biconnectedcomp} and Proposition~\ref{prop:biconnectedred}) and whose root cones are full-dimensional (see Propositions~\ref{prop:rtconenotfulldimtypea} and~\ref{prop:samertsemigroup}).
		\item Section~\ref{subsec:signednotches} defines the notion of a signed
				notch (see Definition~\ref{def:signednotch}), explains what it means to
				open/close a signed notch and what impact that
				has on the root cone semigroup ring (see
				Proposition~\ref{prop:closenotchmodout}).
		\item Section~\ref{subsec:stronglyplanar} defines strongly planar
				posets (see Definition~\ref{def:stronglyplanar}), proves a
				number of propositions and lemmas regarding strongly planar
				posets and then proves Theorem~\ref{thm:csspci}.
		\item After the proof, Section~\ref{subsec:computingpsi} will explain
				how Theorem~\ref{thm:csspci} can be used to compute $\Psi_P$
				and $\Psi^\smvee_\Pc$.
\end{itemize}

\subsection{A few reductions}\label{subsec:reductions}
The first reduction will be to the case of signed posets consisting of a single
biconnected component.
\begin{defn}\label{def:biconnectedcomp}
Let $P \subset \Bn$ and $\Pc \subset \Cn$ be signed posets. Let $A \subset P$
(resp.\ $\Pc$) be the elements corresponding to the edges of $\Gbhat(P)$ (resp.\
$\Gchat(\Pc)$). Say two elements of $A$ are \emph{cycle equivalent} if there is
a cycle in $\Gbhat(P)$ (resp.\ $\Gchat(P)$) not fixed orientation-wise by the involution passing
through an edge corresponding to each element. Taking the transitive closure
gives an equivalence relation. Combine equivalence classes lying in the same
orbit of the involution. This partitions the edges of $\Gbhat(P)$ (rep.\
$\Gchat(\Pc)$) into the \emph{biconnected components} of $P$ (resp.\ $\Pc$).
Each biconnected component corresponds to a signed poset, which will also be
called the biconnected components of $P$ (resp.\ $\Pc$).
\end{defn}

Consider  $P =
\{+e_1-e_2,+e_1+e_2,+e_1-e_3,+e_1+e_3,+e_1-e_4,+e_1-e_5,+e_1-e_6,+e_2-e_3,
+e_2-e_4 , +e_2-e_5 , +e_2-e_6 , +e_4-e_6,+e_5-e_6,+e_1\}$.
Figure~\ref{fig:biconnectedex} shows $\Ghat(P)$ with the edges of one biconnected
component solid and the edges of the other dashed.
\begin{figure}
    \begin{center}
    \begin{subfigure}[b]{.4\textwidth}
\begin{center}
   \begin{tikzpicture}
       \node[vertex] (1) at (0,0) [label=left:$1$] {};
	   \node[vertex] (2) at (-1,1) [label=right:$2$] {};
       \node[vertex] (3) at (-1,2) [label=right:$3$] {};
       \node[vertex] (-3) at (1,1) [label=left:$-3$] {};
       \node[vertex] (-2) at (1,2) [label=left:$-2$] {};
	   \node[vertex] (-1) at (0,3) [label=left:$-1$] {};
       \node[vertex] (5) at (-1.5,2) [label=right:$5$] {};
       \node[vertex] (4) at (-2,2) [label=left:$4$] {};
       \node[vertex] (6) at (-2,3) [label=left:$6$] {};
       \node[vertex] (-4) at (1.5,1) [label=below left:$-5$] {};
       \node[vertex] (-5) at (2,1) [label=right:$-4$] {};
       \node[vertex] (-6) at (2,0) [label=right:$-6$] {};
       \node[vertex] (0) at (0,1.5) [label=left:$0$] {};
       \draw[dashed] (1)--(0)--(-1);
       \draw[dashed] (1)--(2)--(3)--(-1)--(-2)--(-3)--(1);
      \draw (2)--(4)--(6)--(5)--(2);
       \draw (-2)--(-4)--(-6)--(-5)--(-2);
   \end{tikzpicture}
\end{center}
   \caption{$\Gbhat(P)$}
\end{subfigure}
    \begin{subfigure}[b]{.4\textwidth}
\begin{center}
   \begin{tikzpicture}
       \node[vertex] (1) at (0,0) [label=left:$1$] {};
	   \node[vertex] (2) at (-1,1) [label=right:$2$] {};
       \node[vertex] (3) at (-1,2) [label=right:$3$] {};
       \node[vertex] (-3) at (1,1) [label=left:$-3$] {};
       \node[vertex] (-2) at (1,2) [label=left:$-2$] {};
	   \node[vertex] (-1) at (0,3) [label=left:$-1$] {};
       \node[vertex] (5) at (-1.5,2) [label=right:$5$] {};
       \node[vertex] (4) at (-2,2) [label=left:$4$] {};
       \node[vertex] (6) at (-2,3) [label=left:$6$] {};
       \node[vertex] (-4) at (1.5,1) [label=below left:$-5$] {};
       \node[vertex] (-5) at (2,1) [label=right:$-4$] {};
       \node[vertex] (-6) at (2,0) [label=right:$-6$] {};
       \draw[dashed] (1)--(2)--(3)--(-1)--(-2)--(-3)--(1);
      \draw (2)--(4)--(6)--(5)--(2);
       \draw (-2)--(-4)--(-6)--(-5)--(-2);
   \end{tikzpicture}
\end{center}
\caption{$\Gchat(\Pc)$}
\end{subfigure}
   \caption{$\Ghat(P)$ for $P =
\{+e_1-e_2,+e_1+e_2,+e_1-e_3,+e_1+e_3,+e_1-e_4,+e_1-e_5,+e_1-e_6,+e_2-e_3,
+e_2-e_4 , +e_2-e_5 , +e_2-e_6 , +e_4-e_6,+e_5-e_6,+e_1\}$ with biconnected
components indicated by dashed and solid lines.}
\label{fig:biconnectedex}
\end{center}
\end{figure}
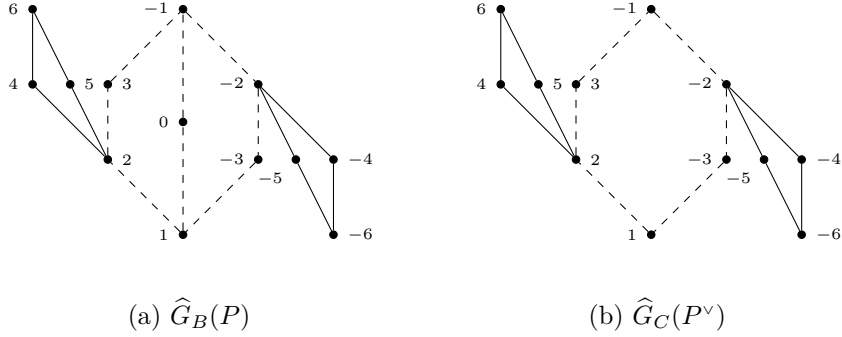

\begin{prop}\label{prop:biconnectedred}
Let $P \subset \Bn$ (\resp $\Pc \subset \Cn$) be a signed poset and $P_1,\ldots,P_k$ (\resp $P^\smvee_1,\ldots,P^\smvee_k$) its biconnected components. Then
\[
\Rprt \cong \Rrt{P_1}\otimes_k \cdots \otimes \Rrt{P_k}\quad\text{and}\quad \Rrt{\Pc} \cong \Rrt{P^\smvee_1}\otimes_k\cdots\otimes_k \Rrt{P^\smvee_k}
\]
and, as a consequence,
\[
	\Hilb(\Rprt,\bm{x})=\prod_{i=1}^k \Hilb(\Rrt{P_i},\bm{x}) \quad \text{and} \quad \Hilb(\Rrt{\Pc},\bm{x})
	=\prod_{i=1}^k \Hilb(\Rrt{P^\smvee_i},\bm{x}).
\]
\end{prop}

\begin{proof}
	Recall that $\Rprt = S_P / I_P$ and $\Rrt{\Pc}=S_{\Pc}/I_{\Pc}$, where
    $I_P$ and $I_{\Pc}$ are generated, respectively, by the cycle binomials of cycles of $\Gbhat(P)$ and $\Gchat(\Pc)$ not fixed by the involution. Every edge in $\Gbhat(P)$ and $\Gchat(\Pc)$ lies in a unique biconnected component and is thus associated to a unique $P_i$ (\resp $P^\smvee_i$). Consequently, $S_P \cong \bigotimes_{\ell =1}^k S_{P_\ell}$ and $S_\Pc \cong \bigotimes_{\ell=1}^k S_{P^\smvee_\ell}$. Furthermore, each cycle not fixed by the involution lies wholly in a single biconnected component. Consequently, $I_P = \bigoplus_{\ell=1}^k I_{P_k}$ and $I_{\Pc} =\bigoplus_{\ell=1}^k I_{P^\smvee_k}$. Then
	\[
	\Rprt \cong S_P/I_P = S/\oplus_{\ell=1}^k I_{P_k} \cong \bigotimes_{\ell =1}^k S_{P_\ell} / I_{P_\ell} \cong \Rrt{P_1}\otimes_k \cdots \otimes_k \Rrt{P_k}
	\]
	and
	\[
	\Rrt{\Pc} \cong S_\Pc/I_{\Pc} = S/\oplus_{\ell=1}^k I_{P^\smvee_k} \cong
    \bigotimes_{\ell =1}^k S_{P^\smvee_\ell} / I_{P^\smvee_\ell} \cong
    \Rrt{P^\smvee_1}\otimes_k \cdots \otimes_k \Rrt{P^\smvee_k},
	\]
    as claimed.
\end{proof}

For the remainder of this chapter, it will be assumed that all signed posets
consist of a single biconnected component. In particular, one may assume that
$\Gbhat(P)$ and $\Gchat(\Pc)$ have no vertices of degree one. 

Furthermore, signed posets for which $\Kprt$ is not full-dimensional need not be
considered here, as they reduce to work of~\citet*{BoussicaultFerayLascouxReiner2012}, as is now explained. 

Suppose $P$ is a signed poset such that $\Kprt$ is not full-dimensional. There
are two possibilities:
\begin{itemize}
		\item $P$ can be relabeled so that $P \subset \Phi_{B_{n-1}}$ (resp.\ $\Pc
				\subset \Phi_{C_{n-1}})$, i.e.\ one should think of $\Kprt \subset
        \bR^{n-1}$ rather than $\Kprt \subset \bR^n$.
    \item The signed graph $\Sigma_P$ underlying the Hasse diagram of $P$ is
        balanced. (See Definition~\ref{def:hassediagram} for
        the definition of the Hasse diagram of a signed poset and
        Definition~\ref{def:balanced} for the notion of a balanced
        signed graph.)
\end{itemize}

While the first case is straightforward, the second requires a little care.

\begin{prop}\label{prop:rtconenotfulldimtypea}
Suppose $P \subset \Bn$ (resp.\ $\Pc \subset \Cn$) is a signed poset such that
$\Sigma_P$ is balanced. Then $P$ (\resp $\Pc$) is isomorphic to a
poset $P' \subset \An \cap \Bn$ (\resp $P' \subset \An \cap \Cn$).
\end{prop}

\begin{proof}
One may assume $\Ghat(P)$ consists of a single biconnected component.
Consequently, $\Sigma_P$ must be connected. Since $\Sigma_P$ is balanced, the
vertices can be partitioned into sets $V^+$ and $V^-$ such that edges labeled
$+$ join a vertex in $V^+$ to one in $V^-$ and edges labeled $-$ join two
vertices in either $V^+$ or $V^-$. (This is Theorem~\ref{thm:hararybalance}.)
Note that since $\Sigma_P$ is balanced, it cannot contain any self-loops.
Therefore, there is no $\pm i < \mp i$ in $\Ghat(P)$. (In particular, should
one be considering $\Gbhat(P)$, $0$ must lie in its own connected component.)
Then $\Ghat(P)$ consists of two isotropic components (plus $0$ in $\Gbhat(P)$).
One can then read off a signed permutation sending $P$ to some $P' \subset
\An$ (it is the permutation flipping signs so that all vertices in each
isotropic component have the same sign), as desired. 
\end{proof}

Next, one checks that, given an $\An$-poset, its root cone semigroup does not
change when one switches among the $\An$, $\Bn$ and $\Cn$ root lattices.

\begin{prop}\label{prop:samertsemigroup}
Suppose $P \subset \An$ is a poset. Then the semigroups $\Kprt \cap \Lrt_A$,
$\Kprt \cap \Lrt_B$ and $\Kprt \cap \Lrt_C$ are equal.
\end{prop}

\begin{proof}
First, begin by observing that $\Kprt \cap \Lrt_A \subset \Kprt \cap \Lrt_B$
and $\Kprt \cap \Lrt_C$. Suppose $\alpha \in \Kprt \cap \Lrt_B$. Then $\alpha$
can be written as a positive integer linear combination of elements of $P$
corresponding to the covering relations in $\Gbhat(P)$. However, these
elements correspond precisely to the covering relations in the Hasse diagram of
$P$, which are a minimal generating set for $\Kprt \cap \Lrt_A$
(see~\cite[Proposition 7.1]{BoussicaultFerayLascouxReiner2012}). Therefore, $\alpha \in \Kprt
\cap \Lrt_A$, so $\Kprt \cap \Lrt_A = \Kprt \cap \Lrt_B$. The same argument
shows $\Kprt \cap \Lrt_A = \Kprt \cap \Lrt_C$.
\end{proof}

As a consequence of the last two propositions, if $P$ is a signed poset and
$\Kprt$ is not full-dimensional, its root cone semigroup ring can be understood
using the results of~\cite{BoussicaultFerayLascouxReiner2012}. If $\Ghat(P)$ is
disconnected, it either has multiple biconnected components or consists of a
single biconnected component made up of two isotropic connected components,
meaning $\Kprt$ is not full-dimensional. Consequently, for the remainder of the
chapter it will be assumed that $\Ghat(P)$ is connected and consists of a
single biconnected component.


\subsection{Signed Notches}\label{subsec:signednotches}

In~\cite{BoussicaultFerayLascouxReiner2012}, a \emph{$\vee$-shaped notch}
(resp.\ a \emph{$\wedge$-shaped notch}) in a poset was defined to be a triple
of elements $(a,b,c)$ such that $a \lessdot b, a \lessdot c$ (resp.\ $a \gtrdot
b,a \gtrdot c$) and $b$ and $c$ are in different connected components of
$P\smallsetminus P_{\leq a}$ (resp.\ $P\smallsetminus P_{\geq a}$). One defines
a signed notch as a pair of notches in this sense.

\begin{defn}\label{def:signednotch}
Given a signed poset $P$, a \emph{signed notch} in $\Ghat(P)$ is a pair
$(a,b,c)$ and $(-a,-b,-c)$ such that both $(a,b,c)$ and $(-a,-b,-c)$ are notches
in $\Ghat(P)$ and, if in $\Gbhat(P)$, neither $b$ nor $c$ is $0$.
\end{defn}

Consider again the signed poset $P$ from 
Figure~\ref{fig:biconnectedex}. Together $(2,3,5)$ and $(-2,-3,-5)$ form a signed notch. 
	
If $(a,b,c)$ is a notch in $\Ghat(P)$, it follows from the properties of the involution that $(-a,-b,-c)$ is also a notch in the type $A$ sense, but facing the other way. Consequently, one may assume that $(a,b,c)$ is a $\vee$-shaped notch and $(-a,-b,-c)$ is a $\wedge$-shaped notch. One may also always assume, without loss of generality, that $\sgn(b)=\sgn(c)$, by relabeling the elements of $\Ghat(P)$ as necessary.

One next wants to show that closing a signed notch in either $\Gbhat(P)$ or
$\Gchat(\Pc)$ gives the $\Gbhat$ or $\Gchat$ associated to some other signed
poset. If $(a,b,c)$ and $(-a,-b,-c)$ form a signed notch in $\Ghat(P)$, the poset $\Ghat(P)/\{b\equiv
c,-b\equiv -c\}$ is the poset obtained by \emph{closing the notch}. It will
often be useful to think of this operation as closing the type $A$ notch
$(a,b,c)$ and then the notch $(-a,-b,-c)$ (or vice versa). To legitimize this
point of view, one needs to show the following:
\begin{itemize}
    \item $(-a,-b,-c)$ remains a notch in $\Ghat(P)/\{b \equiv c\}$
        (Proposition~\ref{prop:closeonenotch})
    \item There is a signed poset $P'$ such that $\Ghat(P') = \Ghat(P)/\{b
        \equiv c,-b\equiv -c\}$.  Such a $P'$ will be said to have been obtained
        from $P$ by \emph{closing the signed notch}. (Propositions~\ref{prop:notchclosingb}
        and~\ref{prop:notchclosingc})
\end{itemize}

The result of closing the signed notch formed by $(2,3,5)$ and $(-2,-3,-5)$ in
$P$ shown
in~Figure \ref{fig:notchcloseex}.

\begin{figure}
    \begin{center}
    \begin{subfigure}[b]{.45\textwidth}
        \begin{center}
      \begin{tikzpicture}
       \node[vertex] (1) at (0,0) [label=left:$1$] {};
	   \node[vertex] (2) at (-1,1) [label=right:$2$] {};
       \node[vertex] (3) at (-1,2) [label=right:$3 \equiv 5$] {};
       \node[vertex] (-3) at (1,1) [label=left:$-3 \equiv -5$] {};
       \node[vertex] (-2) at (1,2) [label=left:$-2$] {};
	   \node[vertex] (-1) at (0,3) [label=left:$-1$] {};
       \node[vertex] (4) at (-1.5,2) [label=left:$4$] {};
       \node[vertex] (6) at (-2,3) [label=left:$6$] {};
       \node[vertex] (-4) at (1.5,1) [label=right:$-4$] {};
       \node[vertex] (-6) at (2,0) [label=right:$-6$] {};
       \node[vertex] (0) at (0,1.5) [label=left:$0$] {};
       \draw (1)--(2)--(3)--(-1)--(-2)--(-3)--(1);
       \draw (2)--(4)--(6)--(3);
       \draw (-2)--(-4)--(-6)--(-3);
       \draw (1)--(0)--(-1);
      \end{tikzpicture}
        \end{center}
        \caption{$\Gbhat(P)/\{3 \equiv 5, -3 \equiv -5\}$}
    \end{subfigure}
    \begin{subfigure}[b]{.45\textwidth}
   \begin{center}
      \begin{tikzpicture}
       \node[vertex] (1) at (0,0) [label=left:$1$] {};
	   \node[vertex] (2) at (-1,1) [label=right:$2$] {};
       \node[vertex] (3) at (-1,2) [label=right:$3 \equiv 5$] {};
       \node[vertex] (-3) at (1,1) [label=left:$-3 \equiv -5$] {};
       \node[vertex] (-2) at (1,2) [label=left:$-2$] {};
	   \node[vertex] (-1) at (0,3) [label=left:$-1$] {};
       \node[vertex] (4) at (-1.5,2) [label=left:$4$] {};
       \node[vertex] (6) at (-2,3) [label=left:$6$] {};
       \node[vertex] (-4) at (1.5,1) [label=right:$-4$] {};
       \node[vertex] (-6) at (2,0) [label=right:$-6$] {};
       \draw (1)--(2)--(3)--(-1)--(-2)--(-3)--(1);
       \draw (2)--(4)--(6)--(3);
       \draw (-2)--(-4)--(-6)--(-3);
      \end{tikzpicture}
   \end{center}
   \caption{$\Gchat(\Pc)/\{3 \equiv 5, -3 \equiv -5\}$}
   \end{subfigure}
   \end{center}
\caption{$\Gbhat(P)$ and $\Gchat(\Pc)$ from Figure~\ref{fig:biconnectedex} after closing the signed notch $(2,3,5)$ and $(-2,-3,-5)$.}
\label{fig:notchcloseex}
\end{figure}

\begin{prop}\label{prop:closeonenotch}
	If $(a,b,c)$ and $(-a,-b,-c)$ form a signed notch in $\Ghat(P)$, $(-a,-b,-c)$ remains a notch in $\Ghat(P)/\{b\equiv c\}$.
\end{prop}

\begin{proof}
One knows from~\cite[Definition 8.5]{BoussicaultFerayLascouxReiner2012} that $\Ghat(P)/\{b \equiv c\}$ is a poset and thus so is $(\Ghat(P)/\{b \equiv c\})/\{-b \equiv -c\}$, as long as $(-a,-b,-c)$ remains a notch in $\Ghat(P)/\{b \equiv c\}$. The only way in which $(-a,-b,-c)$ can fail to be a notch in $\Ghat(P)/\{b \equiv c\}$ is if $-b$ and $-c$ lie in the same connected component of $(\Ghat(P)/\{b \equiv c\})\smallsetminus (\Ghat(P)/\{ b \equiv c\})_{\geq -a}$. In that case, there is a path from $-b$ to $-c$ avoiding $ (\Ghat(P)/\{ b \equiv c\})_{\geq -a}$. However, since $(-a,-b,-c)$ was a notch in $\Ghat(P)$, this path cannot lift to a path in $\Ghat(P)$, so it must pass through the vertex $\{b \equiv c\}$. Therefore at least one of $b$ and $c$ lies in $\Ghat(P)_{\geq -a}$. Without loss of generality, assume $b \in \Ghat(P)_{\geq -a}$. But then $b \equiv c \geq -a$ in $\Ghat(P)/\{b \equiv c\}$, a contradiction. Thus, one must have that $(-a,-b,-c)$ is a notch in $\Ghat(P)/\{b\equiv c\}$.

\end{proof}

With Proposition~\ref{prop:closeonenotch} in hand, it is easier to address
$\Gbhat(P)$ and $\Gchat(\Pc)$ separately, as the next two propositions.

\begin{prop}\label{prop:notchclosingb}
	If $(a,b,c)$ and $(-a,-b,-c)$ form a signed notch in $\Gbhat(P)$, there is
	a signed poset $P'\subset \Bn$ such that $\Gbhat(P') = \Gbhat(P) /\{b
	\equiv c,-b \equiv -c\}$. 
\end{prop}

\begin{proof}
From Proposition~\ref{prop:closeonenotch}, one knows that $\Gbhat(P)/\{b \equiv c\}$ has a notch $(-a,-b,-c)$.
	Closing the second notch $(-a,-b,-c)$ one obtains 
	\begin{multline*}
	\Gbhat(P') = (\Gbhat(P)/\{b\equiv c\})/\{-b\equiv -c\} \\
	=
	(\Gbhat(P)/\{-b\equiv -c\})/\{b\equiv c\} 
	=\Gbhat(P)/\{b \equiv c,-b\equiv -c\},
	\end{multline*}
	as the resulting poset is independent of the order in which the notches are closed. Since $\sgn(b)=\sgn(c)$, this poset has an involution $\pm i \mapsto \mp i$ such that $i \to j$ is sent to $-j \to -i$, since $\Gbhat(P)$ had an involution with this property. 

  To show that $\Gbhat(P') = \Gbhat(P)/\{b \equiv c,-b \equiv -c\}$ is really
associated to a signed poset $P'$, one must show that $\Gbhat(P')$ has the
property that if $i < -i$, then $i < 0 < -i$. Suppose $i < -i$ in $\Gbhat(P')$. It
suffices to consider only the case where $ i \not < -i$ in $\Gbhat(P)$. Then,
without loss of generality, one may assume that $b< -i$ and $i < c$. First,
consider the case where $a \ne 0$. One then has the situation depicted in
Figure~\ref{fig:notchclosingbproof},
  where the solid lines are edges in the Hasse diagram of $\Gbhat(P)$ and the
dashed lines are chains. 
\begin{figure}[htbp]
     \begin{center}
	\begin{tikzpicture}
	  \node[vertex] at (0,-1) (i) [label=right:$i$] {};
	  \node[vertex] at (-1,1) (c) [label=right:$c$] {};
	  \node[vertex] at (-2,0) (a) [label=right:$a$] {};
	  \node[vertex] at (-3,1) (b) [label=left:$b$] {};
	  \node[vertex] at (3,1)  (-b) [label=right:$-b$] {};
	  \node[vertex] at (2,2)  (-a)[label=right:$-a$] {};
	  \node[vertex] at (1,1)  (-c)[label=right:$-c$] {};
	  \node[vertex] at (0,3)  (-i)[label=right:$-i$] {};
	  \draw (a)--(b);
	  \draw (a)--(c);
	  \draw (-b)--(-a);
	  \draw (-c)--(-a);
	  \draw[dashed] (i)--(c);
	  \draw[dashed] (i)--(-b);
	  \draw[dashed] (b)--(-i);
	  \draw[dashed] (-c)--(-i);
	\end{tikzpicture}
  \end{center}
\caption{After closing two notches in the proof of
Proposition~\ref{prop:notchclosingb}}
\label{fig:notchclosingbproof}
\end{figure}
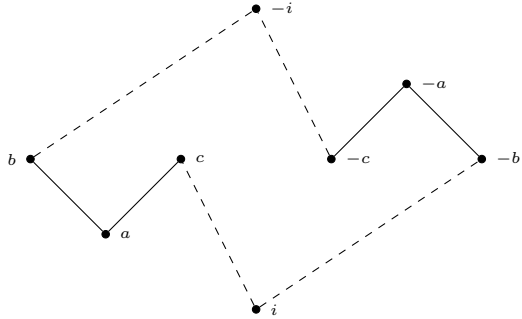
This gives a path from $b$ to $c$, via $-i, -c,-a,-b$ and $i$. Since
$(a,b,c)$ is a notch, this path must intersect $\Gbhat(P)_{\leq a}$ at some $d$.
There are a few cases.
  \begin{enumerate}[label=(\alph*)]
	\item \textbf{Suppose $-c \leq d \leq -i$.} Then $-c <c$ in $\Gbhat(P)$. Therefore, $-c < 0 < c$. Then in $\Gbhat(P')$, one has $b \equiv c < -i$ and $i < -b \equiv -c $, so since $-c < 0 < c$ one has $i  < -b \equiv -c < 0 < b \equiv c < -i$, as desired.
	\item \textbf{Suppose $i \leq d \leq -b$.} Then in $\Gbhat(P)$ one has $i \leq d < a < -i$, so $i < 0 < -i$ and this relation is preserved in $\Gbhat(P')$.
	\item \textbf{Suppose $i \leq d < c$.} Then one has $i \leq d < c <-i$ in $\Gbhat(P)$, so $i < 0 < -i$ and this relation is preserved in $\Gbhat(P')$.
	\item \textbf{Suppose $-a=d < a$.} Then $i < -i$ in $\Gbhat(P)$, so $i < 0 < -i$ and this relation is preserved in $\Gbhat(P')$. 
  \end{enumerate}
  In the other case, suppose $a=0$. Then $-b < 0 < b$. Since $-i >b$, $i < -b$, so $i < -i$ in $\Gbhat(P)$, so $i < 0 < -i$ and this relation is preserved in $\Gbhat(P')$.
\end{proof}

\begin{prop}\label{prop:notchclosingc}
	If $(a,b,c)$ and $(-a,-b,-c)$ form a signed notch in $\Gchat(\Pc)$, there is a signed poset $P'^{\smvee} \subset \Cn$ such that $\Gchat(P'^\smvee) = \Gchat(\Pc) /\{b \equiv c,-b \equiv -c\}$. 
	\end{prop}

\begin{proof}
	From Proposition~\ref{prop:closeonenotch}, one knows that $\Gchat(\Pc)/\{b \equiv c\}$ has a notch $(-a,-b,-c)$.
	Closing the second notch $(-a,-b,-c)$ one obtains 
	\begin{multline*}
	\Gchat(P'^\smvee)=(\Gchat(\Pc)/\{b\equiv c\})/\{-b\equiv -c\}\\
	= (\Gchat(\Pc)/\{-b\equiv -c\})/\{b\equiv c\} =\Gchat(\Pc)/\{b \equiv c,-b\equiv -c\},
	\end{multline*}
	as the resulting poset is independent of the order in which the notches are closed. Since $\sgn(b)=\sgn(c)$, this poset has an involution $\pm i \mapsto \mp i$ such that $i \to j$ is sent to $-j \to -i$, since $\Gchat(\Pc)$ had an involution with this property. 
	
	It remains to show that $\Gchat(P'^\smvee)$ has the property that if $i < -i$ and $j < -j$ (possibly after relabelling) that $i < -j$ and $j < -i$. Closing a notch introduces relations and does not remove any relations. There are then two cases:
	\begin{enumerate}
		\item \textbf{Suppose $i < -i$ in $\Gchat(P'^\smvee)$, but not in $\Gchat(\Pc)$
            and $j < -j$ in both $\Gchat(\Pc)$ and $\Gchat(P'^\smvee)$.} Then,
            without loss of generality, suppose $i < b$ and $c < -i$ in
            $\Gchat(\Pc)$. Then $-b < -i$ and $i < -c$. Then one has the
         situation in $\Gchat(\Pc)$ depicted in Figure~\ref{fig:notchclosingc1}, where the dashed edges are chains and the solid edges are edges in the Hasse diagram of $\Gchat(\Pc)$. 
            \begin{figure}[htbp]
			\begin{center}
				\begin{tikzpicture}[vertex/.style={draw,circle,fill=black,inner sep=1pt}]
					\node[vertex] at (0,-1) (i) [label=below:$i$] {};
					\node[vertex] at (3,1) (-c) [label=right:$-c$] {};
					\node[vertex] at (1,1) (-b) [label=right:$-b$] {};
					\node[vertex] at (-2,1) (a) [label=right:$a$] {};
					\node[vertex] at (-3,2) (b) [label=right:$b$] {};
					\node[vertex] at (-1,2) (c) [label=right:$c$] {};
					\node[vertex] at (2,2) (-a) [label=right:$-a$] {};
					\node[vertex] at (0,3) (-i) [label=above:$-i$] {};
					\draw[dashed] (i) to [bend left] (b);
					\draw (a)--(b);
					\draw (a)--(c);
					\draw[dashed] (c)--(-i);
					\draw[dashed] (-i)--(-b);
					\draw (-b)--(-a);
					\draw (-a)--(-c);
					\draw[dashed] (i) to [bend right](-c);
				\end{tikzpicture}
			\end{center}
            \caption{$\Gchat(\Pc)$ in case 1 of the proof of
                Proposition~\ref{prop:notchclosingc}}
            \label{fig:notchclosingc1}
        \end{figure}
			Since $(a,b,c)$ is a notch in $\Gchat(\Pc)$, one must have that the path from $b$ to $c$ via $i$ must pass through $\Gchat(\Pc)_{\leq a}$ at some $d$. There are a few cases
			\begin{enumerate}[label=(\arabic{enumi}.\alph*)]
				\item \textbf{Suppose $d$ lies in the chain from $b$ to $i$ or
                        the chain from $i$ to $-c$.} Then $i < a$, so $i < -i$ in $\Ghat(P)$, a contradiction.
				\item \textbf{Suppose $d < a$ and $-b \leq d < -i$.} Then, $-b < d < a <
                    b$. Consequently, if $j < -j$ in $\Gchat(\Pc)$ and $b \ne
                    j$, one has that $-b < -j$ and $j < b$ in both
                    $\Gchat(\Pc)$ and $\Gchat(P'^\smvee)$. Then, in
                    $\Gchat(P'^\smvee)$, one has $j < b\equiv c < -i$ and $i < -b\equiv -c < -j$, as required.

				\item \textbf{Suppose $d = -a$ and $-a < a$.} Then in $\Gchat(\Pc)$ one has that $i < -i$, a contradiction.
				\item \textbf{Suppose $d=-c$ and $-c < a$.} Then $\Gchat(\Pc)$ is as in
                    Figure~\ref{fig:notchclosingc2}. Then $i < -i$ in $\Gchat(\Pc)$, a contradiction.
                    \begin{figure}[htbp]
					\begin{center}
					\begin{tikzpicture}[vertex/.style={draw,circle,fill=black,inner sep=1pt}]
						\node[vertex] at (0,0) (-c) [label=left:$-c$] {};
						\node[vertex, below left of=-c] (i) [label=left:$i$] {};
						\node[vertex, above left of=-c] (a) [label=left:$a$] {};
						\node[vertex, above right of=-c](-a) [label=above:$-a$] {};
						\node[vertex,above left of=-a] (c) [label=right:$c$] {};
						\node[vertex,above right of=c](-i) [label=right:$-i$] {};
						\node[vertex,above left of=a] (b) [label=left:$b$]{};
						\node[vertex,below right of=-a] (-b) [label=right:$-b$] {};
						\draw (-c)--(a)--(b);
						\draw (a)--(c);
						\draw (-c)--(-a)--(c);
						\draw (-b)--(-a);
						\draw[dashed] (i)--(-c);
						\draw[dashed] (c)--(-i);
						\draw[dashed] (i)--(b);
						\draw[dashed] (-i)--(-b);
					\end{tikzpicture}
					\end{center}
                    \caption{$\Gchat(\Pc)$ in case 1(d) in the proof of
                        Proposition~\ref{prop:notchclosingc}}
                    \label{fig:notchclosingc2}
                \end{figure}
							\end{enumerate}

		\item Suppose $i < -i$ and $j < -j$ in $\Gchat(P'^\smvee)$, but neither relation exists in $\Gchat(\Pc)$. There are again two cases (up to relabeling the vertices).
			\begin{enumerate}[label=(\arabic{enumi}.\alph*)]
				\item \textbf{Suppose $i < b$, $c<-i$ and $j < c$, $-j >b$.} Then in
                    $\Gchat(P'^\smvee)$, one has $i < b\equiv c < -j$ and $j < b\equiv c < -i$, as required.
				\item \textbf{Suppose $i, j < b$ and $c < -i,-j$.} Without loss of
                    generality, suppose $i < j$. Consequently $-j < -i$. Then
                    in $\Gchat(P'^\smvee)$, one has  $i < j < -j < -i$, giving the required relations.
			\end{enumerate}
	\end{enumerate}

	Thus, this new poset is $\Gchat(P'^\smvee)$ for some signed poset $P'^\smvee$.
\end{proof}

Now that one understands what it means to close a notch in a signed poset, the
natural question is what effect this maneuver has on the semigroup ring.

\begin{prop}\label{prop:closenotchmodout}
	Let $P \subset \Bn$ be a signed poset and $(a,b,c)$ and $(-a,-b,-c)$ form a signed notch in $\Gbhat(P)$. Let $P'$ be the signed poset obtained from $P$ by closing this notch. Then
	\[
	\Rrt{P'} \cong \begin{cases}
			\Rrt{P}/(t_a^\delta t_b^\epsilon - t_a^\delta t_c^\epsilon) & \text{if }a \ne 0 \\
			\Rrt{P}/(t_b^\epsilon - t_c^\epsilon) & \text{if }a =0 \\
	\end{cases}
	\]
	where $\delta =\sgn(a)$ and $\epsilon = -\sgn(b)$.

	Let $\Pc \subset \Cn$ be a signed poset and $(a,b,c)$ and $(-a,-b,-c)$ form a signed notch in $\Gchat(\Pc)$. Let $P'^\smvee$ be the signed poset obtained from $\Pc$ by closing this notch. Then
	\[
		\Rrt{P'^\smvee} \cong \Rrt{\Pc}/(t_a^\delta t_b^\epsilon - t_a^\delta t_c^\epsilon),
	\]
	where $\delta = \sgn(a)$ and $\epsilon = -\sgn(b)$.
\end{prop}

To see how this works, consider once again  $P =
\{+e_1-e_2,+e_1+e_2,+e_1-e_3,+e_1+e_3,+e_1-e_4,+e_1-e_5,+e_1-e_6,+e_2-e_3,
+e_2-e_4 , +e_2-e_5 , +e_2-e_6 , +e_4-e_6,+e_5-e_6, +e_1\}$ shown in
Figure~\ref{fig:biconnectedex} (page~\pageref{fig:biconnectedex}). Closing the notch $(2,3,5)$ and $(-2,-3,-5)$
gives the poset shown in Figure~\ref{fig:notchcloseex}
(page~\pageref{fig:notchcloseex}), $Q =
\{+e_1-e_2,+e_1+e_2,+e_1+e_3,+e_1-e_3,+e_1-e_4,+e_1-e_6,+e_2-e_3,+e_2-e_4,+e_3-e_6,+e_4-e_6\}$
(with $3 \equiv 5$ and $3 \equiv -5$ being renamed $3$ and $-3$, respectively).
In type $B$, one has $\Srt{P} =
k[U_{12},U_{23},U_{\m13},U_{24},U_{25},U_{46},U_{56},U_{10}]$. Define a map
$\psi \colon \Srt{P} \to \Rrt{Q}$ by 
\[
    \begin{array}{llll}
        U_{12} \mapsto t_1t_2^{-1} & U_{23} \mapsto t_2t_3^{-1} &
        U_{\m13}\mapsto t_1t_3 & U_{24} \mapsto t_2t_4^{-1} \\
        U_{25} \mapsto t_2t_3^{-1} & U_{46} \mapsto t_4t_6^{-1} & U_{56}
        \mapsto t_3t_6^{-1} & U_{10} \mapsto t_1
    \end{array}
\]
Then
\begin{align*}
    \Rrt{Q} &\cong \Srt{P}/\ker \psi \\
    &\cong \Srt{P}
    /(U_{24}U_{46}-U_{25}U_{56},U_{12}U_{\m13}U_{25}-U_{10}^2,U_{23}-U_{25}) \\
    &= \Srt{P} /(\Irt{P}+(U_{23}-U_{25})) \\
    &\cong (\Srt{P}/\Irt{P})/(\overline{U}_{23}-\overline{U}_{25}) \\
    & \cong \Rprt / (t_2t_3^{-1}-t_2t_5^{-1})
\end{align*}

\begin{proof}[Proof of Proposition~\ref{prop:closenotchmodout}]
Recall the definitions of $\Srt{P}$, $\Srt{\Pc}$ and of the toric ideals $\Irt{P}$ and
$\Irt{\Pc}$ from Section~\ref{subsec:rootconetoricideal}. Let $P'$ and $P'^\smvee$ be the signed posets obtained by closing notches in $\Gbhat(P)$ and $\Gchat(\Pc)$, respectively. One defines maps from $\Srt{P}$ and $\Srt{\Pc}$ to the semigroup rings $\Rrt{P'}=k[K_{P'}^\mathrm{rt}\cap \Lbrt]$ and $\Rrt{P'^\smvee}=k[K_{P'^\smvee}^\mathrm{rt}\cap \Lcrt]$.

Define a map $\psi\colon \Srt{P} \to \Rrt{P'}$ by
	\[
		U_{\delta i,\epsilon j} \mapsto 
		\begin{cases}
			t_i^\delta t_j^{-\epsilon} & i,j \ne c, 0 \\
			t_i^\delta t_b^{-\epsilon} & j = c, i \ne 0 \\
			t_b^\delta t_j^{-\epsilon} & i = c \\
			t_j^{-\epsilon} & i = 0, j \ne c \\
			t_b^{-\epsilon} & i = 0, j = c
		\end{cases}
		\] 
	  and define a map $\psi^\smvee \colon \Srt{\Pc} \to \Rrt{P'^\smvee}$ by
	 \[
		U_{\delta i,\epsilon j} \mapsto
		\begin{cases}
			t_i^\delta t_j^{-\epsilon} & i,j \ne c \\
			t_b^\delta t_j^{-\epsilon} & i = c \\
			t_i^\delta t_b^{-\epsilon} & j = c
		\end{cases}
			 \]

			 One shows that $\ker \psi = I_P + (U_{ab}-U_{ac})$ and $\ker
             \psi^\smvee = I_{\Pc} +(U_{ab}-U_{ac})$. The argument is the same
             for $P$ and $\Pc$. To simplify notation, in the remainder of the
             proof, write $J$ for the kernel, $I$ for the toric ideal, let the
             variables of the polynomial ring $S$ be $U_{ij}$ and use
             $\Ghat(P)$ to denote $\Gbhat(P)$ or $\Gchat(P)$, as applicable.
             Thus, one needs to show that $J = I + (U_{ab}-U_{ac})$.

	Recall that $I$ is generated by the cycle binomials of cycles in $\Ghat(P)$
    not fixed by the involution. Let $U(C)$ be such a cycle binomial. The
    definition of $\phi$ (\resp $\psi$) ensures that $U(C) \in J$, so $I \subset J$.
	Both $U_{ab}$ and $U_{ac}$ have the same image in $\Rrt{P'}$, so $(U_{ab}-U_{ac}) \subset J$. Thus $I+(U_{ab}-U_{ac}) \subset J$.

	Let $P^+$ be the directed graph that has the same vertices and edges as
    $\Ghat(P')$, but with the edges $-b \equiv -c \to -a$ and $a \to b\equiv c$
    doubled. The argument in the proof of Theorem~\ref{thm:toricideal} shows
    that $J$ is generated by the cycle binomials corresponding to cycles of
    $P^+$ not fixed by the involution. (See Figure~\ref{fig:pplusex} for $P^+$
    for the $\Gbhat(P)$ and $\Gchat(\Pc)$ from Figure~\ref{fig:notchcloseex}.)

    \begin{figure}
        \begin{center}
            \begin{subfigure}[b]{.45\textwidth}
        \begin{center}
      \begin{tikzpicture}
       \node[vertex] (1) at (0,0) [label=left:$1$] {};
	   \node[vertex] (2) at (-1,1) [label=right:$2$] {};
       \node[vertex] (3) at (-1,2) [label=right:$3 \equiv 5$] {};
       \node[vertex] (-3) at (1,1) [label=left:$-3 \equiv -5$] {};
       \node[vertex] (-2) at (1,2) [label=left:$-2$] {};
	   \node[vertex] (-1) at (0,3) [label=left:$-1$] {};
       \node[vertex] (4) at (-1.5,2) [label=left:$4$] {};
       \node[vertex] (6) at (-2,3) [label=left:$6$] {};
       \node[vertex] (-4) at (1.5,1) [label=right:$-4$] {};
       \node[vertex] (-6) at (2,0) [label=right:$-6$] {};
       \node[vertex] (0) at (0,1.5) [label=left:$0$] {};
       \draw (1)--(2);
       \path (2) edge[bend left](3)
             edge[bend right] (3);
       \draw (3)--(-1)--(-2);
       \path (-2) edge[bend left](-3)
       edge[bend right] (-3);
       \draw (-3)--(1);
       \draw (2)--(4)--(6)--(3);
       \draw (-2)--(-4)--(-6)--(-3);
       \draw (1)--(0)--(-1);
      \end{tikzpicture}
        \end{center}
        \caption{$P^+_C$}
    \end{subfigure}
    \begin{subfigure}[b]{.45\textwidth}
   \begin{center}
      \begin{tikzpicture}
       \node[vertex] (1) at (0,0) [label=left:$1$] {};
	   \node[vertex] (2) at (-1,1) [label=right:$2$] {};
       \node[vertex] (3) at (-1,2) [label=right:$3 \equiv 5$] {};
       \node[vertex] (-3) at (1,1) [label=left:$-3 \equiv -5$] {};
       \node[vertex] (-2) at (1,2) [label=left:$-2$] {};
	   \node[vertex] (-1) at (0,3) [label=left:$-1$] {};
       \node[vertex] (4) at (-1.5,2) [label=left:$4$] {};
       \node[vertex] (6) at (-2,3) [label=left:$6$] {};
       \node[vertex] (-4) at (1.5,1) [label=right:$-4$] {};
       \node[vertex] (-6) at (2,0) [label=right:$-6$] {};
       \draw (1)--(2);
       \path (2) edge[bend left] (3)
       edge[bend right] (3);
       \draw (3)--(-1)--(-2);
       \path (-2) edge[bend left] (-3)
       edge [bend right] (-3);
       \draw (-3)--(1);
       \draw (2)--(4)--(6)--(3);
       \draw (-2)--(-4)--(-6)--(-3);
      \end{tikzpicture}
   \end{center}
   \caption{$P^+_C$}
   \end{subfigure}
   \end{center}
\caption{$P^+_B$ and $P^+_C$ for $\Gbhat(P)$ and $\Gchat(\Pc)$ from
    Figure~\ref{fig:notchcloseex}}
\label{fig:pplusex}
    \end{figure}

	One then needs to show that for every cycle of $P^+$ not fixed by the involution, its circuit binomial lies in $I +(U_{ab}-U_{ac})$. There are a number of cases.
	\begin{enumerate}
		\item \textbf{Suppose $C$ is a cycle in $P^+$ not fixed by the
                involution  passing through neither $b \equiv c$ nor $-b \equiv
                -c$.} Then $C$ lifts to a cycle in $\Ghat(P)$, meaning $U(C) \in I_P$.
		\item \textbf{Suppose $C$ is a cycle in $P^+$ not fixed by the
                involution, but passing through at least one of $b \equiv c$
                and $-b \equiv -c$.} One can partition the edges  incident to $b \equiv c$ and $-b \equiv c$ into $E_b \sqcup E_c$ according to whether they lift to edges incident to either $b$ or $-b$ or to either $c$ or $-c$ in $\Ghat(P)$.
			\begin{enumerate}[label=(\arabic{enumi}.\alph*)]
			\item \textbf{Suppose the edges of $C$ incident to $b \equiv c$ or
                    $-b \equiv -c$ all lie in one of $E_b$ and $E_c$.} Then $C$ lifts to a cycle in $\Ghat(P)$, so $U(C) \in I$.
			\item \textbf{Suppose $C$ contains an edge from both $E_b$ and
                    $E_c$.} Assume $C$ has an edge from each of $E_b$ and $E_c$ incident to $b \equiv c$. The other case is symmetric.

				Since $(a,b,c)$ was a notch in $\Ghat(P)$, one has that $b$ and $c$ lie in different connected components of $\Ghat(P) \smallsetminus \Ghat(P)_{\leq a}$, so $C$ must pass through at least one vertex $d \leq a$. Let $\pi_{da}$ be a saturated chain between $d$ and $a$. Let $C_b$ be the cycle in $P^+$ that follows $C$ from $b\equiv c$ to d, then $\pi_{da}$ from $d$ to $a$ and finishes along the edge in $E_b$ between $a$ and $b \equiv c$. Let $C_c$ be the cycle in $P^+$ that follows $C$ from $b \equiv c$ to $d$, then $\pi_{da}$ from $d$ to $a$ and finishes along the edge in $E_c$ from $a$ to $b \equiv c$.
				\begin{enumerate}[label=(\arabic{enumi}.\alph{enumii}.\roman*)]
					\item \textbf{Suppose $C_b$ and $C_c$ both lift to cycles in
                        $\Ghat(P)$ and, with $C_b$ and $C_c$ are oriented so
                        they traverse $\pi_{da}$ in opposite directions.} One has
							\begin{multline*}
		U(C)=U(C_b)\left(\prod_{e \in W(\pi_{dc})} U_e\right) + U(C_c)\left(\prod_{e \in A(\pi_{bd})}
		U_e\right) \\
		+(U_{ab}-U_{ac})\left(\prod_{e \in W(\pi_{dc})} U_e\right)\left( \prod_{e \in
		A(\pi_{bd})} U_e\right)\left( \prod_{e \in W(\pi_{da})}U_e\right),
\end{multline*}
		where $\pi_{dc}$ is the portion of $C_c$ between $d$ and $c$ and $\pi_{bd}$ is the portion of $C_b$ between $d$ and $b$. Note that it is possible one of $C_b$ and $C_c$ is fixed by the involution, in which case its cycle binomial is zero since $W(C)=A(C)$.
		Thus, $U(C) \in I+(U_{ab}-U_{ac})$, as desired.

	\item \textbf{Now suppose at least one of $C_b$ and $C_c$ does not lift to a cycle
        in $\Ghat(P)$.} One can use the previous argument to show both $U(C_b)$ and $U(C_c)$ lie in $I_P+(U_{ab}-U_{ac})$. Without loss of generality, suppose $C_b$ does not lift to a cycle in $\Ghat(P)$. (If neither lifts, one repeats the following argument with each cycle.) Since $C_b$ does not lift to a cycle in $\Ghat(P)$, it must pass through $-b \equiv -c$, along one edge in $E_b$ and one in $E_c$. Since $(-a,-b,-c)$ was a notch in $\Ghat(P)$, there is $d' \geq -a \in P^+$ which $C_b$ passes through. 
		Then choose a saturated chain between $d'$ and $-a$, call it
        $\pi_{da}'$, with the proviso that if $\pi_{da}'$ includes $a$ and $b
        \equiv c$, it includes the edge in $C$. One can then construct $C_{bb}$
        and $C_{bc}$ as in the previous case and the same argument shows that
        $U(C_b) \in I_P + (U_{ab}-U_{ac})$. (However, if $\pi_{da}'$ coincides
        with $C$, it will be the case that one of $C_{bb}$ and $C_{cc}$ is not
        genuinely a cycle, rather one will have a chain from (say) $b$ to $d'$ and then return to $b$ down the same chain, in which case $U(C_{bb})=0$.)
				\end{enumerate}
		\end{enumerate}
	\end{enumerate}
	Thus, $J = I +(U_{ab}-U_{ac})$.

	To finish the proof, one sees that
	\begin{align*}
		\Rrt{P'} & \cong S_P/\ker \psi \\
			&\cong S_P/(I_P +(U_{ab}-U_{ac})) \\
			& \cong (S_P/I_P)/(\bar{U}_{ab}-\bar{U}_{ac}) \\
			& \cong \Rprt/(t_a^{\sgn(a)}t_b^{-\sgn(b)}-t_a^{\sgn(a)}t_c^{-\sgn(c)}),
	\end{align*}
	where $t_a=1$ if $a =0$,
	and
	\begin{align*}
	  \Rrt{P'^\smvee} & \cong S_{\Pc}/\ker \psi^\smvee \\
	  		&\cong S_{\Pc}/(I_{\Pc} +(U_{ab}-U_{ac})) \\
			& \cong (S_{\Pc}/I_{\Pc})/(\bar{U}_{ab}-\bar{U}_{ac}) \\
			& \cong \Rrt{\Pc}/(t_a^{\sgn(a)}t_b^{-\sgn(b)}-t_a^{\sgn(a)}t_c^{-\sgn(c)}).
	\end{align*}
\end{proof}

\subsection{Strongly Planar Signed Posets}\label{subsec:stronglyplanar}

Having looked at the notion of opening notches and its impact on $\Rprt$, one
is ready to show that a certain class of signed posets have $\Rprt$ a complete
intersection, implying that the numerator of $\Psi_P$ factors.

\begin{defn}\label{def:stronglyplanar}
A poset is said to be \emph{strongly planar} if, after the addition of a maximal element
$\hat{1}$ and a minimal element $\hat{0}$, there is an embedding of its
Hasse diagram in $\bR^2$ that is planar and has the property that if $a <_P b$,
the $y$-coordinate of $a$ is smaller than that of $b$. A signed poset
$P \subset \Bn$ (\resp $P \subset \Cn$) will be said to be \emph{strongly
    planar} if
$\Gbhat(P)$ (\resp $\Gchat(\Pc)$) is strongly planar.
\end{defn}
As an example, consider $P
= \{+e_1+e_2,+e_1,+e_2\}$. Figure~\ref{fig:stronglyplanarex} shows $\Gbhat(P)$
and $\Gchat(\Pc)$. One sees that $\Gbhat(P)$ is strongly planar, but $\Gchat(\Pc)$ is not.

\begin{figure}
    \begin{center}
    \begin{subfigure}[b]{.45\textwidth}
        \begin{center}
            \begin{tikzpicture}
                \node[vertex] (0) at (0,0) [label=right:$0$] {};
                \node[vertex] (-1) at (1,1) [label=right:$-1$] {};
                \node[vertex] (-2) at (-1,1) [label=left:$-2$] {};
                \node[vertex] (1) at (-1,-1) [label=left:$1$] {};
                \node[vertex] (2) at (1,-1) [label=right:$2$] {};
                \draw (1)--(0)--(-1);
                \draw (2)--(0)--(-2);
            \end{tikzpicture}
        \end{center}
        \caption{$\Gbhat(P)$}
    \end{subfigure}
    \begin{subfigure}[b]{.45\textwidth}
        \begin{center}
            \begin{tikzpicture}
                \node[vertex] (-1) at (1,1) [label=right:$-1$] {};
                \node[vertex] (-2) at (-1,1) [label=left:$-2$] {};
                \node[vertex] (1) at (-1,-1) [label=left:$1$] {};
                \node[vertex] (2) at (1,-1) [label=right:$2$] {};
                \draw (1)--(-2)--(2)--(-1)--(1);
            \end{tikzpicture}
        \end{center}
        \caption{$\Gchat(\Pc)$}
    \end{subfigure}
\end{center}
    \caption{$\Gbhat(P)$ and $\Gchat(\Pc)$ for $P=\{+e_1+e_2,+e_1,+e_2\}$}
    \label{fig:stronglyplanarex}
\end{figure}

\begin{defn}\label{def:centrallysymmetric}
An embedding of $\Gbhat(P)$ (\resp $\Gchat(\Pc)$) in $\bR^n$ is said to be \emph{centrally
    symmetric} if it is fixed by the map $(x,y) \mapsto (-x,-y)$.
	\end{defn}
The main result of this section will be the following.

\begin{thm}\label{thm:csspci}

	Suppose $P\subset \Bn$ (\resp $\Pc \subset \Cn$) is a signed poset such
    that $\Gbhat(P)$ (\resp $\Gchat(\Pc)$) has an embedding in $\bR^2$ which is
    strongly planar. Then
    \begin{itemize}
        \item $\Gbhat(P)$ (\resp $\Gchat(\Pc)$) has an embedding
    that is both centrally symmetric and strongly planar, 
    \item $\Rprt$ (\resp $\Rrt{\Pc}$) is a complete intersection, with $I_P$
        (\resp $I_{\Pc}$) generated by the cycle binomials of the cycles in
        $\Gbhat(P)$ (\resp $\Gchat(\Pc)$) defining the faces of the graph, and
    \item one has
    \begin{align*}
    \Hilb(\Rprt,\bm{x}) &=\dfrac{\prod_{\rho} (1-\bm{x}^\rho)}{\prod_{e} (1-x_a^\delta x_b^{-\epsilon})}
	\quad \text{and} \\
	\Hilb(\Rrt{\Pc},\bm{x}) &=\dfrac{\prod_{\rho} (1-\bm{x}^\rho)}{\prod_{e} (1-x_a^\delta x_b^{-\epsilon})},
\end{align*}
where $\rho$ runs over all regions enclosed by $\Gbhat(P)$ (\resp $\Gchat(P)$)
not fixed by the involution, $e$ runs over the orbits of edges $\delta a \to
\epsilon b$ ($a<b$) in $\Gchat(P)$, $x_0$ is taken to be $1$ and
\[
    \bm{x}^\rho =
    x_{\min(\rho)}^{\sgn(\min(\rho))}x_{\max(\rho)}^{-\sgn(\max(\rho))}.  
\]
\end{itemize}
\end{thm}

\begin{figure}
    \begin{center}
    \begin{subfigure}[b]{.45\textwidth}
        \begin{center}
            \begin{tikzpicture}
                \node[vertex] (0) at (0,0) [label=left:$0$] {};
                \node[vertex] (-2) at (1,0) [label=right:$-2$] {};
                \node[vertex] (-1) at (0,1) [label=above:$-1$] {};
                \node[vertex] (-6) at (2,1) [label=right:$-3$] {};
                \node[vertex] (-5) at (1,2) [label=right:$-4$] {};
                \node[vertex] (1) at (0,-1) [label=below:$1$] {};
                \node[vertex] (5) at (-1,-2) [label=right:$4$] {};
                \node[vertex] (6) at (-2,-1) [label=left:$3$] {};
                \node[vertex] (2) at (-1,0) [label=left:$2$] {};
				\node at (1,1) {$\rho$};
                \draw (5)--(1)--(2)--(6)--(5);
                \draw (-2)--(-1)--(-5)--(-6)--(-2);
                \draw (1)--(0)--(-1);
				\draw (2)--(-1);
				\draw (1)--(-2);
            \end{tikzpicture}
        \end{center}
        \caption{$\Gbhat(P)$}
    \end{subfigure}
        \begin{subfigure}[b]{.45\textwidth}
        \begin{center}
            \begin{tikzpicture}
                \node[vertex] (-2) at (1,0) [label=right:$-2$] {};
                \node[vertex] (-1) at (0,1) [label=above:$-1$] {};
                \node[vertex] (-6) at (2,1) [label=right:$-3$] {};
                \node[vertex] (-5) at (1,2) [label=right:$-4$] {};
                \node[vertex] (1) at (0,-1) [label=below:$1$] {};
                \node[vertex] (5) at (-1,-2) [label=right:$4$] {};
                \node[vertex] (6) at (-2,-1) [label=left:$3$] {};
                \node[vertex] (2) at (-1,0) [label=left:$2$] {};
                \draw (5)--(1)--(2)--(6)--(5);
				\node at (1,1) {$\rho$};
                \draw (-2)--(-1)--(-5)--(-6)--(-2);
				\draw (2)--(-1);
				\draw (1)--(-2);
            \end{tikzpicture}
        \end{center}
        \caption{$\Gchat(\Pc)$}
    \end{subfigure}
\end{center}
    \caption{A signed poset with $\Gbhat(P)$ and $\Gchat(\Pc)$ both strongly planar}
    \label{fig:hilbex}
\end{figure}

The proof of Theorem~\ref{thm:csspci} is quite involved and requires a number
of propositions. First, the idea of the proof is illustrated using the posets
in Figure~\ref{fig:hilbex}. The proof is by induction on
the number of orbits of regions enclosed by $\Ghat(P)$
under the involution. 
Proposition~\ref{prop:rightregion} will show that
one can find at least one ``rightmost'' region in $\Ghat(P)$, such as the
region marked by $\rho$ in Figure~\ref{fig:hilbex}.
Lemma~\ref{lem:rightregionnotfixed} gives that the region $\rho$ is not fixed by
the involution. 
\begin{figure}[htbp]
    \begin{center}
    \begin{subfigure}[b]{.45\textwidth}
        \begin{center}
            \begin{tikzpicture}
                \node[vertex] (0) at (0,0) [label=left:$0$] {};
                \node[vertex] (-2) at (1,0) [label=right:$-2$] {};
                \node[vertex] (-1) at (0,1) [label=above:$-1$] {};
                \node[vertex] (-1') at (.25,1) [label=right:$-5$] {};
                \node[vertex] (-6) at (2,1) [label=right:$-3$] {};
                \node[vertex] (-5) at (1,2) [label=right:$-4$] {};
                \node[vertex] (1) at (0,-1) [label=below:$1$] {};
                \node[vertex] (1') at (-.25,-1) [label=below:$5$] {};
                \node[vertex] (5) at (-1,-2) [label=right:$4$] {};
                \node[vertex] (6) at (-2,-1) [label=left:$3$] {};
                \node[vertex] (2) at (-1,0) [label=left:$2$] {};
                \draw (5)--(1')--(2)--(6)--(5);
                \draw (1)--(2);
                \draw (-1)--(-2);
                \draw (1)--(-2);
                \draw (-2)--(-1')--(-5)--(-6)--(-2);
                \draw (-1)--(2);
                \draw (1)--(0)--(-1);
            \end{tikzpicture}
        \end{center}
        \caption{$\Gbhat(P')$}
    \end{subfigure}
        \begin{subfigure}[b]{.45\textwidth}
        \begin{center}
            \begin{tikzpicture}
                \node[vertex] (-2) at (1,0) [label=right:$-2$] {};
                \node[vertex] (-1) at (0,1) [label=above:$-1$] {};
                \node[vertex] (-1') at (.25,1) [label=right:$-5$] {};                
                \node[vertex] (-6) at (2,1) [label=right:$-3$] {};
                \node[vertex] (-5) at (1,2) [label=right:$-4$] {};
                \node[vertex] (1) at (0,-1) [label=below:$1$] {};
                \node[vertex] (1') at (-.25,-1) [label=below:$5$] {};                
                \node[vertex] (5) at (-1,-2) [label=right:$4$] {};
                \node[vertex] (6) at (-2,-1) [label=left:$3$] {};
                \node[vertex] (2) at (-1,0) [label=left:$2$] {};
                \draw (5)--(1')--(2)--(6)--(5);
                \draw (1)--(-2);
                \draw (-2)--(-1')--(-5)--(-6)--(-2);
                \draw (-1)--(2);
                \draw (1)--(2);
                \draw (-1)--(-2);
            \end{tikzpicture}
        \end{center}
        \caption{$\Gchat(\Pc')$}
    \end{subfigure}
\end{center}
    \caption{Having opened one notch to obtain $P'$}
    \label{fig:cistep1}
\end{figure}
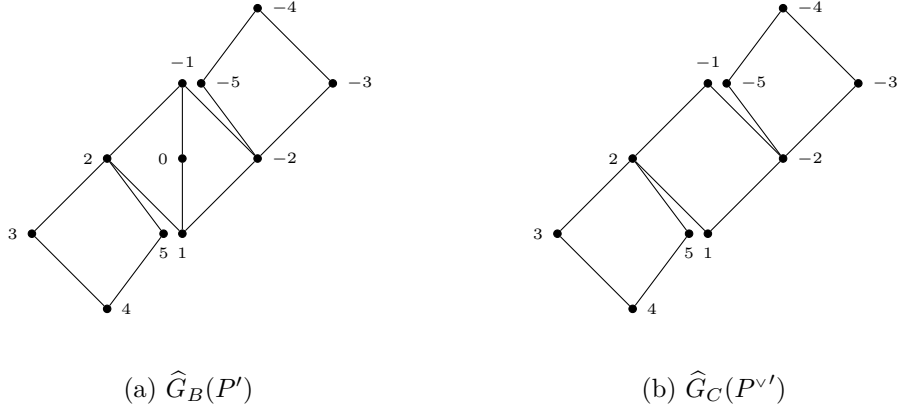
Lemma~\ref{lem:notchopening} guarantees that $\Ghat(P)$ can be obtained by
closing a notch along the left border of $\rho$. Figure~\ref{fig:cistep1} shows the $\Ghat(P')$ obtained by opening a notch along $(-2,-1)$ and $(1,2)$ in $\Ghat(P)$. Proposition~\ref{prop:closenotchmodout} gives that
\[
\Rprt = \Rrt{P'}/(x_1x_2^{-1}-x_5x_2^{-1}) \quad \text{and}\quad 
\Rrt{P'^\smvee} = \Rrt{P'^\smvee}/(x_1x_2^{-1}-x_5x_2^{-1}).
\]
Typically, one thinks of $\Rrt{P'}$ as graded by $\bZ^5$ and $\Rprt$ as
graded by $\bZ^4$. One can alter the grading of $\Rrt{P'}$ so that $\deg
x_1=\deg x_5 = (1,0,0,0)$, so that $\Rrt{P'}$ is also graded by $\bZ^4$
and $\Rprt$ is a quotient of $\Rrt{P'}$ by a homogeneous ideal. Keeping
the altered grading in mind, one then has from
Proposition~\ref{prop:hilbseriesprincipalidea} that
\begin{equation}
\label{eq:hilbseriesexpressions}
\begin{split}
\Hilb(\Rprt,\bm{x}) &= (1-x_1x_2^{-1})\Hilb(\Rrt{P'},\bm{x})|_{x_5=x_1}
\quad \text{and}\quad\\
\Hilb(\Rrt{\Pc},\bm{x}) &= (1-x_1x_2^{-1})\Hilb(\Rrt{P'^\smvee},\bm{x})|_{x_5=x_1}
\end{split}
\end{equation}
$\Gbhat(P')$ and $\Gchat(P'^\smvee)$ each have two biconnected components,
associated to signed posets $P_1$ and $P_2$, shown in
Figure~\ref{fig:biconnected}.
\begin{figure}[htbp]
		\begin{center}
		\begin{tabular}{cc}
				\begin{subfigure}{.4\textwidth}
                    \begin{center}
                        \begin{tikzpicture}
                            \node[vertex] (0) at (0,0) [label=left:$0$] {};
                            \node[vertex] (-2) at (1,0) [label=below:$-2$] {};
                            \node[vertex] (-5) at (0,1) [label=left:$-5$] {};
                            \node[vertex] (-3) at (2,1) [label=right:$-3$] {};
                            \node[vertex] (-4) at (1,2) [label=left:$-4$] {};
                            \node[vertex] (2) at (-1,0) [label=left:$2$] {};
                            \node[vertex] (3) at (-2,-1) [label=left:$3$] {};
                            \node[vertex] (4) at (-1,-2) [label=below:$4$] {};
                            \node[vertex] (5) at (0,-1) [label=right:$5$] {};
                            \draw (2)--(3)--(4)--(5)--(2);
                            \draw (-2)--(-3)--(-4)--(-5)--(-2);
                        \end{tikzpicture}
                    \end{center}    
                    \caption{$\Gbhat(P_1)$}
						\label{subfig:gbhatp1}
				\end{subfigure} &
				\begin{subfigure}{.4\textwidth}
                    \begin{center}
                        \begin{tikzpicture}
                            \node[vertex] (-2) at (1,0) [label=below:$-2$] {};
                            \node[vertex] (-5) at (0,1) [label=left:$-5$] {};
                            \node[vertex] (-3) at (2,1) [label=right:$-3$] {};
                            \node[vertex] (-4) at (1,2) [label=left:$-4$] {};
                            \node[vertex] (2) at (-1,0) [label=left:$2$] {};
                            \node[vertex] (3) at (-2,-1) [label=left:$3$] {};
                            \node[vertex] (4) at (-1,-2) [label=below:$4$] {};
                            \node[vertex] (5) at (0,-1) [label=right:$5$] {};
                            \draw (2)--(3)--(4)--(5)--(2);
                            \draw (-2)--(-3)--(-4)--(-5)--(-2);
                        \end{tikzpicture}
                    \end{center}
						\caption{$\Gchat(P^\smvee_1)$}
						\label{subfig:gchatp1}
				\end{subfigure}\\
				\begin{subfigure}{.4\textwidth}
                    \begin{center}
                        \begin{tikzpicture}
                            \node[vertex] (0) at (0,0) [label=left:$0$] {};
                            \node[vertex] (2) at (-1,0) [label=left:$2$] {};
                            \node[vertex] (-2) at (1,0) [label=right:$-2$] {};
                            \node[vertex] (1) at (0,-1) [label=left:$1$] {};
                            \node[vertex] (-1) at (0,1) [label=left:$-1$] {};
                            \draw (1)--(2)--(-1)--(0)--(1)--(-2)--(-1);
                        \end{tikzpicture}
                    \end{center}
						\caption{$\Gbhat(P_2)$}
						\label{subfig:gbhatp2}
				\end{subfigure} &
				\begin{subfigure}{.4\textwidth}
                    \begin{center}
                        \begin{tikzpicture}
                            \node[vertex] (2) at (-1,0) [label=left:$2$] {};
                            \node[vertex] (-2) at (1,0) [label=right:$-2$] {};
                            \node[vertex] (1) at (0,-1) [label=left:$1$] {};
                            \node[vertex] (-1) at (0,1) [label=left:$-1$] {};
                            \draw (1)--(2)--(-1)--(-2)--(1);
                        \end{tikzpicture}
                    \end{center}
						\caption{$\Gchat(P^\smvee_2)$}
						\label{subfig:gchatp2}
				\end{subfigure}\\
		\end{tabular}
\end{center}
\caption{$P'$ broken into biconnected components}
\label{fig:biconnected}
\end{figure}
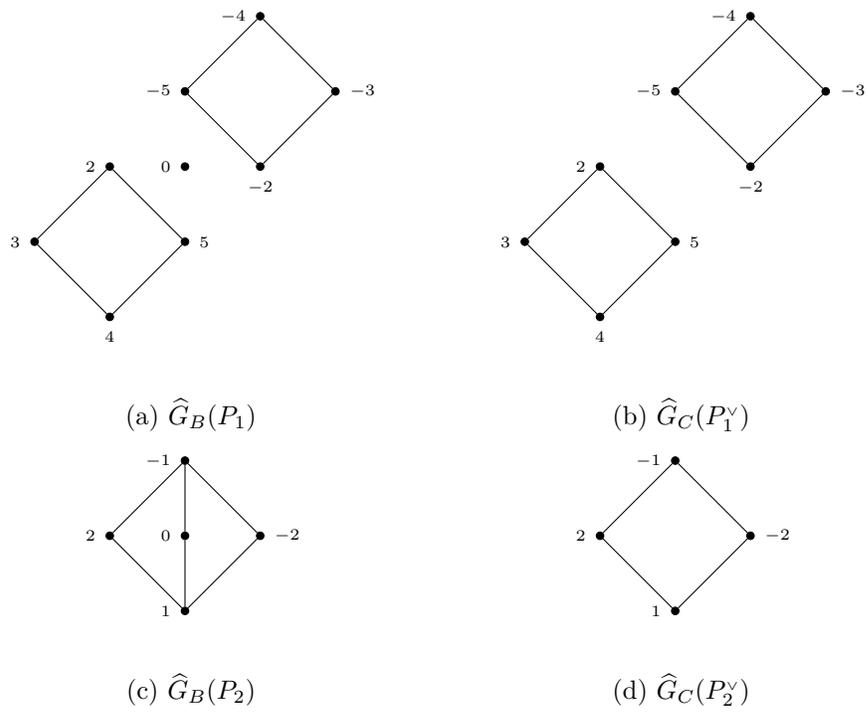
Both $P_1$ and $P^\smvee_1$ have a single biconnected component and principal
toric ideals generated by $U_{\m34}U_{\m23}-U_{4\m5}U_{\m25}$. Then one has
\begin{equation}\label{eq:p1hilbseries}
    \begin{split}
    \Hilb(\Rrt{P_1},\bm{x})&=\dfrac{(1-x_4x_2^{-1})}{(1-x_4x_3^{-1})(1-x_3x_2^{-1})(1-x_4x_5^{-1})(1-x_5x_2^{-1})}
    \quad \text{and} \\
    \Hilb(\Rrt{P^\smvee_1},\bm{x}) &=
    \dfrac{(1-x_4x_2^{-1})}{(1-x_4x_3^{-1})(1-x_3x_2^{-1})(1-x_4x_5^{-1})(1-x_5x_2^{-1})}.
\end{split}
\end{equation}
Looking at Figures~\ref{subfig:gbhatp2} and~\ref{subfig:gchatp2}, one sees that
an additional notch can be opened in $\Gbhat(P_2)$, but not in $\Gchat(P_2^\smvee)$.
Opening the notch in $\Gbhat(P_2)$ results in a signed poset $P_3$ shown in
Figure~\ref{fig:p3}.
\begin{figure}
    \begin{center}
        \begin{tikzpicture}
            \node[vertex] (2) at (-2,0) [label=left:$2$] {};
            \node[vertex] (0) at (0,0) [label=left:$0$] {};
            \node[vertex] (-2) at (2,0) [label=right:$-2$] {};
            \node[vertex] (1) at (-1,-1) [label=left:$1$] {};
            \node[vertex] (3) at (1,-1) [label=right:$3$] {};
            \node[vertex] (-3) at (-1,1) [label=left:$-3$] {};
            \node[vertex] (-1) at (1,1) [label=right:$-1$] {};
            \draw (1)--(2)--(-3)--(0)--(1);
            \draw (3)--(0)--(-1)--(-2)--(3);
        \end{tikzpicture}
    \end{center}
    \caption{$\Gbhat(P_3)$}
    \label{fig:p3}
\end{figure}
Both $\Gbhat(P_3)$ and $\Gchat(P_2^\smvee)$ each have only one biconnected component
and a single cycle. This gives
\begin{align}
    \Hilb(\Rrt{P_2},\bm{x}) &=
    (1-x_1)\Hilb(\Rrt{P_3},\bm{x})|_{x_3=x_1}\nonumber \\
    &=\left.\dfrac{(1-x_1)(1-x_1x_3)}{(1-x_1x_2^{-1})(1-x_2x_3)(1-x_3)(1-x_1)}\right|_{x_3=x_1}
    \label{eq:p2bhilbseries} \\
    &= \dfrac{1-x_1^2}{(1-x_1x_2^{-1})(1-x_1x_2)(1-x_1)}\nonumber
\end{align}
and
\begin{equation}\label{eq:p2chilbseries}
    \Hilb(\Rrt{P_2^\smvee},\bm{x}) =
    \dfrac{(1-x_1^2)}{(1-x_1x_2^{-1})(1-x_1x_2)}
\end{equation}
Since $P_1$ and $P_2$ are the biconnected components of $P'$, from
Proposition~\ref{prop:biconnectedred}, one has that $\Rrt{P'} = \Rrt{P_1}
\otimes \Rrt{P_2}$ and $\Rrt{\Pc'} = \Rrt{P_1^\smvee}\otimes \Rrt{P_2^\smvee}$,
meaning
\begin{multline*}
    \Hilb(\Rrt{P'},\bm{x}) = \Hilb(\Rrt{P_1},\bm{x})\Hilb(\Rrt{P_2},\bm{x})
    \quad\text{and} \\
    \Hilb(\Rrt{\Pc'},\bm{x}) =
    \Hilb(\Rrt{P^\smvee_1},\bm{x})\Hilb(\Rrt{P_2^\smvee},\bm{x}).
\end{multline*}
Combining \eqref{eq:hilbseriesexpressions}, \eqref{eq:p1hilbseries},
\eqref{eq:p2bhilbseries} and \eqref{eq:p2chilbseries} gives
\[
\begin{split}
    \Hilb&(\Rprt,\bm{x})  = (1-x_1x_2^{-1})\Hilb(\Rprt,\bm{x})|_{x_5=x_1} \\
    =& (1-x_1x_2^{-1})\Hilb(\Rrt{P_1},\bm{x})\Hilb(\Rrt{P_2},\bm{x})|_{x_5=x_1}
    \\
    =&
    \left.\dfrac{(1-x_1x_2^{-1})(1-x_4x_2^{-1})(1-x_1^2)}{(1-x_4x_3^{-1})(1-x_3x_2^{-1})(1-x_4x_5^{-1})(1-x_5x_2^{-1})(1-x_1x_2^{-1})(1-x_1x_2)(1-x_1)}\right|_{x_5=x_1}\\
    =&\dfrac{(1-x_4x_2^{-1})(1-x_1^2)}{(1-x_4x_3^{-1})(1-x_3x_2^{-1})(1-x_4x_1^{-1})(1-x_1x_2^{-1})(1-x_2x_4)(1-x_1x_2)(1-x_1)}.
\end{split}
\]
and
\[    
    \begin{split}
\Hilb&(\Rrt{\Pc},\bm{x})=(1-x_1x_2^{-1})\Hilb(\Rrt{\Pc'},\bm{x})|_{x_5=x_1}
    \\
    =&
    (1-x_1x_2^{-1})\Hilb(\Rrt{P^\smvee_1},\bm{x})\Hilb(\Rrt{P_2^\smvee},\bm{x})|_{x_5=x_1}
    \\
    =&\left.\dfrac{(1-x_1x_2^{-1})(1-x_4x_2^{-1})(1-x_1^2)}{(1-x_4x_3^{-1})(1-x_3x_2^{-1})(1-x_4x_5^{-1})(1-x_5x_2^{-1})(1-x_1x_2^{-1})(1-x_1x_2)}\right|_{x_5=x_1}
    \\
    =&\dfrac{(1-x_1x_2^{-1})(1-x_4x_2^{-1})(1-x_1^2)}{(1-x_4x_3^{-1})(1-x_3x_2^{-1})(1-x_4x_1^{-1})(1-x_1x_2^{-1})(1-x_1x_2^{-1})(1-x_1x_2)}
    \\
    =&\dfrac{(1-x_4x_2^{-1})(1-x_1^2)}{(1-x_4x_3^{-1})(1-x_3x_2^{-1})(1-x_4x_1^{-1})(1-x_1x_2^{-1})(1-x_1x_2^{-1})(1-x_1x_2)}.
\end{split}
\]

The edges which correspond to the notches that are opened form a set of
\emph{disconnecting chains} splitting $\Ghat(P)$ into biconnected components.

\begin{defn}\label{def:disconnectingchain}
Suppose $P \subset \Bn$ (\resp $\Pc \subset \Cn$) is a signed poset. A
\emph{disconnecting chain} of $\Ghat(P)$ is a chain $c_1 \lessdot c_2 \lessdot
\dots \lessdot c_k$ such that removing $c_1 \lessdot \cdots \lessdot c_k$ and
$-c_k \lessdot \cdots \lessdot
-c_1$ breaks $\Ghat(P)$ into three connected components.
\end{defn}

The first step in the proof is to locate a region to work with.

\begin{prop}\label{prop:rightregion}
	Suppose $Q$ is a strongly planar poset whose Hasse diagram is connected.
    Then $Q$ has a region $\rho$ such that any vertex other than the maximum
    and minimum element of $\rho$ in the right border of $\rho$ is in the
    border of no other region. Such a region is called a \emph{rightmost
        region}.
\end{prop}

  By considering a strongly planar poset, one has the luxury of assuming a
  given strongly planar embedding, allowing sensible notions of ``left'' and
  ``right''. 
  \begin{defn}\label{def:rightmostcover}
		  Suppose $Q$ is a strongly planar poset. The \emph{rightmost cover} of $x \in Q$ is the $y$ that covers $x$ such that if one traversed a small circle around $x$ counterclockwise (starting from the bottom, say) one passes the edge leading to $y$ last. The \emph{rightmost lower cover} is the $z \lessdot x$ such that the edge leading to $z$ is encountered first when traveling clockwise from the top of the circle.
  \end{defn}

\begin{proof}[Proof of Proposition~\ref{prop:rightregion}]
Without loss of generality, one may assume that every vertex in $Q$ has degree
at least two, since one makes this assumption of $\Ghat(P)$.
Since $Q$ is strongly planar, the poset $\widehat{Q}$ obtained by adding a $\hat{0}$ and $\hat{1}$ to $Q$ is also planar. Construct a saturated chain $\hat{0}\lessdot c_1 \lessdot \cdots \lessdot c_k \lessdot \hat{1}$ such that $c_{i+1}$ is the rightmost cover of $c_i$ for all $i$ and $c_1$ is the rightmost cover of $\hat{0}$. 

  Let $a = c_i$, where $i$ is maximal such that $c_i$ is covered by at least
  two elements. (Since $c_1$ is a minimal element of $Q$, and $Q$ has been
  assumed to have no vertices of degree 1, it has at least two covers, so such
  an $a$ exists.) Construct a chain $a \lessdot b_1 \lessdot \cdots \lessdot
  b_\ell \lessdot \hat{1}$ where $b_1$ is the cover of $a$ that is rightmost but one and $b_{i+1}$ is the rightmost cover of $b_i$.

    \noindent\textbf{Claim: } $c_k$ is the rightmost lower cover of $\hat{1}$ in $\widehat{Q}$.

    Suppose not and $d \lessdot \hat{1}$ is to the right of $c_k$. Then $d$ must lie above some minimal element of $Q$, say $f$. Since $Q$ is strongly planar, a maximal chain from $f$ to $d$ must intersect $c_1 \lessdot \cdots \lessdot c_k$. Let $j$ be maximal such that $c_j$ is in this chain from $f$ to $d$. Since $d \ne c_k$, one must have that $j \ne k$. Since $d$ is to the right of $c_k$, the chain must continue along the rightmost cover of $c_j$. However, this is $c_{j+1}$, contradicting the maximality of $j$.
	
  \noindent\textbf{Claim: } $b_{\ell}=c_k$.

  Suppose not. Then $c_k$ has degree $\geq 2$ and it was maximal in $Q$, so it must cover some $d \ne
  c_{k-1}$. Then $Q$ has a minimal element $a'$ such that $a' < d$. As $Q$ is
  strongly planar, a saturated chain from $a'$ to $d$ must include either some
  $b_i$ or $c_j$, $j < k$. If the chain includes $b_i$, then there must be some
  $m$ where $b_{m+1}$ is not the rightmost cover of $b_m$, a contradiction. If
  $c_j$ is in the chain, but no $b_i$, then, since $d \ne c_k$, there must be
  an $i$ such that $c_n > a$ and $c_n$ has more than two covers, contradicting
  that $a=c_i$ was maximal with this property.

Let $\rho$ be the region enclosed by the $b_i$ and $c_j$. Note that $b_\ell=c_k$ need not be the maximal element of $\rho$. However, by construction, $a$ will be the minimal element of $\rho$.

  \noindent\textbf{Claim: } The $c_j$ with $c_j > a$ are rightmost in $Q$, i.e.\ they are not in the left border of any region.
 
  Suppose $c_j$ is on the right border of $\rho$ (and is not $\max(\rho)$ or
  $\min(\rho)$), that $c_j$ is on the left border of some region $\sigma$ and
  that
  $j$ is minimal such that this is the case. Then, since $c_{j+1}$ is the
  rightmost cover of $c_j$, the edge $(c_j, c_{j+1})$ must be in the left
  border of $\sigma$. Consequently, $c_j$ cannot be the minimal element of
  $\sigma$. Then, by the minimality of $j$, there is some $d \lessdot c_j$ such
  that $d$ is to the right of $c_{j-1}$, as depicted in
  Figure~\ref{fig:rightregionpf}
  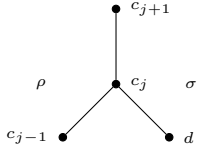
\begin{figure}[htbp]
  \begin{center}
	  \begin{tikzpicture}
		  \node[circle,fill=black,draw,inner sep=1pt] (j) at (0,0) [label=right:$c_j$] {};
		  \node[circle,fill=black,draw,below left of=j,inner sep=1pt] (j1) [label=left:$c_{j-1}$] {};
		  \node[circle,fill=black,draw,below right of=j,inner sep=1pt] (d) [label=right:$d$] {};
		  \node[circle,fill=black,draw,above of=j,inner sep=1pt] (jplus) [label=right:$c_{j+1}$] {};
		  \node[right of=j] {$\sigma$};
		  \node[left of=j] {$\rho$};
		  \draw (j)--(j1);
		  \draw (j)--(d);
		  \draw (j)--(jplus);
	  \end{tikzpicture}
  \end{center}
  \caption{The situation in the last claim of the proof of
      Proposition~\ref{prop:rightregion}}
  \label{fig:rightregionpf}
  \end{figure}
  The region $\sigma$ must have a minimum element, call it $f$. Then $f \geq g$ for some $g$ that is minimal in $Q$. Then, since $Q$ is strongly planar and $c_1$ is the rightmost minimal element of $Q$, a saturated chain from $g$ to $f$ must pass through some $c_i$. But then, since $c_{i+1}$ is the rightmost cover of $c_i$, one must have that $f$ is one of the $c$'s, a contradiction. 
\end{proof}

One can now use the previous result to prove a few lemmas specific to $\Gbhat(P)$ and $\Gchat(\Pc)$.

\begin{lem}\label{lem:onlyoneplusminus}
	Suppose $\Pc \subset \Cn$ is a $\Cn$-signed poset such that $\Gchat(\Pc)$ is strongly planar. Then there is at most one $i$ such that $\pm 2e_i$ corresponds to an edge in $\Gchat(\Pc)$.
\end{lem}

\begin{proof}
	Suppose $\Pc \subset \Cn$ is a signed poset such that $\Gchat(\Pc)$ is
    strongly planar and $i \lessdot -i$ and $j \lessdot -j$. Then one must have
    $i < -j$ and $j < -i$. Then if $\Gchat(\Pc)$ is embedded in the plane (so
    that if $a < b$, then $y_a < y_b$), there must be a path from $i$ to $-j$
    and $j$ to $-i$, but these paths must intersect. For $\Gchat(\Pc)$ to be
    planar, they must intersect at a vertex, say $k$. But then $\Gchat(\Pc)$
    would cease to be the Hasse diagram of a poset, as the relation $i < -i$ would follow by transitivity from $i < k$ and $k < -i$. Thus, such a $\Gchat(\Pc)$ cannot be strongly planar.
\end{proof}

\begin{lem}\label{lem:rightregionnotfixed}
	Suppose $P \subset \Bn$ (\resp $\Pc \subset \Cn$) is a signed poset such
    that $\Gbhat(P)$ (\resp $\Gchat(\Pc)$) is strongly planar. Then, if
    $\Gbhat(P)$ (\resp $\Gchat(P)$) encloses more than one region, a cycle
    defining a rightmost region of $\Ghat(P)$ is not fixed by the involution.
\end{lem}

\begin{proof}
	Suppose $C$ is a cycle in $\Gbhat(P)$ (\resp $\Gchat(\Pc)$) that encloses a rightmost region, $\rho$. Suppose $C$ is fixed by the involution. Let $\sigma$ be a region to the immediate left of $\rho$. The type $B$ and type $C$ cases are slightly different.
	
	First, suppose $P \subset \Bn$. There are two cases.
	\begin{enumerate}
		\item \textbf{Suppose $0$ is not in the left border of $\rho$.} Let $e$ be an edge that lies in the left border of $\rho$ and the right border of $\sigma$. Since $0$ is not in the left border of $\rho$, an edge and its image under the involution cannot both be in the left border. Therefore, since $\rho$ is fixed by the involution, $\iota(e)$ must lie in the right border of $\rho$. But $\iota(e)$ is also in the border of $\iota(\sigma)$, contradicting that $\rho$ is a rightmost region.

		\item \textbf{Suppose $0$ is in the left border of $\rho$.} Then if $j$ is in the left border, $-j$ must also be in the left border, since $\rho$ is fixed by the involution and $\pm j < 0 < \mp j$. Then the left border is fixed by the involution. Since $\rho$ itself is fixed by the involution, this means that the right border must also be fixed and thus symmetric, a contradiction, since $0$ is in the left border, not the right.
	\end{enumerate}

	Next, consider $\Pc \subset \Cn$. There are two cases.
	\begin{enumerate}
		\item \textbf{Suppose there is no edge of the form $(i, -i)$ in the left border
            of $\rho$.} Since $\sigma$ is to the left of $\rho$, there is some
            edge $e$ that lies in the left border of $\rho$ and the right
            border of $\sigma$. Since $\rho$ is fixed by the involution, $\iota(e)$
            lies in the border of $\rho$. Since there is no edge of the form
            $(i, -i)$ in the left border, it is not symmetric and $\iota(e)$ cannot
            be in the left border, so it is in the right border. But,
            $\iota(e)$ is in the border of $\iota(\sigma)$, meaning $\rho$ cannot be a rightmost region, a contradiction.

		\item \textbf{Suppose there is an edge $(i, -i)$ in the left border of
                $\rho$.}
            Then, by the definition of $\Gchat(\Pc)$, if $j$ is in the left
            border of $\rho$, then $-j$ is also in the left border of $\rho$.
            From Lemma~\ref{lem:onlyoneplusminus}, one knows that the right
            border of $\rho$ does not include an edge $(k, -k)$, so there must
            be some $k$ in the right border such that $-k$ is not in the right
            border. (In fact, the absence of such an edge means only one of $k$
            and $-k$ is in the right border for any $k$ that is not maximal or
            minimal in $C$.) Then $\iota(k) = -k$ is a vertex in $\iota(C)$, which cannot be $C$, so $C$ is not fixed by the involution.
	\end{enumerate}
      \end{proof}

\begin{prop}\label{prop:spimpliescssp}
	Suppose $P \subset \Bn$ (\resp $\Pc \subset \Cn$) is a signed poset. Then $\Gbhat(P)$ (\resp $\Gchat(\Pc$) has a strongly planar embedding if and only if it has a centrally symmetric strongly planar embedding.
\end{prop}

\begin{proof}
	One needs to show only that if $\Gbhat(P)$ (\resp $\Gchat(\Pc)$) is
	strongly planar, it has a centrally symmetric strongly planar embedding.
	Instead, prove a slightly stronger statement, namely that if $\Gbhat(P)$
	(\resp $\Gchat(\Pc)$) has a strongly planar embedding, it also has a
	centrally symmetric strongly planar embedding such that the outer border of
	$\Gbhat(P)$ (\resp $\Gchat(\Pc)$) is the same in both the first strongly
	planar embedding and the centrally symmetric strongly planar embedding.

	Induct on the number of regions in $\Gbhat(P)$ (\resp $\Gchat(\Pc)$). There are two base cases.

	First, suppose $\Gbhat(P)$ (\resp $\Gchat(\Pc)$) consists of a single region, bounded by a single cycle. One needs to address type $B$ and type $C$ separately. 
	\begin{itemize}
		\item In type $C$, consider $\Gchat(\Pc)$. Since it is strongly planar,
            it has a single maximum and a single minimum, which must be $i$ and $-i$. 
		
		\textbf{Claim: } There is no $j \ne i$ such that $j < -j$ or $-j < j$ in $\Gchat(\Pc)$.

		Suppose not. Since $\Gchat(P)$ consists of a single cycle, one has that
		either $i < j < -j < -i$, in which case one border of $\Gchat(\Pc)$ is
		fixed by the involution, so $\Gchat(\Pc)$ must have more than one
		region, a contradiction, or $j$ lies in the left border of $\Gchat(P)$ and $-j$ lies in
		the right border of $\Gchat(\Pc)$. In the latter case, one must have
        that the left border of $\Gchat(\Pc)$ is sent to the right border under
        the involution, i.e.\ if $i \lessdot c_1 \lessdot \cdots \lessdot c_k
        \lessdot -i$ is the left border, then $i \lessdot -c_k \lessdot \cdots
        \lessdot -c_1 \lessdot -i$ is the right border. However, it is then
        impossible that $j < -j$ since $\Gchat(\Pc)$ encloses a single region. 
        
        Thus, when $\Gchat(\Pc)$ is strongly planar and encloses a single
        region, $\Gchat(\Pc)$ can be embedded centrally symmetrically by evenly spacing the vertices around a circle centered at the origin with $-i$ at the top and $-i$ at the bottom.

		\item In type $B$, consider $\Gbhat(P)$. Since $i < -i$, one must have
            $i < 0 < -i$, meaning either the left or right border of
            $\Gbhat(P)$ is symmetric about $0$, so it is fixed by the
            involution. But then $\Gbhat(P)$ must have more than one region,
            contradicting the assumption.
\end{itemize}

Next, consider the second base case where $\Gbhat(P)$ (\resp $\Gchat(\Pc)$) consists of two
regions. From Lemma~\ref{lem:rightregionnotfixed}, one knows that the rightmost
region $\rho$ is not fixed by the involution. Therefore, the border between the
two regions consists of a chain fixed by the involution and the two regions are
exchanged by the involution. One then has the scenario depicted in
Figure~\ref{fig:centrallysymregions}.
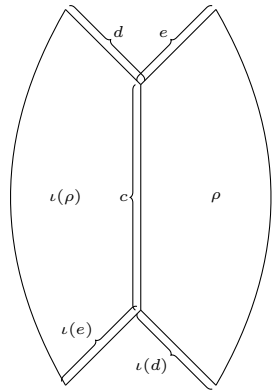
\begin{figure}[htbp]
	\begin{center}
		\begin{tikzpicture}
			\draw (-1,-1)--(0,0);
			\draw (0,0)--(0,3);
			\draw (0,3)--(1,4);
			\draw (1,-1)--(0,0);
			\draw (0,3)--(-1,4);
			\path (-1,-1) edge [bend left](-1,4);
			\path (1,-1) edge [bend right] (1,4);
			\node at (1,1.5) {$\rho$};
			\node at (-1,1.5) {$\iota(\rho)$};
			\draw [decorate,decoration={brace,amplitude=2pt,raise=2pt}] (0,0)--(0,3)
			node [midway,left=1pt] {$c$};
			\draw [decorate,decoration={brace,amplitude=2pt,raise=2pt,mirror}] (0,3) -- (-1,4)
			node[midway,above right] {$d$};
			\draw [decorate,decoration={brace,amplitude=2pt,raise=2pt}] (0,3)--(1,4)
			node[midway,above left] {$e$};
			\draw [decorate,decoration={brace,amplitude=2pt,raise=2pt}] (-1,-1)--(0,0)
			node[midway,above left] {$\iota(e)$};
			\draw [decorate,decoration={brace,amplitude=2pt,raise=2pt}] (1,-1)--(0,0)
			node[midway,below left] {$\iota(d)$};
		\end{tikzpicture}
	\end{center}
    \caption{Two regions exchanged by the involution separated by a chain, $c$, fixed
        by the involution}
    \label{fig:centrallysymregions}
\end{figure}
	One sees that $\Gbhat(P)$ (\resp $\Gchat(\Pc)$) (when translated appropriately) is centrally symmetric and strongly planar.

	Now suppose $\Gbhat(P)$ (\resp $\Gchat(\Pc)$) encloses more than two regions. Then, by Proposition~\ref{prop:rightregion}, it has a rightmost region, call it $\rho$. The right border of $\rho$ is defined by some chain $\min(\rho)\lessdot c_1 \lessdot \cdots \lessdot c_k \lessdot \max(\rho)$. Since $\Gbhat(P)$ (\resp $\Gchat(\Pc)$) is a poset and $\rho$ is enclosed by a cycle, $k\geq 1$. Since $\rho$ is rightmost, by definition each edge in its right border lies in no cycle defining a region other than $\rho$. The same then must be true of the image of the right border of $\rho$ in $\iota(\rho)$. Deleting $c_1,\ldots,c_k,-c_1,\ldots,-c_k$ and the incident edges gives $\Gbhat(P')$ (\resp $\Gchat(P'^\smvee)$) for a signed poset $P'$
	
	
Since $\Ghat(P)$ was strongly planar, $\Ghat(P')$ is strongly planar. By induction, $\Ghat(P')$ has a centrally symmetric strongly planar embedding. The left border of $\rho$ is now part of the right border of $\Ghat(P')$, meaning the chain $c_1 \lessdot \cdots \lessdot c_k$ and be attached to form a new region on the right of $\Ghat(P')$ and similarly for $-c_k \lessdot \cdots \lessdot -c_1$ on the left. Since $\pm \min(\rho)$ and $\pm \max(\rho)$ are positioned so as to be centrally symmetric, it is possible to place the chains in such a way as to preserve central symmetry. Replacing the chains in $\Ghat(P')$ gives a centrally symmetric strongly planar embedding of $\Ghat(P)$.
\end{proof}

   \begin{lem}\label{lem:notchopening}
     Suppose $P \subset \Bn$ (\resp $P \subset \Cn$) is a signed poset such that $\Gbhat(P)$ (\resp $\Gchat(\Pc)$) is centrally symmetric and strongly planar. Suppose $\rho$ is a rightmost region of $\Ghat(P)$ and $c_1 \lessdot \cdots \lessdot c_k$ is the portion of the left border of $\rho$ which is part of the right border of some other region(s), with $k\geq 2$ and $c_k \ne -c_{k-1}$ (or $c_1 \ne -c_2$). Then $\Gbhat(P)$ (\resp $\Gchat(\Pc)$) is obtained from $\Gbhat(P')$ (\resp $\Gchat(P')$) for some signed poset $P'$ by closing a signed notch $(c_{k-1},c_k,c_k')$ and $(-c_{k-1},-c_k,-c_k')$ or a signed notch $(c_2,c_1,c_1')$ and $(-c_2,-c_1,-c_1')$.
     \end{lem}

     \begin{proof}
	     Begin by observing that in the type $B$ case, one of $c_1$ and $c_k$
         is nonzero, and in the type $C$ case, at least one of $c_k \ne
         -c_{k-1}$ and $c_1 \ne -c_2$ holds. Without loss of generality, assume $c_k \ne 0$ in $\Gbhat(P)$ and $c_k \ne -c_{k-1}$ in $\Gchat(\Pc)$. (The argument in the other cases is symmetric.)

	     Construct $\Gbhat(P')$ (\resp $\Gchat(P')$) by replacing $c_k$ by $c_k$ and $c_k'$, with an edge between $c_{k-1}$ and each of $c_k$ and $c_k'$, with the edges incident to $c_k$ in $\Gbhat(P)$ (\resp $\Gchat(\Pc)$) that are not in the border of $\rho$ moved to be incident to $c_k'$. Replace $-c_k$ by $-c_k$ and $-c_k'$ and partition the edges incident to $-c_k$ in $\Gbhat(P)$ (\resp $\Gchat(\Pc)$) in the same way. By construction, $\Gbhat(P')$ (\resp $\Gchat(P'c)$) will be centrally symmetric and strongly planar and have an involution sending $i < j$ to $-j < -i$ (and fixing $0$ in the type $B$ case). Then to show $P'$ is really a signed poset, one needs to check the conditions in Proposition~\ref{prop:gbhatcharacterisation} and Proposition~\ref{prop:gchatcharacterisation}.
      
	First, consider the type $B$ case. Suppose $i < -i$ in $\Gbhat(P')$. Since opening a notch does not introduce relations, one must have that $i < -i$ in $\Gbhat(P)$, so $i < 0 < -i$ in $\Gbhat(P)$. Suppose one does not have $-i > 0$ in $\Gbhat(P')$. There are two cases where $0 < -i$ in $\Gbhat(P)$ could fail to lift to $\Gbhat(P')$.
	\begin{itemize}
		\item \textbf{Suppose $-i > c_k$ and $0 < c_k'$, but $0 \not < -i$.}

		  By construction (keeping in mind that $\Gbhat(P)$ is centrally symmetric and strongly planar), $0$ must be in the right border of $\rho$ and $c_k'=\max(\rho)$. However $0$ being in the right border contradicts central symmetry.

		\item \textbf{Suppose $-i > -c_k$ and $0 < -c_k'$, but $0 \not < -i$.}

		  By construction, one has that $0$ is in the part of the border of
          $\iota(\rho)$ in both $\Gbhat(P)$ and in $\Gbhat(P')$ not shared by any other region. Thus, since $i< 0$ in $\Gbhat(P)$, one must have $i < 0$ in $\Gbhat(P')$, contradicting that $i < -i$ in $\Gbhat(P')$.
	\end{itemize}

       In the type $C$ case, one must check that if $i < -i$ and $j < -j$ in
       $\Gchat(P')$, then $i < -j$ and $j < -i$.
	
	First, observe that if $i< -i$ in $\Gchat(P')$, then $i < -i$ in $\Gchat(\Pc)$. (If $i = c_k'$ in $\Gchat(P')$, then $c_k < -c_k$ in $\Gchat(\Pc)$.) Now suppose that $i < -i$ and $j < -j$ in $\Gchat(P')$. Then $i < -j$ and $j < -i$ in $\Gchat(\Pc)$ (possibly vacuously if $i =j$ in $\Gchat(\Pc)$). Should these relations fail to lift to $\Gchat(P')$, one is in one of the following two cases.
       \begin{enumerate}
		\item \textbf{Suppose $i \leq c_k$ and $-j \geq c_k'$.}

			There are several subcases.
			\begin{enumerate}[label=(\arabic{enumi}.\alph*)]
				\item \textbf{Suppose $i < c_{k-1}$.} Then $i < -j$ and one is done.
				\item \textbf{Suppose $i < c_{k}$, but $i \not < c_{k-1}$.} In
                    other words, $i$ is on the right border of $\rho$ and $c_k
                    = \max(\rho)$. Since $\rho$ is not fixed by the involution
                    and is a right region, $-i$ is not on the right border of
                    $\rho$. (If it were, either $\rho$ would contain an edge $(k
                    , -k)$ and thus not be a right region, or $i = \min(\rho)$, $-i = \max(\rho)$ and $\rho$ would be cut out by a cycle fixed by the involution, since $\max(\rho)$ has degree 2, by construction.) Therefore, one cannot have that $i < -i$ in $\Gchat(\Pc)$, a contradiction.
				\item \textbf{Suppose $ i =c_k$.} Then $c_k < -c_k$, but, by
                    construction of $\Gchat(P'^\smvee)$, elements in the
                    border of $\rho$ other than $c_1 \lessdot \cdots \lessdot
                    c_{k-1}$ are not in the border of any other region in
                    $\Gchat(P')$ and $-c_k$ is in the border of $\iota(\rho)$, a
                    contradiction, a contradiction.

			  \end{enumerate}
			\item \textbf{Suppose $i \leq c_k'$ and $-j \geq c_k$.} In this case, $-j$ must be
					on the outer border of $\rho$, as it is not one of $c_1
					\lessdot \cdots \lessdot c_{k-1}$. Then, if $j < -j$, one has that either $-c_k < c_k$ or $-c_1 < c_1$. In the latter case, it follows that $-c_k < c_k$. Therefore, without loss of generality, suppose $-j = c_k$. Consider two cases.
			\begin{enumerate}[label=(\arabic{enumi}.\alph*)]
				\item \textbf{There is a chain from $-c_k$ to $c_k$ which is
                        not fixed by the involution.} This chain and its image form a cycle fixed by the involution, having $c_k$ as its maximum and $-c_k$ as its minimum. But then $c_k$, which has degree two, must cover two elements in this cycle, meaning $c_k = \max (\rho)$ and this cycle is the one that cuts out $\rho$. This is a contradiction, since $\rho$ is not fixed by the involution.

				\item \textbf{There is a unique chain from $-c_k$ to $c_k$.} By construction of $\Gchat(P')$, this chain passes through $c_{k-1}$ and $-c_{k-1}$ and extends uniquely to a maximal chain from $\min(\iota(\rho))$ to $\max(\rho)$. On the other hand, since $i < -i$, there is a chain from $-i$ to $i$. Since $\Gchat(\Pc)$ is centrally symmetric and strongly planar, this chain must pass through the chain from $\min(i\rho))$ to $\max(\rho)$, meaning $-c_{k-1} < -i$ and $i < c_{k-1}$, so $-c_k < -i$ and $i < c_k$, as desired.			      
			      \end{enumerate}

			      Lastly, one must show that $(c_{k-1},c_k,c_k')$ and $(-c_{k-1},-c_k,-c_k')$ forms a signed notch in $\Ghat(P')$. However, this follows from the fact $c_1 \lessdot \cdots \lessdot c_{k-1}$ and $-c_{k-1} \lessdot \cdots \lessdot -c_1$ are disconnecting chains in $\Gbhat(P')$ (\resp $\Gchat(P'))$.
			  \end{enumerate}
     \end{proof}

     One is finally ready to prove Theorem~\ref{thm:csspci}.

\begin{proof}[Proof of Theorem~\ref{thm:csspci}]
  Begin by realizing one may assume courtesy of Proposition~\ref{prop:spimpliescssp} that $\Ghat(P)$ is embedded in $\bR^n$ so as to
  be centrally symmetric and strongly planar. Next, recall that a centrally
  symmetric strongly planar $\Gbhat(P)$ contains at least two regions. If a
  centrally symmetric strongly planar $\Gchat(\Pc)$ has no region not fixed by
  the involution, central symmetry guarantees it consists of one region, cut
  out by a cycle fixed by the involution. This fixed cycle is the only cycle of
  $\Gchat(\Pc)$, so the toric ideal $\Irt{\Pc}$ is the zero ideal and $\Rrt{\Pc}$ is a complete intersection.

 Thus, one may assume $\Gbhat(P)$ (\resp $\Gchat(\Pc)$) has at least one region
 not fixed by the involution. Induct on the number of two-element orbits of
 regions under the involution of $\Gbhat(P)$ (\resp $\Gchat(P)$). Since $\Gbhat(P)$ (\resp $\Gchat(P)$) is centrally symmetric and strongly planar, it has a rightmost region $\rho$ and $\rho$ is not fixed by the involution.
 
 As a base case, consider the case where $\Gbhat(P)$ (\resp $\Gchat(\Pc)$) has only one orbit. The case where $\Gchat(\Pc)$ consists of a single orbit of one region has already been addressed. Thus, one may assume in both the $\Gbhat(P)$ and $\Gchat(\Pc)$ cases that there are precisely two regions, which are exchanged by the involution. Call them $\rho$ and $\iota(\rho)$, with $\rho$ being the right region. There are two cases.
 \begin{enumerate}
	\item \textbf{The regions $\rho$ and $\iota(\rho)$ share an edge and so
            intersect along a chain.} (See Figure~\ref{fig:thmcase1}.)
        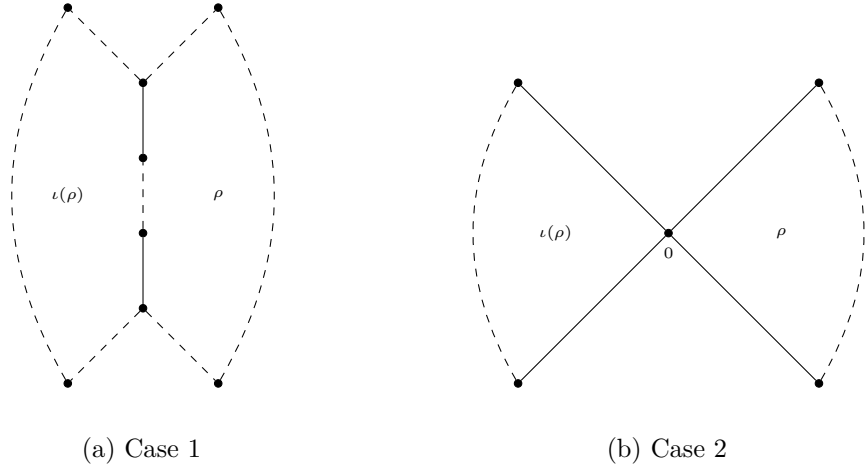
\begin{figure}[htbp]
            \begin{center}
                \begin{subfigure}[b]{.45\textwidth}
                    \begin{center}
                \begin{tikzpicture}
                    \node[vertex] (1) at (0,0){};
                    \node[vertex] (2) at (0,1) {};
                    \node[vertex] (3) at (0,2) {};
                    \node[vertex] (4) at (0,3) {};
                    \node[vertex] (5) at (-1,-1) {};
                    \node[vertex] (6) at (-1,4) {};
                    \node[vertex] (7) at (1,-1) {};
                    \node[vertex] (8) at (1,4) {};
                    \draw (1)--(2);
                    \draw[dashed] (2)--(3);
                    \draw (3)--(4);
                    \draw[dashed] (1)--(5);
                    \draw[dashed] (1)--(7);
                    \draw[dashed] (4)--(6);
                    \draw[dashed] (4)--(8);
                    \draw[dashed] (5)edge[bend left] (6);
                    \draw[dashed] (7) edge[bend right] (8);
                    \node at (1,1.5) {$\rho$};
                    \node at (-1,1.5) {$\iota(\rho)$};
                \end{tikzpicture}
            \end{center}
            \caption{Case 1}\label{fig:thmcase1}
            \end{subfigure}
            \begin{subfigure}[b]{.45\textwidth}
                \begin{center}
                    \begin{tikzpicture}
                        \node[vertex] (0) at (0,0) [label=below:$0$] {};
                        \node[vertex] (1) at (-2,-2) {};
                        \node[vertex] (2) at (-2,2) {};
                        \node[vertex] (3) at (2,2) {};
                        \node[vertex] (4) at (2,-2) {};
                        \draw (1)--(0)--(2);
                        \draw (3)--(0)--(4);
                        \draw[dashed] (1) edge[bend left](2);
                        \draw[dashed] (4) edge[bend right](3);
                        \node at (1.5,0) {$\rho$};
                        \node at (-1.5,0) {$\iota(\rho)$};
                    \end{tikzpicture}
                \end{center}
                \caption{Case 2}\label{fig:thmcase2}
            \end{subfigure}
            \end{center}
            \caption{The first two cases of the proof of Theorem~\ref{thm:csspci}}
        \end{figure}

	  Then $\Gbhat(P)$ (\resp $\Gchat(\Pc)$) has three cycles: the cycle
      defining $\rho$, its image under the involution, which defines $\iota(\rho)$,
      and the cycle consisting of the ``outer border'' of $\Gbhat(P)$ (\resp
      $\Gchat(\Pc)$), i.e.\ those edges surrounding only one of $\rho$ and
      $\iota(\rho)$. This last cycle must be fixed by the involution, as the
      involution exchanges the cycles defining $\rho$ and $\iota(\rho)$ while
      fixing the border between them. Then $\Irt{P}$ (\resp $\Irt{\Pc}$) is generated by the cycle defining $\rho$ and, as the toric ideal is then principal, $\Rrt{P}$ (\resp $\Rrt{\Pc}$) is a complete intersection.

	\item \textbf{The regions $\rho$ and $\iota(\rho)$ do not share an edge and
            instead share a single vertex.} (See Figure~\ref{fig:thmcase2}. Note that this case only arises in $\Gbhat(P)$ (where the shared vertex is $0$).

	  Then the cycle defining $\rho$ and its imagine under the involution,
      which defines $\iota(\rho)$, are the only cycles in $\Gbhat(P)$, so $\Irt{P}$ is a principal ideal and $\Rprt$ is a complete intersection.
      \end{enumerate}

	Suppose $\Ghat(P)$ has $n>1$ orbits of regions and the result holds for posets $P'$ such that $\Ghat(P')$ has $k$ orbits of regions not fixed by the involution. Let $\rho$ be a rightmost region of $\Ghat(P)$. Let $c_1 \lessdot \cdots \lessdot c_k$ be the chain defining the left border of $\rho$. Note that the removal of $c_1 \lessdot \cdots \lessdot c_k$ would disconnect $\Ghat(P)$.

	Let $c_{j_1}\lessdot \cdots \lessdot c_{j_\ell}$ be the portion of the
    border of $\rho$ such that $c_{j_1}$ is the minimal vertex in the border
    that lies in the border of more than one region and $c_{j_\ell}$ is the
    maximal vertex in the border that lies in the border of more than one
    region. (Equivalently, $c_{j_1}$ and $c_{j_\ell}$ are minimal and maximal,
    respectively, elements in the chain having degree $\geq 3$. See
    Figure~\ref{fig:cchain}.) Induct on $\ell$.
    
    \begin{figure}[htbp]
        \begin{center}
            \begin{tikzpicture}
                \node[vertex] (cj1) at (0,0) [label=left:$c_{j_1}$] {};
                \node[vertex] (cjl) at (0,3) [label=left:$c_{j_\ell}$] {};
                \node[vertex] (c1) at (1,-1) [label=right:$c_1$] {};
                \node[vertex] (ck) at (1,4) [label=right:$c_k$] {};
                \node (top) at (-1,4) {};
                \node (bottom) at (-1,-1) {};
                \draw[dashed] (cj1)--(c1);
                \draw[dashed] (cjl)--(ck);
                \draw (cj1)--(bottom);
                \draw (cjl)--(top);
                \draw[dashed] (cj1)--(cjl);
                \node at (1,1.5) {$\rho$};
            \end{tikzpicture}
        \end{center}
        \caption{The chain $c_{j_1}\lessdot \cdots \lessdot
            c_{j_\ell}$ in the proof of Theorem~\ref{thm:csspci}.}
        \label{fig:cchain}
    \end{figure}

	Postpone discussing the base case and suppose $\ell \geq 3$. \\
    \textbf{Claim:} $P$ is obtained by closing a notch in some signed poset
    $P'$ such that $\Gbhat(P')$ (\resp $\Gchat(P'^\smvee)$) has a right region
    whose left border is $c_1\lessdot \cdots \lessdot c_k$ such that
    $c_{j_{\ell-1}}$ is the maximal element along the left border that is in the
        border of more than one region.

    There are three cases.
	\begin{enumerate}
	  \item \textbf{Suppose $c_{j_{\ell}-1} \ne -c_{j_{\ell}}$ and
              $c_{j_\ell-1}\ne 0$.}
			
			By Lemma~\ref{lem:notchopening}, there is a signed poset $P'$ such that $P$ is obtained from $P'$ by closing a signed notch $(c_{j_\ell -1},c_{j_\ell}',c_{j_\ell})$ and $(-c_{j_\ell -1},-c_{j_\ell}',-c_{j_\ell})$.

			By construction, $\Gbhat(P')$ (\resp $\Gchat(P')$) has a right
			region whose left border is $c_1 \lessdot \cdots \lessdot c_k$,
			such that $c_{j_\ell -1}$ is the maximal element along the left
			border that is in the border of more than one region. Then $\Gbhat(P')$ (\resp $\Gchat(P')$) also has a right region whose right border is $-c_k \lessdot \cdots \lessdot -c_1$, such that $-c_{j_\ell-1}$ is the minimal element along the right border that is in the border of more than one region.

			The result then follows by induction.

		      \item \textbf{Suppose $c_{\ell-1} = -c_\ell$. This case can only
                      arise in type $C$. Then since $\ell \geq 3$, from
                      Lemma~\ref{lem:onlyoneplusminus}, one knows that $c_1 \ne
                      -c_2$.}

			      Once again, from Lemma~\ref{lem:notchopening}, there is a
                  signed poset $P'$ such that $\Gchat(\Pc)$ is obtained from $\Gchat(P')$ by closing the signed notch comprised of $(-c_{j_2},-c_{j_1},-c_{j_1}')$ and $(c_{j_2},c_{j_1},c_{j_1}')$. By construction, $\Gchat(P')$ has a right region whose left border is $c_1 \lessdot \cdots \lessdot \cdot c_k$, such that $c_{j_2}\lessdot \cdots \lessdot c_{j_\ell}$ is the shortest saturated chain including all vertices in the left border of this region that lie in the right border of another region. 

			      The result then follows by induction.

			\item \textbf{Suppose $c_{j_\ell}=0$. This case only arises in type
                    $B$. Since $\ell \geq 3$, one knows that $c_{j_1} \ne 0$.}
	
			  Then, from Lemma~\ref{lem:notchopening}, one has a signed poset $P' \subset \Bn$ such that $\Gbhat(P)$ is obtained from $\Gbhat(P')$ by closing a signed notch $(-c_{j_2},-c_{j_1},-c_{j_1}')$ and $(c_{j_2},c_{j_1},c_{j_1}')$ By construction, $\Gbhat(P')$ has a right region whose right left border is $c_1\lessdot \cdots \lessdot \cdot c_k$, such that $c_{j_2}\lessdot \cdots \lessdot c_{j_\ell}$ is the shortest saturated chain including all the vertices in the left border of this region that lie in the right border of another region.

			  The result then follows by induction.
		      \end{enumerate}

	For the base case $\ell =2$, address types $B$ and $C$ separately. In type B, there are two cases.
	\begin{enumerate}
		\item \textbf{Neither $c_1$ nor $c_2$ is $0$.}

			From Lemma~\ref{lem:notchopening}, one has a signed poset $P'$ such that $\Gbhat(P)$ is obtained from $\Gbhat(P')$ by closing a signed notch $(c_1,c_2,c_2')$ and $(-c_1,-c_2,-c_2')$. Then $\Gbhat(P')$ has a rightmost region $\rho$ such that the left border of $\rho$ has precisely one vertex in the border of another region and similarly for the right border of $\iota(\rho)$. Then $\rho$ and $\iota(\rho)$ form a biconnected component in $\Ghat(P')$.

		\item \textbf{One of $c_1$ and $c_2$ is $0$. Without loss of
                generality, assume $c_1=0$.}

		  One applies Lemma~\ref{lem:notchopening} once again, and one has a signed poset $P'$ such that $\Gbhat(P)$ is obtained from $\Gbhat(P')$ by closing a signed notch $(c_1,c_2,c_2')$ and $(-c_1,-c_2,-c_2')$. Then $\Gbhat(P')$ has a rightmost region $\rho$ such that the left border of $\rho$ has precisely one vertex in the border of another region and similarly for the right border of $\iota(\rho)$. Then $\rho$ and $\iota(\rho)$ form a biconnected component in $\Ghat(P')$.
	\end{enumerate}

	In type C, there are two cases.
	\begin{enumerate}
		\item \textbf{Suppose $c_1 \ne -c_2$.}

			In this case, Lemma~\ref{lem:notchopening} applies and there is a signed poset $P'$ such that $\Gchat(\Pc)$ is obtained from $\Gchat(P')$ by closing a signed notch $(c_1,c_2,c_2')$ and $(-c_1,-c_2,-c_2')$. Then $\Gchat(P')$ has a rightmost region $\rho$ such that the left border of $\rho$ has precisely one vertex in the border of another region and similarly for the right border of $\iota(\rho)$. Then $\rho$ and $\iota(\rho)$ form a biconnected component in $\Gchat(P')$.

		\item \textbf{Suppose $c_1=-c_2$.}

			In this case, one has that $(c_1, c_2)$ is the only edge of either
            $\rho$ or $\iota(\rho)$ shared by another region. Since $(c_1,
            c_2)$ is of the form $(i, -i)$, it must be shared by $\rho$ and $\iota(\rho)$ (and, since $\Ghat(P)$ is strongly planar, by no other region). Consequently, $\rho$ and $\iota(\rho)$ must form an entire biconnected component of $\Gchat(\Pc)$.
	\end{enumerate}
	In each of these cases, the rightmost region $\rho$ and its image under the involution form a biconnected component of $\Gbhat(P)$ (\resp $\Gchat(\Pc)$). Let $P_1$ be this biconnected component and $P_2$ correspond to the rest of $\Gbhat(P)$ (\resp $\Gchat(\Pc)$). Then, by the same argument as in the proof of Proposition~\ref{prop:biconnectedred}, one has
	\[
	\Rprt = \Rrt{P_1} \otimes_k \Rrt{P_2} \quad \text{and} \quad \Rrt{\Pc} = \Rrt{P_1} \otimes_k \Rrt{P_2}.
	\]
 One has that $\Ghat(P_2)$ is centrally symmetric and strongly planar since $\Ghat(P)$ was and $\Ghat(P_2)$ has one fewer region not fixed by the involution than $\Ghat(P)$, so the result follows by induction for $\Rrt{P_2}$. One then has
	\[
        \Irt{P} = (U(C_\rho)) \oplus \Irt{P_2} = (U(C_\rho), U(C_{\sigma_1}),\ldots,U(C_{\sigma_{n-1}}))
	\]
	and
	\[
        \Irt{\Pc} = (U(C_\rho)) \oplus \Irt{P_2} = (U(C_\rho),U(C_{\sigma_1}),\ldots,U(C_{\sigma_{n-1}})),
	\]
	where $\sigma_1,\ldots,\sigma_{n-1}$ are the orbits of regions of
	$\Ghat(P_2)$ not fixed by the involution.
Then
	\[
	\Hilb(\Rprt,\bm{x}) = \Hilb(\Rrt{P_1},\bm{x})\Hilb(\Rrt{P_2},\bm{x})
	\quad \text{and}\quad \Hilb(\Rrt{\Pc},\bm{x}) = \Hilb(\Rrt{P_1},\bm{x})\Hilb(\Rrt{P_2},\bm{x}).
	\]
	Since, in each case, $\Irt{P_1}$ is a principal ideal and $\rho$ has a single maximum and a single minimum, one knows that 
	\[
	\Hilb(\Rrt{P_1},\bm{x}) = \dfrac{(1-\bm{x}^\rho)}{\prod_{e \in C_\rho}(1-x_a^\delta x_b^{-\epsilon})},
	\]
	where the product in the denominator runs over edges $e = \delta a \to \epsilon b$ in the cycle defining $\rho$, and taking $x_0=1$ in type $B$. By induction,
	\[
		\Hilb(\Rrt{P_2},\bm{x}) = \dfrac{\prod_{\sigma} (1 -\bm{x}^\rho)}{\prod_{e}(1-x_a^\delta x_b^{-\epsilon})},
	\]
	where $\sigma$ runs over orbits of regions enclosed by $\Ghat(P_2)$ not
    fixed by the involution and $e$ runs over the orbits of edges $\delta a \to
    \epsilon b$ $(a<b)$ in $\Ghat(P_2)$.

	Then
	\[
	\Hilb(\Rprt,\bm{x}) =\dfrac{\prod_\rho (1-\bm{x}^\rho)}{\prod_{e} (1-x_a^\delta x_b^{-\epsilon}),}
	\]
	where $\rho$ runs over all orbits of regions enclosed by $\Ghat(P)$,
	as desired.
\end{proof}

\subsection{Computing $\Psi_P$}\label{subsec:computingpsi}

While the proof of Theorem~\ref{thm:csspci} proceeds as above, using
Proposition~\ref{prop:sandsigmaci} to compute $\Psi_P$ requires having a
regular sequence generating the toric ideal. This regular sequence is implicit
in Theorem~\ref{thm:csspci}---the cycle binomials of the cycles cutting out the
regions of $\Ghat(P)$ form a regular sequence---though one needs to work in the
opposite order to the proof to see the regular sequence, building the poset up
from cycles rather than breaking it apart into cycles.

Recall the posets $P_1$ (Figure~\ref{subfig:gbhatp1} and~\ref{subfig:gchatp1}),
$P_2$ (Figures~\ref{subfig:gbhatp2} and~\ref{subfig:gchatp2}) and $P_3$
(Figure~\ref{fig:p3}) from the previous section. To find a regular sequence
generating $\Irt{P}$, one starts by noting that
\[
    \Rrt{P_3} \cong k[U_1,U_{1\m2},U_{23},U_3]/(U_{1\m2}U_{23}-U_1U_3).
\]
One then closes the notch to obtain $\Gbhat(P_2)$, which has the following
impact on the semigroup ring courtesy of
Proposition~\ref{prop:closenotchmodout}:
\begin{align*}
    \Rrt{P_2} &\cong k[U_1,U_{1\m2}U_{23},U_3]/(U_{1\m2}U_{23}-U_1U_3,U_1-U_3)
    \\
    &\cong k[U_1,U_{1\m2},U_{12}]/(U_{1\m2}U_{12}-U_1^2)
\end{align*}
One sees that $\Gbhat(P')$ has two biconnected components, one coming from $P_1$ and the
other from $P_2$.
\[
    \Rrt{P_1}\cong 
    k[U_{\m34},U_{4\m5},U_{\m23},U_{\m25}]/(U_{\m34}U_{\m23}-U_{4\m5}U_{\m25})
\]
Then
\begin{align*}
    \Rrt{P'} &\cong \Rrt{P_1} \otimes \Rrt{P_2} \\
    &\cong k[U_1,U_{1\m2},U_{12},U_{\m23},U_{\m34},U_{4\m5},U_{\m25}]/(U_{1\m2}U_{12}-U_1^2,U_{\m34}U_{\m23}-U_{4\m5}U_{\m25}).
\end{align*}
Closing the last notch to obtain $\Gbhat(P)$ means
\begin{align*}
    \Rrt{P} &\cong
    k[U_1,U_{1\m2},U_{12},U_{\m23},U_{\m34},U_{4\m5},U_{\m25}]/(U_{1\m2}U_{12}-U_1^2,U_{\m34}U_{\m23}-U_{4\m5}U_{\m25},U_{\m25}-U_{1\m2})
    \\
    &\cong
    k[U_1,U_{1\m2},U_{12},U_{\m23},U_{\m34},U_{\m14}]/(U_{12}U_{1\m2}-U_1^2,U_{1\m2}U_{\m14}-U_{\m34}U_{\m23}).
\end{align*}
Each step in this process results in a regular sequence, so one finishes with a
regular sequence generating $\Irt{P}$. One has
\[
    \Hilb(\Rprt,\bm{x}) =
    \dfrac{(1-x_1^2)(1-x_2^{-1}x_4)}{(1-x_1)(1-x_1x_2)(1-x_1x_2^{-1})(1-x_1^{-1}x_4)(1-x_3^{-1}x_4)(1-x_2^{-1}x_3)}.
\]
Applying Proposition~\ref{prop:sandsigmaci} gives
\begin{equation}\label{eq:psifromhilbB}
    \Psi_P(\bm{x}) =
    \dfrac{2x_1(x_4-x_2)}{x_1(x_1+x_2)(x_1-x_2)(x_4-x_1)(x_4-x_3)(x_3-x_2)}.
\end{equation}
On the other hand, Figure~\ref{fig:idealsforle} shows the poset of order ideals
of $P$, from which the linear extensions can be read off.
\begin{figure}
    \begin{center}
        \begin{tikzpicture}
            \node (0) at (0,0) {$(0,0,0,0)$};
            \node (4) at (0,1) {$(0,0,0,1)$};
            \node (34) at (-1,2) {$(0,0,1,1)$};
            \node (14) at (1,2) {$(1,0,0,1)$};
            \node (134) at (0,3) {$(1,0,1,1)$};
            \node (1m24) at (2,3) {$(1,-1,0,1)$};
            \node (1234) at (-1,4) {$(1,1,1,1)$};
            \node (1m234) at (1,4) {$(1,-1,1,1)$};
            \node (1m2m34) at (3,4) {$(1,-1,-1,1)$};
            \draw (0)--(4)--(34)--(134)--(1234);
            \draw (4)--(14)--(134)--(1m234);
            \draw (14)--(1m24)--(1m2m34);
            \draw (1m24)--(1m234);
        \end{tikzpicture}
    \end{center}
    \caption{$J(P)$ for $P$ in Figure~\ref{fig:hilbex}}
    \label{fig:idealsforle}
\end{figure}
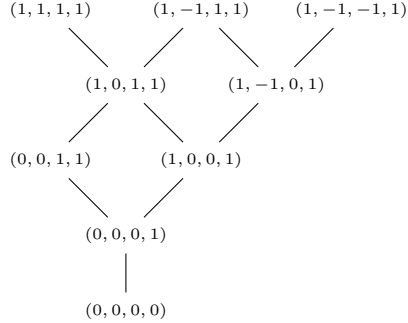
The linear extensions are:
\[
    \begin{split}
    \begin{pmatrix} 1 & 2 & 3 & 4 \\ 4 & 3 & 1 & 2 \end{pmatrix},
    \begin{pmatrix} 1 & 2 & 3 & 4 \\ 4 & 3 & 1 & -2 \end{pmatrix},
    \begin{pmatrix} 1 & 2 & 3 & 4 \\ 4 & 1 & 3 & 2 \end{pmatrix}, \\
    \begin{pmatrix} 1 & 2 & 3 & 4 \\ 4 & 1 & 3 & -2 \end{pmatrix},
    \begin{pmatrix} 1 & 2 & 3 & 4 \\ 4 & 1 & -2 & 3 \end{pmatrix},
    \begin{pmatrix} 1 & 2 & 3 & 4 \\ 4 & 1 & -2 & -3 \end{pmatrix}.
\end{split}
\]
Then
\begin{multline*}
    \Psi_P(\bm{x}) = \sum_{w \in \cL(P)}
    w\left(\dfrac{1}{(x_1-x_2)(x_2-x_3)(x_3-x_4)x_4}\right) =  \\
    \dfrac{1}{(x_4-x_3)(x_3-x_1)(x_1-x_2)x_2} -
    \dfrac{1}{(x_4-x_3)(x_3-x_1)(x_1+x_2)x_2} \\ +
    \dfrac{1}{(x_4-x_1)(x_1-x_3)(x_3-x_2)x_2)} -
    \dfrac{1}{(x_4-x_1)(x_1-x_3)(x_2+x_3)x_2} \\ -
    \dfrac{1}{(x_4-x_1)(x_1+x_2)(x_2+x_3)x_3} -
    \dfrac{1}{(x_4-x_1)(x_1+x_2)(x_3-x_2)x_3} \\
    =
    \dfrac{2(x_4-x_2)}{(x_4-x_3)(x_3-x_2)(x_4-x_1)(x_1-x_2)(x_1+x_2)}\cdot \dfrac{x_1}{x_1},
\end{multline*}
agreeing with \eqref{eq:psifromhilbB}. 

One builds $\Gchat(\Pc)$ (see Figure~\ref{fig:hilbex}) similarly, except one
actually arrives at a principal ideal. 
\[
    \Rrt{P'^\smvee} \cong \Rrt{P_1^\smvee} \otimes \Rrt{P_2^\smvee}.
\]
However, note that $\Gchat(P_2^\smvee)$ (see Figure~\ref{subfig:gchatp2})
consists of a single cycle fixed orientation-wise by the involution, so the
only cycle binomial is $0$. Consequently,
\[
    \Rrt{P'^\smvee} \cong k[U_{1\m2},U_{12},U_{\m34},U_{\m23},U_{\m25},U_{4\m5}] /
    (U_{\m34}U_{\m23}-U_{4\m5}{\m25}),
\]
and closing the notch gives
\begin{align*}
    \Rrt{\Pc} &\cong  k[U_{1\m2},U_{12},U_{\m34},U_{\m23},U_{\m25},U_{4\m5}] /
    (U_{\m34}U_{\m23}-U_{4\m5}{\m25}, U_{\m25} - U_{\m15})\\
    &\cong
    k[U_{1\m2},U_{12},U_{\m34},U_{\m23},U_{\m14}]/(U_{\m34}U_{\m23}-U_{\m14}U_{\m25}).
\end{align*}
Then the Hilbert series is
\[
    \Hilb(\Rrt{\Pc},\bm{x}) =
    \dfrac{1-x_2^{-1}x_4}{(1-x_1^{-1}x_4)(1-x_3^{-1}x_4)(1-x_2^{-1}x_3)(1-x_1x_2^{-1})(1-x_1x_2)}.
\]

Applying Proposition~\ref{prop:sandsigmaci} gives
\[
    \Psi^\smvee_{\Pc}(\bm{x}) =
    \dfrac{x_4-x_2}{(x_4-x_1)(x_4-x_3)(x_3-x_2)(x_1-x_2)(x_1+x_2)}.
\]
On the other hand,
\begin{multline*}
    \Psi^\smvee_{\Pc}(\bm{x}) = \sum_{w \in \cL(P)}
    w\left(\dfrac{1}{(x_1-x_2)(x_2-x_3)(x_3-x_4)2x_4}\right) \\ =
    \dfrac{x_4-x_2}{(x_4-x_3)(x_3-x_2)(x_4-x_1)(x_1-x_2)(x_1+x_2)},
\end{multline*}
agreeing with the computation via the Hilbert series.

With the regular sequence in hand, one can apply
Proposition~\ref{prop:sandsigmaci} to obtain the following corollary.

\begin{cor}\label{cor:computingpsi}
Suppose $P \subset \Bn$ (\resp $\Pc \subset \Cn$) is a strongly planar signed
poset. Then 
\[
    \Psi_P(\bm{x}) = \dfrac{\prod_{\rho} \sgn(\min(\rho))x_{\min(\rho)} -
        \sgn(\max(\rho))x_{\max(\rho)}}{\prod_{(i,j) \in \Sigma_P} \sgn(i)x_i -
        \sgn(j)x_j},
\]
where $\rho$ runs over the regions of $\Ghat(P)$ and $(i,j)$ runs over the
edges of the Hasse diagram.
\end{cor}

\chapter{The Weight Cone Semigroup}\label{ch:weightcone}

Associated to the weight cone, $\Kpwt$, is a semigroup, called the \emph{weight
cone semigroup}, defined by the intersection of $\Kpwt$ with the coweight
lattice. In this chapter, only $\Bn$-signed posets will be considered.
Section~\ref{sec:wtsemigroupgens} discusses the generators of the semigroup,
Section~\ref{sec:wttoricideal} gives a generating set for the toric ideal.
Finally, Section~\ref{sec:computingsums} considers the question of computing
$\Phi_P(\bm{x})$ and
\[
\sum_{w \in \cL(P)} q^{\maj(w)}.
\]
Theorem~\ref{thm:excludedposets} characterizes signed posets which are
so-called initial complete intersections, for which the aforementioned sums can
be readily computed, and Theorem~\ref{thm:construction} explains how the
initial complete intersections are constructed.

Before beginning, recall some definitions and facts about the weight cone:
\begin{itemize}
		\item an ideal $I$ is said to be \emph{extensible} if there is a (nonempty) $J
				\subset \pm [n]$ such that $I \cup J$ and $I \cup -J$ are
				ideals (see Definition~\ref{def:extensible})
		\item the extreme rays of $\Kpwt$ correspond to the connected,
				nonextensible ideals of $P$ (see
				Proposition~\ref{prop:kpwtextremerays})
		\item the elements of the weight cone semigroup are the $P$-partitions
				(see Section~\ref{sec:ideals})
		\item when considering the ideals of a signed poset, it suffices to
				look at $\Gchat(\Pc)$ instead of $\Gbhat(P)$. To that end,
				$\Ghat(P)$ in this chapter denotes $\Gchat(\Pc)$, even though
				signed posets are $P \subset \Bn$. (see
                Definition~\ref{def:ppartition})
		\item the \emph{signed support} of a $P$-partition $f$ is an ideal 
				\[
				\ssupp(f) =
				\{\sgn(f_i)i \colon i \in[n], |f_i|\geq 1\}
				\]
				(see Definition~\ref{def:signedsupport}).
            \item the set of connected ideals is denoted $\Jconn(P)$.
\end{itemize}

\section{Generators of the Semigroup}\label{sec:wtsemigroupgens}

Lemma~\ref{lem:lincombideals} has a consequence that is key to the discussion
of the weight cone semigroup, which is stated now.

\begin{prop}\label{prop:nestedideals}
Suppose $P \subset \Bn$ is a signed poset and $f$ is a $P$-partition. Then
there are ideals $J_1 \supset J_2 \supset \cdots \supset J_k$ such that $f =
\chi_{J_1}+\chi_{J_2}+\cdots \chi_{J_k}$.
\end{prop}
The $J_i$ are precisely those from Lemma~\ref{lem:lincombideals}.

\begin{prop}
	The semigroup $\Kpwt \cap \Lcwt_B$ is generated by the connected ideals of $P$, though not necessarily minimally.
\end{prop}

\begin{proof}
	Suppose $f$ is a $P$-partition. Then there are ideals $I_1 \subseteq \cdots \subseteq I_k$ such that
	\[
		f = \sum_j \chi_{I_j}.
    \]
	Let $I_j^{(i)}$ be the connected components of $I_j$. Then $f$ is in the semigroup generated by the $I_j^{(i)}$, which lies in the semigroup generated by the connected ideals.
\end{proof}

It is simple to see that this is not necessarily a minimal generating set.
Suppose $P=\{e_1+e_2, +e_2\}$. Then $\Jconn(P)=\{\{1\},\{2\},\{-1,2\}\}$, but
$\{1\}$ and $\{-1,2\}$ suffice to generate the semigroup.

\section{The Toric Ideal}\label{sec:wttoricideal}

To discuss the toric ideal of the weight cone semigroup, one needs a few
additional definitions involving ideals of a signed poset. 
\begin{defn}\label{def:support}
Suppose $J$ is an
ideal of a signed poset. The \emph{support} of $J$ is the set $\supp J = \{i \colon \pm
i \in J\}\subset [n]$. Two ideals, $J$ and $K$, \emph{intersect nontrivially} if $\supp J
\cap \supp K \ne \emptyset$ and neither $J \subset K$ nor $K \subset J$. Two
ideals which do not intersect nontrivially will be said to \emph{intersect
trivially}. 
\end{defn}
If $J$ and $K$ are ideals, let $J+K$ denote the $P$-partition $\chi_J +
\chi_K$. In the sequel, it will be pairs of nontrivially intersecting connected
ideals that are paramount. Denote the set of such pairs by $\Pi(P)$.

With this setup one can now give a presentation for the toric ideal of the
weight cone semigroup. The result closely resembles that of \citet*[Theorem 1.2]{FerayReiner2012}, save that one must account for the fact that
the union of two ideals of a $\Bn$-poset is not necessarily an ideal, since the union of two isotropic order ideals of $\Ghat(P)$ is not
necessarily isotropic.

\begin{defn}
Suppose $P \subset \Bn$ is a signed poset. Let
		$\Swt{P}=k[U_{J_1},\ldots,U_{J_k}]$, where the $J_i$ are the connected
		ideals of $P$. Define a map 
		\begin{align*}
				\phi \colon \Swt{P} &\to k[x_1^{\pm 1},\ldots,x_n^{\pm 1}] \\
				U_J &\mapsto \bm{x}^{\chi_J}.
		\end{align*}
		Suppose $J_1, J_2$ are two ideals which intersect nontrivially. Define
  \[
  \syz (U_{J_1},U_{J_2})=U_{J_1}U_{J_2} - \prod U_{J^{(i)}}\prod U_{K^{(j)}},
  \]
  where the $J^{(i)}$ are the connected components of $J_1\cap J_2$ and the $K^{(j)}$ are the connected components of $\ssupp(J_1+J_2)$. 
\end{defn}

\begin{thm}\label{thm:typebpresentation}
Suppose $P \subset \Bn$ is a signed poset.
		One has an exact sequence
  \[
  0 \to \Iwt{P} \to \Swt{P} \to \Rpwt \to 0,
  \]
 with 
  \[
  \Iwt{P}=(\syz(U_{J_1},U_{J_2})),
  \]
  where $\{J_1,J_2\}$ runs over $\Pi(P)$, the set of pairs of nontrivially
  intersecting connected ideals.
\end{thm}

Theorem~\ref{thm:typebpresentation} is the generalization of the type A
result~\cite[Theorem 1.2]{FerayReiner2012}.

Consider the signed poset in Figure~\ref{fig:wttoricidealex}. The connected ideals are
\[
\{1\}, \{-3\}, \{-4\}, \{1,2\}, \{1,4\}, \{-2,-3\}, \{1,2,4\}, \{1,2,3\}, \{1,2,3,4\},
\]
so one has
$\Swt{P}=k[U_1,U_{\m3},U_{\m4},U_{12},U_{14},U_{\m2\m3},U_{124},U_{123},U_{1234}]$.
Then 
\begin{multline*}
		\Iwt{P} =
		(U_{\m3}U_{123}-U_{12},U_{\m3}U_{1234}-U_{124},U_{\m4}U_{14}-U_1,U_{\m4}U_{124}-U_{12},U_{\m4}U_{1234}-U_{123},\\
		U_{12}U_{14}-U_1U_{124},U_{12}U_{\m2\m3}-U_1U_{\m3},
		U_{\m2\m3}U_{124}-U_{14}U_{\m3},U_{124}U_{123}-U_{12}U_{1234})
\end{multline*}

\begin{figure}
\begin{center}
    \begin{tikzpicture}[vertex/.style={draw,circle,fill=black,inner sep=1pt}]
      \node[vertex] at (0,0) (1) [label=right:$1$] {};
      \node[vertex] at (0,1) (2) [label=right:$2$] {};
      \node[vertex] at (-1,1)(4) [label=right:$4$]{};
      \node[vertex] at (0,2) (3) [label=right:$3$]{};
      \node[vertex] at (1,1) (-1) [label=right:$-1$]{};
      \node[vertex] at (1,0) (-4) [label=right:$-4$]{};
      \node[vertex] at (2,0) (-2) [label=right:$-2$]{};
      \node[vertex] at (2,-1) (-3)[label=right:$-3$] {};
      \draw (1)--(4);
      \draw (1)--(2)--(3);
      \draw (1)--(-1);
      \draw (-4) --(-1);
      \draw (-1)--(-2)--(-3);
    \end{tikzpicture}
  \end{center}
  \caption{$P = \{+e_1,+e_1-e_2,+e_1-e_3,+e_1-e_4,+e_2-e_3\}$}
  \label{fig:wttoricidealex}
\end{figure}
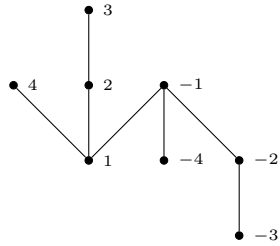

The proof of Theorem~\ref{thm:typebpresentation} requires a number of lemmas and a few facts from the theory of Gr\"obner bases. Section~\ref{subsec:groebner} discusses Gr\"obner bases and Section~\ref{subsec:proofoftypebpresentation} contains the lemmas and proof of the theorem.

\subsection{Term Orders and Gr\"obner Bases}\label{subsec:groebner}

In this section, we pause for a moment to review a few facts about terms orders
and Gr\"obner bases before the proof of Theorem~\ref{thm:typebpresentation} in
the next section. This material is standard and can be found in many places,
including~\cite{Eisenbud1995,EneHerzog2012,Sturmfels1996a}.

\begin{defn}\label{def:termorder}
Suppose $S=k[x_1,\ldots,x_n]$ is a polynomial ring. A \emph{term order}, $<$,
is a total order on the monomials of $S$ such that
\begin{enumerate}[label=(\alph{*})]
		\item there are no infinite descending chains
		\item $1=\bm{x}^0$ is the minimal element
		\item if $\bm{x}^\alpha \leq \bm{x}^\beta$ and
				$\bm{x}^\gamma$ is any monomial, then
				$\bm{x}^\alpha\bm{x}^\gamma \leq \bm{x}^\beta
				\bm{x}^\gamma$.
\end{enumerate}

Suppose $f=a_1\bm{x}^{\alpha_1}+\cdots+a_k\bm{x}^{\alpha_k}$ is a
polynomial in $R$. The \emph{multidegree} of $f$ is $\alpha_{\deg f}$ which is
maximal among $\alpha_1,\ldots,\alpha_k$ with respect to the term order $<$.
The \emph{initial term} of $f$ is then $a_{\deg f}\bm{x}^{\alpha_{\deg f}}$,
denoted $\init_< f$.
\end{defn}

\begin{defn}\label{def:groebnerbasis}
Given an ideal $I \subset R$, the \emph{initial ideal} with respect to $<$ is
\[
   \init_< I = (\init_<f \colon f \in I).
\]
An ideal and its initial ideal are connected by the notion of a Gr\"obner
basis. A set $\cG=\{g_1,\ldots,g_m\} \subset I$ is a \emph{Gr\"obner basis} of
$I$ if 
\[
   \init_< I=(\init_< g_1,\ldots,\init_< g_m).
\]
\end{defn}
\textit{A priori}, it is not clear that an arbitrary ideal in a polynomial ring should have a
finite Gr\"obner basis. However, this is a consequence of Dickson's Lemma (see
Theorem 1.9, Corollary 1.10 and page 20 of~\citet{EneHerzog2012}). The
following facts will underpin the proof of
Theorem~\ref{thm:typebpresentation}.

\begin{thm}[{Macaulay~\cite{Macaulay1927}}]\label{thm:macaulay}
Suppose $S$ is a polynomial ring over the field $K$, there is an ideal $I
\subset S$ and a term order $<$. The monomials of $S$ which do not belong to $\init_< I$ form a $k$-basis
of $S/I$.
\end{thm}

\begin{cor}\label{cor:initsamehilbert}
   Suppose $I$ is a homogeneous ideal in a polynomial ring $S$ and $<$ is a term order.
Then
\[
   \Hilb_{S/\init_< I}(\bm{x}) = \Hilb_{S/I}(\bm{x}).
\]
\end{cor}

\begin{prop}
   If $I$ is an ideal in a polynomial ring $S$ and $\cG$ is a Gr\"obner basis
of $I$, then $\cG$ generates $I$.
\end{prop}

As an aside, Gordan observed in~\cite{Gordan1900} that this last proposition, when combined with the
existence of finite Gr\"obner bases gives the Hilbert Basis Theorem, though it is
common to rely on the Hilbert Basis Theorem to obtain the existence of finite
Gr\"obner bases.

\subsection{The Proof of Theorem~\ref{thm:typebpresentation}}\label{subsec:proofoftypebpresentation}

The basic idea of the proof of Theorem~\ref{thm:typebpresentation} is to show
that the generating set given in the theorem is a Gr\"obner basis for $\Iwt{P}$
in a particular term order. This is accomplished by showing that the rings
$\Swt{P}/\Iwt{P}$ and $\Swt{P}/I^\init_{P}$ share the same Hilbert series.
(See Lemma~\ref{lem:samehilbertseries} for the definition of $I^\init_{P}$.)

\begin{lem}\label{lem:ppartitionsoutoftrivintersectideals}
 Suppose $P \subset \Bn$ is a signed poset. A $P$-partition $f$ can be written uniquely as the sum of trivially intersecting connected ideals.
\end{lem}

Lemma~\ref{lem:ppartitionsoutoftrivintersectideals}
generalizes~\cite[Proposition 2.5(ii)]{FerayReiner2012}, which has the same
statement when $P$ is a poset.

\begin{proof}
		From Proposition~\ref{prop:nestedideals}, any $P$-partition can be written as $f=\sum \chi_{I_k}$ with ideals $I_1
  \subset I_2 \subset \cdots \subset I_\ell$. Thus, the lemma is a matter of transforming this collection of ideals into a set of trivially intersecting connected ideals. Proceed by induction on $|f|=|f_1|+\cdots+|f_n|$. The base case $|f|=0$ is trivial.

  Suppose $|f|\geq 1$. Let $J = \ssupp(f)$. Recall that the signed support is
  an ideal and $J=I_k$. Let $J^{(1)},J^{(2)},\cdots,J^{(c)}$ be the connected
  components of $J$. Suppose $c>1$. Consider $f|_{J^{(i)}}$. One has
  \[
	f|_{J^{(i)}}=\sum_{i=1}^k I_k|_{J^{(i)}},
  \]
   so $f|_{J^{(i)}}$ is a
  $J^{(i)}$-partition. One then has that $|J^{(i)}|< |J|\leq |f|$, so, by
  induction, $f|_{J^{(i)}}$ can be written uniquely as a sum of trivially
  intersecting connected ideals. Consequently, $f$ can be written uniquely as a sum of
  trivially intersecting connected ideals.

  In the other case suppose $c=1$. Then $J=\ssupp(f)$ is connected. Let $\hat{f}=\sum_{i=i}^{k-1} I_i$. Then $f=\chi_J+\hat{f}$ and one has that $\hat{f}$ is a (possibly zero) $P$-partition with $|\hat{f}|<|f|$. Then, by induction, $\hat{f}$ can be written uniquely as a sum of trivially intersecting connected ideals. Further, each of these ideals is contained in $\chi_J=I_k$, so $f$ can be written uniquely as a sum of trivially intersecting connected order ideals.
\end{proof}

\begin{defn}\label{def:Iinit}
Suppose $P \subset \Bn$ is a signed poset. Let 
\[
I_P^\init=(U_{J_1}U_{J_2})
\]
with $\{J_1,J_2\}$ running over $\Pi(P)$.
\end{defn}

\begin{lem}\label{lem:samehilbertseries}
		Suppose $P \subset \Bn$ is a signed poset. Then $\Rpwt$ and $\Swt{P}/I_P^\init$ have the same $\bZ^n$-graded Hilbert series, namely
  \[
  \sum_{f \in \cA(P)} \bm{x}^{f}.
  \]
\end{lem}

\begin{proof}
		Since the $P$-partitions are the elements of the semigroup, $\Rpwt$ has
        $\sum_{f \in \cA(P)} \bm{x}^{f}$ as its $\bZ^n$-graded
  Hilbert series.
  The monomials killed by $I_P^\init$ are precisely those mapped to
  $P$-partitions expressed as a sum of (at least) two nontrivially intersecting
  ideals. By Lemma~\ref{lem:ppartitionsoutoftrivintersectideals}, every
  $P$-partition can be written uniquely as a sum of nontrivially intersecting
  connected order ideals, i.e.\ each $P$-partition corresponds to a unique
  monomial surviving in $\Swt{P}/I_P^\init$.
\end{proof}

The notation $I_P^\init$ is not a coincidence, as Lemma~\ref{lem:initialterm}
will show. One defines a term order for $\Swt{P}=k[U_J]$ that specializes to
the term order given by \feray and Reiner in~\cite{FerayReiner2012}. First,
recall their term order. Place a total order $<$ on the $U_J$ such that $U_J <
U_K$ whenever $|J| < |K|$. (This amounts to choosing a linear extension of the poset of nonempty connected order ideals ordered by inclusion.) Suppose $U^J = U_{J_1}\cdots U_{J_r}$ and $U^K = U_{K_1}\cdots U_{K_s}$ are two monomials and assume without loss of generality that $r< s$ and the $U_{J_i}$ and $U_{K_\ell}$ are ordered by $<$. Find the first $i$ such that $J_i \ne K_i$. If $J_i < K_i$, then $U^J < U^K$, and if $K_i < J_i$, then $U^J < U^K$. If no such $i$ exists, then $U^J$ divides $U^K$, so $U^J < U^K$.

In type A, the initial term of $\syz(U_J,U_K)$ is always $U_JU_K$ with respect
to this term order. However, in type B, this is not always the case. Consider
the poset in Figure~\ref{fig:wttoricidealex}.
$\Swt{P}=k[U_1,U_{\m3},U_{\m4},U_{12},U_{14},U_{\m2\m3},U_{124},U_{123},U_{1234}]$.
Order the ideals as the corresponding variables appear from left to right.
Then, using the term order from~\cite{FerayReiner2012},  the leading term of
$\syz(U_{123},U_{\m3}) = U_{123}U_{\m3}-U_{12}$ is $U_{12}$ and not
$U_{123}U_{\m3}$.

To resolve this issue, define a new term order. 
\begin{defn}
		Let $w = (|J|)_{J \in
J_{\text{conn}}(P)}$ be a weight vector and define a term order as follows.
Consider monomials $U^\alpha$ and $U^\beta$. If $\langle w, \alpha \rangle >
\langle w,\beta \rangle$, then $U^\alpha > U^\beta$. If $\langle w,\alpha\rangle = \langle w, \beta \rangle$, break the tie using the term order from~\cite{FerayReiner2012} described above. Denote this new term order $\preceq$.
\end{defn}

In the example from Figure~\ref{fig:wttoricidealex}, the weight vector is $(1,1,1,2,2,2,3,3,4)$, and $U_{123}U_{\m3}$ has weight four while $U_{12}$ has weight two. Consequently, using the $\preceq$ order, the initial term of $\syz(U_{123},U_{\m3})$ is $U_{123}U_{\m3}$.

\begin{lem}\label{lem:initialterm}
  Suppose $P \subset \Bn$ is a signed poset. Suppose $J_1$ and $J_2$ are connected ideals that intersect nontrivially. Then $\init_\preceq(\syz(U_{J_1},U_{J_2}))=U_{J_1}U_{J_2}$.
\end{lem}

\begin{proof}
  There are two cases to consider. First, suppose $\supp(J_1 + J_2)= \supp(J_1)
  \cup \supp(J_2)$. In other words, no cancellation occurs in $J_1+J_2$. Then
  \[
  |J_1|+|J_2| = |\ssupp(J_1+J_2)| +|J_1 \cap J_2|.
  \]
  In this case, the weight
  vector $w$ has the same inner product with the exponent vectors of both
  monomials of $\syz(U_{J_1},U_{J_2})$. Using the term order $>$
  from~\cite{FerayReiner2012} to break the tie, one will always have that $U_{J_1}U_{J_2} > \prod U_{J^{i}}\prod U_{K^{(j)}}$, since any connected component of $J_1 \cap J_2$ is a (proper) subset of $J_1$ and $J_2$. Therefore, $\init_\preceq(\syz(U_{J_1},U_{J_2}))=U_{J_1}U_{J_2}$. 

  In the other case, cancellation occurs in $J_1+J_2$. In this case,  
  \[
  |J_1|+|J_2| > |\ssupp(J_1+J_2)| +|J_1 \cap J_2|,
  \]
  so $\init_\preceq(\syz(U_{J_1},U_{J_2}))=U_{J_1}U_{J_2}$.
\end{proof}

The proof of Theorem~\ref{thm:typebpresentation} now proceeds much as the proof does for type A given in~\cite{FerayReiner2012}.

\begin{proof}[Proof of Theorem~\ref{thm:typebpresentation}]
  For simplicity, let $K = \ker(\phi \colon S \to k[x_1^{\pm 1},\ldots,x_n^{\pm
  1}])$ and, as before, let $I_P^\init=(U_{J_1}U_{J_2})$, where $\{J_1,J_2\}$
runs over $\Pi(P)$. Observe that by definition, $I_P \subset K$. One then has $\init_\preceq (I_P)\subset \init_\preceq(K)$. On the other hand, recall from Lemma~\ref{lem:initialterm} that $U_{J_1}U_{J_2} = \init_\preceq(\syz(U_{J_1},U_{J_2}))$, so $I_P^\init \subset \init_\preceq(I_P)$. Therefore, one has surjections
  \[
	S/I_P^\init \to S/\init_\preceq(I_P) \to S/\init_\preceq(K).
  \]
  By the definition of $\phi$, the ideals $K$ and $I_P$ are homogeneous in the
  $\bZ^n$-grading, so $S/K$ and $S/I_P$ each share Hilbert series with
  $S/\init_\preceq(K)$ and $S/\init_\preceq(I_P)$, respectively, by
  Corollary~\ref{cor:initsamehilbert}. Furthermore, one knows from
  Lemma~\ref{lem:samehilbertseries} that $S/K$ and $S/I_P^\init$ share the same
  $\bZ^n$-Hilbert series. Hence, one has 
  \[
	\Hilb(S/I_P^\init,\bm{x}) = \Hilb(S/I,\bm{x})=\Hilb(S/K,\bm{x}) = \Hilb(S/\init_\preceq(I),\bm{x}) = \Hilb(S/\init_\preceq(K),\bm{x}).
  \]
  Consequently, the surjections must be isomorphisms and $S/I_P^\init \cong
  S/\init_\preceq(I_P) \cong S/\init_\preceq(K)$, meaning $I_P^\init =
  \init_\preceq(I_P) = \init_\preceq(K)$. One then has that the
  $\syz(U_{J_1},U_{J_2})$ form a Gr\"obner basis for both $I_P$ and $K$, meaning $I_P = K$,
  since an ideal is always generated by a Gr\"obner basis.
\end{proof}

\section{Complete intersections and the sum over linear extensions}\label{sec:computingsums}

In~\cite{Reiner1990}, Reiner proved the following result about the
$P$-partition generating function for signed posets. Recall the definition of
\emph{major index} of a signed permutation $w$ from Definition~\ref{def:signedmaj}:
\[
    \maj(w) = \sum_{ i \in \Des(w)} i.
\]

\begin{prop}\label{prop:qmajppartitions}
    Suppose $P$ is a signed poset. Then
    \[
        \sum_{f \in \cA(P)} q^{|f|} = \dfrac{\sum_{w \in \cL(P)}
            q^{\maj(w)}}{(1-q)(1-q^2)\cdots(1-q^n)}.
    \]
\end{prop}

This parallels the result in type A due to Stanley (see~\cite{Stanley1972}
or~\cite{stanleyvol1}). \feray and Reiner observed (again for type A) that the left hand side is
the Hilbert series of $\Rpwt$ with the grading $\deg \bm{x}^f = |f|$. However, this
is \emph{not} the case when $P$ is a signed poset---$\deg x^f = |f|$ is not a
grading of $\Rpwt$, never mind a specialization of the $\bZ^n$ grading!
Additionally, while Theorem~\ref{thm:typebpresentation} gives a presentation of
$\Rpwt$ as $\Swt{P}/\Iwt{P}$, the toric ideal $\Iwt{P}$ is not homogeneous in
this $\bN$-grading. 

All is not lost, however, as, using the same logic as in
Lemma~\ref{lem:samehilbertseries}, one observes that when $\Swt{P}$ is graded by $\deg
U_J = |J|$, one has
\[
    \Hilb(\Swt{P}/\initI,q) = \sum_{f \in \cA(P)} q^{|f|}.
\]
Additionally, if one grades $\Swt{P}$ by $\deg U_J = 1$, one has
\[
    \Hilb(\Swt{P}/\initI,t) = \sum_{f \in \cA(P)} t^{\nu(f)},
\]
where $\nu(f)$ is the number of ideals used in the unique expression of the
$P$-partition $f$ as a sum of nontrivially intersecting connected order
ideals.

This section is concerned with computing these sums in
the case where $\Swt{P}/\initI{P}$ is a complete intersection.
Section~\ref{subsec:wtreducing} will reduce the case of signed posets which are
not full-dimensional to the work of \feray and Reiner
in~\cite{FerayReiner2012}. Section~\ref{subsec:characterisinginitci} will
define the notion of \emph{initial complete intersection} and characterize in
two ways the
signed posets which are initial complete 
intersections:
\begin{itemize}
    \item they are the signed posets avoiding a certain list of induced
        subposets (Theorem~\ref{thm:excludedposets});
    \item they are the signed posets constructed using certain moves
        (Theorem~\ref{thm:construction}).
\end{itemize}

By shifting one's focus to $S/\initI$ from $S/\Iwt{P}$, one sees immediate
benefit in that one now has a minimal generating set for the ideal of
interest.

\begin{prop}\label{prop:gbinitmingens}
    Suppose $P$ is a signed poset. Then $\initI$ is minimally generated by
    $U_JU_K$ where $\{J,K\}$ runs over all nontrivially intersecting pairs of
    connected order ideals of $P$.
\end{prop}

\begin{proof}
    Since the $\syz(U_J,U_K)$ form a Gr\"obner basis of $I$, their initial
    terms, the $U_JU_K$, certainly generate $\initI$. A generating set of a
    monomial ideal is a minimal generating set precisely when none of the
    monomials divides another. (See~\cite[Proposition 1.1.6]{HerzogHibi2011},
    for instance.) Since the $U_JU_K$ are all distinct and all quadratic, none
    can divide any of the others.
\end{proof}

\subsection{Reducing to connected $\Ghat(P)$}\label{subsec:wtreducing}

As with the root cone, one is able to reduce discussion of the weight cone
semigroup to a more convenient case: that where $\Ghat(P)$ is connected. First,
one uses a similar logic to the biconnected component reduction of
Proposition~\ref{prop:biconnectedred}. 

\begin{defn}\label{def:signedcomponent}
    Suppose $P \subset \Bn$ is a signed poset.
    Let a \emph{signed component} of
$\Ghat(P)$ be
\begin{enumalph}
\item a connected component $P_1$ of $\Ghat(P)$ such that if $i \in P_1$, then
		$-i\in P_1$, or
\item a pair of connected components $P_1,P_2$ such that $P_1=-P_2$.
\end{enumalph}
\end{defn}
Observe that each signed component of $\Ghat(P)$ is the Fischer poset of some
smaller signed poset, call them $P_1,\ldots,P_k$.  One then has the following.

\begin{prop}\label{prop:signedcomponentred}
Let $P$ be a signed poset and let $P_1,\ldots,P_k$ be the signed posets
corresponding to its signed components. Then
\[
\Rpwt \cong \Rwt{P_1} \otimes \cdots \otimes \Rwt{P_k}.
\]
\end{prop}

\begin{proof}
Begin by observing that each connected order ideal of $P$ lies entirely in a
 signed component. Therefore,
\[
\Swt{P} \cong \Swt{P_1} \otimes \cdots \otimes \Swt{P_k},
\]
and
\[
\Iwt{P} \cong \Iwt{P_1}\oplus\cdots\oplus\Iwt{P_k}.
\]
Consequently, one has
\[
\Rpwt = \Rwt{P_1}\otimes \cdots\otimes \Rwt{P_k},
\]
as desired.
\end{proof}

As a result of Proposition~\ref{prop:signedcomponentred} it suffices to
consider signed posets having only one signed component. This assumption will
hold for the remainder of the chapter.

As in Section~\ref{subsec:reductions}, one may assume that each signed poset
under consideration is
not contained in $\text{span} \{e_i \colon i \in A\}$ for any $A \subsetneq [n]$.
We explain here why it suffices to consider signed posets $P$ for which
$\Kprt$ is full-dimensional and $\Kpwt$ is pointed.

Since it was assumed that $\Ghat(P)$ has only one signed component, if $\Kpwt$
is not pointed, one must have that $\Ghat(P)$ consists of two isotropic
connected components, $P_1=-P_2$. Without loss of generality, one may assume
$P_1=[n]$, meaning both $[n]$ and $-[n]$ are both ideals.

\begin{prop}\label{prop:localisation}
Suppose $P$ is a signed poset such that $\Kpwt$ is not pointed. Then there is a
 poset $Q$ on $[n]$ such that
\[
\Rpwt \cong \Rwt{Q}[(x_1x_2\cdots x_n)^{-1}],
\]
the localization of $\Rwt{Q}$ at the multiplicatively closed set $\{(x_1\cdots x_n)^k \colon k \geq 0\}$.
\end{prop}

\begin{proof}
One may assume that $\Ghat(P)$ consists of two connected components $K_1$ and
$K_2$ such that $K_1 = -K_2$ and $[n]$ are the vertices of $K_1$. Let $Q$ be
the poset whose Hasse diagram coincides with $K_1$. One defines a map
\[
\phi \colon \Rpwt \to \Rwt{Q}[(x_1x_2\cdots x_n)^{-1}]
\]
as follows. Given any $f \in \cA(P)$, the $P$-partition $f$ can be written
uniquely as a sum of nontrivially intersecting connected ideals: $f =
J_1+\cdots + J_k$. Without loss of generality, one may assume $J_1,\ldots,J_i
\subset [n]$ and $J_{i+1},\ldots,J_k \subset -[n]$. Then define
\[
\phi(\bm{x}^f) = \dfrac{\bm{x}^{J_1}}{1}\cdots \dfrac{\bm{x}^{J_i}}{1}\cdot
\dfrac{\bm{x}^{([n]\smallsetminus -J_{i+1})}}{x_1\cdots x_n}\cdots
\dfrac{\bm{x}^{([n]\smallsetminus -J_k)}}{x_1\cdots x_n}.
\]
Since $[n]$ and $-[n]$ are both ideals of $P$, for any ideal $J \subset -[n]$,
one has that $[n] \smallsetminus -J$ is also an ideal of $P$, meaning it is an
ideal of $Q$. (It may not be a \emph{connected} ideal of $Q$, but it is an
$Q$-partition, which is all that is needed.)

Certainly $\phi$ is an injection. To complete the proof, one must now show that
$\phi$ is surjective. A prototypical monomial of $\Rwt{Q}[(x_1x_2\cdots
x_n)^{-1}]$ is 
\[
\dfrac{\bm{x}^f}{(x_1\cdots x_n)^k},
\]
with $k$ minimal (i.e.\ having cancelled as many powers of $x_1\cdots x_n$ from
the numerator as possible). Since $f$ is an $Q$-partition, it is also a
$P$-partition. Then $\bm{x}^f(x_1\cdots x_n)^{-k} \in \Rpwt$ and, since $\phi$
is a ring homomorphism, 
\[
\phi(\bm{x}^f(x_1\cdots x-n)^{-k}) = \dfrac{\bm{x}^f}{(x_1\cdots x_n)^k},
\]
meaning $\phi$ is surjective, completing the proof.
\end{proof}

\subsection{Characterizing the initial complete
intersections}\label{subsec:characterisinginitci}

\begin{prop}\label{prop:initciiffuniquenontriv}
    Suppose $P$ is a signed poset. One has that
    \begin{equation}\label{eq:cipresentation}
        0 \to (U_JU_K) \stackrel{\phi}{\to} \Swt{P} \to \Swt{P}/\initI \to 0,
    \end{equation}
    where $\{J,K\}$ runs over the set $\Pi(P)$ of all pairs of nontrivially
    intersecting connected ideals,
    is a complete intersection presentation if and only if no connected order
    ideal of $P$ intersects two or more other connected ideals
    nontrivially.
\end{prop}

Proposition~\ref{prop:initciiffuniquenontriv} is a special case of an easy fact about Stanley-Reisner rings.

\begin{defn}\label{def:stanleyreisner}
Let $V$ be a finite set and suppose $\Delta$ is a \emph{simplicial complex} on
$V$, i.e.\ a collection of subsets of $V$ such that if $F \in \Delta$ and $F'
\subset F$, then $F' \in \Delta$. Let $S$ be a polynomial ring whose variables
are indexed by elements of $V$. The \emph{Stanley-Reisner ideal} of $\Delta$ is the ideal
\[
I_\Delta = \left( \prod_{v \in F} x_v \colon F \notin \Delta\right).
\]
The quotient ring $S/I$ is then called a \emph{Stanley-Reisner ring}.
\end{defn}

\begin{prop}
Suppose $I$ is a Stanley-Reisner ideal. The Stanley-Reisner ring $S/I$ is a
complete intersection if and only if
\[
I = (x_{11}\cdots x_{1n_1},\ldots,x_{m1}\cdots x_{mn_m}),
\]
i.e.\ $I$ is generated by a collection of square-free monomials with pairwise disjoint
support.
\end{prop}

See \citet[Theorem 4.1.1]{Duval1991} for proof of a more general statement.

Proposition~\ref{prop:initciiffuniquenontriv} follows from the observation that
$\Iwt{P}$ is the Stanley-Reisner ideal of the simplicial complex on $\Jconn(P)$
defined by $F \subset \Jconn(P)$ if all ideals in $F$ pairwise intersect
trivially.

This characterization of posets such that $S/\initI$ is a complete intersection
allows one to characterize such posets as those avoiding a list of induced
subposets, similar to that of~\cite[Theorem 10.5]{FerayReiner2012}.
\begin{defn}\label{def:initci}
Suppose $P$ is a signed poset such that
\[
0 \to (U_JU_K) \stackrel{\phi}{\to} \Swt{P} \to \Swt{P}/\initI \to 0
\]
is a complete intersection presentation. Then $P$ is said to be an
\emph{initial complete intersection}.
\end{defn}

\begin{thm}\label{thm:excludedposets}
Suppose $P$ is a signed poset. $P$ is an initial complete intersection
if and only if $P$ does not contain a signed poset isomorphic to any of those shown in Figure~\ref{fig:excludedposets} as an induced subposet.
\end{thm}

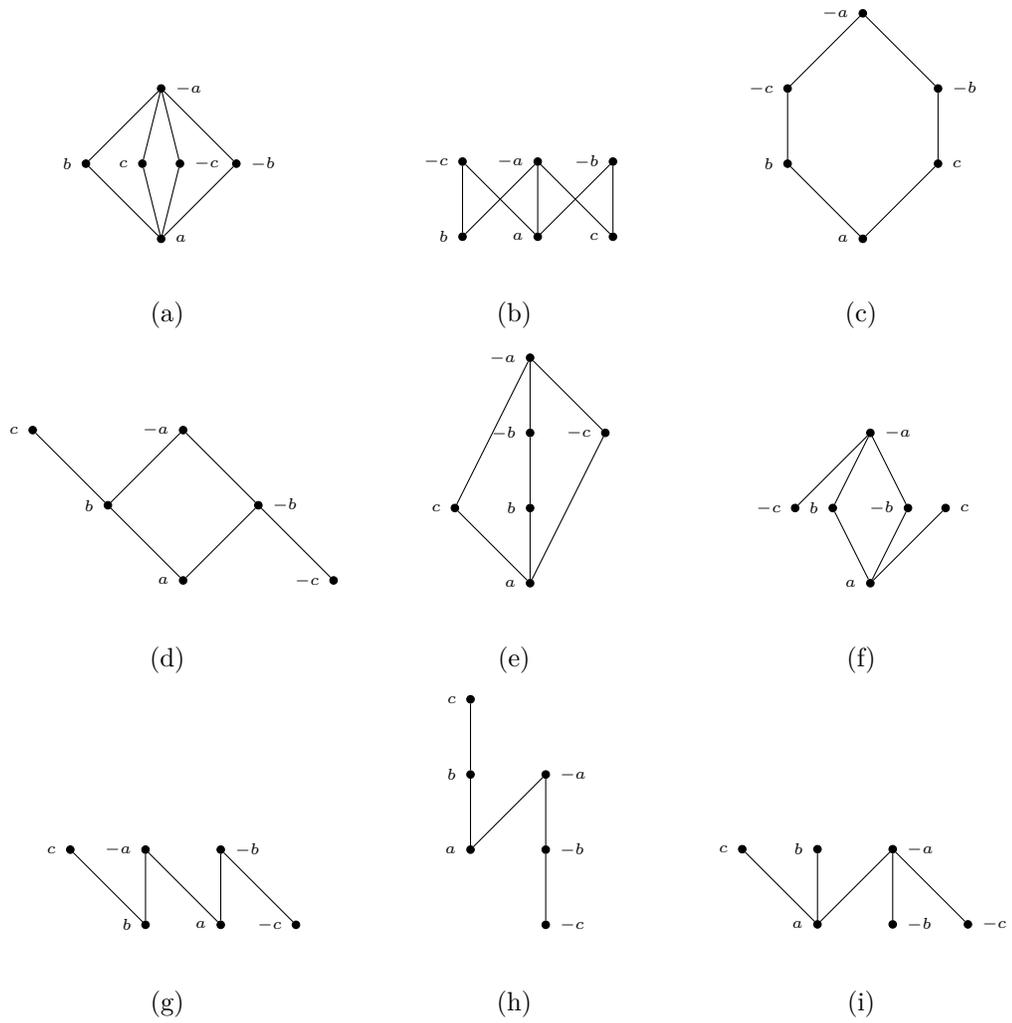
\begin{figure}
\begin{center}
    \begin{tabular}{ccc}
        \begin{subfigure}[b]{.25\textwidth}
            \begin{center}
    \begin{tikzpicture}
    \node[vertex] (1) at (0,0) [label=right:$a$] {}; 
    \node[vertex] (2) at (-1,1) [label=left:$b$] {};
    \node[vertex] (3) at (-.25,1) [label=left:$c$] {};
    \node[vertex] (-3) at (.25,1) [label=right:$-c$] {};
    \node[vertex] (-2) at (1,1) [label=right:$-b$]{};
    \node[vertex] (-1) at (0,2) [label=right:$-a$] {};
    \draw (1)--(2)--(-1);
    \draw (1)--(3)--(-1);
    \draw (1)--(-2)--(-1);
    \draw (1)--(-3)--(-1);
    \end{tikzpicture}
\end{center}
    \caption{}
\end{subfigure}&
\begin{subfigure}[b]{.25\textwidth}
    \begin{center}
    \begin{tikzpicture}
        \node[vertex] (2) at (0,0) [label=left:$b$] {};
        \node[vertex] (1) at (1,0) [label=left:$a$] {};
        \node[vertex] (3) at (2,0) [label=left:$c$] {};
        \node[vertex] (-3) at (0,1) [label=left:$-c$] {};
        \node[vertex] (-1) at (1,1) [label=left:$-a$] {};
        \node[vertex] (-2) at (2,1) [label=left:$-b$] {};
        \draw (2)--(-3);
        \draw (2)--(-1);
        \draw (1)--(-3);
        \draw (1)--(-1);
        \draw (1)--(-2);
        \draw (3)--(-1);
        \draw (3)--(-2);
    \end{tikzpicture}
\end{center}
    \caption{}
\end{subfigure} &
\begin{subfigure}[b]{.25\textwidth}
    \begin{center}
    \begin{tikzpicture}
        \node[vertex] (1) at (0,0) [label=left:$a$] {};
        \node[vertex] (2) at (-1,1) [label=left:$b$] {};
        \node[vertex] (3) at (1,1) [label=right:$c$] {};
        \node[vertex] (-3) at (-1,2) [label=left:$-c$] {};
        \node[vertex] (-2) at (1,2) [label=right:$-b$] {};
        \node[vertex] (-1) at (0,3) [label=left:$-a$] {};
        \draw (1)--(2)--(-3)--(-1);
        \draw (1)--(3)--(-2)--(-1);
    \end{tikzpicture}
\end{center}
\caption{}
\end{subfigure} 
\\
\begin{subfigure}[b]{.3\textwidth}
    \begin{center}
    \begin{tikzpicture}
        \node[vertex] (1) at (0,0) [label=left:$a$] {};
        \node[vertex] (2) at (-1,1) [label=left:$b$] {};
        \node[vertex] (-2) at (1,1) [label=right:$-b$] {};
        \node[vertex] (3) at (-2,2) [label=left:$c$] {};
        \node[vertex] (-1) at (0,2) [label=left:$-a$] {};
        \node[vertex] (-3) at (2,0) [label=left:$-c$] {};
        \draw (1)--(2)--(3);
        \draw (2)--(-1);
        \draw (1)--(-2)--(-1);
        \draw (-3)--(-2);
    \end{tikzpicture}
\end{center}
\caption{}
\end{subfigure} &
\begin{subfigure}[b]{.25\textwidth}
    \begin{center}
    \begin{tikzpicture}
        \node[vertex] (1) at (0,0) [label=left:$a$] {};
        \node[vertex] (3) at (-1,1) [label=left:$c$] {};
        \node[vertex] (2) at (0,1) [label=left:$b$] {};
        \node[vertex] (-3) at (1,2) [label=left:$-c$] {};
        \node[vertex] (-2) at (0,2) [label=left:$-b$] {};
        \node[vertex] (-1) at (0,3) [label=left:$-a$] {};
        \draw (1)--(3)--(-1);
        \draw (1)--(2)--(-2)--(-1);
        \draw (1)--(-3)--(-1);
    \end{tikzpicture}
\end{center}
\caption{}
\end{subfigure}&
\begin{subfigure}[b]{.25\textwidth}
		\begin{center}
    \begin{tikzpicture}
        \node[vertex] (1) at (0,0) [label=left:$a$] {};
        \node[vertex] (-3) at (-1,1) [label=left:$-c$] {};
        \node[vertex] (2) at (-.5,1) [label=left:$b$] {};
        \node[vertex] (-2) at (.5,1) [label=left:$-b$] {};
        \node[vertex] (3) at (1,1) [label=right:$c$] {};
        \node[vertex] (-1) at (0,2) [label=right:$-a$] {};
        \draw (1)--(2)--(-1);
        \draw (1)--(-2)--(-1);
        \draw (-3)--(-1);
        \draw (1)--(3); 
    \end{tikzpicture}
	\end{center}
	\caption{}
	\end{subfigure} \\
	\begin{subfigure}[b]{.25\textwidth}
			\begin{center}
    \begin{tikzpicture}
        \node[vertex] (2) at (0,0) [label=left:$b$] {};
        \node[vertex] (1) at (1,0) [label=left:$a$] {};
        \node[vertex] (-3) at (2,0) [label=left:$-c$] {};
        \node[vertex] (3) at (-1,1) [label=left:$c$] {};
        \node[vertex] (-1) at (0,1) [label=left:$-a$] {};
        \node[vertex] (-2) at (1,1) [label=right:$-b$] {};
        \draw (2)--(3);
        \draw (2)--(-1);
        \draw (1)--(-1);
        \draw (1)--(-2);
        \draw (-3)--(-2);
    \end{tikzpicture} 
\end{center}
\caption{}
\end{subfigure}&
\begin{subfigure}[b]{.25\textwidth}
		\begin{center}
    \begin{tikzpicture}
        \node[vertex] (1) at (0,0) [label=left:$a$] {};
        \node[vertex] (2) at (0,1) [label=left:$b$] {};
        \node[vertex] (3) at (0,2) [label=left:$c$] {};
        \node[vertex] (-1) at (1,1) [label=right:$-a$] {};
        \node[vertex] (-2) at (1,0) [label=right:$-b$] {};
        \node[vertex] (-3) at (1,-1) [label=right:$-c$] {};
        \draw (1)--(2)--(3);
        \draw (1)--(-1);
        \draw (-3)--(-2)--(-1);
    \end{tikzpicture}
\end{center}
\caption{}
\end{subfigure}&
\begin{subfigure}[b]{.3\textwidth}
		\begin{center}
    \begin{tikzpicture}
        \node[vertex] (1) at (0,0) [label=left:$a$] {};
        \node[vertex] (3) at (-1,1) [label=left:$c$] {};
        \node[vertex] (2) at (0,1) [label=left:$b$] {};
        \node[vertex] (-1) at (1,1) [label=right:$-a$] {};
        \node[vertex] (-2) at (1,0) [label=right:$-b$] {};
        \node[vertex] (-3) at (2,0) [label=right:$-c$] {};
        \draw(1)--(2);
        \draw (1)--(3);
        \draw(1)--(-1);
        \draw (-2)--(-1);
        \draw (-3)--(-1);
    \end{tikzpicture}
\end{center}
\caption{}
\end{subfigure}
\end{tabular}
\end{center}
\caption{The excluded posets, part one}
\label{fig:excludedposets}
\end{figure}
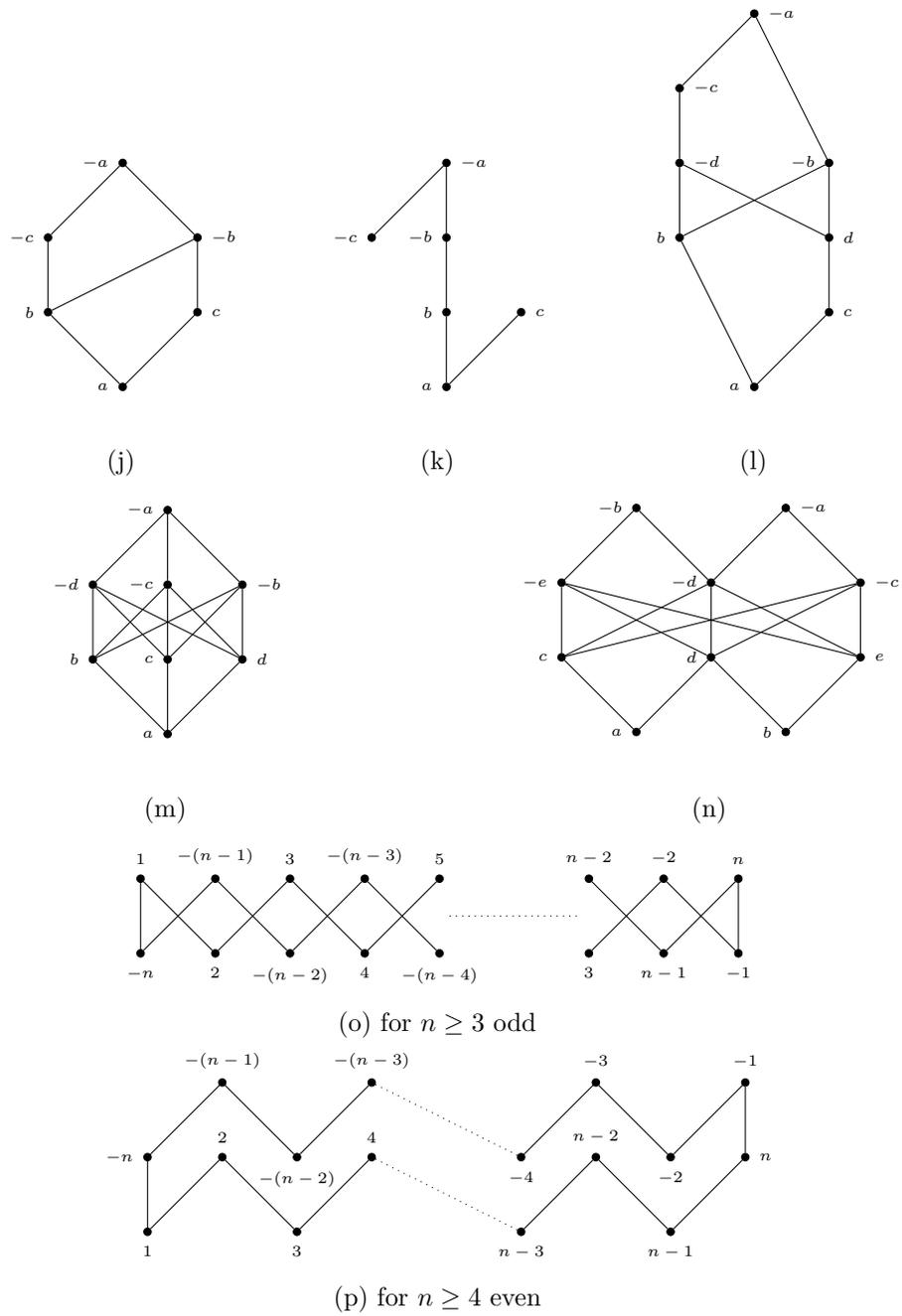
\begin{figure}
	\ContinuedFloat
\begin{center}
    \begin{tabular}{ccc}		
\begin{subfigure}[b]{.25\textwidth}
		\begin{center}
     \begin{tikzpicture}
        \node[vertex] (1) at (0,0) [label=left:$a$] {};
        \node[vertex] (2) at (-1,1) [label=left:$b$] {};
        \node[vertex] (3) at (1,1) [label=right:$c$] {};
        \node[vertex] (-3) at (-1,2) [label=left:$-c$] {};
        \node[vertex] (-2) at (1,2) [label=right:$-b$] {};
        \node[vertex] (-1) at (0,3) [label=left:$-a$] {};
        \draw (1)--(2)--(-3)--(-1);
        \draw (1)--(3)--(-2)--(-1);
        \draw (2)--(-2);
    \end{tikzpicture} 
\end{center}
\caption{}
\end{subfigure}&
\begin{subfigure}[b]{.25\textwidth}
		\begin{center}
    \begin{tikzpicture}
        \node[vertex] (1) at (0,0) [label=left:$a$] {};
        \node[vertex] (2) at (0,1) [label=left:$b$] {};
        \node[vertex] (-2) at (0,2) [label=left:$-b$] {};
        \node[vertex] (-1) at (0,3) [label=right:$-a$] {};
        \node[vertex] (-3) at (-1,2) [label=left:$-c$] {};
        \node[vertex] (3) at (1,1) [label=right:$c$] {};
        \draw (1)--(2)--(-2)--(-1);
        \draw (-3)--(-1);
        \draw (1)--(3);
    \end{tikzpicture}
\end{center}
\caption{}
\end{subfigure}&
\begin{subfigure}[b]{.25\textwidth}
		\begin{center}
    \begin{tikzpicture}
        \node[vertex] (1) at (0,0) [label=left:$a$] {};
        \node[vertex] (2) at (-1,2) [label=left:$b$] {};
        \node[vertex] (-2) at (1,3) [label=left:$-b$] {};
        \node[vertex] (3) at (1,1) [label=right:$c$] {};
        \node[vertex] (4) at (1,2) [label=right:$d$] {};
        \node[vertex] (-4) at (-1,3) [label=right:$-d$] {};
        \node[vertex] (-3) at (-1,4) [label=right:$-c$] {};
        \node[vertex] (-1) at (0,5) [label=right:$-a$] {};
        \draw(1)--(2)--(-2)--(-1);
        \draw (2)--(-4);
        \draw (4)--(-2);
        \draw (1)--(3)--(4)--(-4)--(-3)--(-1);
    \end{tikzpicture}
\end{center}
\caption{}
\end{subfigure}\\
\end{tabular}\\
\begin{tabular}{cc}
\begin{subfigure}[b]{.45\textwidth}
		\begin{center}
    \begin{tikzpicture}
        \node[vertex] (1) at (0,0) [label=left:$a$] {};
        \node[vertex] (2) at (-1,1) [label=left:$b$] {};
        \node[vertex] (3) at (0,1) [label=left:$c$] {};
        \node[vertex] (4) at (1,1) [label=right:$d$] {};
        \node[vertex] (-4) at (-1,2) [label=left:$-d$] {};
        \node[vertex] (-3) at (0,2) [label=left:$-c$] {};
        \node[vertex] (-2) at (1,2) [label=right:$-b$] {};
        \node[vertex] (-1) at (0,3) [label=left:$-a$] {};
        \draw(1)--(2)--(-4)--(-1);
        \draw(1)--(3)--(-3)--(-1);
        \draw(1)--(4)--(-2)--(-1);
        \draw(2)--(-3);
        \draw(2)--(-2);
        \draw(3)--(-4);
        \draw(3)--(-2);
        \draw(4)--(-4);
        \draw(4)--(-3);
    \end{tikzpicture} 
\end{center}
\caption{}
\end{subfigure}&
\begin{subfigure}[b]{.45\textwidth}
		\begin{center}
    \begin{tikzpicture}
        \node[vertex] (1) at (0,0) [label=left:$a$] {};
        \node[vertex] (2) at (2,0) [label=left:$b$] {};
        \node[vertex] (3) at (-1,1) [label=left:$c$] {};
        \node[vertex] (4) at (1,1) [label=left:$d$] {};
        \node[vertex] (5) at (3,1) [label=right:$e$] {};
        \node[vertex] (-5) at (-1,2) [label=left:$-e$] {};
        \node[vertex] (-4) at (1,2) [label=left:$-d$] {};
        \node[vertex] (-3) at (3,2) [label=right:$-c$] {};
        \node[vertex] (-2) at (0,3) [label=left:$-b$] {};
        \node[vertex] (-1) at (2,3) [label=right:$-a$] {};
        \draw (3)--(1)--(4)--(2)--(5)--(-3)--(-1)--(-4)--(-2)--(-5);
        \draw (3)--(-5);
        \draw (3)--(-4);
        \draw (3)--(-3);
        \draw (4)--(-5);
        \draw (4)--(-4);
        \draw (4)--(-3);
        \draw (5)--(-5);
        \draw (5)--(-4);
    \end{tikzpicture}
	\end{center}
	\caption{}\end{subfigure}\\
\end{tabular}
\begin{subfigure}[b]{\textwidth}
    \begin{center}
    \begin{tikzpicture}
        \tikzstyle{every node}=[font=\tiny]
        \node[vertex] (-n) at (0,0) [label=below:$-n$] {};
        \node[vertex] (2) at (1,0) [label=below:$2$] {};
        \node[vertex] (-n2) at (2,0) [label=below:$-(n-2)$] {};
        \node[vertex] (4) at (3,0) [label=below:$4$] {};
        \node[vertex] (-n4) at (4,0) [label=below:$-(n-4)$] {};
        \node[vertex] (1) at (0,1) [label=above:$1$] {};
        \node[vertex] (-n1)at (1,1) [label=above:$-(n-1)$] {};
        \node[vertex] (3) at (2,1) [label=above:$3$] {};
        \node[vertex] (-n3) at (3,1) [label=above:$-(n-3)$] {};
        \node[vertex] (5) at (4,1) [label=above:$5$] {};
        \draw (1)--(-n)--(-n1)--(-n2)--(-n3)--(-n4);
        \draw (1)--(2)--(3)--(4)--(5);

        \node[vertex] (-3) at (6,0) [label=below:$3$] {};
        \node[vertex] (n1) at (7,0) [label=below:$n-1$] {};
        \node[vertex] (-1) at (8,0) [label=below:$-1$] {};
        \node[vertex] (n2) at (6,1) [label=above:$n-2$] {};
        \node[vertex] (-2) at (7,1) [label=above:$-2$] {};
        \node[vertex] (n) at (8,1) [label=above:$n$] {};
        \draw (n2)--(n1)--(n)--(-1)--(-2)--(-3);
        \path (-n4)--(5) node[midway] (A) {};
        \path (-3)--(n2) node[midway] (B) {};
        \draw[dotted] (A)--(B);
    \end{tikzpicture}
    \caption{for $n\geq 3$ odd}
	\label{subfig:oddfamily}
\end{center}
\end{subfigure} \\
\begin{subfigure}[b]{\textwidth}
  \begin{center}
    \begin{tikzpicture}
        \tikzstyle{every node}=[font=\tiny]
        \node[vertex] (1) at (0,0) [label=below:$1$] {};
        \node[vertex] (2) at (1,1) [label=above:$2$] {};
        \node[vertex] (3) at (2,0) [label=below:$3$] {};
        \node[vertex] (4) at (3,1) [label=above:$4$] {};
        \node[vertex] (n1) at (7,0)[label=below:$n-1$] {};
        \node[vertex] (n) at (8,1) [label=right:$n$] {};
        \node[vertex] (-1) at (8,2) [label=above:$-1$] {};
        \node[vertex] (-2) at (7,1) [label=below:$-2$] {};
        \node[vertex] (-3) at (6,2) [label=above:$-3$] {};
        \node[vertex] (-4) at (5,1) [label=below:$-4$] {};
        \node[vertex] (-n2) at (2,1) [label=below:$-(n-2)$] {};
        \node[vertex] (-n1) at (1,2) [label=above:$-(n-1)$] {};
        \node[vertex] (-n) at (0,1) [label=left:$-n$] {};
        \node[vertex] (-n3) at (3,2) [label=above:$-(n-3)$] {};
        \node[vertex] (n2) at (6,1) [label=above:$n-2$] {};
        \node[vertex] (n3) at (5,0) [label=below:$n-3$] {};

        \draw (1)--(-n)--(-n1)--(-n2)--(-n2)--(-n3);
        \draw (1)--(2)--(3)--(4);
        \draw (n3)--(n2)--(n1)--(n)--(-1)--(-2)--(-3)--(-4);
        \draw[dotted] (-n3)--(-4);
        \draw[dotted] (4)--(n3);

    \end{tikzpicture}
\caption{for $n \geq 4$ even}
\label{subfig:evenfamily}
\end{center}
\end{subfigure}
\end{center}
\caption{The excluded posets, part two}
\end{figure}

Key to the proof will be the following lemma.

\begin{lem}\label{lem:inducedbad}
    Suppose $P'$ is a signed poset that contains an induced subposet $P$, such
    that $P$ has a connected order ideal $J$ which intersects at least two
    other connected ideals of $P$ nontrivially. Then $P'$ also has such an ideal.
\end{lem}

\begin{proof}
    It suffices to examine the case where $P$ is a signed poset on $n$ and $P'$ is a
    signed poset on $n+1$. Recall that an ideal is determined by the antichain of its
    maximal elements. Consequently, every connected ideal $J$ of $P$ corresponds
    to an ideal $J'$ in $\Ghat(P')$ having the same determining antichain. 

    \noindent\textbf{Claim:} $J'$ is an isotropic order ideal of $\Ghat(P')$
    and thus an ideal of $P'$.

    Suppose not. There are two cases.
    \begin{itemize}
        \item First suppose $j_1$ and $j_2$ are maximal elements of $J$ (and
            thus of $J'$) with $(n+1) < j_1$ and $-(n+1)< j_2$. Then one must
            have $-j_2 < j_1$ and $-j_1 < j_2$. Since $P$ is an induced
            subposet of $P'$, the same relation holds in $P$, which means $J$
            was not isotropic, a contradiction.
        \item Suppose $j \in J'$ such that $(n+1) < j$ and $-(n+1) <
            j$. Then $-j < j$, which contradicts that $J$ was isotropic.
    \end{itemize}
    Thus, $J'$ must be an ideal of $P'$.

    If $J$ and $K$ intersect nontrivially in $P$ and $J'$ and $K'$ are the
    corresponding ideals of $P'$. Certainly $J' \cap K' \ne \emptyset$. Suppose
    $J' \subset K'$, i.e.\ $J'$ and $K'$ do not intersect nontrivially. Since
    $P$ is an induced subposet of $P'$, that would force $J \subset K$,
    contradicting that $J$ and $K$ intersect nontrivially.

    Consequently, if $P$ has a connected ideal $J$ that intersects connected
    ideals $K_1$ and $K_2$ nontrivially, $J'$ intersects $K_1'$ and $K_2'$
    nontrivially, completing the proof.
\end{proof}

While the above lemma makes the sufficiency of
Theorem~\ref{thm:excludedposets} straightforward, the necessity argument is
made easier by two further lemmas.

\begin{lem}\label{lem:notypeanotinitci}
    Suppose $n \geq 3$ and $P \subset \Bn$ is a signed poset such that no $i <
    -i$. Then $P$ is not an initial complete intersection.
\end{lem}

\begin{proof}
 One may assume $P$ is connected. Begin by observing that a member of the
 infinite families in Figures~\ref{subfig:oddfamily}
 and~\ref{subfig:evenfamily} cannot be an initial complete intersection as a
 consequence of Proposition~\ref{prop:initciiffuniquenontriv}. Since $P$ has no $i < -i$, for $i \in
 \pm[x]$, there must be an
$i \in [n]$ such that there is an isotropic path from $i$ to $-i$. Since $i$
and $-i$ are not comparable, this path must have at least two intermediate
vertices and have a maximum and minimum other than $i$ and $-i$. Then the image
of this path under the involution will form a cycle $C$ which is an induced
subposet of $P$. Taking the smallest induced subposet of $C$ which remains a
cycle gives a member of one of the infinite families in
Figures~\ref{subfig:oddfamily} and~\ref{subfig:evenfamily}, so by Lemma~\ref{lem:inducedbad},
$P$ is not an initial complete intersection.
\end{proof}

\feray and Reiner characterized posets in type A such that $\Rpwt$ is a
complete intersection as being those posets avoiding the intersection of the
three type A posets Figures~\ref{fig:excludedposets}(l),(m),(n) with $\An$.
While $\Swt{P}/\initI$ being a complete
intersection implies $\Swt{P}/\Iwt{P}$ is a complete intersection
(see~\cite[Exercise 3.3]{HerzogHibi2011}), the converse is not, in
general, true. For example, consider the signed poset in
Figure~\ref{fig:cinotinitci}. The semigroup ring $\Rpwt$ has a complete
intersection presentation:
\begin{multline*}
0 \to (U_{\m4}U_{14}-U_1,U_{\m3}U_{134}-U_{14},U_{\m4}U_{134}-U_{13}) \to \\
k[U_1,U_2,U_{\m3},U_{\m4},U_{13},U_{14},U_{134}] \to \Rpwt \to 0.
\end{multline*}
However, $\initI = (U_{\m4}U_{14},U_{\m3}U_{134},U_{\m4}U_{134})$, so
$\Swt{P}/\initI$ is not a complete intersection.

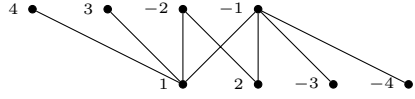
\begin{figure}[htbp]
    \begin{center}
        \begin{tikzpicture}
            \node[vertex] (1) at (0,0) [label=left:$1$] {};
            \node[vertex] (2) at (1,0) [label=left:$2$] {};
            \node[vertex] (-3) at (2,0) [label=left:$-3$] {};
            \node[vertex] (-4) at (3,0) [label=left:$-4$] {};
            \node[vertex] (4) at (-2,1) [label=left:$4$] {};
            \node[vertex] (3) at (-1,1) [label=left:$3$] {};
            \node[vertex] (-2) at (0,1) [label=left:$-2$] {};
            \node[vertex] (-1) at (1,1) [label=left:$-1$] {};
            \draw (1)--(4);
            \draw (1)--(3);
            \draw (1)--(-2)--(2)--(-1)--(1);
            \draw (-3)--(-1);
            \draw (-4)--(-1);
        \end{tikzpicture}
    \end{center}
    \caption{A signed poset $P$ where $\Rpwt$ is a complete intersection, but
        $P$ is not an initial complete intersection.}
    \label{fig:cinotinitci}
\end{figure}

The following lemma is an immediate consequence of Proposition 10.2 and Theorem
10.5 of~\cite{FerayReiner2012}.

\begin{lem}\label{lem:typeainitci}
  Suppose $P \subset \Bn$ is a signed poset such that $i < -i$ for all $i$.
Then $P$ is an initial complete intersection if and only if $P$ does not contain
any of the signed posets in Figures~\ref{fig:excludedposets}(l),(m),(n) as an induced
subposet.
\end{lem}

One is now ready to prove Theorem~\ref{thm:excludedposets}.

\begin{proof}[Proof of Theorem~\ref{thm:excludedposets}]
The necessity follows from checking that all the posets in the list have at
least one connected isotropic order ideal that intersects at least two other
connected ideals nontrivially and applying Lemma~\ref{lem:inducedbad}.

For the sufficiency, Lemmas~\ref{lem:notypeanotinitci} and~\ref{lem:typeainitci}
allow one to immediately reduce to the case where there is $i \in \pm[n]$ such that
$i < -i$ in $P$ and there is some $j$ such that $j$ and $-j$ are incomparable.
There are a number of cases.
\begin{enumerate}[label=(\alph{*})]
    \item \textbf{Suppose $i \in \Ghat(P)$ is such that $i$ and $-i$ are incomparable
and $i$ is not comparable to any $a < -a$.} Since $\Ghat(P)$ is connected, there
must be some path from $i$ to an $a$ such that $a < -a$ such that no
intermediate vertex is less than its negative. Call the vertices on this path
$i, v_1,v_2,\ldots,v_k,a$. Let $m$ be minimal such that $v_m > a$. Then one
must have $v_{m-1} < v_m$. Taking the induced subposet on $v_{m-1},v_m,a$
gives a signed poset isomorphic to Figure~\ref{fig:excludedposets}(g).

\item \textbf{Suppose every $i \in \Ghat(P)$ such that $i$ and $-i$ are incomparable is
comparable to some $a$ such that $a < -a$.}
\begin{enumerate}[label=(\alph{enumi}.\roman*)]
\item \textbf{Suppose $\Ghat(P)$ has a connected isotropic order ideal $J$ containing
two elements $i,j$ which are not comparable to their negatives and there is no
$a$ such that $a < -a$ and $i,j > a$.} Since $J$ is connected, there must be a
path from $i$ to $j$ contained entirely in $J$. If $j < i$, then the induced
subposet on $i,j,a$ gives a signed poset isomorphic to
Figure~\ref{fig:excludedposets}(g).

If $i$ and $j$ are not comparable, one may assume without loss of generality
that the path between them in $J$ passes through only elements which are less
than their negatives. Since $i$ and $j$ are not both comparable to any $a$ with
$a < -a$, the path between them must have at least three intermediate vertices,
$a_1,a_2,a_3$, as shown in Figure~\ref{fig:casebi}.
\begin{figure}[htbp]
\begin{center}
    \begin{tikzpicture}
        \node[vertex] (i) at (0,0) [label=left:$i$] {};
		\node[vertex] (a1) at (1,-1) [label=left:$a_1$] {};
		\node[vertex] (a2) at (2,0) [label=left:$a_2$] {};
		\node[vertex] (a3) at (3,1) [label=right:$a_3$] {};
		\node[vertex] (j) at (5,0) [label=right:$j$] {};
		\draw (i) -- (a1);
		\draw[dotted] (a1)--(a2)--(a3);
		\draw[decorate,decoration={snake},dotted] (a3)--(j);
    \end{tikzpicture}
    \caption{The path described in the second half of case (b)(i) in the proof
        of Theorem~\ref{thm:excludedposets}}
    \label{fig:casebi}
\end{center}
\end{figure}
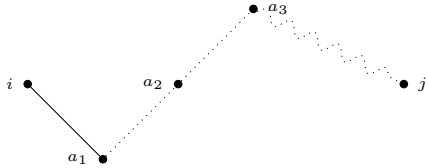
Since $a_1 < -a_1$ and $i$ and $-i$ are incomparable, one knows the first
vertex along the path must be below $i$. Then taking the induced subposet on
$i,a_2,a_3$ gives a signed poset isomorphic to that in
Figure~\ref{fig:excludedposets}(k).

\item \textbf{Assume instead that $J$ contains $i,j$ not comparable to their negatives
and that there is an $a$ with $a < -a$ with $i,j > a$.} Taking the induced
subposet on $i,j,a$ gives one of seven possibilities, which are all forbidden
posets, namely Figures~\ref{fig:excludedposets}(a),(b),(c),(d),(f),(h),(i).

\item \textbf{Suppose $J$ is a connected ideal containing precisely one element not
less than its negative, call it $i$.} One knows that $I_\leq(-i)$ is isotropic,
so $J$ and $I_\leq(-i)$  intersect nontrivially.

\noindent\textbf{Claim:} $i$ is a maximal element of $J$.

Suppose not and $j \in J$ with $j > i$. Since $J$ contains only one element not
less than its negative, i.e.\ $i$, $j < -j$, but the symmetry in $\Ghat(P)$ then
forces $i < -i$, a contradiction.

Having proved the claim, there are two final cases to check.
\begin{enumerate}[label=(\alph{enumi}.\roman{enumii}.\arabic*)]
\item \textbf{Suppose $J \cap K = \emptyset$.} Then one must have $-i \in K$. Since $K
\ne I_\leq(-i)$, there must be $a,b \in K$ with $b,-i > a$ and $a < -a, b<
-b$. However, the induced subposet on $a,b,-i$ is isomorphic to the signed
poset of Figure~\ref{fig:excludedposets}(k).
\item \textbf{Suppose $J \cap K \ne \emptyset$.} Therefore, there must be $j, \ell$ in
$J+K$ with $j < i, \ell$. Since $j, \ell \in J+K$, one must have that $j < -j$
and $\ell < -\ell$. One then has that the induced subposet on $j,k,\ell$ is
isomorphic to one of the signed posets shown in
Figures~\ref{fig:excludedposets}(j) and (k).
\end{enumerate}
\end{enumerate}
\end{enumerate}
\end{proof}

\subsection{Constructing the complete intersection
posets}\label{subsec:constructinginitci}

In~\cite{FerayReiner2012}, \feray and Reiner also characterized the posets for
which $\Rpwt$ is a complete intersection as being the \emph{forests with
duplication}, those posets which could be constructed via operations they
called \emph{disjoint union},
\emph{duplication of a hanger} and \emph{hanging}. An analogous construction exists for
signed posets which are initial complete intersections. Due to the reductions
in Section~\ref{subsec:wtreducing}, one can dispense with the disjoint union
operation in the construction.

Recall that if $P$ is a signed poset, $I_<(a) = \{b \in P \colon b < a\}$ and
$I_{\leq}(a) = \{b \in P \colon b \leq a\}$, with the comparisons being made in
$\Ghat(P)$. Let $P_{<a}$ denote the subposet of
$P$ induced by $\{i \colon \pm i \in I_<(a)\}$ and let $P_{\leq a}$ denote the
subposet of $P$ induced by $\{i \colon \pm i \in I_{\leq}(a)\}$.

\begin{defn}\label{def:hanger}
Suppose $P$ is a signed poset and $i \in \Ghat(P)$ is such that $i < -i$. Then
$i$ is said to be a \emph{hanger} if, for each $a \in I_{<}(i)$, the induced
subset on those vertices $\pm j$ such that $j < i$ (which may be
empty) and for each $b \in G(P) \smallsetminus G(P_{< i})$, any path from $a$ to
$b$ must pass through $i$.
\end{defn}

\begin{defn}\label{def:hanging}
A signed poset $P$ is said to have been \emph{obtained by hanging $P_1$ below $a$ in
$P_2$} if
\begin{itemize}
    \item $a$ is a hanger in $P$,
\item $P_1 = P_{<}(a)$, and
\item $P_2$ is the induced subposet of $P$ on $i$ such that $\pm i \not <a$ in
$\Ghat(P)$.
\end{itemize}
\end{defn}
See Figure~\ref{fig:hangingex} for an example of hanging.
\begin{figure}[htbp]
    \begin{center}
        \begin{tikzpicture}
            \begin{scope}
                \node[vertex] (1) at (0,0) [label=right:$1$] {};
                \node[vertex] (2) at (1,0) [label=right:$2$] {};
                \node[vertex] (-3) at (2,0) [label=right:$-3$] {};
                \node[vertex] (3) at (-1,1) [label=left:$3$] {};
                \node[vertex] (-2) at (0,1) [label=left:$-2$] {};
                \node[vertex] (-1) at (1,1) [label=right:$-1$] {};
                \draw (3)--(1)--(-1)--(-3);
                \draw (1)--(-2)--(2)--(-1);
            \end{scope}
            \begin{scope}[xshift=6cm] 
                \node[vertex] (1) at (0,0) [label=right:$1$] {};
                \node[vertex] (2) at (1,0) [label=right:$2$] {};
                \node[vertex] (-3) at (2,0) [label=right:$-3$] {};
                \node[vertex] (3) at (-1,1) [label=left:$3$] {};
                \node[vertex] (-2) at (0,1) [label=left:$-2$] {};
                \node[vertex] (-1) at (1,1) [label=right:$-1$] {};
                \node[vertex] (-4) at (0,2) [label=left:$-4$] {};
                \node[vertex] (4) at (1,-1) [label=left:$4$] {};
                \draw (3)--(1)--(-1)--(-3);
                \draw (1)--(-2)--(2)--(-1);
                \draw (-2)--(-4);
                \draw (4)--(2);
            \end{scope}
            \draw[->] (2.5,.5)-- node[midway,above] {hanging}(4.5,.5);
        \end{tikzpicture}
    \end{center}
    \caption{Obtaining a new poset by hanging a single element 4 below 2}
    \label{fig:hangingex}
\end{figure}
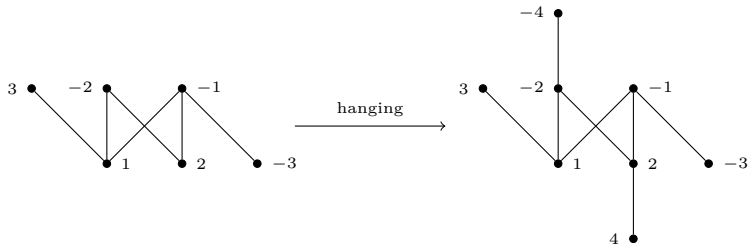
When $P$ is a type A signed poset, this definition of hanging coincides with the
hanging of~\cite{FerayReiner2012} in the sense that if $P'$ is obtained from
$P$ by hanging, $P' \cap \An$ gives the poset that is the result of the same
hanging in $P \cap \An$.

\begin{lem}\label{lem:hangingpreservesci}
Suppose $P$ is obtained by hanging $P_1$ below $a$ in $P_2$ and both $P_1$ and
$P_2$ are initial complete intersections. Then $P$ is also an initial complete
intersection.
\end{lem}

\begin{proof}
    Suppose $P$ is obtained by hanging $P_1$ below $a$ in $P_2$ and both $P_1$
and
$P_2$ are initial complete intersections. The connected
(isotropic) order ideals of $P$ are of three types:
\begin{enumerate}[label=(\Roman{*})]
    \item $J$ for $J \in \Jconn(P_1)$
\item $J\cup P_1$ for $J \in \Jconn(P_2)$ with $a \in J$.
\item $J$ for $J \in \Jconn(P_2)$ with $a \notin J$.
\end{enumerate}
Ideals of type I can only intersect other ideals of type I nontrivially and
can therefore only intersect at most one other ideal nontrivially. A
nontrivially intersecting pair involving ideals of type II or III
corresponds to a nontrivially intersecting pair in $P_2$, so these ideals
can also only be involved in at most nontrivially intersecting pair.
Consequently, one has that $P$ is an initial complete intersection.
\end{proof}

The next move also closely parallels~\cite{FerayReiner2012}. 
\begin{defn}\label{def:duplication}
The signed poset $P'$ is said to have been obtained from $P$ by \emph{duplicating the hanger} $a$
if $\Ghat(P') = \Ghat(P) \cup \{a',-a'\}$ with $i < a'$ or $a' < i$ whenever $i <
a$ or $a < i$ in $\Ghat(P)$.
\end{defn}

Figure~\ref{fig:duplicationex} gives an example of duplicating a hanger.

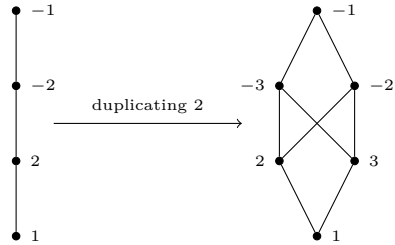
\begin{figure}[htbp]
    \begin{center}
        \begin{tikzpicture}
            \begin{scope}
                \node[vertex] (1) at (0,0) [label=right:$1$] {};
                \node[vertex] (2) at (0,1) [label=right:$2$] {};
                \node[vertex] (-2) at (0,2) [label=right:$-2$] {};
                \node[vertex] (-1) at (0,3) [label=right:$-1$] {};
                \draw (1)--(2)--(-2)--(-1);
            \end{scope}
            \begin{scope}[xshift=4cm]
                \node[vertex] (1) at (0,0) [label=right:$1$] {};
                \node[vertex] (2) at (-.5,1) [label=left:$2$] {};
                \node[vertex] (3) at (.5,1) [label=right:$3$] {};
                \node[vertex] (-2) at (.5,2) [label=right:$-2$] {};
                \node[vertex] (-3) at (-.5,2) [label=left:$-3$] {};
                \node[vertex] (-1) at (0,3) [label=right:$-1$] {};
                \draw (1)--(2)--(-2)--(-1);
                \draw (2)--(-3)--(-1);
                \draw (1)--(3)--(-2);
                \draw (3)--(-3);
            \end{scope}
            \draw[->] (.5,1.5)--node [midway, above] {duplicating $2$}(3,1.5);
        \end{tikzpicture}
        \caption{Obtaining a new poset by duplicating the hanger 2}
        \label{fig:duplicationex}
    \end{center}
\end{figure}

\begin{lem}\label{lem:duplicationpreserveresci}
Suppose $P'$ is obtained from $P$ by duplicating a hanger and $P$ is an initial
complete intersection. Then $P'$ is also an initial complete intersection.
\end{lem}

\begin{proof}
    Begin by noting that if $a$ is a hanger in $P$, it must be that $I_{\leq
a}$ intersects no other ideal nontrivially. Then the connected isotropic order
ideals of $P'$ are
\begin{itemize}
    \item $J$ for $J \in \Jconn(P)$ with $a \notin J$
\item $J\cup\{a'\}$ if $J \in \Jconn(P)$ with $a \in J$
\item $I_\leq(a')$.
\end{itemize}
The only nontrivially intersecting pair in $P'$ that does not
correspond to a nontrivially intersecting pair in $P$ (i.e.\ the only pair
created by the duplication) is $\{I_{\leq}(a),I_\leq(a')\}$, meaning $P'$ must
also have $\Swt{P'}/\init I_{P'}^\mathrm{wt}$ a complete intersection.
\end{proof}

While hanging and duplication are translations of the type A definitions, an
additional move is required to construct all the posets that are initial
complete intersections. 

\begin{defn}\label{def:typebhanging}
	  The signed poset $P'$ is said to have been obtained from $P$ by a
\emph{type B hanging} of $n+1$ from $i$ if $P' = \overline{P\cup \{e_i +\epsilon
    e_{n+1}\}}^{PLC}$,
where $i < -i$ and $\epsilon \in \{\pm 1\}$, and $I_{\leq}(i)$ is a maximal ideal with respect to inclusion.
\end{defn}

Figure~\ref{fig:typebhangingex} gives an example of type B hanging. The
important thing to notice is that the requirement $I_{\leq}(i)$ be a maximal
ideal severely limits the new ideals in $P'$ compared to $P$.

\begin{figure}[htbp]
    \begin{center}
        \begin{tikzpicture}
            \begin{scope}
                \node[vertex] (1) at (0,0) [label=left:$1$] {};
                \node[vertex] (2) at (1,0) [label=right:$2$] {};
                \node[vertex] (-2) at (0,1) [label=left:$-2$] {};
                \node[vertex] (-1) at (1,1) [label=right:$-1$] {};
                \draw (1)--(-1)--(2)--(-2)--(1);
            \end{scope}
            \begin{scope}[xshift=5cm]
                \node[vertex] (1) at (0,0) [label=left:$1$] {};
                \node[vertex] (2) at (1,0) [label=right:$2$] {};
                \node[vertex] (-2) at (0,1) [label=left:$-2$] {};
                \node[vertex] (-1) at (1,1) [label=right:$-1$] {};
                \node[vertex] (3) at (-1,1) [label=left:$3$] {};
                \node[vertex] (-3) at (2,0) [label=right:$-3$] {};
                \draw (1)--(-1)--(2)--(-2)--(1);
                \draw (1)--(3);
                \draw (-3)--(-1);
            \end{scope}
            \draw[->] (1.5,.5) --node[midway,above] {type B hanging} (3.75,.5);
        \end{tikzpicture}
        \caption{Obtaining a signed poset by the type B hanging of 3 above 1}
        \label{fig:typebhangingex}
    \end{center}
\end{figure}
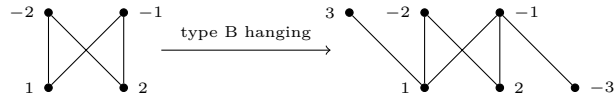

\begin{lem}\label{lem:typebhangingpreservesci}
    Suppose $P'$ is a signed poset obtained from $P$ by a type B hanging. If
$P$ is an initial complete intersection, then $P'$ is also.
\end{lem}

\begin{proof}
    Suppose $P'$ is obtained from $P$ by a type B hanging. Then the connected
	isotropic order ideals are
    \begin{itemize}
        \item $J$ for $J \in \Jconn(P)$
        \item$I_{\leq}(\epsilon(n+1))$
        \item  $\{-\epsilon(n+1)\}$.
    \end{itemize}
    It is clear that $I_\leq(\epsilon(n+1))$ and
$\{-\epsilon(n+1)\}$ intersect nontrivially and each intersect no other ideal
nontrivially. The other ideals cannot be involved in more than one nontrivial
intersection because they were not in $P$. Thus $P'$ must be an initial
complete intersection.
\end{proof}

\begin{thm}\label{thm:construction}
    Up to isomorphism, the signed posets with $\Ghat(P)$ connected which are initial complete
    intersections are precisely those which can be constructed by
hanging, duplication of a hanger and type B hanging from the two posets in
Figure~\ref{fig:baseposets}.
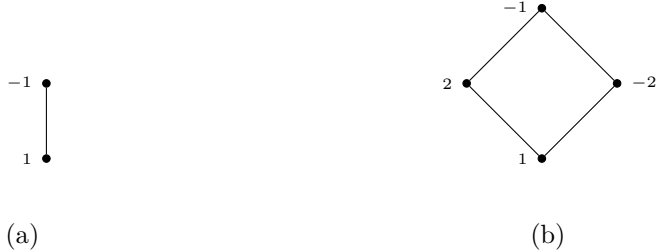
\begin{figure}[htbp]
    \begin{center}
     \begin{subfigure}[b]{.45\textwidth}
      \begin{center}
          \begin{tikzpicture}
              \node[vertex] (1) at (0,0) [label=left:$1$] {};
              \node[vertex] (-1) at (0,1) [label=left:$-1$] {};
              \draw (1)--(-1);
          \end{tikzpicture}
      \end{center}
      \caption{}
  \end{subfigure}
        \begin{subfigure}[b]{.45\textwidth}
            \begin{center}
    \begin{tikzpicture}
    \node[vertex] (1) at (0,0) [label=left:$1$] {};
	\node[vertex] (2) at (-1,1) [label=left:$2$] {};
	\node[vertex] (-2) at (1,1) [label=right:$-2$] {};
	\node[vertex] (-1) at (0,2) [label=left:$-1$] {};
	\draw (1)--(2)--(-1)--(-2)--(1);
    \end{tikzpicture}
\end{center}
\caption{}
\end{subfigure}
\end{center}
\caption{The posets from which the initial complete intersections are built}
\label{fig:baseposets}
\end{figure}
\end{thm}

\begin{proof}

The proof proceeds by induction, with base cases of $n=1$ and $n=2$. Up to
isomorphism, there is only one signed poset for $n=1$, namely $P_1=\{+e_1\}$
(see Figure~\ref{fig:baseposets}a) and
its toric ideal is trivial, so it is certainly an initial complete
intersection. For $n=2$, there are four signed posets which are initial complete
intersections: those obtained from $P_1$ by hanging, duplication of a hanger
and type B hanging, and the signed poset in Figure~\ref{fig:baseposets}b, whose
toric ideal is $(U_{12}U_{1\m2}-U_1^2)$, which is principal, so the signed
poset is an initial complete intersection.

Suppose $P$ is a signed poset on $n \geq 3$ and $\Ghat(P)$ is connected and
$P$ is an initial complete intersection. Consider $a \in \Ghat(P)$ and let $I(a)
= \{i \in \pm [n]: i < a\}$. From Lemma~\ref{lem:notypeanotinitci}, one knows
that one may assume $a < -a$. There are three cases.
\begin{enumerate}[label=(\alph*)]
    \item \textbf{Suppose that $a$ is not minimal and for all $a' \in \Ghat(P)$ not
comparable to $a$ with $I_<(a')$ isotropic, $I_<(a) \cap I_<(a') = \emptyset$.} Then
define two induced subposets of $P$: $P_{< a}$, the induced subposet on $i$
such that $\pm i < a$, and $P \smallsetminus P_{<a}$, the induced subposet on
$i$ such that $\pm i \not < a$.
Furthermore, $P$ is obtained by hanging $P_{<a}$ below $P\smallsetminus
P_{<a}$.
\item \textbf{Suppose there exists $a' \in \Ghat(P)$ not comparable to $a$ with $I_<(a')$
isotropic and $I_<(a) \cap I_<(a') \ne \emptyset$.}

\noindent\textbf{Claim:} $a' < -a'$.

Suppose not. Then both $I_<(a')$ and $I_<(-a')$ are isotropic and intersect
nontrivially. Then since $a < -a$, it must be that $a \ne -a'$, meaning
$I_<(a')$ intersects both $I_<(-a')$ and $I_<(a)$ nontrivially, contradicting
that $P$ had $S/\initI$ a complete intersection.

As in type A (see~\citet[Theorem 10.6]{FerayReiner2012}), one decomposes $P$ into four induced subposets:
\[
   P = \widehat{P} \sqcup P_{<a,a'} \sqcup (P_{<a} \smallsetminus P_{<a,a'})
\sqcup (P_{<a'} \smallsetminus P_{<a,a'}),
\]
where $P_{<a,a'}$ is the induced subposet on $i$ such that $\pm i < a$ and $\pm
i < a'$ in $\Ghat(P)$, $(P_{<a}\smallsetminus P_{<a,a'})$ is the induced
subposet on $i$ such that $\pm i < a$ and $\pm i \not < a'$, similarly for
$(P_{<a'}\smallsetminus P_{<a,a'})$ and 
$\widehat{P} = P\smallsetminus (P_{<a}\sqcup P_{<a'})$.

Observe that any element less than either $a$ or $a'$ is less than its
negative. The signed posets $P_{<a} \smallsetminus P_{<a'}$ and $P_{<a'}\smallsetminus P_{<a}$
may be empty, but $P_{<a,a'}$ is not. 

One constructs a signed poset $Q$ using hanging, duplication and type B hanging and
then shows that $Q = P$. There are two cases of the construction.
\begin{itemize}
\item Suppose there is $b \in \widehat{P}$  with $a < b$, $a' \not < b$ and $b
\not < -b$. Construct $Q$ as follows:
\begin{enumerate}[label=(\arabic*)]
    \item Start with $\widehat{P}\smallsetminus \{a',b\}$.
\item Hang $P_{<a,a'}$ below $a$ in $\widehat{P}\smallsetminus \{a',b\}$.
\item Duplicate $a$.
\item Hang $P_{<a}\smallsetminus P_{<a'}$ and $P_{<a'}\smallsetminus P_{<a}$
below $a$ and $a'$, respectively.
\item Type B hang $b$ from $c$, where $c \lessdot b$ in $P$.
\end{enumerate}
\item Suppose no such $b$ exists. Construct $Q$ as follows:
\begin{enumerate}[label=(\arabic*)]
    \item Start with $\widehat{P}\smallsetminus \{a',b\}$.
\item Hang $P_{<a,a'}$ below $a$ in $\widehat{P}\smallsetminus \{a',b\}$.
\item Duplicate $a$.
\item Hang $P_{<a}\smallsetminus P_{<a'}$ and $P_{<a'}\smallsetminus P_{<a}$
below $a$ and $a'$, respectively.
\end{enumerate}
\end{itemize}
In each case, each of the induced subposets used
($P_{<a,a'},\widehat{P}\smallsetminus\{a'\},\widehat{P}\smallsetminus\{a',b\},P_
{ <a } \smallsetminus P_{<a'},P_{<a'}\smallsetminus P_{<a},$) is an initial
complete intersection courtesy of Lemma~\ref{lem:inducedbad}, since they are
induced subposets of an initial complete intersection.
Consequently, $Q$ is an initial complete intersection. 

Next, one needs to show that $P$ and $Q$ are really the same poset. Certainly
the restrictions of $P$ and $Q$ to $P_{<a,a'}$, $P_{<a}\smallsetminus
P_{<a,a'}$ and $P_{<a'} \smallsetminus P_{<a,a'}$ are the same. It remains to
check that when restricted to the vertices of $\widehat{P}$, $P$ and $Q$ are
the same. Suppose $c > a$ and $c \not > a'$ in $\widehat{P}$. Consider two
cases.
\begin{enumerate}[label=(\alph{enumi}.\roman*)]
\item \textbf{Suppose $c < -c$.} Recall that there must be at least one element, call it
    $\ell$, such that $\ell < a$ and $\ell < a'$. Moreover, $\ell < -\ell$.
    This means that the induced subposet on $a,b,\ell$ is isomorphic to
    Figure~\ref{fig:excludedposets}(j).
\item \textbf{Suppose $c \not < -c$.} The existence of such a $c$ means $Q$ was
constructed using the first construction. If $c$ is not the $b$ from the
construction, one has that $a \in I_\leq(c) \cap I_\leq(b)$. However,
$I_\leq(c)$ and $I_\leq(-c)$ also intersect nontrivially, a contradiction,
unless $b = -c$. If $b= -c$, recall that there must be some $\ell \in
P_{<a,a'}$. Taking the induced subposet of $a,a',b,\ell$ gives the signed poset shown in
Figure~\ref{fig:constructionpf}.
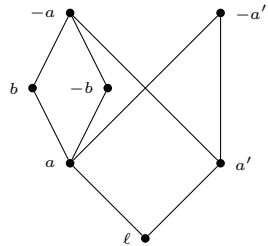
\begin{figure}[htbp]
\begin{center}
    \begin{tikzpicture}
        \node[vertex] (a) at (0,0) [label=left:$a$] {};
		\node[vertex] (b) at (-.5,1) [label=left:$b$] {};
		\node[vertex] (-b) at (.5,1) [label=left:$-b$] {};
		\node[vertex] (-a) at (0,2) [label=left:$-a$] {};
		\node[vertex] (l) at (1,-1) [label=left:$\ell$] {};
		\node[vertex] (a') at (2,0) [label=right:$a'$] {};
		\node[vertex] (-a') at (2,2) [label=right:$-a'$] {};
		\draw (l)--(a)--(b)--(-a)--(-b)--(a);
		\draw (l)--(a')--(-a')--(a);
		\draw (a')--(-a);
    \end{tikzpicture}
    \caption{An induced subposet when $c > a, c\not > a', c < -c$ in case
        (b.ii) of the proof
        of Theorem~\ref{thm:construction}}
    \label{fig:constructionpf}
\end{center}
\end{figure}
However, this poset is not an initial complete intersection, since
$\{b,a,\ell\}$ intersects both $\{-b,a,\ell\}$ and $\{\ell,a'\}$ nontrivially, a contradiction,
courtesy of Lemma~\ref{lem:inducedbad}.
\end{enumerate}
It remains to check that given $x,y$ in two different pieces of the
decomposition, they share the same relation in $P$ and in $Q$. The cases break
down as they did in type A.
\begin{enumerate}[label=(\alph{enumi}.\Roman*)]
\item \textbf{Suppose $x \in P_{<a} \smallsetminus P_{<a'}$ and $y \in
P_{a'}\smallsetminus P_{<a}$.} Then $x$ and $y$ must be incomparable in both $P$
and $Q$.
\item \textbf{Suppose $x \in P_{<a}\smallsetminus P_{<a'}$ (or $P_{<a'}\smallsetminus
P_{<a}$) and $y \in P_{<a,a'}$.} In this case, $x$ and $y$ are incomparable in
$Q$ by construction. If $x<_P y$, then $x \in P_{<a,a'}$, a contradiction. If
$y <_P x$, then the induced subposet of $a,x,y$ is isomorphic to the signed
poset in Figure~\ref{fig:excludedposets}(j),
 contradiction. Thus, one has that $x$ and $y$
must also be incomparable in $P$.
\item \textbf{Suppose $x \in P_{<a}\smallsetminus P_{<a'}$ and $y \in
        \widehat{P}$.}
Neither $y \leq_Q x$ nor $y \leq_P x$ is possible. Observe that $x \leq_Q y$ if
and only if $a \leq_P y$ and $a \leq_P y$ implies $x \leq_P y$. Therefore, one
must show that if $a \not\leq_P y$ then $x \not \leq_P y$. Suppose not, and $a
\not\leq_P y$, but $x \leq_P y$. One knows that there exists $\ell \in
P_{<a,a'}$. Suppose $\ell \not\leq_P y$. If $\ell < \ell'$, then the subposet
of $P$ induced by $a,a',x,y,\ell$ is isomorphic to the signed poset in
Figure~\ref{fig:excludedposets}(n)
a contradiction. If $\ell$ and $-\ell$ are not comparable, the subposet of $P$
induced by $a,y,x$ is isomorphic to the signed poset in
Figure~\ref{fig:excludedposets}(j) or (k), a contradiction in both cases.

Thus, one must have that $\ell \leq_P y$. There are two cases:
\begin{itemize}
\item $y \not \geq_P a'$. Then the subposet of $P$ induced by $a,a',y,\ell$ is
    isomorphic to the signed poset in Figure~\ref{fig:excludedposets}(m), a
    contradiction.

\item $y \geq_P a'$. Then the subposet of $P$ induced by $a,y,\ell$ is
    isomorphic to the signed poset in Figure~\ref{fig:excludedposets}(j), a
    contradiction.
\end{itemize}
\item \textbf{Suppose $x \in P_{<a'}\smallsetminus P_{< a}$ and $y \in
        \widehat{P}$.}
This case is the same as the previous case with the roles of $a$ and $a'$
exchanged.
\item \textbf{Suppose $x \in P_{<a,a'}$ and $y \in \widehat{P}$.} Begin by observing
that $y \leq_Q x$ and $y \leq_P x$ are impossible. One has that $x \leq_Q y$ if
and only if $a \leq_P y$ or $a'\leq_P y$. Both $a\leq_P y$ and $a'\leq_P y$
imply $x \leq_P y$, so one needs to show that if neither $a \not\leq_P y$ nor
$a' \not\leq_P y$, then $x \not\leq_P y$. Suppose not. If $y < -y$, the
subposet induced by $x,y,a,a'$ is isomorphic to the forbidden signed poset in
Figure~\ref{fig:excludedposets}(m).

 If $y$ and $-y$ are not comparable, then the
subposet induced by $x,y,a$ is isomorphic to the signed poset in either
Figure~\ref{fig:excludedposets}(j) or (k), a contradiction.
\end{enumerate}

\item \textbf{Suppose that $a$ \emph{is} minimal and for all $a' \in \Ghat(P)$ not
comparable to $a$ with $I_<(a')$ isotropic, $I_<(a) \cap I_<(a') = \emptyset$.} One
may assume that no $a$ exists that falls into either of the earlier two cases.
Then every $a \in \Ghat(P)$ with $a < -a$ is a minimal element of $\Ghat(P)$.
One can then divide $[n]$ into $a_1,\ldots,a_k$ and $b_1,\ldots,b_j$, where
$a_i < -a_i$ and $b_\ell$ and $-b_\ell$ are not comparable. Since $P$ is an
initial complete intersection, no $a_i$ lies below more than one $b_\ell$. There
are two cases.
\begin{itemize}
\item Suppose $a_1$ and $a_2$ are covered by all the same elements (i.e.\
    $-a_1,\ldots,-a_k$ and possibly $b_\ell$ for some $\ell$). Let $P' =
    P\smallsetminus\{a_1\}$. Then $a_2$ is a hanger in $P'$ and $P$ is obtained
    from $P'$ by duplicating $a_2$.
\item Suppose there is no such $a_1,a_2$. Then, there must be an $a_i$ such
    that $b_\ell > a_i$ and $b_\ell$ covers no other $a_s$. Let $P' =
    P\smallsetminus \{b_\ell\}$. Then $P$ is obtained from $P'$ by a type B
    hanging of $b_\ell$ from $a_i$.
\end{itemize}
\end{enumerate}
\end{proof}

\subsection{Computing Rational Functions}\label{subsec:wtrationalfunctions}

Having characterized the initial complete intersections, attention now turns to
calculating various rational function identities. One knows from
Proposition~\ref{prop:gbinitmingens} that the $U_JU_K$ as $\{J,K\}$ runs over
$\Pi(P)$ form a minimal generating set for $\initI$. When $P$ is an initial
complete intersection, one then has
\begin{equation}\label{eq:hilbseries}
\Hilb(\Rpwt,\bm{x}) = \Hilb(\Swt{P}/\initI,\bm{x})= \dfrac{\prod_{\{J,K\} \in
\Pi(P)} (1-\bm{x}^J\bm{x}^K)}{\prod_{J \in \Jconn(P)} (1-\bm{x}^J)}.
\end{equation}
One can then apply Proposition~\ref{prop:sandsigmaci} to obtain $\Phi_P$.
\begin{cor}
Suppose $P$ is an initial complete intersection. Then
\[
\Phi_P(\bm{x}) = \dfrac{\prod_{\{J,K\} \in \Pi(P)} \langle
\bm{x},\chi_J+\chi_K\rangle}{\prod_{J \in \Jconn(P)} \langle
\bm{x},\chi_J\rangle}.
\]
\end{cor}

\begin{figure}
		\begin{center}
				\begin{tikzpicture}
						\node[vertex] (1) at (0,0) [label=left:$1$] {};
						\node[vertex] (3) at (-2,0) [label=left:$3$] {};
						\node[vertex] (2) at (-1,1) [label=left:$2$] {};
						\node[vertex] (-2) at (1,1) [label=left:$-2$]{};
						\node[vertex] (-1) at (0,2) [label=right:$-1$]{};
						\node[vertex] (-3) at (2,2) [label=right:$-3$]{};
						\draw (1)--(2)--(-1)--(-2)--(1);
                        \draw (3) edge[bend left] (-1);
                        \draw (1) edge[bend right] (-3);
                        \draw (3)--(-3);
				\end{tikzpicture}
		\end{center}
		\caption{A signed poset}
		\label{fig:phiex}
\end{figure}

For example, consider the signed poset in Figure~\ref{fig:phiex}. Then
$\Swt{P}=k[U_1,U_3,U_{12},U_{1\m2}]$ and $\initI = (U_{12}U_{1\m2})$. Then
\[
\Hilb(\Swt{P}/\initI, \bm{x}) =
\dfrac{1-x_1^2}{(1-x_1)(1-x_3)(1-x_1x_2)(1-x_1x_2^{-1})},
\]
and applying Proposition~\ref{prop:sandsigmaci} gives
\[
\Phi_P(\bm{x}) = \dfrac{2x_1}{x_1x_3(x_1+x_2)(x_1-x_2)}.
\]
On the other hand, Table~\ref{tab:linearext} gives the linear extensions of
$P$, their descent sets and their major index.
\begin{table}[htbp]
    \centering
    \begin{tabular}{ccc}
    $w$ & $\Des(w)$ & $\maj(w)$ \\
$\begin{pmatrix} 1& 2& 3 \\ 1 & 2 & 3 \end{pmatrix}$ & $\emptyset$ & $0$
\\[1pc]
$\begin{pmatrix*}[r] 1 & 2& 3 \\ 1 & -2 & 3 \end{pmatrix*}$ & $\{2\}$ & $2$
\\[1pc]
$\begin{pmatrix*}[r] 1 & 2 & 3 \\ 1& 3 & 2 \end{pmatrix*}$ & $\{2\}$ & $2$
\\[1pc]
$\begin{pmatrix*}[r] 1 & 2 & 3 \\ 1 &3 & -2 \end{pmatrix*}$ & $\{3\}$ & $3$
\\[1pc]
$\begin{pmatrix} 1& 2& 3 \\ 3 & 1 & 2 \end{pmatrix}$ & $\{1\}$ & $1$ \\[1pc]
$\begin{pmatrix*}[r] 1 & 2 & 3 \\ 3 & 1 & -2 \end{pmatrix*}$ & $\{1,3\}$ & $4$
    \end{tabular}
    \caption{Linear extensions for the signed poset of Figure~\ref{fig:phiex},
        their descents and major index.}
    \label{tab:linearext}
\end{table}
Then
\begin{multline*}
		\Phi_P(\bm{x}) = \sum_{w \in \cL(P)}
		w\left(\dfrac{1}{x_1(x_1+x_2)\cdots(x_1+\cdots+x_n)}\right) \\
		= \dfrac{1}{x_1(x_1+x_2)(x_1+x_2+x_3)} +
		\dfrac{1}{x_1(x_1-x_2)(x_1-x_2+x_3)} +
		\dfrac{1}{x_1(x_1+x_3)(x_1+x_2+x_3)} \\
		+ \dfrac{1}{x_1(x_1+x_3)(x_1+x_3-x_2)} +
		\dfrac{1}{x_3(x_1+x_3)(x_1+x_2+x_3)} +
		\dfrac{1}{x_3(x_1+x_3)(x_1-x_2+x_3)} \\
		= \dfrac{2}{(x_1-x_2)(x_1+x_2)x_3} =
		\dfrac{2x_1}{x_1x_3(x_1-x_2)(x_1+x_2)}.
\end{multline*}

Altering the grading so $\deg U_J = |J|$ gives the following identity (c.f.\
Proposition~\ref{prop:qmajppartitions}).

\begin{cor}\label{cor:qsum}
Suppose the signed poset $P$ is an initial complete intersection. Then
\[
\sum_{w \in \cL(P)} q^{\maj(w)} = [n]!_q \dfrac{\prod_{\{J,K\} \in \Pi(P)}
    [|J|+|K|]_q}{\prod_{J \in \Jconn(P)} [|J|]_q}.
\]
\end{cor}

\begin{proof}
Collapsing the $\bZ^n$-grading to the $\bN$-grading where $\deg U_J = |J|$,
\eqref{eq:hilbseries} transforms into
\[
    \Hilb(\Swt{P}/\initI,q) = \sum_{f \in \cA(P)} q^{|f|} = \dfrac{\prod_{\{J,K\} \in \Pi(P)} 1-q^{|J|+|K|}}{\prod_{J \in\Jconn(P)}
        1-q^{|J|}}.
\]
Using Proposition~\ref{prop:qmajppartitions} one then has
\[
    \sum_{w \in \cL(P)} q^{\maj(w)} = [n]!_q \dfrac{\prod_{\{J,K\} \in \Pi(P)} 1-q^{|J|+|K|}}{\prod_{J \in\Jconn(P)}
        1-q^{|J|}} = [n]!_q\dfrac{\prod_{\{J,K\} \in \Pi(P)}
        [|J|+|K|]_q}{\prod_{J \in \Jconn(P)} [|J|]_q},
\]
as claimed.
\end{proof}

Looking at the signed poset from Figure~\ref{fig:phiex} once again, one has
\[
    \sum_{w \in \cL(P)} q^{\maj(w)} = [3]!_q
    \dfrac{[2+2]_q}{[1]_q[1]_q[2]_q[2]_q} = \dfrac{[4]_q[3]_q}{[2]_q} = 1 + q +
    2q^2 + q^3 + q^4,
\]
matching the previous tabulation (see Table~\ref{tab:linearext}).

Lastly, one can alter the grading of $\Swt{P}/\initI$ a third time, taking $\deg U_J = 1$ to obtain
the following.

\begin{cor}
Suppose $P$ is a signed poset which is an initial complete intersection. Then
\[
\sum_{f \in \cA(P)} t^{\nu(f)} = \dfrac{(1-t^2)^{|\Pi(P)|}}{(1-t)^{|\Jconn(P)|}},
\]
where $\nu(f)$ is the number of ideals in the unique expression of $f$ as a sum
of nontrivially intersecting connected ideals.
\end{cor}

\chapter{Unfinished Business}\label{ch:unfinished}

\section{Two triangulations of the weight cone}

As was the case for \feray and Reiner in type A, the ideal $\initI$ suggests
a triangulation of the weight cone $\Kpwt$. They explained (see~\cite[\S
11]{FerayReiner2012})  
\begin{itemize}
    \item that $\Kpwt$ is a maximal cone in the normal fan of the
graphic zonotope associated to the graph underlying the Hasse diagram,
    \item that the normal fan of the graphic zonotope is refined by the normal fan of the
graph associahedron of the same graph and,
    \item lastly, that $\initI$ is the
Stanley-Reisner ideal for the simplicial complex describing the triangulation
of the weight cone by the the normal fan of the graph associahedron.
\end{itemize}

This section explains these two triangulations of the weight cone, the first
indexed by linear extensions, the second by sets of pairwise trivially
intersecting connected ideals, explains
Zaslavsky's signed graph analogue of the graphic zonotope and how it gives a
triangulation of $\Kpwt$, analogous to type A, but leaves open the problem of
finding the correct definition of signed graph associahedron.

\begin{defn}
A \emph{triangulation} of the cone $K$ is a collection
$T=\{\sigma_1,\ldots,\sigma_k\}$ of simplicial cones such that
\begin{itemize}
    \item $\bigcup \sigma_i = K$;
    \item if $\sigma \in T$, then every face of $\sigma$ is in $T$;
    \item for any $\sigma_i,\sigma_j \in T$,$\sigma_i \cap \sigma_j$ is a common face of $\sigma_i$ and
        $\sigma_j$
\end{itemize}
\end{defn}

The first triangulation of $\Kpwt$ to be discussed is one suggested by
Proposition~\ref{prop:fundthmppartitions}, that where the maximal cones are
unions of the
$w\Phi^+$-partitions for $w \in \cL(P)$, called the \emph{$P$-partition
    triangulation}. We next describe a second triangulation called the
\emph{trivially intersecting ideals triangulation}. 

\begin{prop}\label{prop:nontrivtriangulation}
    Suppose $P\subset \Bn$ is a signed poset. The weight cone $\Kpwt$ is
    triangulated by cones $\{\sigma_A \colon \mathop{\mathrm{span}} A\}$, where $A$ runs over all
    sets of connected ideals of $P$, the elements of which pairwise intersect
    trivially.
\end{prop}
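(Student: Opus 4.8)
The plan is to verify the three conditions defining a triangulation directly, drawing the covering from the $P$-partition triangulation $\Kpwt=\bigcup_{w\in\cL(P)}C_w$ of Proposition~\ref{prop:fundthmppartitions} (the $C_w$ being the closed Weyl chambers) and otherwise transporting \feray{} and Reiner's type~$A$ reasoning across the antipodal symmetry. Write $e_{\mathcal I}\in\{0,\pm1\}^n$ for the sign vector of a connected ideal $\mathcal I$ of $P$ (a connected ideal contains no antipodal pair, so no cancellation occurs), so that $\sigma_A$ is the simplicial cone positively spanned by $\{e_{\mathcal I}:\mathcal I\in A\}$; each $e_{\mathcal I}$ lies in $\Kpwt$, hence $\sigma_A\subseteq\Kpwt$ by convexity, and $\dim\Kpwt=n$ by Proposition~\ref{prop:fundthmppartitions}. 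To see that $\sigma_A$ is genuinely simplicial, order $A$ by inclusion: if one member lay below two incomparable members their intersection would be nonempty, contradicting triviality, so $(A,\subseteq)$ is a forest, and trivial intersection makes $\{e_{\mathcal I}:\mathcal I\in A\}$ laminar as sign vectors (incomparable members in particular having disjoint support). For $\mathcal I\in A$ with set of children $\mathop{\mathrm{ch}}(\mathcal I)$ in $A$, connectedness of $\mathcal I$ forbids it from being the disjoint union of its pairwise disjoint children (none of which equals $\mathcal I$), so the ``private part'' $\mathcal I\setminus\bigcup\mathop{\mathrm{ch}}(\mathcal I)$ is nonempty; these private parts are pairwise disjoint over $A$, so the vectors $e_{\mathcal I}-\sum_{\mathcal J\in\mathop{\mathrm{ch}}(\mathcal I)}e_{\mathcal J}$ have disjoint nonempty supports and hence are linearly independent, whence so are the $e_{\mathcal I}$. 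Thus $\sigma_A$ is simplicial and $|A|\le n$, and since a subfamily of a trivially intersecting family is again one, the collection is closed under faces.

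For the covering, recall that the extreme rays of $C_w$ are spanned by the vectors $e_{\mathcal I}$ as $\mathcal I$ runs through the $n$ nonempty ideals in the maximal chain of ideals of $P$ determined by $w$, each ideal in the chain having one more element than the last. Decompose each of these ideals into its connected components — every component being a connected ideal of $P$ — and let $A_w$ be the set of all the components so obtained. Then $A_w$ is trivially intersecting, since a component of a smaller ideal in the chain lies inside a component of a larger one, and $C_w\subseteq\sigma_{A_w}$. Reading the chain from the bottom, each step enlarges exactly one connected component and so contributes exactly one new connected ideal to $A_w$; therefore $|A_w|=n$ and $\sigma_{A_w}$ is full-dimensional. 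Hence $\Kpwt=\bigcup_{w}C_w\subseteq\bigcup_{w}\sigma_{A_w}\subseteq\Kpwt$: the maximal cones cover $\Kpwt$, and, together with the bound $|A|\le n$ above, every maximal trivially intersecting family of connected ideals has exactly $n$ elements.

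The real work is the last condition, that $\sigma_A\cap\sigma_B$ be a common face of both; for this it suffices that the full-dimensional cones $\sigma_A$ have pairwise disjoint interiors and that each facet of each such cone either lies on $\partial\Kpwt$ or is a facet of exactly one other maximal cone. I would obtain disjointness of interiors from the identity $\sigma_A=\bigcup_{w:\,A_w=A}C_w$ for maximal $A$, since the $\mathop{\mathrm{int}}C_w$ are pairwise disjoint and each lies in a unique $\sigma_{A_w}$, both cones being $n$-dimensional with $C_w\subseteq\sigma_{A_w}$. To prove this identity I would enlarge $P$ to a signed poset $P_A\supseteq P$ whose linear extensions are exactly the $w$ with $A_w=A$ — concretely by closing $P$ under the relations that force each $\mathcal I\in A$ to occur as a connected component of an initial segment of $w$, a construction for which one checks that trivial intersection rules out directed cycles — and then verify $\sigma_A=K^{\mathrm{wt}}_{P_A}$, the last holding because a generic point of $C_w$ satisfies the additional defining inequalities of $K^{\mathrm{wt}}_{P_A}$ exactly when $w\in\cL(P_A)$, while $K^{\mathrm{wt}}_{P_A}=\bigcup_{w\in\cL(P_A)}C_w$ by Proposition~\ref{prop:fundthmppartitions} applied to $P_A$. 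The sets $\cL(P_A)$ then partition $\cL(P)$, so the $\sigma_A$ dissect $\Kpwt$ with disjoint interiors; and crossing a facet of $\sigma_A$ interior to $\Kpwt$ corresponds either to merging two sibling ideals of the forest $A$ into their union or to moving a single element across a parent--child edge of $A$, which names the unique maximal cone on the far side. I expect the bookkeeping in pinning down $\cL(P_A)$ and in reading facets off the forest structure of $A$ to be the fussiest point; everything else mirrors the type~$A$ argument of \feray{} and Reiner.
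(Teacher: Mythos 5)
Your verifications of the first two triangulation axioms (simpliciality of each $\sigma_A$ via the laminar/forest structure and nonempty ``private parts,'' and the covering $\Kpwt=\bigcup_w C_w\subseteq\bigcup_w\sigma_{A_w}$ obtained by splitting the chain of ideals of a linear extension into connected components) are sound. But the third axiom --- that $\sigma_A\cap\sigma_B$ is a common face --- is where essentially all of the content of the proposition lives, and there your argument is a program rather than a proof. It rests on an auxiliary signed poset $P_A$ that you do not construct (you only assert that ``one checks that trivial intersection rules out directed cycles''), on the unproved identities $\cL(P_A)=\{w:A_w=A\}$ and $\sigma_A=K^{\mathrm{wt}}_{P_A}=\bigcup_{w:A_w=A}C_w$, and on a facet-adjacency analysis (``merging two sibling ideals'' versus ``moving a single element across a parent--child edge'') that is precisely the combinatorics of the conjectural signed graph associahedron, which Question~\ref{finalquest} and the closing remarks of this section leave open. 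In the signed setting the possible moves across an interior facet are not obviously exhausted by the two you list (half edges and negative edges create tubes with no type~A analogue), so this step cannot be waved at. A smaller caveat: your claim that every inclusion-maximal trivially intersecting family has exactly $n$ elements does not follow from ``covering plus $|A|\le n$''; purity of the complex needs the identity $\sigma_A=\bigcup_{w:A_w=A}C_w$ (or an independent argument) and should not be asserted beforehand.

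The paper avoids all of this: Proposition~\ref{prop:nontrivtriangulation} is deduced in one line from \citet[Theorem~8.3]{Sturmfels1996a}, which says that a squarefree monomial initial ideal of the toric ideal of the configuration $\{e_{\mathcal{I}}\}_{\mathcal{I}\in\Jconn(P)}$ is the Stanley--Reisner ideal of a (regular, hence genuine) triangulation of the cone spanned by that configuration; since $\initI$ was already shown to be squarefree with generators indexed by non-trivially-intersecting pairs, all three axioms come for free. If you want to keep a direct, Gr\"obner-free argument, the honest way to close your gap is to actually construct $P_A$, prove it is a signed poset, and establish $\sigma_A=K^{\mathrm{wt}}_{P_A}$ with $\cL(P_A)$ partitioning $\cL(P)$ --- this is real work, comparable to \feray{} and Reiner's Proposition~11.7 in type~A. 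Otherwise, the efficient fix is to invoke the initial-ideal computation you already have access to and cite Sturmfels.
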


Proposition~\ref{prop:nontrivtriangulation} is an immediate consequence
of~\citet[Theorem 8.3]{Sturmfels1996a}. Moreover, as mentioned in
Section~\ref{subsec:characterisinginitci}, since $\initI$ is
square-free, it is the Stanley-Reisner ideal of the complex on $\Jconn(P)$
whose faces are given by sets of ideals which pairwise intersect trivially.

Figure~\ref{fig:bigtriangulationexample} shows the two different
triangulations: the $P$-partition triangulation in
Figure~\ref{fig:linexttriangulation}, where the maximal cones are indexed by the
linear extensions of $P$, and the non-intersecting ideals triangulation of
Proposition~\ref{prop:nontrivtriangulation} in
Figure~\ref{fig:idealtriangulation}, where the maximal cones are indexed by
signed posets (more on these signed posets later).

\begin{figure}
    \begin{center}
\tdplotsetmaincoords{70}{100}
\begin{subfigure}[b]{.45\textwidth}
    \begin{center}
        \begin{tikzpicture}[scale=2]
            \node[circle,draw] (1) at (0,0) {$1$};
            \node[circle,draw] (2) at (1,0) {$2$};
            \node[circle,draw] (3) at (.5,.5) {$3$};
            \draw (1.10)--node[very near start,above] {$+$} node[very near end,
            above] {$-$} (2.170);
            \draw (1.350)--node[very near start,below] {$+$} node[very near
            end, below] {$+$} (2.190);
            \draw (3) edge[loop right] node[midway,right] {$+$} (3);
        \end{tikzpicture}
        \caption{Hasse diagram}
        \label{fig:trianghasse}
    \end{center}
\end{subfigure}
\begin{subfigure}[b]{.45\textwidth}
    \begin{center}
        \begin{tikzpicture}[tdplot_main_coords]
            \node[vertex] (m201) at (-2,0,1) [label=right:$\m201$] {};
            \node[vertex] (m20m1) at (-2,0,-1)[label=right:$\m20\m1$] {};
            \node[vertex] (0m2m1) at (0,-2,-1)[label=left:$0\m2\m1$] {};
            \node[vertex] (0m21) at (0,-2,1) [label=left:$0\m21$] {};
            \node[vertex] (02m1) at (0,2,-1) [label=right:$02\m1$] {};
            \node[vertex] (021) at (0,2,1) [label=right:$021$] {};
            \node[vertex] (20m1) at (2,0,-1) [label=left:$20\m1$] {};
            \node[vertex] (201) at (2,0,1) [label=left:$201$] {};
            \draw (m201)--(021)--(02m1)--(20m1)--(0m2m1)--(0m21)--(201);
            \draw[dashed] (0m2m1)--(m20m1)--(m201);
            \draw (201)--(021);
            \draw (0m21)--(m201);
            \draw (20m1)--(201);
            \draw[dashed] (m20m1)--(02m1);
        \end{tikzpicture}
    \end{center}
    \caption{The acyclotope $\cZ[\Sigma_P]$}
    \label{fig:acyclotope}
\end{subfigure}
\vspace{1 pc}\\
\begin{subfigure}[b]{.45\textwidth}
    \begin{center}
        \includegraphics[width=\textwidth]{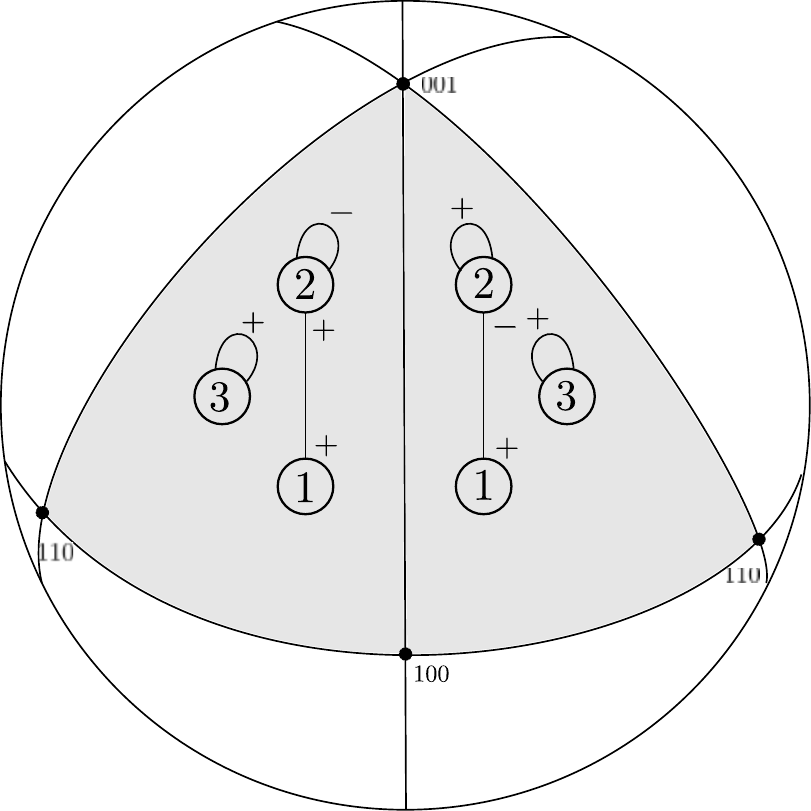}
\end{center}
\caption{$\Kpwt$ triangulated by sets of pairwise trivially intersecting ideals}
\label{fig:idealtriangulation}
\end{subfigure}
\begin{subfigure}[b]{.45\textwidth}
    \begin{center}
        \includegraphics[width=\textwidth]{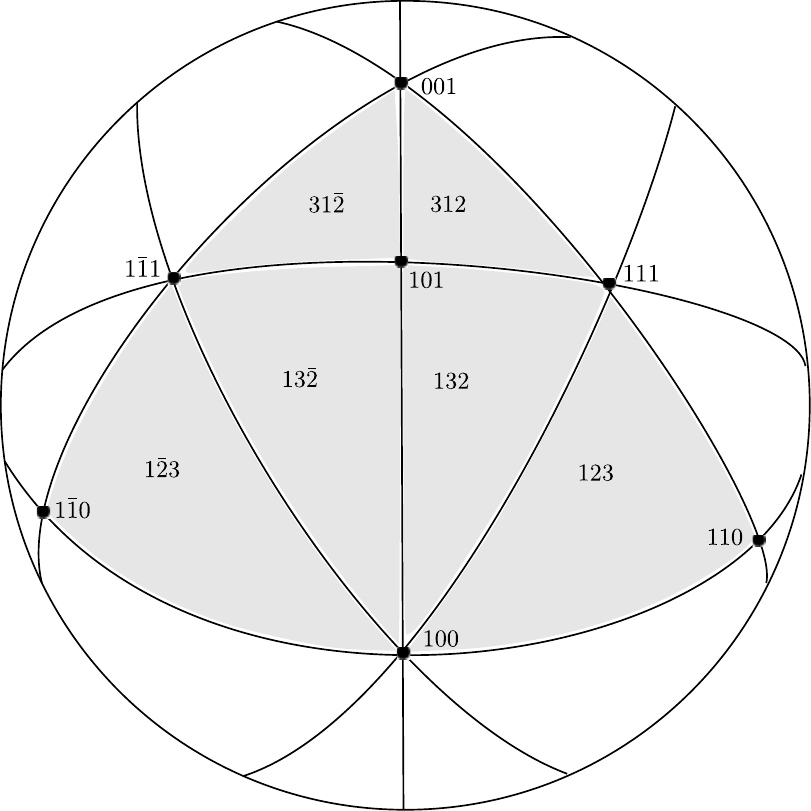}
\end{center}
\caption{$\Kpwt$ triangulated by $\cN(\cZ[\pm K_n^B])$ \\ \ }
\label{fig:linexttriangulation}
\end{subfigure}

\caption{The two different triangulations of $\Kpwt$}
\label{fig:bigtriangulationexample}
\end{center}
\end{figure}

In type A, the analogous triangulation is explained by \feray and Reiner in terms of
the graphic zonotope.

\begin{defn}\label{def:zonotope}
A \emph{zonotope} is a polytope that is the Minkowski sum of line segments. In
particular, if $G$ is a graph, then
\[
    \cZ[G] = \sum_{e=(i,j) \in G} [-(e_i-e_j),e_i-e_j],
\]
is called the \emph{graphic zonotope}.
\end{defn}

\feray and Reiner note that the maximal cones of the normal fan $\cN(\cZ[G])$
are indexed by acyclic orientations of $G$. In particular, if $P$ is a poset and $G$
is the graph underlying its Hasse diagram, there is an orientation, call it
$\omega$, of $G$ such that the corresponding maximal cone of the normal fan,
$\cN_\omega$, is, in fact, $\Kpwt$.

\begin{defn}\label{def:graphicalbuildingset}
Let $G$ be a graph. Its \emph{graphical building set} $\cB(G)$ is the
collection of sets of vertices $J$ where the vertex-induced subgraph $G|_J$ is
connected.
\end{defn}

The graphical building set is used to define the graph associahedron
of~\citet*{CarrDevadoss2006}.

\begin{defn}\label{def:graphassociahedron}
    Suppose $G$ is a graph. Its \emph{graph associahedron} is
    \[
        \cP_G = \sum_{J \in \cB(G)} \conv\{e_j \colon j \in J\}.
    \]
\end{defn}

\feray and Reiner show the following.
\begin{prop}[{\cite[Proposition 11.7]{FerayReiner2012}}]
Suppose $P$ is a poset on $[n]$, $G$ is the graph underlying its Hasse diagram
and $w$ is the orientation of $G$ giving the Hasse diagram of $P$. Then the
simplicial complex $\Delta_P$ having $\initI$ as its Stanley-Reisner ideal
describes the triangulation of the cone $\cN_w$ in the fan $\cN(\cZ[G])$ by
cones of the normal fan $\cN(\cP_G)$.
\end{prop}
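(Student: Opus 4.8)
The plan is to realise the triangulation of $\cN_w=\Kpwt$ as the restriction of the normal fan $\cN(\cP_G)$ to that cone, and then to identify this restriction combinatorially with $\Delta_P$. The first step is to show that $\cN(\cP_G)$ refines $\cN(\cZ[G])$. This follows from the fact that $\cZ[G]$ is, up to a positive dilation and a translation, a Minkowski summand of $\cP_G$: every edge $e=(i,j)$ of $G$ is a two-element connected vertex set, so $\conv\{e_i,e_j\}$ is one of the summands defining $\cP_G$, and it is a translate of $\frac12[\,-(e_i-e_j),\ e_i-e_j\,]$; hence, up to translation, $\cP_G=\frac12\cZ[G]+Q$, where $Q=\sum_{J\in\cB(G),\ |J|\ge 3}\conv\{e_j:j\in J\}$. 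Since the normal fan of a Minkowski sum is the common refinement of the normal fans of its summands, $\cN(\cP_G)$ refines $\cN(\cZ[G])$; in particular, for every maximal cone of $\cN(\cZ[G])$, and so for $\cN_w$, the cones of $\cN(\cP_G)$ lying inside it form a polyhedral fan supported on that cone, that is, a subdivision of it. Because graph associahedra are simple polytopes \cite{CarrDevadoss2006}, the fan $\cN(\cP_G)$ is simplicial, so this subdivision of $\Kpwt$ is a triangulation; call it $T_w$.

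It remains to prove $T_w=\Delta_P$. Here I would use the standard dictionary between faces of $\cP_G$ and tubings of $G$: the facet of $\cP_G$ cut out by a proper tube $t$ has normal cone the ray through $-\mathbf 1_t$ (the negative of the indicator vector of $t$), taken modulo the lineality line of $\cN(\cZ[G])$ spanned by $\sum_j e_j$, and the cone dual to a tubing $\tau$ is spanned by the rays of the tubes of $\tau$ together with that lineality. Reading off the inequalities that cut out $\cN_w$ — the region of the graphic arrangement corresponding to the orientation $w$ that gives the Hasse diagram of $P$ — one finds that the ray through $-\mathbf 1_t$ lies in $\cN_w$ exactly when $t$ corresponds to a connected order ideal of $P$; hence a cone of $\cN(\cP_G)$ is contained in $\Kpwt$ if and only if its tubing consists entirely of connected order ideals. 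One then checks that a family of connected order ideals is a tubing of $G$ precisely when its members pairwise intersect trivially, that is, are pairwise nested or disjoint: nesting is always admissible, and if two connected order ideals are disjoint then no edge of $G$ can join them, because the lower endpoint of such an edge would have to lie in both ideals. Thus $T_w$ and $\Delta_P$ have the same faces, and, both being triangulations of $\Kpwt$ — the latter by Proposition~\ref{prop:nontrivtriangulation} — they coincide.

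The step I expect to be the real obstacle is the middle of the second paragraph: fixing the sign and complementation conventions so that the normal ray of a tube lands in $\cN_w$ precisely for the connected ideals, and checking that this choice is coherent along an entire tubing, so that the face poset of $T_w$ is literally — and not merely abstractly isomorphic to — the poset of tubings of $G$ by connected ideals of $P$. The remaining ingredients, the Minkowski-summand argument and the simplicity of $\cP_G$, are routine, and the observation that disjoint connected ideals have no connecting edge is immediate.
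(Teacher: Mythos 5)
The paper does not prove this statement: it is quoted verbatim from \cite[Proposition 11.7]{FerayReiner2012} as motivation for the signed-poset case, so there is no internal proof to compare yours against. Measured against the source, your argument follows essentially the same route---the normal fan of the graph associahedron refines that of the graphic zonotope because the edge segments of $\cZ[G]$ appear (up to dilation and translation) among the Minkowski summands $\conv\{e_j\colon j\in J\}$ of $\cP_G$, the rays of $\cN(\cP_G)$ inside $\cN_w$ are exactly the indicator vectors of connected order ideals, and tubings by connected ideals are exactly the pairwise trivially intersecting families---and I find the outline correct. Three small remarks. The closing appeal to Proposition~\ref{prop:nontrivtriangulation} is misplaced (that proposition concerns signed posets in $\Bn$, not type~A posets) but also unnecessary: once you know that the cones of $\cN(\cP_G)$ contained in $\cN_w$ form a subfan of a simplicial fan supported on all of $\cN_w$, and that they are precisely the cones spanned by the indicator vectors of faces of $\Delta_P$, the proposition follows with no further input. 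The sign and complementation conventions you flag as the ``real obstacle'' are genuinely only bookkeeping: they are fixed by the choice of inner versus outer normal fan and of whether $\cN_w$ is cut out by $x_i\le x_j$ or $x_i\ge x_j$ along Hasse edges, and the two choices merely exchange ``ideal'' for ``filter,'' which is harmless since $\mathbf{1}_I$ and $-\mathbf{1}_{[n]\setminus I}$ agree modulo the lineality line $\mathbb{R}\mathbf{1}$. Finally, when identifying tubings with pairwise trivially intersecting families you should note that the non-pairwise condition in the Carr--Devadoss definition (no antichain of pairwise disjoint tubes whose union is again a tube) is automatic here, since pairwise disjoint, pairwise non-adjacent tubes always have disconnected union; your observation that disjoint order ideals are never joined by a Hasse edge then closes the argument.
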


The maximal cones of this triangulation are indexed by $\cB(G)$-forests,
forests $F$ in which every principal ideal $F_{\leq i}$ is a connected ideal of
$P$ and, whenever $i$ and $j$ are incomparable in $F$, then the ideal $F_{\leq i}$ and
$F_{\leq j}$ of $P$ is disconnected.

One would hope for a similar understanding of the triangulation of
Proposition~\ref{prop:nontrivtriangulation}. In the signed poset case, one can
avail oneself of the analogue of the graphic zonotope---the acyclotope of
Zaslavsky.

\begin{defn}\label{def:acyclotope}
Suppose $\Sigma$ is a signed graph and $\tau$ is a bidirection orienting
$\Sigma$. Then \emph{acyclotope} of $\Sigma$, written $\cZ[\Sigma]$ is the
polytope
\[
    \cZ[\Sigma] = \sum_{e \in E} [-x_\tau(e),x_\tau(e)],
\]
where $x_\tau(e)$ is the column vector associated to $e$ in the incidence
matrix of $\Sigma$.
\end{defn}

Figure~\ref{fig:acyclotope} gives an example of the acyclotope of the signed
poset in Figure~\ref{fig:trianghasse}.

\begin{prop}\label{prop:braidarrangement}
The hyperplane arrangement $H(\Sigma)$ whose fan corresponds to the normal fan of the graphic
zonotope is:
\[
    \begin{array}{ll}
x_i = \sigma(e) x_j & \text{for an edge $e=(i,j)$} \\
x_i = 0 & \text{for a loop or half edge $e=(i,i)$, $e=(i,-)$}
    \end{array}
\]
The regions of $H(\Sigma)$ correspond to various orientations of $\Sigma$.
\end{prop}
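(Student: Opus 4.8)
The plan is to handle the two claims separately: the first follows from general facts about zonotopes, and the second from Zaslavsky's theory of oriented signed graphs, which I would route through the signed double cover.

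For the first claim, recall that the normal fan of a segment $[-v,v]$ is the complete fan whose only wall is the hyperplane $v^{\perp}$, and that the normal fan of a Minkowski sum is the common refinement of the normal fans of the summands; applied to $\cZ[\Sigma]=\sum_{e\in E}[-x_\tau(e),x_\tau(e)]$ this identifies $\cN(\cZ[\Sigma])$ with the fan of the central arrangement $\{\,x_\tau(e)^{\perp}:e\in E\,\}$. It remains to read each $x_\tau(e)^{\perp}$ off the incidence matrix: for a link $e$ with endpoints $i$ and $j$ the column $x_\tau(e)$ has its two nonzero entries in rows $i$ and $j$, with product $-\sigma(e)$, so $x_\tau(e)=\pm(e_i-\sigma(e)e_j)$ and $x_\tau(e)^{\perp}=\{x_i=\sigma(e)x_j\}$; for a half edge or a negative loop at $i$ the column is a nonzero multiple of $e_i$, so $x_\tau(e)^{\perp}=\{x_i=0\}$ (a positive loop gives the zero column and hence no hyperplane). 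These are exactly the hyperplanes listed in the statement.

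For the second claim I would pass to the signed double cover $\widetilde\Sigma$: the ordinary graph on vertex set $\{i^{+},i^{-}\}_{i\in V}$ obtained by lifting each positive edge $ij$ of $\Sigma$ to $i^{+}j^{+}$ and $i^{-}j^{-}$, each negative edge $ij$ to $i^{+}j^{-}$ and $i^{-}j^{+}$, and each half edge or negative loop at $i$ to $i^{+}i^{-}$. Let $\rho$ be the linear involution of $\mathbb{R}^{\{i^{\pm}\}}$ carrying the $i^{+}$-coordinate to minus the $i^{-}$-coordinate and vice versa. It permutes the hyperplanes $x_u=x_v$ ($uv\in\widetilde\Sigma$); its fixed subspace $V_{-}$ is identified with $\mathbb{R}^{V}$ via $x_i:=x_{i^{+}}$; and under this identification $H(\Sigma)$ is precisely the restriction of the graphic arrangement of $\widetilde\Sigma$ to $V_{-}$. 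The regions of the restricted arrangement are the nonempty traces $R\cap V_{-}$ of regions $R$ of the $\widetilde\Sigma$-arrangement, and $R$ has nonempty trace precisely when it is $\rho$-invariant---one direction because distinct open regions are disjoint, the other by averaging any point of $R$ with its $\rho$-image, which lies in $\mathrm{Fix}(\rho)=V_{-}$. So the regions of $H(\Sigma)$ correspond to the $\rho$-invariant regions of the $\widetilde\Sigma$-arrangement, hence to the $\rho$-invariant acyclic orientations of $\widetilde\Sigma$; and such an orientation orients the two lifts of each edge of $\Sigma$ in $\rho$-matched fashion---precisely a bidirection of $\Sigma$, acyclic in Zaslavsky's sense exactly when its lift is. This is the asserted correspondence between regions of $H(\Sigma)$ and orientations of $\Sigma$.

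The step I expect to demand the most care is this last dictionary, ``$\rho$-invariant orientation of $\widetilde\Sigma$ $=$ bidirection of $\Sigma$'': keeping the sign conventions in the lifting rule consistent with the halfspaces of $H(\Sigma)$, and treating half edges and negative loops separately, since their lifts are the $\rho$-invariant \emph{edges} of $\widetilde\Sigma$, on which every orientation is automatically $\rho$-matched---matching the free choice of an arrow at a half edge. The other ingredients are standard: the normal fan of a zonotope, the region/acyclic-orientation dictionary for an ordinary graphic arrangement, and the convexity argument for $\rho$-invariance. As an alternative to building the bijection explicitly one could instead compare cardinalities, the number of regions of $H(\Sigma)$ and the number of acyclic orientations of $\Sigma$ both being computed by the same evaluation of the chromatic polynomial of the signed graph $\Sigma$ (Zaslavsky).
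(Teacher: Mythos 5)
Your proposal is correct in substance, but be aware that the paper offers no proof of this proposition at all: it treats the hyperplane description as the standard normal-fan computation for a zonotope and defers the precise region--orientation dictionary to the result it quotes immediately afterwards (Zaslavsky's Theorem~4.4, which sharpens the vague phrase ``various orientations'' to ``acyclic orientations''). Your first paragraph is exactly the argument the paper has in mind: the normal fan of a Minkowski sum of segments is the fan of the central arrangement $\{x_\tau(e)^{\perp}\}$, and the hyperplanes are read off the columns of the signed incidence matrix; your remark that a positive loop contributes the zero column and hence no hyperplane is a correct refinement of the statement's loose ``loop or half edge.'' Your second paragraph genuinely departs from the paper by replacing the citation to Zaslavsky with a self-contained reduction to the ordinary graphic case: realize $H(\Sigma)$ as the restriction of the graphic arrangement of the double cover $\widetilde\Sigma$ to $\mathrm{Fix}(\rho)$, match regions of the restriction with $\rho$-invariant regions upstairs (the disjointness/averaging argument is right, and no hyperplane upstairs contains $\mathrm{Fix}(\rho)$, so the standard trace-of-regions fact applies), then invoke Greene's theorem. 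What this buys is independence from Zaslavsky's orientation machinery; what it costs is the step you flag yourself, namely that a $\rho$-invariant orientation of $\widetilde\Sigma$ is acyclic exactly when the corresponding bidirection of $\Sigma$ is acyclic in the matroid-circuit sense of the paper's definition. That equivalence is true but needs an actual argument comparing circuits of the frame matroid (balanced cycles versus handcuff-type circuits) with directed cycles in the double cover; as written it is asserted, and it is essentially the content of the theorem the paper cites. Note also that the cardinality fallback via the signed chromatic polynomial would only equate counts, not exhibit the correspondence, so it cannot substitute for that step.
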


\begin{defn}
A \emph{cycle} of an oriented signed graph is a matroid circuit such that there
is no vertex $v$ such that all $\tau(v,e)$ coincide as $e$ runs over the edges
of the circuit incident to $v$. An orientation that contains no cycles will be
said to be \emph{acyclic}.
\end{defn}

Zaslavsky generalized a result of Greene to the signed graph/acyclotope case in
the following.

\begin{thm}[{\citet[Theorem 4.4]{Zaslavsky1991}}]
Suppose $\Sigma$ is a signed graph. There is a one-to-one correspondence
between the regions of $H[\Sigma]$ and acyclic orientations of $\Sigma$.
\end{thm}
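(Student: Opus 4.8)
The plan is to realize the correspondence explicitly and prove it is a bijection, following the classical route for Greene's theorem (regions of a graphic arrangement versus acyclic orientations) but using the signed incidence vectors in place of the $e_i-e_j$. Send a region $R$ of $H[\Sigma]$ to the orientation recorded by the sign pattern of any interior point: for $x\in R$ and each edge $e$, the hyperplane of $H[\Sigma]$ attached to $e$ is $\{y:\langle y,x_\tau(e)\rangle=0\}$, and the sign of $\langle x,x_\tau(e)\rangle$ says how $\epsilon$ agrees or disagrees with the reference bidirection $\tau$ on $e$; by Proposition~\ref{prop:braidarrangement} this sign data over all $e$ is exactly an orientation $\Phi(R)$ of $\Sigma$. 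This map is well defined because $R$ is connected and meets no hyperplane of $H[\Sigma]$, so no $\langle\,\cdot\,,x_\tau(e)\rangle$ changes sign on $R$. It is injective: if $\Phi(R)=\Phi(R')$, pick $x\in R$, $x'\in R'$; the set of $y$ having the common sign pattern is a nonempty intersection of open halfspaces, hence convex and connected, avoids $H[\Sigma]$, and contains $x$ and $x'$, so $R=R'$.

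It remains to identify the image of $\Phi$ with the acyclic orientations. An orientation $\epsilon$ is in the image precisely when it is \emph{realizable}, i.e. there is an $x$ with $\langle x,\ell_e\rangle>0$ for all $e$, where $\ell_e$ denotes the incidence vector of $e$ oriented according to $\epsilon$ (so $\ell_e=\pm x_\tau(e)$); any such $x$ avoids $H[\Sigma]$, hence lies in a region mapping to $\epsilon$. By Gordan's theorem of the alternative, the strict system $\langle x,\ell_e\rangle>0$ ($e\in E$) is infeasible if and only if there is a nonzero $\lambda\ge 0$ with $\sum_{e\in E}\lambda_e\ell_e=0$. Choose such a $\lambda$ whose support $S$ is inclusion-minimal. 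If $S$ properly contained a circuit $C$ of the signed graphic matroid of $\Sigma$, with circuit dependency $\mu$ on $C$, then either $-\mu\ge 0$ gives a smaller-support nonnegative dependency, or some $\mu_e>0$ and $\lambda-t\mu$ for the least $t>0$ zeroing a coordinate is again a nonnegative dependency of strictly smaller support; both contradict minimality. Hence $S$ is itself a circuit, and since the dependencies on a circuit form a line, $\lambda|_S$ spans it, so $S$ carries a dependency all of whose coefficients have one sign. Conversely any such circuit yields a nonzero $\lambda\ge0$. Therefore $\epsilon$ is realizable if and only if $\Sigma$ has no circuit that is ``positively oriented'' by $\epsilon$.

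The proof is completed by the identification: a matroid circuit $S$, equipped with the orientation $\tau$ restricted from $\epsilon$, carries a dependency with all coefficients of one sign if and only if $S$ is a \emph{cycle}, i.e. has no vertex $v$ at which all $\tau(v,e)$ coincide. This is the step I expect to be the main obstacle. One direction is easy and geometric: if $v$ is coherent (all incident circuit edges point the same way), the $v$-coordinate of $\sum_e\lambda_e\ell_e$ is a sum of terms of a single sign, so some $\lambda_e$ must vanish, contradicting that $S$ is a circuit; hence a positively oriented circuit has no coherent vertex. The converse needs the explicit circuit structure of signed graphic matroids — a positive cycle (dependency coefficients $\pm1$), a tight or loose handcuff of two negative cycles joined by a path (coefficients $\pm1$ on the cycle edges, $\pm2$ on the path), and the degenerate variants where a negative loop or half edge plays the role of a negative cycle — checking case by case that the essentially unique dependency has all coefficients of one sign exactly when no vertex of the circuit is coherent. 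Granting this, realizable $=$ acyclic, and $\Phi$ restricts to the asserted bijection between regions of $H[\Sigma]$ and acyclic orientations of $\Sigma$.
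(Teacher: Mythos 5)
The paper does not actually prove this statement; it is imported verbatim from Zaslavsky, so there is no internal proof to compare against. Your architecture is nonetheless the standard one and is sound as far as it goes: the sign-vector map $\Phi$ is well defined and injective for exactly the reasons you give, and the Gordan/minimal-support argument correctly reduces surjectivity onto acyclic orientations to the claim that an orientation is realizable if and only if no matroid circuit of $\Sigma$ supports a one-signed linear dependency among its reoriented incidence vectors. Up to that point, however, you have only established the general fact, valid for any real vector configuration, that regions of the arrangement biject with orientations admitting no positively oriented matroid circuit.

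The genuine gap is the step you flag and then grant: that a circuit of the signed graphic matroid carries a one-signed dependency if and only if it has no coherent vertex. Your forward direction is fine, but the converse is precisely the signed-graph content of the theorem, and it is not a formality because the condition ``not all $\tau(v,e)$ coincide at $v$'' does not by itself make the $v$-coordinate equation solvable with positive coefficients. For instance, in a tight handcuff (two unbalanced digons $\{e_1,e_2\}$ and $\{f_1,f_2\}$ sharing a vertex $v$), the equation at $v$ is $\lambda_1\bigl(\tau(v,e_1)+\tau(v,e_2)\bigr)+\lambda_3\bigl(\tau(v,f_1)+\tau(v,f_2)\bigr)=0$ after incoherence at the two outer vertices forces $\lambda_1=\lambda_2$ and $\lambda_3=\lambda_4$; a three-versus-one sign split at $v$ leaves $v$ incoherent yet makes this unsolvable with $\lambda_1,\lambda_3>0$, and one must argue that incoherence at the \emph{other} vertices excludes that split. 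So the equivalence is not vertex-local: it requires running through the full circuit classification (balanced cycles, loose and tight handcuffs, and the degenerate variants with negative loops and half edges, which do occur for the arrangements in this paper) and propagating the incoherence constraints around each circuit type. Until that case analysis is written out, your proof yields a bijection with orientations having no positively oriented matroid circuit, not yet with acyclic orientations in the sense defined here.
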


As an example, consider the signed poset, $P$, shown in
Figure~\ref{fig:zonotopeposet}. Figure~\ref{fig:sigmap} shows $\Sigma_P$, the
signed graph underlying the Hasse diagram of $P$
(Figure~\ref{fig:zonotopehasse}). As $\Sigma_P$ has a single unbalanced cycle,
so every orientation will be acyclic. 
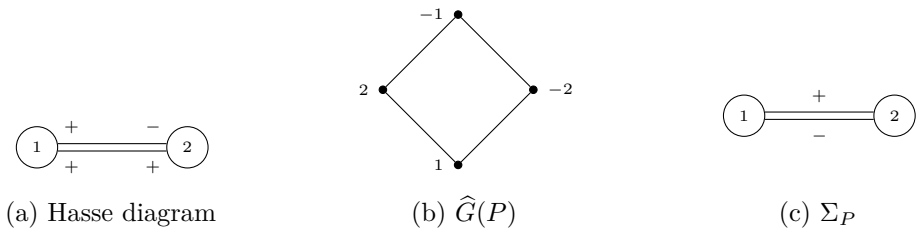
\begin{figure}[htbp]
    \begin{center}
    \begin{subfigure}[b]{.30\textwidth}
        \begin{center}
            \begin{tikzpicture}[scale=2]
                \node[circle,draw] (1) at (0,0) {$1$};
                \node[circle,draw] (2) at (1,0) {$2$};
                \draw (1.10)-- node[very near start, above] {$+$} node[very
                near end, above]{$-$}(2.170);
                \draw (1.350)--node[very near start,below] {$+$} node[very near
                end,below] {$+$}(2.190);
            \end{tikzpicture}
            \caption{Hasse diagram}
            \label{fig:zonotopehasse}
        \end{center}
    \end{subfigure}
    \begin{subfigure}[b]{.30\textwidth}
        \begin{center}
            \begin{tikzpicture}
                \node[vertex] (1) at (0,0) [label=left:$1$] {};
                \node[vertex] (2) at (-1,1) [label=left:$2$] {};
                \node[vertex] (-2) at (1,1) [label=right:$-2$] {};
                \node[vertex] (-1) at (0,2) [label=left:$-1$] {};
                \draw (1)--(2)--(-1)--(-2)--(1);
            \end{tikzpicture}
            \caption{$\Ghat(P)$}
        \end{center}
    \end{subfigure}
    \begin{subfigure}[b]{.30\textwidth}
\begin{center}
            \begin{tikzpicture}[scale=2]
                \node[circle,draw] (1) at (0,0) {$1$};
                \node[circle,draw] (2) at (1,0) {$2$};
                \draw (1.10)-- node[midway,above]{$+$}(2.170);
                \draw (1.350)--node[midway,below] {$-$} (2.190);
            \end{tikzpicture}
        \end{center}
        \caption{$\Sigma_P$}
        \label{fig:sigmap}
    \end{subfigure}
\end{center}
\caption{$P = \{+e_1-e_2,+e_1+e_2,+e_1\}$}
\label{fig:zonotopeposet}
\end{figure}
\begin{figure}[htbp]
    \begin{center}
        \begin{subfigure}[b]{.45\textwidth}
            \begin{center}
            \begin{tikzpicture}
                        \filldraw[draw=black,fill=gray!60](0,0)--(1,-1)--(2,0)--(1,1)--cycle;
                            \node[vertex] at (0,0) [label=left:${(0,0)}$] {};
            \node[vertex] (1m1) at (1,-1) [label=below:${(1,-1})$] {};
            \node[vertex] (10) at (2,0) [label=right:${(2,0)}$] {};
            \node[vertex] (11) at (1,1) [label=above:${(1,1)}$] {};
            \end{tikzpicture}
        \end{center}
        \caption{the zonotope $\cZ[\Sigma]$}
        \end{subfigure}
        \begin{subfigure}[b]{.45\textwidth}
            \begin{center}
                \begin{tikzpicture}
                    \draw[gray!60] (-2,0)--(2,0);
                    \draw[gray!60] (0,-2)--(0,2);
                    \draw (-2,-2)--(2,2);
                    \draw (-2,2)--(2,-2);
                    \node[anchor=south west] at (2,2) {$x_1=x_2$}; 
                    \node[anchor=north west] at (2,-2) {$x_1=-x_2$};
                    \node[circle,draw] (1a) at (-1,1.75) {$1$};
                    \node[circle,draw] (2a) at (1,1.75) {$2$};
                    \draw (1a.10)--node[very near start,above] {$-$} node[very
                    near end,above] {$+$} (2a.170);
                    \draw (1a.350)--node[very near start,below] {$+$} node[very
                    near end,below] {$+$} (2a.190);
                    \node[circle,draw] (1b) at (-1,-1.75) {$1$};
                    \node[circle,draw] (2b) at (1,-1.75) {$2$};
                    \draw (1b.10)--node[very near start,above] {$+$} node[very
                    near end,above] {$+$} (2b.170);
                    \draw (1b.350)--node[very near start,below] {$-$} node[very
                    near end,below] {$-$} (2b.190);
                    \node[circle,draw] (1c) at (-1.75,1) {$1$};
                    \node[circle,draw] (2c) at (-1.75,-1) {$2$};
                    \draw (1c.260)--node[very near start,left] {$+$} node[very
                    near end,left]{$+$} (2c.100);
                    \draw (1c.280)--node[very near start,right] {$-$} node[very
                    near end,right] {$-$} (2c.80);
                    \node[circle,draw] (1d) at (1.75,1) {$1$};
                    \node[circle,draw] (2d) at (1.75,-1) {$2$};
                    \draw (1d.260)--node[very near start,left] {$+$} node[very
                    near end,left]{$-$} (2d.100);
                    \draw (1d.280)--node[very near start,right] {$+$} node[very
                    near end,right] {$+$} (2d.80);
                \end{tikzpicture}
                \caption{$\cN(\cZ[\Sigma])$ with maximal cones labeled by
                    acyclic orientations of $\Sigma$}
            \end{center}
        \end{subfigure}
    \end{center}
    \caption{The zonotop $\cZ[\Sigma]$ and its Newton polytope for $\Sigma$
        from Figure~\ref{fig:zonotopeposet}}
    \label{fig:zonotope}
\end{figure}
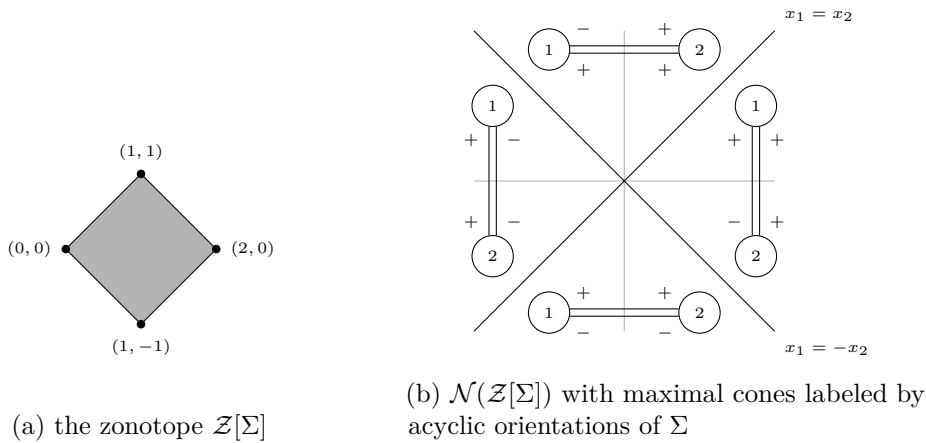
Figure~\ref{fig:zonotope} shows the
acyclotope $\cZ[\Sigma_P]$ and its normal fan $\cN(\cZ[\Sigma_P])$. Recall from
Proposition~\ref{prop:fundthmppartitions} that the $P$-partitions of a signed poset are the
disjoint union of the $w\Phi^+$-partitions for each $w \in \cL(P)$. This
corresponds to the triangulation of $\cN(\cZ[\Sigma_P])$ by the normal fan of
the acyclotope of the complete graph, as in type A. There are a number of
possible choices for a complete signed graph, but taking the lead from the type
A braid arrangement, there is a clear choice.

\begin{defn}
Let $\pm K_n^B$ be the signed graph whose vertices are $[n]$ and whose edges are:
\begin{itemize}
    \item an edge $\{i,j\}$ signed $+$ for all pairs $i,j \in [n]$,
    \item an edge $\{i,j\}$ signed $-$ for all pairs $i,j \in [n]$
    \item a half edge at $i$ signed $+$ for each $i \in [n]$.
\end{itemize}
Call $\pm K_n^B$ a \emph{complete signed graph}.
\end{defn}
One could opt to replace the half edges with loops, but in that case an
orientation of a complete signed graph would, strictly speaking, correspond to $\Cn$ roots rather
than $\Bn$, though it would not alter the normal fan of the zonotope.
Figure~\ref{fig:k2} shows $\pm K_2^B$, $\cZ(\pm K_2^B)$ and
$\cN(\cZ(\pm(K_2^B))$.
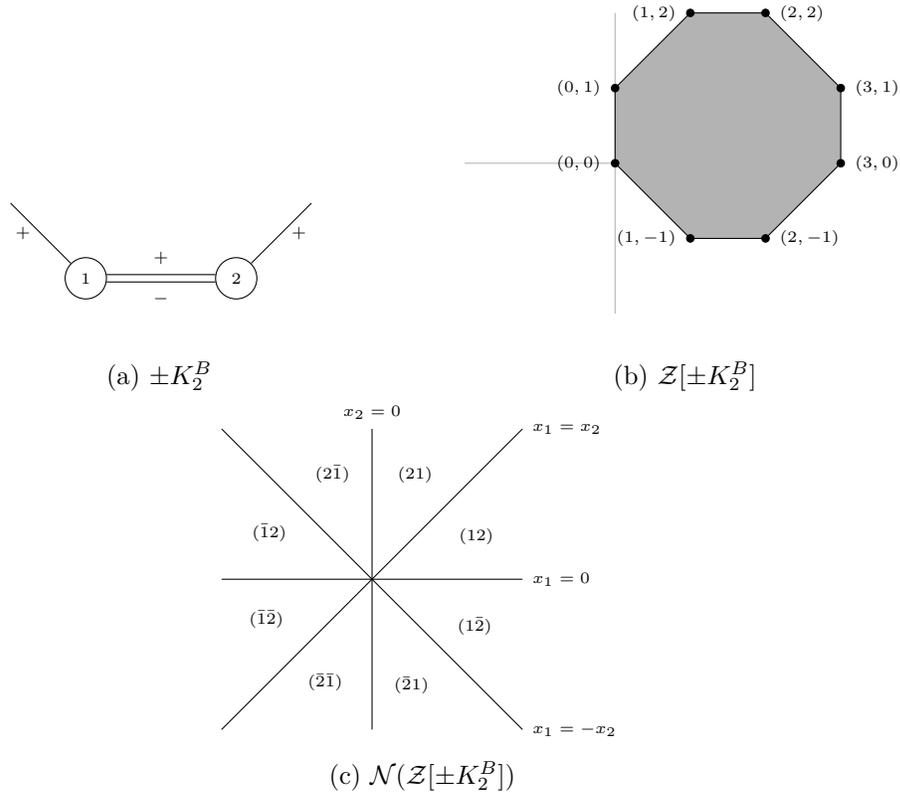
\begin{figure}
    \begin{center}
        \begin{subfigure}[b]{.45\textwidth}
            \begin{center}
                \begin{tikzpicture}
                    \node[circle,draw] (1) at (0,0){$1$};
                    \node[circle,draw] (2) at (2,0){$2$};
                    \draw (1.10)--node[midway,above] {$+$}(2.170);
                    \draw (1.350) --node[midway, below] {$-$} (2.190);
                    \draw (1.135)--node[midway,left]{$+$} (-1,1);
                    \draw (2.45)--node[midway,right]{$+$} (3,1);
                \end{tikzpicture}
            \end{center}
            \caption{$\pm K_2^B$}
        \end{subfigure}
        \begin{subfigure}[b]{.45\textwidth}
            \begin{center}
                \begin{tikzpicture}
                    \draw[gray!60] (-2,0)--(2,0);
                    \draw[gray!60] (0,-2)--(0,2);
                    \filldraw[draw=black,fill=gray!60](0,0)
                    --(0,1)--(1,2)--(2,2)--(3,1)--(3,0)--(2,-1)--(1,-1)--cycle;
                    \node[vertex] at (0,0) [label=left:${(0,0)}$] {};
                    \node[vertex] at (0,1)[label=left:${(0,1)}$] {};
                    \node[vertex] at (1,2) [label=left:${(1,2)}$] {};
                    \node[vertex] at (2,2) [label=right:${(2,2)}$] {};
                    \node[vertex] at (3,1) [label=right:${(3,1)}$] {};
                    \node[vertex] at (3,0) [label=right:${(3,0)}$] {};
                    \node[vertex] at (2,-1) [label=right:${(2,-1)}$] {};
                    \node[vertex] at (1,-1) [label=left:${(1,-1)}$] {};
                \end{tikzpicture}
            \end{center}
            \caption{$\cZ[\pm K_2^B]$}
        \end{subfigure} \\
        \begin{subfigure}[b]{\textwidth}
            \begin{center}
                \begin{tikzpicture}
                    \draw (2,0)--(-2,0);
                    \draw (0,2)--(0,-2);
                    \draw (2,2)--(-2,-2);
                    \draw (2,-2)--(-2,2);
                    \node[anchor=west] at (2,0) {$x_1=0$};
                    \node[anchor=south] at (0,2) {$x_2=0$};
                    \node[anchor=west] at (2,2) {$x_1=x_2$};
                    \node[anchor=west] at (2,-2) {$x_1=-x_2$};
                    \draw (22.5:1.5cm) node{$(12)$};
                    \draw (67.5:1.5cm) node{$(21)$};
                    \draw (110.5:1.5cm) node{$(2\m1)$};
                    \draw (155.5:1.5cm) node{$(\m12)$};
                    \draw (200.5:1.5cm) node{$(\m1\m2)$};
                    \draw (245.5:1.5cm) node{$(\m2\m1)$};
                    \draw (290.5:1.5cm) node{$(\m2 1)$};
                    \draw (335.5:1.5cm) node{$(1\m2)$};
                \end{tikzpicture}
                \caption{$\cN(\cZ[\pm K_2^B])$}
            \end{center}
        \end{subfigure}
    \end{center}
    \caption{The complete signed graph $\pm K_2^B$, its acyclotope and the
        normal fan of the acyclotope}
    \label{fig:k2}
\end{figure}

One sees that the example $\Kpwt$ is triangulated by the cones spanned by
$(12)\Phi_2^+$ and $(21)\Phi_2^+$, which are, in fact, the linear extensions of
$P$.

\begin{prop}
Suppose $P \subset \Bn$ is a signed poset with $\Sigma$ the signed graph
underlying its Hasse diagram. Then $\cN(\cZ[\Sigma])$ is refined by
$\cN(\cZ[\pm K_n^B])$ and, if $\tau$ is the orientation of $\Sigma$
corresponding to $P$, then $\cN_\tau$ is triangulated by the cones of
$\cN(\cZ[\pm K_n^B])$ corresponding to the linear extensions of $P$.
\end{prop}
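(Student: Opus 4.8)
The plan is to reduce both assertions to standard facts about subarrangements of hyperplanes --- using the description of $\cN(\cZ[\Sigma])$ and $\cN(\cZ[\pm K_n^B])$ as arrangement fans from Proposition~\ref{prop:braidarrangement} --- and then to read off the triangulation itself directly from the fundamental theorem of $P$-partitions, Proposition~\ref{prop:fundthmppartitions}. For the refinement, observe that each edge of $\Sigma$ joins two (possibly equal) vertices of the ground set and carries a sign, so it is one of the edges of $\pm K_n^B$; hence, by Proposition~\ref{prop:braidarrangement}, every hyperplane of $H(\Sigma)$ --- be it $x_i = \sigma(e) x_j$ from an edge or $x_i = 0$ from a loop or half-edge --- already appears in $H(\pm K_n^B)$. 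Thus $H(\Sigma)$ is a subarrangement of $H(\pm K_n^B)$, and, since enlarging an arrangement only subdivides its faces, the arrangement fan $\cN(\cZ[\pm K_n^B])$ refines the arrangement fan $\cN(\cZ[\Sigma])$. (Equivalently, after choosing compatible bidirections, $\cZ[\Sigma]$ is a Minkowski summand of $\cZ[\pm K_n^B]$, and normal fans of Minkowski summands are coarsenings of the normal fan of the sum.)

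For the triangulation I would first pin down the ambient fan: the generating segments of $\cZ[\pm K_n^B]$ are spanned by the vectors $e_i - e_j$, $e_i + e_j$ and $e_i$, so $H(\pm K_n^B)$ is the Coxeter arrangement of type $B_n$ and $\cN(\cZ[\pm K_n^B])$ is its Coxeter (braid) fan. Its maximal cones are the closed Weyl chambers $\overline{wC}$ as $w$ ranges over the signed permutations, $C$ the fundamental chamber; each $\overline{wC}$ is a pointed simplicial cone --- the nonnegative span of the $n$ linearly independent images under $w$ of the fundamental coweights --- and is precisely the closure of the cone of $w\Phi^+$-partitions. Next, $\cN_\tau$ is the region of $H(\Sigma)$ determined by $\tau$, cut out by the Hasse relations of $P$; since those relations generate $P$, the same inequalities describe $\Kpwt$, so $\cN_\tau = \Kpwt$. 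By Proposition~\ref{prop:fundthmppartitions} the $P$-partitions of $P$ are the disjoint union of the $w\Phi^+$-partitions over $w \in \cL(P)$, and taking closures yields $\cN_\tau = \Kpwt = \bigcup_{w \in \cL(P)} \overline{wC}$. Finally let $T$ consist of these chambers together with all of their faces: every member of $T$ is a simplicial cone, $T$ is closed under taking faces by construction, $\bigcup T = \cN_\tau$ by the previous sentence, and any two members of $T$ meet in a common face since they all lie in the one polyhedral fan $\cN(\cZ[\pm K_n^B])$. Hence $T$ is a triangulation of $\cN_\tau$ with maximal cones the $\overline{wC}$, $w \in \cL(P)$; and, because $H(\Sigma) \subseteq H(\pm K_n^B)$, these are exactly the full-dimensional cones of $\cN(\cZ[\pm K_n^B])$ contained in $\cN_\tau$.

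I expect the only genuine work to lie in unwinding Proposition~\ref{prop:fundthmppartitions}: checking that the relatively open cone of $w\Phi^+$-partitions has closure exactly the chamber $\overline{wC}$, and that $w$ being a linear extension of $P$ is equivalent to $P \subseteq w\Phi^+$, i.e. to $\overline{wC} \subseteq \Kpwt$. With those two identifications in place, the triangulation axioms hold for formal reasons. One small point to dispose of: if the incidence vectors of $\Sigma$ do not span the ambient space, so that $\Kpwt$ carries a nontrivial lineality space, one first restricts to the linear span of $\Kpwt$ (equivalently, essentialises the two arrangements); with that understood, nothing above changes.
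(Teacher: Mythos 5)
Your proposal is correct and follows essentially the same route as the paper: the refinement comes from $H(\Sigma)$ being a subarrangement of $H(\pm K_n^B)$, and the triangulation comes from identifying the maximal cones of $\cN(\cZ[\pm K_n^B])$ with the $w\Phi^+$-partition cones and invoking Proposition~\ref{prop:fundthmppartitions}. You simply spell out more of the details (the closure and lineality issues, and the verification of the triangulation axioms) that the paper leaves implicit.
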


\begin{proof}
First, it is straightforward to see that $\cN(\cZ[\Sigma])$ is refined by
$\cN(\cZ[\pm K_n^B])$. The hyperplanes defining $\cN(\cZ[\Sigma])$ are a subset
of the hyperplanes defining $\cN(\cZ[\pm K_n^B])$.

Now, suppose $\tau$ is the orientation of $\Sigma$ corresponding to $P$. By
construction, the maximal cones of $\cN(\cZ[\pm K_n^B])$ are the
$w\Phi^+$-partition cones for the elements of the Weyl group, i.e.\ the signed
permutations. Thus, if the maximal cone corresponding to $w$ lies in
$\cN_\tau$, by definition, $w$ will be a linear extension of $P$.
\end{proof}

As was noted above, one can index the maximal cones of the triangulation from
Proposition~\ref{prop:nontrivtriangulation} by certain signed posets, namely
those whose weight cones are the maximal cones of the triangulation. By
construction, these signed posets have simplicial and unimodular weight cones.
They should be the signed analogue of type A's $\cB(G)$-forests.

\begin{quest}\label{finalquest}
What is the appropriated analogue of Carr and Devadoss's graph associahedron
for signed graphs? Does its normal fan give the triangulation of $\Kpwt$ given
in Proposition~\ref{prop:nontrivtriangulation}?
\end{quest}

At the moment, it appears an appropriate signed graph associahedron can be
obtained by shaving the $n$-cube at faces corresponding to ``signed tubes'' of
the signed graph, with a proof proceeding as that of Carr and Devadoss.
However, all the details have yet to be written down.


\section{The type C weight cone}

Thus far, consideration of the weight cone, ideals and $P$-partitions has been
restricted to those signed posets $P \subset \Bn$. However, though one can read
the ideals from either $\Gbhat(P)$ or $\Gchat(\Pc)$, understanding one does not immediately lead to understanding the other, as the
next two examples illustrate.

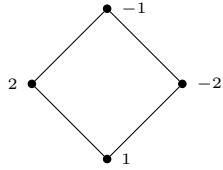
\begin{figure}[htbp]
    \begin{center}
        \begin{tikzpicture}
            \node[vertex] (1) at (0,0) [label=right:$1$] {};
            \node[vertex] (2) at (-1,1) [label=left:$2$] {};
            \node[vertex] (-2) at (1,1) [label=right:$-2$] {};
            \node[vertex] (-1) at (0,2) [label=right:$-1$] {};
            \draw (1)--(2)--(-1)--(-2)--(1);
        \end{tikzpicture}
    \end{center}
    \caption{$P=\{e_1-e_2,e_1+e_2,e_1\}$}
    \label{fig:bvscex1}
\end{figure}

Consider the signed posets $P = \{e_1-e_2,e_1+e_2,e_1\}$ and $\Pc
=\{e_1-e_2,e_1+e_2,2e_2\}$. Figure~\ref{fig:bvscex1} illustrates $\Ghat(P)$.
The connected ideals of $P$ and $\Pc$ are shown in Table~\ref{tab:idealbc}.
Recall that ideals live in the coweight lattice, so the maximal ideals of $P$
and $\Pc$ do not coincide.
\begin{table}[htbp]
    \begin{center}
    \begin{tabular}{ccc}
ideal in $\Ghat(P)$ & ideal in $P$ & ideal in $\Pc$ \\
$\{1\}$ & $(1,0)$ & $(1,0)$ \\
$\{1,2\}$ & $(1,1)$ & $(\frac{1}{2},\frac{1}{2})$ \\
$\{1,-2\}$ & $(1,-1) $ & $(\frac{1}{2},-\frac{1}{2})$
    \end{tabular}
\end{center}
\caption{Type $B$ and $C$ ideals for $\Ghat(P)$ from Figure~\ref{fig:bvscex1}}
\label{tab:idealbc}
\end{table}
Inspecting the ideals reveals that all three connected ideals are required to
generate the semigroup $\Kpwt \cap \Lcwt_B$. However, only
$(\frac{1}{2},\frac{1}{2})$ and $(\frac{1}{2},-\frac{1}{2})$ are required to
generate $\Kpwt \cap \Lcwt_C$. Consequently, one sees $\Kpwt$ is unimodular
with respect to $\Lcwt_C$ but \emph{not} with respect to $\Lcwt_B$. In both
cases, one has the triangulation into cones defined by collections of pairwise
trivially intersecting ideals, even though $\Kpwt$ is already simplicial and
unimodular when viewed in type C.

Alone this is not enough to conclude that the arguments from
Chapter~\ref{ch:weightcone} do not go through almost immediately in type C.
Consider the signed poset in Figure~\ref{fig:bvscex2}.
\begin{figure}[htbp]
    \begin{center}
        \begin{tikzpicture}
            \node[vertex] (1) at (0,0) [label=left:$1$] {};
            \node[vertex] (3) at (-2,0) [label=left:$3$] {};
            \node[vertex] (2) at (-1,1) [label=left:$2$] {};
            \node[vertex] (-2) at (1,1) [label=right:$-2$] {};
            \node[vertex] (-1) at (0,2) [label=left:$-1$] {};
            \node[vertex] (-3) at (2,2) [label=right:$-3$] {};
            \draw (3)--(2)--(-1)--(-2)--(1)--(2);
            \draw (-2)--(-3);
        \end{tikzpicture}
        \caption{$\Ghat(\Pc)$ for $\Pc =
            \{e_1-e_2,e_1+e_2,e_1+e_3,e_3-e_2,2e_1\}$}
        \label{fig:bvscex2}
    \end{center}
\end{figure}
The connected ideals are $\{1\},\{3\},\{1,-2\},\{1,2,3\},\{1,-2,-3\}$. The
toric ideal is the kernel of $\phi \colon \Swt{\Pc} \to k[x_1^{\pm 1},x_2^{\pm
    1},x_3^{\pm 1}]$ defined by
\begin{align*}
    \phi(U_1) &= x_1^2 \\
    \phi(U_3) &= x_3^2 \\
    \phi(U_{1\m2}) &= x_1^2x_2^{-2} \\
    \phi(U_{123}) &= x_1x_2x_3 \\
    \phi(U_{1\m2\m3}) &= x_1x_2^{-1}x_3^{-1}.
\end{align*}
Using Macaulay2, one sees that
\[
    \ker \phi=(U_{1\m2}U_{123}^2 - U_1^2U_3, U_{123}U_{1\m2\m3}-U_1),
\]
meaning that while the generators of the toric ideal are still indexed by pairs of
nontrivially intersecting connected ideals, one needs to tweak the definition
of $\syz(U_J,U_K)$ somewhat. Proposition~\ref{prop:gbinitmingens} used that the
leading term of $\syz(U_J,U_K)$ in type B is quadratic, which is clearly not
the case here, though it is likely that a similar result holds and many of the
results of Chapter~\ref{ch:weightcone} could be pushed through into type C.

\section{On characterizing the $\Rprt$ complete
    intersections}\label{subsec:rtconecharacterisingci}
In this section, attention returns to the root cone and its semigroup (refer to
Chapter~\ref{ch:rootcone} for previous discussion).
\citet*[Theorem 7.7]{Boussicault2009} had shown that $\Psi_P$ factored for posets they called
``gluings of diamonds along chains'', of which strongly planar posets were a
subset.  A poset was said to be a gluing of diamonds along chains if it could
be decomposed into a collection of diamonds by means of disconnecting chains. A
diamond was a cycle with a unique maximum and minimum. A disconnecting chain is
a chain in the poset that partitions the vertices into three groups: the chain
itself, and two other sets such that the paths between the two sets much pass
through a vertex of the chain.
For example, consider the poset in Figure~\ref{fig:morrisex}.
\begin{figure}[htbp]
		\begin{center}
		\begin{tikzpicture}
				\foreach \x in {0,1,2,3,4,5}
				{
				\pgfmathtruncatemacro{\label}{\x+1}
				\node[vertex] (\label) at (0,\x) [label=right:$\label$] {};
				}
				\node[vertex] (8) at (1,0) [label=right:$7$] {};
				\node[vertex] (9) at (1,1) [label=right:$8$] {};
				\node[vertex] (10) at (-1,1.5) [label=left:$9$] {};
				\node[vertex] (11) at (-1,2.5) [label=left:$10$] {};
				\node[vertex] (12) at (1,3.5) [label=right:$11$] {};
				\draw (1)--(6);
				\draw (1)--(9)--(8)--(2);
				\draw (1)--(10)--(4);
				\draw (2)--(11)--(5);
				\draw (3)--(12)--(6);
		\end{tikzpicture}
\end{center}
\caption{A poset which has $\Rprt$ a complete
intersection but is not gluing of diamonds along chains}
\label{fig:morrisex}
\end{figure}
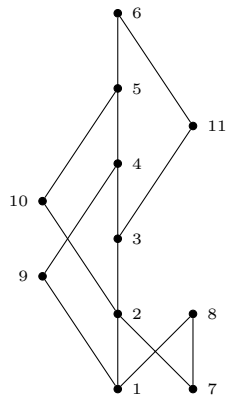
Certainly, it can be broken apart via disconnecting chains in the posets shown
in Figure~\ref{fig:brokenup}.
\begin{figure}[htbp]
    \begin{center}
        \begin{tabular}{cc}
            \begin{subfigure}[b]{.4\textwidth}
                \begin{center}
            \begin{tikzpicture}
                \node[vertex] (2) at (0,0) [label=right:$2$] {};
                \node[vertex] (3) at (0,1) [label=right:$3$] {};
                \node[vertex] (4) at (0,2) [label=right:$4$] {};
                \node[vertex] (5) at (0,3) [label=right:$5$] {};
                \node[vertex] (10) at (-1,1.5) [label=left:$10$] {};
                \draw (2)--(5)--(10)--(2);            
            \end{tikzpicture}
            \end{center}\caption{}
        \end{subfigure}
                & 
                \begin{subfigure}[b]{.4\textwidth}
                    \begin{center}
            \begin{tikzpicture}
                \node[vertex] (2) at (0,0) [label=right:$3$] {};
                \node[vertex] (3) at (0,1) [label=right:$4$] {};
                \node[vertex] (4) at (0,2) [label=right:$5$] {};
                \node[vertex] (5) at (0,3) [label=right:$6$] {};
                \node[vertex] (10) at (1,1.5) [label=left:$7$] {};
                \draw (2)--(5)--(10)--(2); 
            \end{tikzpicture}
                    \end{center}\caption{}
        \end{subfigure}  
            \\
            \begin{subfigure}[b]{.4\textwidth}
                \begin{center}
            \begin{tikzpicture}
                \node[vertex] (2) at (0,0) [label=right:$1$] {};
                \node[vertex] (3) at (0,1) [label=right:$2$] {};
                \node[vertex] (4) at (0,2) [label=right:$3$] {};
                \node[vertex] (5) at (0,3) [label=right:$4$] {};
                \node[vertex] (10) at (-1,1.5) [label=left:$9$] {};
                \draw (2)--(5)--(10)--(2);    
            \end{tikzpicture} 
        \end{center}\caption{}
    \end{subfigure} &
    \begin{subfigure}[b]{.4\textwidth}
        \begin{center}
            \begin{tikzpicture}
                \node[vertex] (1) at (0,0) [label=left:$1$] {};
                \node[vertex] (2) at (0,1) [label=left:$2$] {};
                \node[vertex] (7) at (1,0) [label=right:$7$] {};
                \node[vertex] (8) at (1,1) [label=right:$8$] {};
                \draw (1)--(2)--(7)--(8)--(1);
            \end{tikzpicture}
            \caption{}
            \label{fig:nondiamond}
        \end{center}
    \end{subfigure}
        \end{tabular}
    \end{center}
    \caption{The poset of Figure~\ref{fig:morrisex} broken into unicycle
        components}
    \label{fig:brokenup}
\end{figure}
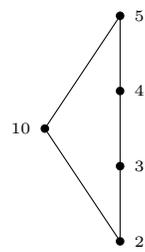
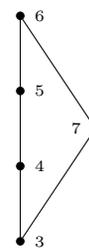
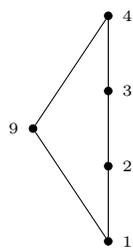
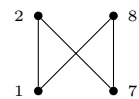
The poset in Figure~\ref{fig:nondiamond} is not a diamond, so is not
covered by Boussicault and \feray's result. However, the fact the poset of
Figure~\ref{fig:morrisex} is a complete intersection is explained by
\citet*[Theorem 8.6]{BoussicaultFerayLascouxReiner2012}, who give an algebraic
explanation for the factorization via opening/closing notches---locating Boussicault and
\feray's disconnecting chains can serve as a guide for which notches to open. 

An immediate consequence of~\cite[Theorem
8.6]{BoussicaultFerayLascouxReiner2012} is that a poset
which can be broken apart into unicyclic components (those having one or no
cycles) by opening notches must
have $\Rprt$
a complete intersection. It turns out that this suffices to characterize the
posets for which $\Rprt$ is a complete intersection, which was conjectured by the
author of this thesis and V.\ Reiner and proved
by~\citet{Morris2013}.

\begin{thm}[{\cite[Theorem 3]{Morris2013}}]
Suppose $P$ is a poset. Its root cone semigroup ring $\Rprt$ is a complete
intersection if and only if the Hasse diagram of $P$ can be obtained from
unicyclic posets by repeated gluings along chains.
\end{thm}

One would hope for a similar result in the signed poset case. Like in the poset
case, it is clear that strong planarity is not a necessary condition for
$\Rprt$ to be a complete intersection. After all, consider the signed poset in
Figure~\ref{fig:stronglyplanarnotnecc}. Its toric ideal is principal and there
is no notch that can be opened, so
$\Rprt$ must be a complete intersection, though the poset is clearly not
strongly planar.
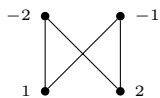
\begin{figure}[htbp]
    \begin{center}
        \begin{tikzpicture}
            \node[vertex] (1) at (0,0)[label=left:$1$] {};
            \node[vertex] (2) at (1,0) [label=right:$2$] {};
            \node[vertex] (-1) at (1,1) [label=right:$-1$] {};
            \node[vertex] (-2) at (0,1) [label=left:$-2$] {};
            \draw (1)--(-1)--(2)--(-2)--(1);
        \end{tikzpicture}
    \end{center}
    \caption{A signed poset which is not strongly planar, but has $\Rprt$ a
        complete intersection}
    \label{fig:stronglyplanarnotnecc}
\end{figure}

Unicyclic posets, of course, have principal root cone toric ideals.
Furthermore, cycles in the Hasse diagram of the poset correspond to circuits in
its matroid. This turns out to be the key fact to identifying the correct
signed analogue of ``unicyclic''.

\begin{conj}
		Suppose $P \subset \Bn$ (\resp $\Pc \subset \Cn$) is a signed poset.
		$\Rprt$ (\resp $\Rrt{\Pc}$) is a complete intersection if and only if
		$\Ghat(P)$ can be broken into biconnected components $P_1,\ldots,P_k$,
		each of which has a Hasse diagram with at most one circuit, by
		opening a series of signed notches.
\end{conj}

\begin{spacing}{1}
\emergencystretch=1em
\printbibliography[heading=bibintoc]
\nocite{*}
\end{spacing}
\end{document}